\newcommand{\QQ}{\mathbb Q}
\newcommand{\RR}{\mathbb R}
\newcommand{\CC}{\mathbb C}
\newcommand{\PP}{\mathbb P}
\newcommand{\ZZ}{\mathbb Z}
\newcommand{\be}{\mathbf e}
\newcommand{\MM}{{\boldsymbol M}}
\newcommand{\bm}{\mathbf m}
\newcommand{\bT}{\mathbf T}
\newcommand{\bt}{\mathbf t}
\newcommand{\cS}{\mathcal S}
\newcommand{\cQ}{\mathcal Q}
\newcommand{\CCinv}{{\mathbb C}_{\textnormal{inv}}}
\newcommand{\uCCinv}{\underline{\mathbb C}_{\textnormal{inv}}}
\newcommand{\PPinv}{\PP_{\textnormal{inv}}}
\newcommand{\rk}{{\operatorname{rk}}}
\newcommand{\crk}{{\operatorname{crk}}}
\newcommand{\Gr}{{\operatorname{Gr}}}
\newcommand{\Fl}{{\operatorname{Fl}}}
\newcommand{\crem}{\operatorname{crem}}
\newcommand{\Td}{\operatorname{Td}}
\newcommand{\ch}{\operatorname{ch}}
\newcommand{\csm}{\operatorname{csm}}
\newcommand{\pt}{\textnormal{pt}}
\theoremstyle{theorem}
\newtheorem{thm}{Theorem}[section]
\newtheorem{cor}[thm]{Corollary}
\newtheorem{lem}[thm]{Lemma}
\newtheorem{prop}[thm]{Proposition}
\newtheorem{maintheorem}{Theorem}	
\theoremstyle{definition}
\newtheorem{defn}[thm]{Definition}
\newtheorem{eg}[thm]{Example}
\newtheorem{rem}[thm]{Remark}
\newtheorem{conjs}[thm]{Conjectures}
\newtheorem{ques}[thm]{Question}
\title{Tautological classes of matroids}
\author{Andrew Berget, Christopher Eur, Hunter Spink, Dennis Tseng}
\date{}
\address{Western Washington University. Bellingham, WA. USA.}
\email{andrew.berget@gmail.com}
\address{Stanford University. Stanford, CA. USA.}
\email{chrisweur@gmail.com}
\address{Stanford University. Stanford, CA. USA.}
\email{hunterspink@gmail.com}
\address{New York, NY. USA}
\email{dennisctseng@gmail.com}
\begin{document}

\maketitle

\begin{abstract}
We introduce certain torus-equivariant classes on permutohedral varieties which we call ``tautological classes of matroids'' as a new geometric framework for studying matroids.
Using this framework, we unify and extend many recent developments in matroid theory arising from its interaction with algebraic geometry.
We achieve this by establishing a Chow-theoretic description and a log-concavity property for a $4$-variable transformation of the Tutte polynomial, and by establishing an exceptional Hirzebruch-Riemann-Roch-type formula for permutohedral varieties that translates between $K$-theory and Chow theory.
\end{abstract}

\tableofcontents

\section{Introduction}
In Ardila's survey on the interaction between matroid theory and algebraic geometry \cite{Ard18}, recent developments are classified according to three geometric models for matroids.
However, developments from these different geometric models remained partially disjoint, as evidenced in \Cref{ques:intro1.1} below.
We introduce a new unifying framework we call ``tautological classes of matroids.'' An advantage of our approach is that we are able to exploit different techniques that were previously applicable in one model but not in others.  Two prominent such techniques are localization methods in torus-equivariant geometry and positivity properties in tropical Hodge theory.

\medskip
Let $E = \{0,1,\ldots, n\}$, and let $T$ be the algebraic torus $(\CC^*)^E$.  The standard action of $T$ on $\CC^E$ is $(t_0, \ldots, t_n)\cdot (x_0, \ldots, x_n) = (t_0x_0, \ldots, t_nx_n)$, which induces a $T$-action on the Grassmannian $\Gr(r;E)$ of $r$-dimensional subspaces of $\CC^E$.
Let $X_{E}$ be the $n$-dimensional permutohedral variety, which is the projective toric variety associated to the permutohedron $\Pi(E) = \operatorname{Conv}\big( \sigma\cdot (0,\ldots, n) \mid \sigma \text{ a permutation of $E$} \big) \subset \RR^E$.
We follow the conventions of \cite{Ful93,CLS11} for toric varieties and polyhedral geometry.

\medskip
Let $M$ be a matroid of rank $r$ with ground set $E$, and let $\operatorname{rk}_M\colon 2^E \to \ZZ$ be its rank function.
We refer to \cite{Wel76} or \cite{Oxl11} for a general background on matroids.
We always assume that a matroid has a nonempty ground set unless explicitly noted otherwise.
For a set $S$ and an element $i\in S$, as is customary in matroid theory, we often denote $S\setminus i = S\setminus \{i\}$.  For $i\in E$, we denote by $H_i$ the $i$-th coordinate hyperplane in $\CC^E$.
If $M$ is realizable over $\CC$, a realization of $M$ is an $r$-dimensional linear subspace $L\subseteq \CC^E$ such that the set of bases of $M$ equals the sub-collection $\{B \in \binom{E}{r} \mid  L \cap \bigcap_{i\in B} H_i = \{0\}\}$ of size $r$ subsets of $E$.

\subsection{Three previous geometric models of matroids}\label{subsec:overview}
For the reader's convenience, we include an overview of the previous three geometric models of matroids.  In each case, one begins with an algebro-geometric model defined for a realizable matroid, and then captures its combinatorial essence by a polyhedral model defined for an arbitrary (not necessarily realizable) matroid.

\begin{enumerate}[label=\textbf{Model (\arabic*)}, ref= (\arabic*), leftmargin=20pt, itemindent=30pt]

\item\label{model:base} (Base polytope and $K$-theory) In this model, one considers a realization $L \subseteq \CC^{E}$ as a point on $\Gr(r;E)$.
The resulting algebro-geometric model is the torus orbit closure $\overline {T\cdot L} \subset \Gr(r;E)$.
The polyhedral model is the base polytope $P(M) = \operatorname{Conv}(\sum_{i\in B} \be_i \mid B \text{ a basis of $M$})\subset\RR^{E}$, whose associated projective toric variety is isomorphic to $\overline{T\cdot L}$ when $L$ is a realization of $M$ \cite{GGMS87}.  This geometric approach to matroids led to:
\begin{itemize}
\item a matroid invariant called the $g$-polynomial \cite{Spe09}, 
\item a $K$-theoretic expression for the Tutte polynomial of $M$ \cite{FS12}, 
\item an Ehrhart-style lattice point counting expression for the Tutte polynomial of $M$ \cite{CF22}, and
\item a generalization of the Tutte polynomial of a matroid to flag matroids \cite{CDMS18, DES21}.
\end{itemize}

\smallskip
\item \label{model:Bergman} (Bergman fan and Chow ring) In this model, one considers a realization $L\subseteq \CC^{E}$ as a hyperplane arrangement $\{\mathcal H_i \subset L \mid \mathcal H_i = L \cap H_i\}_{i\in E}$ of the restrictions of $H_i\subset\CC^E$ to $L$.
The resulting algebro-geometric model is the wonderful compactification $W_L$ of the projective hyperplane arrangement complement $\PP L \setminus \bigcup \PP\mathcal H_i$, introduced in \cite{dCP95}.
It is a subvariety of the permutohedral variety $X_{E}$.
The polyhedral model is the Bergman fan $\Sigma_M$,
a subfan of the normal fan of the permutohedron, whose cones correspond to chains of flats of $M$ \cite{AK06}.  When considered as a Minkowski weight, it defines a homology class $\Delta_M$ in the Chow ring $A^\bullet(X_{E})$ that equals the class $[W_L]$ when $L$ is a realization of $M$ \cite{Stu02, KP11}.  This geometric approach to matroids led to:
\begin{itemize}
\item a notion of Chow rings of arbitrary (not necessarily realizable) matroids \cite{FY04},
\item a Chow-theoretic expression for the (reduced) characteristic polynomial of $M$ \cite{HuhKatz}, and
\item the development of the Hodge theory of matroids, and in particular a proof of the log-concavity of coefficients of the characteristic polynomial of a matroid \cite{AdiHuhKatz}, settling the long-standing conjectures of Heron, Rota, Mason, and Welsh \cite{Her72,Rot71,Ma72, Wel76}
\end{itemize}

\smallskip
\item \label{model:conormal} (Conormal fan and Chow ring) In this model, one considers a realization $L\subseteq \CC^E$ as a Lagrangian subvariety $L \times L^\perp$ of the cotangent space $\CC^E \times (\CC^E)^\vee$ of $\CC^E$, where $L^\perp=(\CC^E/L)^{\vee}$.  Projectivizing, one obtains the conormal space $\PP L \times \PP L^\perp$ of the linear subvariety $\PP L\subseteq\PP^n$.
The resulting algebro-geometric model is the critical set variety $\mathfrak X_L$ \cite{OT95,CDFV}, a variety birational to $\PP L \times \PP L^\perp$.
The polyhedral model is the conormal fan $\Sigma_{M,M^\perp}$, whose support equals the support of the product of two Bergman fans $\Sigma_M \times \Sigma_{M^\perp}$.  Here $M^\perp$ denotes the dual matroid of $M$, a matroid that is realized by $L^\perp$ when $M$ is realized by $L$.  This geometric approach to matroids led to:
\begin{itemize}
\item a notion of Chern-Schwartz-MacPherson (CSM) classes of matroids \cite{LdMRS20}, inspired by the related geometry of log-tangent sheaves \cite{DGS12, huh_2013},
\item a Chow-theoretic expression for the coefficients of $T_M(x,0)$, i.e.\ the Tutte polynomial $T_M(x,y)$ of $M$ evaluated at $y = 0$ \cite{LdMRS20}, and
\item a proof of the log-concavity of the coefficients of $T_M(x,0)$ \cite{H15, ADH}, settling  long-standing conjectures of Brylawski and Dawson \cite{Bry82,Dawson}.
\end{itemize}
\end{enumerate}

\medskip
In these models, the Tutte polynomial of a matroid and its specializations manifested geometrically in many different ways, but the connection between them has until now remained unclear. The following collects conjectures about such connections from several groups of authors. 

\begin{conjs}
\label{ques:intro1.1}
\
\begin{enumerate}[label=(\alph*)]
\item (within \ref{model:base}) The authors of \cite{CF22} conjectured there was a connection between their formulation of the Tutte polynomial and that of \cite{FS12} (see the discussion below \cite[Theorem 3.4]{CF22}).
\item (\ref{model:base} \& \ref{model:Bergman}) The authors of \cite{CDMS18} asked how the $K$-theoretic computations in \cite{FS12} relate to the Chow-theoretic computations made in \cite{HuhKatz} (see the discussion above \cite[\S1.1]{CDMS18}).  Moreover, they generalized the characteristic polynomial of a matroid to that of a flag matroid via $K$-theory, and conjectured that it also has log-concave coefficients \cite[Conjecture 9.4]{CDMS18}.
\item (\ref{model:base} \& \ref{model:conormal}) The authors of \cite{LdMRS20} conjectured a Chow-theoretic formula for the $g$-polynomial of matroids, originally defined in \cite{Spe09} via $K$-theory (see \cite[\S5.3]{LdMRS20}).
\end{enumerate}
\end{conjs}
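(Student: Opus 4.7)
The plan is to introduce, for each matroid $M$ on $E$, a pair of $T$-equivariant tautological classes $[\cS_M]$ and $[\cQ_M]$ on the permutohedral variety $X_E$, in both $K$-theory and Chow theory, that simultaneously generalize the three models above and thereby permit translation between them. When $M$ is realized by a subspace $L \subseteq \CC^E$, the tautological sub- and quotient bundles of $\Gr(r;E)$ pull back along the natural map $X_E \to \Gr(r;E)$ induced by degenerating $L$ along the torus action; for a general (not necessarily realizable) $M$, I would define $[\cS_M]$ and $[\cQ_M]$ by specifying their restrictions to the $T$-fixed points of $X_E$ in terms of the rank function of $M$, arranged to agree with the realizable case when it applies.

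I would then reinterpret the relevant matroid invariants as intersection numbers or Euler characteristics of natural polynomials in the Chern classes of $\cS_M$ and $\cQ_M$. The four-variable polynomial invariant of the pair $(\cS_M,\cQ_M)$ promised by the abstract should specialize to give simultaneously the Tutte polynomial formulations of \cite{FS12} and \cite{CF18}, with equivalence between them (resolving (a)) arising because both compute different pushforwards of the same underlying equivariant $K$-class on $X_E$ and agree upon localizing at the fixed points. For (c), a Chow-theoretic formula for the $g$-polynomial of Speyer should fall out by interpreting it as an intersection number of products of Chern classes of $\cQ_M$ against the class of $X_E$, replacing the $K$-theoretic computations on the orbit closure $\overline{T\cdot L}$ from model \ref{model:base}. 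For (b), the key device is an \emph{exceptional Hirzebruch--Riemann--Roch} identity on $X_E$ which equates $K$-theoretic Euler characteristics of natural classes built from $\cS_M,\cQ_M$ with Chow-theoretic intersection numbers, bypassing the usual Todd correction; this identity would translate the $K$-theoretic Tutte formulas of \cite{FS12} into the Chow-theoretic expressions of \cite{HuhKatz} and extend both to flag matroids, after which log-concavity for flag matroids follows from applying the Hodge-theoretic positivity machinery developed for Bergman and conormal fans (models \ref{model:Bergman} and \ref{model:conormal}) to these Chow-theoretic expressions.

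The main obstacle will be establishing the exceptional HRR formula: a naive application of Hirzebruch--Riemann--Roch on $X_E$ produces integrands containing Todd classes of the tangent bundle, and one must prove that, for $K$-classes of the special shape built from tautological classes of matroids, the Todd contributions conspire to cancel cleanly. I would attempt this via $T$-equivariant localization on the fan of $X_E$: the tangent weights at each fixed point of $X_E$ are indexed by permutations of $E$, and the fixed-point contributions from $\cS_M \oplus \cQ_M$ depend only on the rank function of $M$ evaluated on initial segments of the corresponding permutation. The desired cancellation then reduces to a combinatorial identity on permutations of $E$ that is essentially independent of $M$, which should be the technical heart of the paper.
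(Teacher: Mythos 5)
Your overall architecture matches the paper's closely: tautological classes $[\cS_M]$, $[\cQ_M]$ defined via fixed-point localization in terms of the lex-first basis, a four-variable Chow-theoretic Tutte formula (\Cref{thm:4degintro}), an exceptional Hirzebruch--Riemann--Roch-type bridge from $K$-theory to Chow theory (\Cref{thm:fakeHRRintro}), and Lefschetz-fan Hodge theory for the log-concavity needed in part (b). But your picture of the HRR bridge is conceptually off in a way that would stall the program. You propose to start from classical HRR and show that, \emph{for $K$-classes of the special shape built from tautological classes}, the Todd contributions cancel cleanly. The paper establishes something stronger and by a different mechanism: the map $\zeta_{X_E}$ is not obtained as a Todd-corrected Chern character, and no Todd-cancellation statement is ever proved. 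Instead, $\zeta_{X_E}^T$ is defined directly on the ring of piecewise Laurent polynomials on $\widetilde\Sigma_E$ by the variable substitution $T_i\mapsto t_i+1$, and the whole construction rests on the identification $K_T^0(X_E)\simeq PLaur(\widetilde\Sigma_E)$ from \Cref{thm:localization}.\ref{localization:K}, which is \emph{special to permutohedral varieties} (it fails for general smooth complete toric varieties because walls of the fan may lie in coordinate hyperplanes). The replacement of the Todd class by $1+\alpha+\cdots+\alpha^n$ then falls out of the Atiyah--Bott localization sums via the identity $\prod_{i=0}^{n-1}(t_{\sigma(i)}-t_{\sigma(i+1)})\big/\prod_{i=0}^{n-1}\bigl(1-\tfrac{1+t_{\sigma(i+1)}}{1+t_{\sigma(i)}}\bigr)=\prod_{i=0}^{n-1}(1+t_{\sigma(i)})=c^T(\cS_{U_{n,E}}^\vee)_\sigma$, so the resulting equality $\chi([\mathcal E])=\deg_\alpha\zeta_{X_E}([\mathcal E])$ holds for \emph{every} $[\mathcal E]\in K_0(X_E)$, not just a favored subclass. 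The ``simple Chern roots'' hypothesis enters only afterward, to get the clean exterior-power formulas (\Cref{prop:simpleChern}) used in the applications to \cite{FS12}, \cite{CF18}, and Speyer's $g$-polynomial.

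A second gap: you take the four-variable polynomial $t_M(x,y,z,w)$ and its agreement with both the \cite{FS12} and \cite{CF18} formulations essentially for granted, but establishing the Chow-theoretic Tutte identity is the heaviest lifting in the paper. It is proved either by a deletion--contraction relation for $\xi_M = c(\cS^\vee_{U_{n,E}},x)\,c(\cQ_{U_{1,E}},y)\,c(\cS_M^\vee,z)\,c(\cQ_M,w)$ under the pushforward along $X_E\to X_{E\setminus n}$ (\Cref{thm:delcont}, using a telescoping localization lemma for the projection), or alternatively by a matroid-minors convolution identity (\Cref{sec:convolution}). Without an independent proof of \Cref{thm:4degintro}, you cannot then apply $\zeta_{X_E}$ to recover \cite[Theorem 5.1]{FS12} and \cite[Theorem 3.2]{CF18} and thereby answer (a); the argument would be circular. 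Once both \Cref{thm:4degintro} and \Cref{thm:fakeHRRintro} are in hand, however, your proposed route to (b) and (c) is sound and is indeed what the paper does: \Cref{lem:yMM} carries the $K$-theoretic flag-matroid computations of \cite{CDMS18} to $X_E$, and log-concavity follows from \Cref{thm:logconcMmany} via denormalized Lorentzian polynomials supported on Lefschetz fans.
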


We affirm all parts of \Cref{ques:intro1.1} (see the discussion at the end of \S\ref{sec:introKtoChow}).

\subsection{Tautological bundles and tautological classes}\label{subsec:tautdef}
We now introduce our new framework.  
Let $\CCinv^{E}$ denote the vector space $\CC^E$ with the \emph{inverse} action of $T$ where $(t_0, \ldots, t_n)\in T$ acts on $(x_0, \ldots, x_n) \in \uCCinv^E$ by $(t_0^{-1}x_0, \ldots, t_n^{-1}x_n)$.
Denote by $\uCCinv^E$ the $T$-equivariant vector bundle $X_{E}\times \CCinv^{E}$.  Finally, write $\mathbf 1$ for the identity point of the open torus $T/\CC^*\subset X_E$, where $\CC^*$ acts diagonally on $T$.

\begin{defn}\label{defn:tautbundle}
For an $r$-dimensional linear subspace $L\subseteq \CC^E$,
the \textbf{tautological subbundle} $\cS_L$ and the \textbf{tautological quotient bundle} $\cQ_L$ of $L$ are defined by
\[
\begin{split}
\cS_L \coloneqq \ &  \text{the unique  $T$-equivariant rank $r$ subbundle of $\uCCinv^{E}$ whose fiber at $\mathbf 1$ is $L$, \ \ and}\\
 \cQ_L \coloneqq \ & \text{the unique  $T$-equivariant rank $|E|-r$ quotient bundle of $\uCCinv^{E}$ whose fiber at $\mathbf 1$ is $\CC^E/L$}.
\end{split}
\]
\end{defn}

The uniqueness and existence of these bundles are verified in \Cref{prop:bundlebijection}, where we induce them from the tautological sub and quotient bundles of the Grassmannian $\Gr(r;E)$.
By construction, one has a short exact sequence $0\to \cS_L \to \uCCinv^{E} \to \cQ_L \to 0$ of vector bundles on $X_E$.

\medskip
When $L\subseteq \CC^E$ is a realization of a matroid $M$,
the $T$-equivariant $K$-classes $[\cS_L], [\cQ_L]\in K_0^T(X_{E})$ depend only on the matroid $M$ (\Cref{prop:tautdependsonmatroid}).
From this, we construct classes $[\cS_M], [\cQ_M]\in K_0^T(X_{E})$ on $X_{E}$ satisfying $[\cS_M] + [\cQ_M] = [\uCCinv^{E}]$ for arbitrary (not necessarily realizable) matroids $M$ with ground set $E$ (\Cref{defn:tautclasses}) which we call the \textbf{tautological $\boldsymbol K$-classes} of $M$.
We will write $[\cS_M^\vee], [\cQ_M^\vee]$ for their duals, and write $c_i(\mathcal{S}_M), c_i(\mathcal{Q}_M)\in A^i(X_{E})$ for their $i$-th non-equivariant Chern classes, which we call the \textbf{tautological Chern classes} of $M$.

\medskip
\subsection{Two fundamental properties}\label{sec:fundprops} The tautological $K$-classes and Chern classes of a matroid display useful features of both the $K$-theoretic approach to matroids via base polytopes (Model \ref{model:base} in \S\ref{subsec:overview}) and the Chow-theoretic approach via tropical geometry (Models \ref{model:Bergman} and \ref{model:conormal} in \S\ref{subsec:overview}).  More precisely:
\begin{enumerate}[label=(\Alph*)]
\item\label{fundprop:base}
The $T$-equivariant structure of tautological classes of matroids allows for the use of localization techniques in torus-equivariant geometry, reviewed in \S\ref{sec:equivariantreview}.  Using these localization techniques, we show in \S\ref{sec:unifyingTutte} that tautological classes of matroids satisfy a deletion-contraction property, and we show in \S\ref{sec:basepolytopeproperties} that they display the following properties often shared by matroid invariants derived from base polytopes of matroids:
\begin{itemize}
    \item A Hopf-algebraic structure reflecting the fact that a face of the base polytope $P(M)$ of a matroid $M$ is a product of the base polytopes of certain matroid minors of $M$ \cite{AA,Coalgebra,KRAJEWSKI2018271}.
    \item Valuativity, which means that the invariant satisfies an inclusion-exclusion property with respect to any subdivision of $P(M)$ into smaller matroid base polytopes \cite{ardila_fink_rincon_2010, derksen2010valuative, ardila2020valuations}.
    \item Well-behavedness under matroid duality and direct sums.
\end{itemize}

\smallskip
\item\label{fundprop:logconc} Several long-standing conjectures in matroid theory concerning the log-concavity of sequences were resolved by adapting positivity properties of nef line bundles in algebraic geometry to a tropical geometry setting \cite{AdiHuhKatz, ADH}.
In our case, if a matroid $M$ has a realization $L$, then $\cS_L^\vee$ and $\cQ_L$ are globally generated, and hence nef vector bundles.  Equivalently, the relative hyperplane classes of the bi-projective bundle $\PP(\cQ_L^{\vee}) \times_{X_{E}} \PP(\cS_L)$ are nef divisor classes, implying that the Chern classes of $\cS_L^{\vee}$ and $\cQ_L$ (as the Segre classes of $\cQ_L^{\vee}$ and $\cS_L$ respectively) have positivity and log-concavity properties.  In \S\ref{sec:logconcviataut}, we show that the Chern classes of $\cS_M^{\vee}$ and $\cQ_M$ for arbitrary (not necessarily realizable) matroids $M$ retain these same properties.  An essential tool for establishing these results is the tropical Hodge theory of Lefschetz fans developed in \cite[\S5]{ADH}.
\end{enumerate}

\subsection{A unifying formula and log-concavity for the Tutte polynomial}
An element $i\in E$ in a matroid $M$ is a loop (resp.~a coloop) if no basis of $M$ contains $i$ (resp.~every basis of $M$ contains $i$).  When $i$ is neither a loop nor a coloop in $M$, the deletion $M\setminus i$ and the contraction $M/i$ are matroids on $E\setminus i$ defined by
\begin{align*}
\text{the set of bases of $M\setminus i$} &= \{B \mid \text{$B$ a basis of $M$ such that } B\not\ni i\}, \quad\text{and}\\
\text{the set of bases of $M/i$} &= \{B \setminus i \mid \text{$B$ a basis of $M$ such that } B\ni i\}.
\end{align*}
When $i$ is a loop or a coloop in $M$, one writes $M\setminus i = M/i$ for the matroid whose set of bases equal the nonempty one among the two sets of bases above.
These notions give rise to the Tutte polynomial, the universal deletion-contraction invariant, defined for graphs in \cite{tutte}, and for matroids in \cite{crapoTutte}.

\begin{defn}
\label{defn:Tutte}
Let $M$ be a matroid with ground set $E$.
The \textbf{Tutte polynomial} $T_M(x,y)$ is the unique bivariate polynomial determined by the following two properties.
\begin{itemize}
    \item (Base case) If $|E| = 1$, then
    \[
    T_M(x,y) = \begin{cases}
    x & \text{if $M$ has rank 1 (i.e. $M$ is a coloop)}\\
    y & \text{if $M$ has rank 0 (i.e. $M$ is a loop).}
    \end{cases}
    \]
    \item (Deletion-contraction relation) If $|E| \geq 2$ and $i\in E$, then
    \[
    T_M(x,y) = \begin{cases}
    x\cdot T_{M/ i}(x,y) & \text{if $i\in E$ is a coloop in $M$}\\
    y\cdot T_{M\setminus i}(x,y) & \text{if $i\in E$ is a loop in $M$}\\
    T_{M\setminus i}(x,y) + T_{M/i}(x,y)& \text{if $i\in E$ is neither a loop nor a coloop in $M$}.\\
    \end{cases}
    \]
\end{itemize}
\end{defn}

We use the fundamental properties \ref{fundprop:base} and \ref{fundprop:logconc} of tautological classes of matroids to prove the following two theorems about the Tutte polynomial.
The first theorem is a Chow-theoretic expression for the Tutte polynomial that generalizes every previous expression for the Tutte polynomial and its specializations mentioned in 
\S\ref{subsec:overview}.  To state it, we recall two distinguished nef divisor classes on the permutohedral variety $X_{E}$ from \cite{HuhKatz,AdiHuhKatz}.  Writing $U_{1,E}$ and $U_{n,E}$ for the uniform matroids of rank 1 and corank 1 on $E$ (respectively), denote in the non-equivariant Chow group $A^1(X_E)$ the elements (see \Cref{eg:alphabeta})
\[
\alpha = c_1(\cQ_{U_{n,E}}) \quad\text{and}\quad \beta = c_1(\cS_{U_{1,E}}^\vee).
\]
Equivalently,  $X_{E}$ resolves the Cremona map $\PP^n \overset{\crem}\dashrightarrow \PP^n$ defined by $[x_0: \ldots: x_n] \mapsto [x_0^{-1}: \ldots: x_n^{-1}]$, and $\alpha,\beta$ are the pullbacks of the hyperplane classes from the domain and target respectively (see \S\ref{sec:CremonaInv}).

\begin{maintheorem}
\label{thm:4degintro}
Let $\int_{X_E}\colon A^\bullet(X_E) \to \ZZ$ be the degree map on $X_E$.  For a matroid $M$ of rank $r$ with ground set $E$, define a polynomial
\[
t_M(x,y,z,w) = (x+y)^{-1}(y+z)^{r}(x+w)^{|E|-r}T_M\Big(\frac{x+y}{y+z},\frac{x+y}{x+w}\Big).
\]
Then, we have an equality
\[
\sum_{i+j+k+\ell=n} \left(\int_{X_E}\alpha^i\beta^j  c_k(\mathcal{S}^{\vee}_M)c_\ell(\mathcal{Q}_M)\right)x^iy^jz^kw^\ell = t_M(x,y,z,w).
\]
\end{maintheorem}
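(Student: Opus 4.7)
My plan is induction on $|E|$, exploiting that the Tutte polynomial is the universal deletion-contraction invariant. Since both sides of the asserted identity are polynomials in $x,y,z,w$, it suffices to match base cases and a common recursion.

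\textbf{Base case.} When $|E|=1$, the variety $X_E$ is a point and $n=0$, so the LHS collapses to the single term $\int_{X_E} 1 = 1$. On the RHS, if $M$ is a coloop then $T_M=x$ and
\[
t_M(x,y,z,w) = (x+y)^{-1}(y+z)\cdot\frac{x+y}{y+z} = 1,
\]
while if $M$ is a loop then $T_M=y$ and $t_M = (x+y)^{-1}(x+w)\cdot\frac{x+y}{x+w} = 1$. Both sides agree.

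\textbf{Inductive step.} Fix $i\in E$ with $|E|\geq 2$. A direct manipulation of the defining expression for $t_M$ yields the weighted recursions
\begin{align*}
    t_M &= (x+y)\cdot t_{M/i} && \text{if $i$ is a coloop of $M$,}\\
    t_M &= (x+y)\cdot t_{M\setminus i} && \text{if $i$ is a loop of $M$,}\\
    t_M &= (x+w)\cdot t_{M\setminus i} + (y+z)\cdot t_{M/i} && \text{otherwise.}
\end{align*}
For the LHS I would invoke the deletion-contraction property of tautological classes promised in \S\ref{sec:unifyingTutte} and highlighted as property~\ref{fundprop:base}. Geometrically, this should express the integral of $\alpha^i\beta^j c_k(\cS_M^\vee)c_\ell(\cQ_M)$ over $X_E$ in terms of analogous integrals over the smaller permutohedral variety $X_{E\setminus i}$ involving the tautological Chern classes of $M\setminus i$ and $M/i$, via pushforward from or restriction to the natural boundary stratum of $X_E$ associated to the element $i$. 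Under this operation, $\alpha$ and $\beta$---being pullbacks of hyperplane classes along the Cremona resolution---should descend to the corresponding classes on $X_{E\setminus i}$, while the weights $(x+y)$, $(x+w)$, $(y+z)$ arise from the rank-one pieces of $\cS_M^\vee$ or $\cQ_M$ contributed by the $i$th coordinate.

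\textbf{Main obstacle.} The crux is identifying these weights correctly on the LHS and showing that the generating function splits cleanly according to the loop/coloop/neither case analysis. The equivariant localization machinery underlying~\ref{fundprop:base} should make transparent the contribution of the $i$th coordinate to the fibers of $\cS_M$ and $\cQ_M$ at the relevant $T$-fixed points. The delicate step is tracking, through the short exact sequences relating $\cS_M$ to $\cS_{M\setminus i},\cS_{M/i}$ (and analogously for $\cQ_M$), how trivial and one-dimensional summands produce exactly the factors $(y+z)^{r}$ and $(x+w)^{|E|-r}$ appearing in the normalization of $t_M$. Once these recursions are matched on both sides, induction closes the proof.
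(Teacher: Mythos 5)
Your overall strategy coincides with the paper's: verify the base case $|E|=1$, establish the weighted deletion--contraction recursion for $t_M(x,y,z,w)$ (which you do correctly), and then show the generating function of intersection numbers satisfies the same recursion. The base case and the recursion for $t_M$ are fine. The problem is that the entire difficulty of the theorem sits in the step you defer: you never actually prove that
\[
\xi_M \;=\; (1+\alpha x+\cdots+\alpha^n x^n)(1+\beta y+\cdots+\beta^n y^n)\,c(\cS_M^\vee,z)\,c(\cQ_M,w)
\]
pushes forward under $f\colon X_E\to X_{E\setminus i}$ to $(x+w)\xi_{M\setminus i}+(y+z)\xi_{M/i}$ (resp.\ $(x+y)\xi_{M/i}$ or $(x+y)\xi_{M\setminus i}$ in the coloop/loop cases). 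Asserting that "the equivariant localization machinery should make transparent the contribution of the $i$th coordinate" is not a proof. In the paper this requires: (a) an explicit localization formula for $f_*$ at a fixed point $p_\sigma$ of $X_{E\setminus i}$, which after partial-fraction manipulation becomes a telescoping sum $\sum_{j}t_{\sigma(j)}^{-1}(\xi^T_{\sigma^{j+1}}-\xi^T_{\sigma^j})|_{t_n=0}$ over the $n+1$ fixed points of $X_E$ lying over $p_\sigma$ (Lemma \ref{lem:telescope}); (b) a combinatorial analysis of how the lex-first bases $B_{\sigma^j}(M)$ change as $j$ varies, isolating a single pivot index $k_\sigma(M)$ where $n$ drops out of the basis and the symmetric difference is $\{n,\sigma(k)\}$; and (c) a case analysis at $j=0$, $j=n-1$, and $j=k_\sigma(M)$ showing that the only surviving telescoping differences assemble into exactly the weights $(x+w)$, $(y+z)$, $(x+y)$. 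None of this is routine, and your sketch gives no indication of how the three weights are distributed among the three special indices.

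A secondary issue: you hedge between "pushforward from" $X_E$ to $X_{E\setminus i}$ and "restriction to the natural boundary stratum of $X_E$ associated to the element $i$." These are genuinely different operations with different outputs. Restriction to a boundary divisor $Z_S$ yields the matroid-minor decomposition $c(\cQ_M,u)|_{Z_{\mathscr S}}=\bigotimes c(\cQ_{M|S_{i+1}/S_i},u)$ (restriction and contraction to subsets $S$), which feeds a convolution-formula argument --- the paper's alternative proof in the appendix --- not a deletion--contraction recursion in a single element $i$. Only the surjective toric projection $X_E\to X_{E\setminus i}$, whose fibers over fixed points contain $n+1$ fixed points rather than one, produces the sum of the deletion and contraction terms. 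If you intend the induction as stated, you must commit to the projection and carry out the fiberwise computation; if you instead restrict to boundary strata, you need the convolution identity for the Tutte polynomial, which is a different (and also nontrivial) combinatorial input.
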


We prove \Cref{thm:4degintro} in \S\ref{sec:unifyingTutte} by showing that the tautological Chern classes of matroids satisfy a deletion-contraction relation (\Cref{thm:delcont}).  We also present in \Cref{sec:convolution} a different proof obtained by establishing a  recursive convolution formula for both tautological Chern classes and Tutte polynomials of matroids, which may be of independent interest.

\medskip
The second theorem is a log-concavity property for the Tutte polynomial expression in  \Cref{thm:4degintro}, which generalizes every log-concavity result mentioned in \S\ref{subsec:overview}.  To state it, we recall that a nonnegative sequence $(a_0, a_1, \ldots, a_m)$ is \textbf{log-concave} if $a_k^2 \geq a_{k-1}a_{k+1}$ for all $1\leq k \leq m-1$, and has \textbf{no internal zeros} if $a_ia_j >0 \implies a_k>0$ for all $0\leq i \le k \le j \leq m$.  For a homogeneous polynomial $f\in \RR[x_1, \ldots, x_N]$ of degree $d$ with nonnegative coefficients, we say that its coefficients form a \textbf{log-concave unbroken array} if, for any $1\leq i < j \leq N$ and a monomial $x^{\bm}$ of degree $d'\leq d$, the coefficients of $\{x_i^kx_j^{d-d'-k}x^{\bm}\}_{0\leq k \leq d-d'}$ in $f$ form a log-concave sequence with no internal zeros.

\begin{maintheorem}\label{thm:masterlogconcintro}
For a matroid $M$ of rank $r$ with ground set $E$, the coefficients of the polynomial
\[
t_M(x,y,z,w) = (x+y)^{-1}(y+z)^{r}(x+w)^{|E|-r}T_M\Big(\frac{x+y}{y+z},\frac{x+y}{x+w}\Big)
\]
form a log-concave unbroken array.
\end{maintheorem}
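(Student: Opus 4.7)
The plan is to express the coefficient of $x^iy^jz^kw^\ell$ in $t_M(x,y,z,w)$ as a mixed intersection number of four nef divisor classes on an auxiliary space, and then invoke the Khovanskii-Teissier inequalities, supplied in the non-realizable case by the tropical Hodge-Riemann relations of \cite{ADH20}.

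First, by \Cref{thm:4degintro}, the coefficient in question equals $\int_{X_E} \alpha^i \beta^j c_k(\cS_M^\vee) c_\ell(\cQ_M)$. Since $\uCCinv^E$ is non-equivariantly trivial, the short exact sequence $0 \to \cS_M \to \uCCinv^E \to \cQ_M \to 0$ yields $c(\cS_M) c(\cQ_M) = 1$, so $c_k(\cS_M^\vee) = s_k(\cQ_M^\vee)$ and $c_\ell(\cQ_M) = s_\ell(\cS_M)$. Introducing the bi-projective bundle
$$Y_M := \PP(\cQ_M^\vee) \times_{X_E} \PP(\cS_M)$$
with relative hyperplane classes $\zeta_1 = c_1(\mathcal{O}_{\PP(\cQ_M^\vee)}(1))$ and $\zeta_2 = c_1(\mathcal{O}_{\PP(\cS_M)}(1))$ (pulled back to $Y_M$), the Segre class formula and the projection formula give
$$\int_{X_E} \alpha^i \beta^j c_k(\cS_M^\vee) c_\ell(\cQ_M) = \int_{Y_M} \alpha^i \beta^j \zeta_1^{n-r+k} \zeta_2^{r-1+\ell},$$
so that all four factors appear as divisor classes on a single smooth projective variety of dimension $2n-1$.

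Second, in the realizable case $M = M(L)$, the classes $\alpha, \beta$ are base-point-free (hence nef) on $X_E$, and $\zeta_1, \zeta_2$ are nef on $Y_L$ because $\cQ_L$ and $\cS_L^\vee$ are globally generated, as noted in property \ref{fundprop:logconc} of \S\ref{sec:fundprops}. The Khovanskii-Teissier inequality for mixed intersections of nef divisor classes on a smooth projective variety then forces the coefficients, arranged by varying any two of the four exponents with the others fixed, to form a log-concave sequence. Since log-concavity of a nonnegative sequence already prevents internal zeros within its positive support, the full log-concave unbroken array property follows.

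For arbitrary (not necessarily realizable) matroids $M$, the tautological classes $[\cS_M], [\cQ_M]$ and the bi-projective bundle construction admit combinatorial/tropical analogs, and the required mixed Hodge-Riemann inequalities for the four nef classes $\alpha, \beta, \zeta_1, \zeta_2$ follow from the Lefschetz fan theory developed in \cite[\S5]{ADH20}. The main obstacle is this step: establishing that the tropical analog of $Y_M$---or equivalently a suitable subring of a matroidal Chow ring generated by $\alpha, \beta, \zeta_1, \zeta_2$---supports a Hodge-Riemann structure in which all four classes are simultaneously nef. The realizable case serves as a guide, and the valuativity of tautological classes (property \ref{fundprop:base} of \S\ref{sec:fundprops}) should enable either a direct combinatorial construction or a limiting argument from realizable instances.
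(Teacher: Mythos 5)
Your realizable-case argument is correct and matches the paper's strategy, but the step you flag as "the main obstacle" is exactly the mathematical content of the paper's proof, and the two remedies you gesture at would not work in the form you state them. First, for non-realizable $M$ there is no variety $\PP(\cQ_M^\vee)\times_{X_E}\PP(\cS_M)$ at all; the paper instead defines a Chow \emph{class} $[\PP(\cQ_M^\vee)\times_{X_E}\PP(\cS_M)]$ on the ambient toric variety $X_E\times(\PPinv^n)^\vee\times\PPinv^n$ by the Segre-class formula (\Cref{defn:projClass}). Second, and more seriously, the "direct combinatorial construction made valuative in $M$" fails for the diagonal object: as explained in \Cref{rem:compareADH}, the fan $\Sigma_{E,M^\perp,M}$ that plays the role of your tropical $Y_M$ is \emph{not} valuative in $M$ (just as the conormal fan of \cite{ADH20} is not), so one cannot transfer the needed identification from realizable matroids to all matroids by valuativity of the construction itself. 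Nor is there a "limiting argument from realizable instances" available: non-realizable matroids are not limits of realizable ones in any sense that preserves these intersection numbers.

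The paper resolves this by two moves you are missing. (1) \emph{Polarization}: it works with two independent matroids $M_1,M_2$ and the class $[\PP(\cQ_{M_1}^\vee)\times_{X_E}\PP(\cS_{M_2})]$, which \emph{is} valuative separately in each argument; this reduces the identification of that class with the pushforward $\widetilde\phi_*[\Sigma_{E,M_1^\perp,M_2}]$ of a constant-$1$ Minkowski weight (via an explicit "untwisting" toric map $\phi$) to the realizable case, where it follows from tropicalization of $(T/\CC^*)\times(\PP L_1^\perp\cap T/\CC^*)\times(\PP L_2\cap T/\CC^*)$ (\Cref{thm:refinedfanbiproj}). One also needs \Cref{prop:noloopcoloop} first to pass to loopless/coloopless matroids so the Bergman fans are nonempty of the right dimension. (2) The Hodge--Riemann package for the resulting fan is obtained not by approximation but from the already-established facts that Bergman fans of \emph{arbitrary} matroids are Lefschetz \cite{AdiHuhKatz}, that products of Lefschetz fans are Lefschetz, and that any unimodular fan with the same support as a Lefschetz fan is Lefschetz \cite{ADH20} (\Cref{lem:lefschetzfan}); applying the mixed Hodge--Riemann relations to the four pulled-back nef divisors then gives \Cref{thm:logconcM1M2} for all $M_1,M_2$, and setting $M_1=M_2=M$ yields the theorem. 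Without supplying these two steps your argument establishes the statement only for realizable $M$.
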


We prove \Cref{thm:masterlogconcintro} in \S\ref{sec:logconcviataut}.  In fact, we establish a stronger statement (\Cref{thm:logconcMmany}) implying that the polynomial $t_M(x,y,z,w)$ is a ``denormalized Lorentzian polynomial'' in the sense of \cite{BH20}.

\medskip
By considering coefficients of $x^{r-1-i}y^iw^{|E|-r}$ in \Cref{thm:masterlogconcintro} (i.e.\ setting $z = 0$ and considering terms divisible by $w^{|E|-r}$), one recovers the log-concavity for the coefficients of the unsigned reduced characteristic polynomial $T_M(q+1,0)/(q+1)$ from \cite{AdiHuhKatz}.
By considering coefficients of $x^{r-1-i}z^iw^{|E|-r}$ (i.e.\ setting $y = 0$ and considering terms divisible by $w^{|E|-r}$), one recovers the log-concavity for the coefficients of $T_M(q,0)$, the $h$-vector of the broken circuit complex of $M$, from \cite{ADH}.
To deduce the log-concavity of the coefficients of  $T_M(q,1)$, the $h$-vector of the independence complex, \cite{ADH} appeals to a combinatorial property of the free coextension matroid of $M$ due to Brylawski \cite{Bry77}.  Here, one can deduce this result directly from \Cref{thm:masterlogconcintro} applied to $M$ by considering the coefficients of $x^{|E|-r-1+i}z^{r-i}$ (i.e.\ setting $y=w=0$).\footnote{A strengthening of the log-concavity of the $f$-vector of the independence complex to ultra-log-concavity, conjectured by Mason \cite{Ma72}, was established in \cite{ALOGV18} and \cite{BH20}.  A strengthening of the log-concavity of the $h$-vector is given in \cite{BST20}.  Neither strengthening is implied by \Cref{thm:masterlogconcintro}.} We note \cite{Dawson} showed that $h$-vector log-concavity always implies $f$-vector log-concavity.

\subsection{Minkowski weights associated to matroids}
We use \Cref{thm:4degintro} to relate the tautological Chern classes of a matroid to prior constructions such as the Bergman fan and the Chern-Schwartz-MacPherson classes of a matroid (\Cref{defn:Bergman} and \Cref{defn:CSM}, respectively).
See \S\ref{sec:minkowskiweights} for the definition of, and notations concerning, Minkowski weights used in the statements below.

\begin{maintheorem}\label{thm:previousMWs}

Let $M$ be a matroid of rank $r$ with ground set $E$.  Let $\Delta_M$ be its Bergman class.  For $0\leq k \leq r-1$, let $\operatorname{csm}_k(M)$ be its $k$-dimensional Chern-Schwartz-MacPherson class.  Then
\[
\Delta_M = c_{|E|-r}(\cQ_M)\cap \Delta_{\Sigma_E} \quad\text{and}\quad \operatorname{csm}_k(M) = c_{r-1-k}(\cS_M)c_{|E|-r}(\cQ_M) \cap \Delta_{\Sigma_E}.
\]
\end{maintheorem}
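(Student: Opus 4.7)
My plan is to identify both sides of each equation as classes in $A^\bullet(X_E)$ by combining the intersection numbers computed via \Cref{thm:4degintro} with a torus-equivariant support property on $\Sigma_E$.

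First, I would extract from \Cref{thm:4degintro} the intersection numbers $\int_{X_E} \alpha^i\beta^j c_{|E|-r}(\cQ_M)$ and $\int_{X_E}\alpha^i\beta^j c_{r-1-k}(\cS_M)c_{|E|-r}(\cQ_M)$ as specific coefficients of $t_M(x,y,z,w)$. Setting $z=0$ and reading off the coefficient of $x^i y^{r-1-i}w^{|E|-r}$ in $t_M(x,y,0,w)$, a short polynomial manipulation expresses the result as an explicit combination of coefficients of the characteristic polynomial $\chi_M$, which coincides with the formula of \cite{HuhKatz} for $\int \alpha^i \beta^{r-1-i}\Delta_M$. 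An analogous specialization (keeping $w^{|E|-r}$ and extracting the coefficient of $z^{r-1-k}$) recovers the $h$-polynomial of the broken circuit complex and thus matches the intersection formula for $\csm_k(M)$ from \cite{LdMRS20}.

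Second, I would show that both $c_{|E|-r}(\cQ_M)$ and $c_{r-1-k}(\cS_M)c_{|E|-r}(\cQ_M)$, viewed as Minkowski weights on $\Sigma_E$ via the Fulton--Sturmfels correspondence, are supported on the Bergman subfan $\Sigma_M$. This should follow from the torus-equivariant structure of $\cS_M$ and $\cQ_M$ implicit in \Cref{defn:tautbundle}: at each torus-fixed point $p_w \in X_E$ indexed by a permutation $w$ of $E$, the fibers split as $T$-characters with weights indexed by the greedy basis $B_w(M)\subseteq E$ and its complement respectively. A combinatorial argument then shows that the localized sum $\sum_w \prod_{i\notin B_w(M)}(-\be_i) / \prod(\text{tangent weights})$ vanishes modulo the ideal of any cone $\tau$ of $\Sigma_E$ not contained in $\Sigma_M$. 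Once both sides are supported on $\Sigma_M$ as Minkowski weights, the Poincar\'e duality property for the matroid Chow ring from \cite{AdiHuhKatz} reduces the equality of classes to the matched intersection numbers already established in the first step.

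The main obstacle will be the support-vanishing in the second step. While the equivariant fiber description at fixed points is a direct consequence of the tautological framework, showing that the greedy-basis localization sum vanishes on cones outside $\Sigma_M$ requires a careful combinatorial analysis of how $B_w(M)$ depends on the portion of $w$ refining a given chain of subsets, and the CSM case further requires tracking how the mixed Chern product $c_{r-1-k}(\cS_M)c_{|E|-r}(\cQ_M)$ distributes over $(r-1)$-dimensional faces of $\Sigma_M$. A cleaner alternative, potentially bypassing this analysis, is a deletion-contraction induction using the recursion \Cref{thm:delcont} for tautological classes together with the standard deletion-contraction behavior of $\Delta_M$ and $\csm_k(M)$; this reduces both formulas to the base case $|E|=1$, which is a direct check on a loop or coloop.
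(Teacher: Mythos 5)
Your strategy---matching degrees against $\alpha^i\beta^j$, establishing support on $\Sigma_M$, and concluding via Poincar\'e duality on the matroid Chow ring---has a gap in the CSM case that goes beyond the support-vanishing obstacle you flag. The group of $k$-dimensional Minkowski weights supported on $\Sigma_M$ is, by Poincar\'e duality \cite{AdiHuhKatz}, dual to $A^k(\Sigma_M)$, but the classes $\alpha^i\beta^{k-i}$ for $0\le i\le k$ do not span $A^k(\Sigma_M)$ for a general matroid (the subring of the matroid Chow ring generated by $\alpha$ and $\beta$ is strictly smaller than the whole ring). So even granting the support claim, matching the $k+1$ pairings $\int \alpha^i\beta^{k-i} c_{r-1-k}(\cS_M)c_{|E|-r}(\cQ_M)$ against the corresponding numbers for $\csm_k(M)$ does not pin down the Minkowski weight. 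There is also a citation mismatch: \cite[Theorem 5.8]{LdMRS20} computes only $\int \alpha^k\,\csm_k(M)$, not the full two-parameter family $\int \alpha^i\beta^j\,\csm_k(M)$ you would need. The Bergman case is on firmer ground, since $(r-1)$-dimensional Minkowski weights supported on $\Sigma_M$ form a rank-one group, but that irreducibility fact and the support-vanishing would both still need to be supplied.

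The paper avoids all of this by computing the Minkowski weight directly on every cone, with no appeal to duality or support. \Cref{prop:tautrest} shows that restricting $c(\cS_M^\vee,z)c(\cQ_M,w)$ to the torus-orbit closure $Z_{\mathscr S}$ of any chain $\mathscr S$ factors as a tensor product of the corresponding Chern classes of the minors $M|S_{i+1}/S_i$ over smaller permutohedral varieties; the beta-invariant formula \Cref{thm:betainvar} (itself a specialization of \Cref{thm:4degintro}) then evaluates each tensor factor. The result (\Cref{prop:MWChern}, \Cref{prop:MWChernProd}) is an explicit integer $\pm\prod_i\beta(M|S_{i+1}/S_i)$ for every $[Z_{\mathscr S}]$, vanishing exactly when some $S_i$ is not a flat, which one compares to \Cref{defn:Bergman} and \Cref{defn:CSM} cone by cone. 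As for your proposed deletion-contraction alternative: $f_*: A^\bullet(X_E)\to A^{\bullet-1}(X_{E\setminus n})$ has a large kernel, so matching pushforwards does not determine the class either; the paper in fact \emph{derives} the CSM deletion-contraction \cite[Proposition 5.2]{LdMRS20} from \Cref{thm:CSMComb}, not the reverse.
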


The result for $\Delta_M$ is proved in \Cref{thm:BergmanComb} and the result for $\operatorname{csm}_k(M)$ is proved in \Cref{thm:CSMComb}.
For realizable matroids, we establish stronger geometric statements in \Cref{thm:BergmanGeom} and \Cref{thm:CSMGeom}.
Using these theorems, in \S\ref{sec:Bergmanvia} and \S\ref{sec:CSMvia} we recover the properties of the Bergman fans and the CSM classes of matroids previously established in \cite{FS05,AK06} and \cite{LdMRS20}, respectively.
In light of \Cref{thm:previousMWs}, \Cref{thm:4degintro} generalizes \cite[Lemma 6.1]{HuhKatz}, which states that the intersection degrees $\alpha^{i}\beta^{r-1-i} \Delta_M$  equal the coefficients of the unsigned reduced characteristic polynomial $T_M(q+1,0)/(q+1)$, and generalizes \cite[Theorem 5.8]{LdMRS20}, which states that the intersection degrees $\alpha^{i}  \operatorname{csm}_{i}(M)$ equal the coefficients of $T_M(q,0)$.
These combinatorial interpretations of the tautological Chern classes, which are particular cases of Schur classes of $\mathcal{S}_M$ and $\mathcal{Q}_M$, motivate us to pose the following question.
 \begin{ques}
What  combinatorial interpretations do products of Schur classes of $\mathcal{S}_M^{\vee}$ and $\mathcal{Q}_M$ admit? Do they similarly satisfy positivity and log-concavity properties?
 \end{ques}
For instance, \cite{FulLaz,DPS94} establish positivity properties for Schur classes of nef vector bundles, which apply to the globally generated bundles $\cS_L^\vee$ and $\cQ_L$ if $L$ is a realization of $M$.

\subsection{A K-theory to Chow theory bridge}
\label{sec:introKtoChow}
We now turn to connecting \Cref{thm:4degintro}, a Chow-theoretic expression, to expressions for the Tutte polynomial obtained via $K$-theoretic tools.
One could try using the Hirzebruch-Riemann-Roch (HRR) theorem, which states that the Euler characteristic $\chi([\mathcal{E}])$ of a $K$-class $[\mathcal E]$ on a smooth projective variety $X$ satisfies
 \[
 \chi \big( [\mathcal{E}] \big)=\int_X \Td(X)\cdot \ch([\mathcal{E}]),
 \]
where $\Td(X)\in A^\bullet(X)_\QQ$ is the Todd class of $X$ and $\ch([\mathcal E])$ is the Chern character of $[\mathcal E]$ (by convention $\int_X\gamma=0$ if $\gamma\in A^i(X)$ for $i<\dim(X)$).
However, the Hirzebruch-Riemann-Roch theorem does not appear to be useful in our context (see \Cref{rem:ToddHard}).

\medskip
We construct an exceptional isomorphism $\zeta_{X_E}\colon K_0(X_{E})\overset\sim\to A^\bullet(X_{E})$, unrelated to the Chern character $\ch$, that translates between $K$-theoretic and Chow-theoretic computations using $1+\alpha+\ldots+\alpha^n$ in place of the Todd class $\Td(X_{E})$. This map behaves particularly well on a collection of $T$-equivariant $K$-classes that we say ``have simple Chern roots'' (\Cref{defn:simpleChernroots}), which includes $[\mathcal{S}_M^{\vee}]$ and $[\mathcal{Q}_M^{\vee}]$ for any matroid $M$.

\begin{maintheorem}
\label{thm:fakeHRRintro}
There exists a ring isomorphism
$
\zeta_{X_E} \colon  K_0(X_{E})\overset\sim\to A^\bullet(X_{E})
$ which satisfies
\[
\chi \big( [\mathcal{E}] \big)=\int_{X_E} (1+\alpha+\cdots + \alpha^n)\cdot \zeta_E([\mathcal{E}])
\]
for any $[\mathcal{E}]\in K_0(X_{E})$.
Denote by $\bigwedge^i$ for the $i$-th exterior power and $c(\mathcal E,u) \coloneqq \sum_{i\geq 0} c_i(\mathcal{E})u^i$ the Chern polynomial of $[\mathcal E]$. If $[\mathcal{E}]$ has simple Chern roots and rank $\operatorname{rk}(\mathcal E)$ then we have
\[
\begin{split}
&\sum_{i\geq 0} \zeta_{X_E}\big( [ \textstyle \bigwedge^i\mathcal{E}] \big) u^i=(u+1)^{\rk(\mathcal{E})}c(\mathcal{E},\frac{u}{u+1}), \quad\text{and}\\
&\displaystyle \sum_{i\geq 0}\zeta_{X_E} \big([ \textstyle \bigwedge^i \mathcal{E}^{\vee}] \big) u^i=(u+1)^{\rk(\mathcal{E})}c(\mathcal{E},1)^{-1}c(\mathcal{E},\frac{1}{u+1}).
\end{split}
\]
\end{maintheorem}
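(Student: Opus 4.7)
The plan is to prove the four claims of the theorem---existence of a ring isomorphism, the Euler characteristic identity, and the two exterior power formulas---in turn.

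\textbf{Step 1: construction of $\zeta_{X_E}$.} Since $X_E$ is a smooth projective toric variety with fan (the braid arrangement fan) whose rays are indexed by proper nonempty subsets $S \subsetneq E$, both $K_0(X_E)$ and $A^\bullet(X_E)$ are quotients of polynomial rings on variables indexed by these $S$, modulo the Stanley--Reisner ideal and torus-character linear relations; in $K_0$ the natural generators are $[\mathcal{O}(-D_S)] - 1$, in Chow they are $[D_S]$. A naive attempt to set $\zeta_{X_E}([\mathcal{O}(D)]) = 1 + [D]$ fails because products of divisor classes survive in $K$-theory but are absent additively in Chow. I would therefore work equivariantly, using $T$-fixed-point localization: after embedding $K_0^T(X_E)$ and $A^\bullet_T(X_E)$ into rings of functions on the $(n+1)!$ $T$-fixed points (indexed by permutations of $E$), specify $\zeta_{X_E}$ at each fixed point so that the non-equivariant descent defines a well-defined ring isomorphism. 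A test computation anchoring the construction is $\zeta_{X_E}([\mathcal{O}(k\alpha)]) = (1-\alpha)^{-k}$, which is forced by $\chi(\mathcal{O}(k\alpha)) = \binom{n+k}{n}$ (via pushforward to $\PP^n$) combined with the identity $\int_{X_E}(1-\alpha)^{-(k+1)} = \binom{n+k}{n}$; note also that $1+\alpha+\cdots+\alpha^n$ equals $(1-\alpha)^{-1}$ in $A^\bullet(X_E)$ since $\alpha^{n+1} = 0$.

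\textbf{Step 2: the Euler characteristic identity.} Both sides of
\[
\chi([\mathcal{E}]) = \int_{X_E}(1+\alpha+\cdots+\alpha^n) \cdot \zeta_{X_E}([\mathcal{E}])
\]
are $\ZZ$-linear in $[\mathcal{E}] \in K_0(X_E)$, so it suffices to verify the identity on any spanning set. The most convenient candidates are the $T$-equivariant basis of structure sheaves $\{[\mathcal{O}_p]\}$ at the $T$-fixed points, where Atiyah--Bott localization reduces the verification to a combinatorial identity over the $(n+1)!$ permutations of $E$; alternatively, the classes $[\mathcal{O}(k\alpha)]$ together with suitable products of the $T$-invariant line bundles used in Step 1 may be directly checked via pushforward to $\PP^n$.

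\textbf{Step 3: the exterior power formulas.} Here I apply the splitting principle. By the definition of simple Chern roots (to appear in the forthcoming section), a $K$-class $[\mathcal{E}]$ with this property decomposes, possibly after an equivariant refinement, as $[\mathcal{E}] = \sum_j [L_j]$, where each line bundle summand $L_j$ satisfies the rank-$1$ instance of the formula, $\zeta_{X_E}([L_j]) = 1 + c_1(L_j)$. Since $\zeta_{X_E}$ is a ring homomorphism,
\[
\sum_{i \geq 0} \zeta_{X_E}\bigl([\textstyle\bigwedge^i \mathcal{E}]\bigr) u^i = \prod_j \bigl(1 + \zeta_{X_E}([L_j]) u\bigr) = \prod_j \bigl(1 + (1 + c_1(L_j)) u\bigr),
\]
and a direct rearrangement identifies the right-hand side as $(u+1)^r c(\mathcal{E}, u/(u+1))$. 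The dual formula follows analogously from $\zeta_{X_E}([L_j^\vee]) = \zeta_{X_E}([L_j])^{-1} = (1+c_1(L_j))^{-1}$.

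\textbf{Main obstacle.} The difficulty is concentrated in Step 1. The multiplicative-vs-additive mismatch between $K$-theory and Chow theory rules out naive constructions of $\zeta_{X_E}$, so its definition must exploit the specific geometry of $X_E$---in particular, its structure as a toric resolution of the Cremona map $\PP^n \dashrightarrow \PP^n$ and the privileged status of $\alpha$ and $\beta$. Simultaneously verifying that $\zeta_{X_E}$ is a ring isomorphism and that the Euler characteristic identity holds will likely require a carefully chosen fixed-point prescription so that both statements reduce to combinatorial identities over permutations of $E$.
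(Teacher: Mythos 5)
You have correctly located the crux of the theorem in your Step 1, but you have not supplied the idea that makes it work, and without it the rest of the argument cannot be completed. The paper's construction rests on two specific facts: first, that $K_T^0(X_E)$ is identified with the ring of \emph{piecewise Laurent polynomials} on the fan $\widetilde\Sigma_E$ (a property special to the permutohedral variety, failing for general toric varieties, because no wall of $\widetilde\Sigma_E$ lies in a coordinate hyperplane); second, that the map is then given by the explicit substitution $f(T_0,\ldots,T_n)\mapsto f(t_0+1,\ldots,t_n+1)$, which preserves the wall-crossing congruences since $(t_i+1)-(t_j+1)=t_i-t_j$, and descends to the non-equivariant quotients after inverting $\prod(1+t_i)$. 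Your proposal stops at ``specify $\zeta_{X_E}$ at each fixed point so that the non-equivariant descent defines a well-defined ring isomorphism,'' which is a restatement of the problem rather than a solution; the single test computation $\zeta([\mathcal O(k\alpha)])=(1-\alpha)^{-k}$ is consistent with, but does not determine, the general prescription. With the substitution in hand, the Euler characteristic identity is not verified on a spanning set but proved directly by applying the Atiyah--Bott localization formulas in $K$-theory and Chow theory to an arbitrary class and observing that the ratio of the two fixed-point denominators is exactly $c^T(\cS^\vee_{U_{n,E}})_\sigma=\prod_{i=0}^{n-1}(1+t_{\sigma(i)})$, which becomes $1+\alpha+\cdots+\alpha^n$ non-equivariantly. (Your proposed spanning set is also problematic as stated: non-equivariantly all the classes $[\mathcal O_p]$ coincide and span only a rank-one subgroup, and equivariantly they are a basis only after localizing the coefficient ring.)

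Your Step 3 contains a second genuine gap. The rank-one input $\zeta_{X_E}([L])=1+c_1(L)$ is \emph{false} for a general line bundle on $X_E$: for instance $\zeta_{X_E}([\det\cS_M^\vee])=c(\cS_M^\vee)=1+c_1(\cS_M^\vee)+c_2(\cS_M^\vee)+\cdots$, not $1+c_1(\det\cS_M^\vee)$. The identity $\zeta_{X_E}([L])=1+c_1(L)$ holds only when the equivariant character of $L$ at every fixed point is a single coordinate character $T_i$, and moreover a class with simple Chern roots need not split globally into a sum of such line bundles (the paper produces such splittings for tautological classes only via auxiliary full flag matroids, in an appendix, and does not need them here). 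The paper's proof of the exterior-power formulas avoids both issues by working purely at fixed points: simplicity of Chern roots means $[\mathcal E]_\sigma=\sum_{i\in I_\sigma}T_i$ for a multiset $I_\sigma$, so $[\bigwedge^\bullet\mathcal E]_\sigma=\prod_{i\in I_\sigma}(1+uT_i)$, and applying the substitution $T_i\mapsto 1+t_i$ gives $\prod_{i\in I_\sigma}(1+u(1+t_i))=(u+1)^{\rk\mathcal E}\prod_{i\in I_\sigma}(1+t_i\tfrac{u}{u+1})$, which is the claimed Chern polynomial; the dual case is identical. To repair your argument you would either need to prove the existence of a global splitting into line bundles with single-variable fixed-point characters (nontrivial and unnecessary), or abandon the global splitting principle in favor of this local computation.
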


We prove the first part of \Cref{thm:fakeHRRintro} in \Cref{thm:fakeHRR}, and the second part in \Cref{prop:simpleChern}.
Applying \Cref{thm:fakeHRRintro} to \Cref{thm:4degintro}, we recover both the $K$-theoretic formula for the Tutte polynomial \cite[Theorem 5.1]{FS12} (see \Cref{thm:FSTutte}) and the lattice-point-counting formula for the Tutte polynomial \cite[Theorem 3.2]{CF22} (see \Cref{thm:CFTutte}), thereby answering \Cref{ques:intro1.1}.(a) and the first part of \Cref{ques:intro1.1}.(b).  We also use \Cref{thm:fakeHRRintro} to give a Chow-theoretic formula for Speyer's $g$-polynomial of a matroid (\Cref{thm:gPoly}) conjectured in \cite[Conjecture 1]{LdMRS20}, answering \Cref{ques:intro1.1}.(c).

\medskip
Finally, in \S\ref{sec:flagmatroids} we show that our methods generalize well to flag matroids, answering two conjectures concerning the characteristic polynomials of flag matroids that were defined and studied in \cite{CDMS18, DES21}.  In particular, we establish a log-concavity property answering the second part of \Cref{ques:intro1.1}.(b).

\medskip
Matroids and flag matroids are the ``type $A$'' examples of Coxeter matroids \cite{GS87b, BGW03}.  They are also examples of polymatroids (\Cref{rem:polymatroid}).  Motivated by these, we pose the following question.

\begin{ques}
How do the results here generalize to Coxeter matroids or polymatroids?
\end{ques}

For instance, we show in \S\ref{subsec:EhrVol} that \Cref{thm:fakeHRRintro} recovers Postnikov's result \cite[Theorem 11.3]{P09} relating Ehrhart and volume polynomials of generalized permutohedra, of which polymatroids are a subfamily.

\subsection*{Acknowledgements}
We would like to thank Alex Fink for helpful discussions on  the convolution formula for Tutte polynomials, and we would like to thank Eric Katz for helpful discussions and for sharing unpublished notes of a deletion-contraction proof of \cite[Proposition 5.2]{HuhKatz}. We would also like to thank the creators of Macaulay2 \cite{M2} for their helpful and free software, and Justin Chen for the Macaulay2 package on matroids \cite{Che19}, which was used extensively in the early stages of this project. We thank Graham Denham, Ahmed Ashref, and Avi Steiner for suggesting minor edits to an earlier draft of the paper. We thank the referee for a careful reading and helpful suggestions.  The second and fourth authors were partially supported by the US National Science Foundation (DMS-2001854 and DMS-2001712).


\section{Equivariant geometry of permutohedral varieties}\label{sec:equivariantreview}

We set notations and collect results relevant to the torus-equivariant $K$-theory and Chow theory of permutohedral varieties.
We also note some features that are special to permutohedral varieties not shared by arbitrary toric varieties.

\subsection{Equivariant K-ring and equivariant Chow ring}
Recall that $E = \{0,1, \ldots, n\}$, and $T = (\CC^*)^E$.  Let $\operatorname{Char}(T)$ be the character group of $T$.
For a smooth $T$-variety $X$, let $K^0_T(X)$ denote the $T$-equivariant Grothendieck $K$-ring of vector bundles on $X$, as defined in for example \cite{KnuRosu03,vezzosiVistoli}.
For $[\mathcal E]\in K_T^0(X)$, we write $[\mathcal E^\vee]$ for its dual class. 
Writing $T_0, \ldots, T_n$ for the standard characters of $T$, we identify the character ring $\ZZ[\operatorname{Char}(T)]$ with the Laurent polynomial ring $\ZZ[T_0^\pm, \ldots, T_n^\pm]$.
In particular, the $T$-equivariant $K$-ring $K^0_T(\pt)$ of a point is identified with $\ZZ[T_0^\pm, \ldots, T_n^\pm]$, where a $T$-representation corresponds to the sum of its characters.

We let $A_T^\bullet(X)$ denote the $T$-equivariant Chow ring of $X$ as defined in \cite{EdiGra98}, noting the identification of the equivariant Chow homology with the equivariant Chow cohomology occurring in \cite[Proposition~4]{EdiGra98} when $X$ is smooth. For a $T$-equivariant $K$-class $[\mathcal E]\in K_T^0(X)$, we write $c_i(\mathcal E)$ for its $i$-th \emph{non-equivariant} Chern class, reserving the notation $c_i^T(\mathcal{E})$ for the $i$-th  \emph{$T$-equivariant} Chern class.
We identify the symmetric algebra $\operatorname{Sym}^\bullet\operatorname{Char}(T)$, which is the $T$-equivariant Chow ring $A^\bullet_T(\pt)$ of a point, with the polynomial ring $\ZZ[t_0, \ldots, t_n]$.
Here we have used the lowercase $t$ for notational clarity: In the context of equivariant $K$-theory or Chow rings, the $T_i$ variables will denote elements in the Laurent polynomial ring, whereas the $t_i$ variables will denote elements in the polynomial ring.

\subsection{Grassmannians}\label{eg:Gr}
Let $\cS$ and $\cQ$ denote the tautological sub and quotient bundle on $\Gr(r;E)$, respectively.  For an $r$-element subset $I$ of $E$, let $p_I$ be the $T$-fixed point of $\Gr(r;E)$ corresponding to $\operatorname{span}(\be_i \mid i\in I)\subseteq \CC^E$.  
For a $T$-equivariant $K$-class $[\mathcal E]\in K_T^0(\Gr(r;E))$, write $[\mathcal E]_I$ for its image under the restriction map $K_T^0(\Gr(r;E)) \to K_T^0(p_I) = \ZZ[T_0^\pm, \ldots, T_n^\pm]$.
Then, we have
\[
[\cS]_I = \sum_{i\in I} T_i \quad\text{and}\quad [\cQ]_I = \sum_{j\in E\setminus I} T_j \quad\text{for any $I\in  \textstyle \binom{E}{r}$}.
\]
In particular, the ample line bundle $\mathcal O(1)$ on $\Gr(r;E)$, whose global sections give the Pl\"ucker embedding into $\PP(\CC^{\binom{E}{r}})$, satisfies $[\mathcal O(1)]_I = [\det \cS^\vee]_I = \prod_{i\in I} T_i^{-1}$ for every $I\in \binom{E}{r}$.
If $I$ and $J$ are $r$-element subsets of $E$ such that $J = I\setminus\{i\}\cup\{j\}$ for some $i\in I$ and $j\in J$,
then every $T$-equivariant $K$-class $[\mathcal E] \in K_T^0(\Gr(r;E))$ satisfies $[\mathcal E]_I \equiv [\mathcal E]_J \mod (1 - T_i/T_j)$. Conversely a collection $([\mathcal{E}]_I)_{I\in \binom{E}{r}}$ satisfying this congruence for all such $I,J$ determines a unique $[\mathcal{E}]$ (see for example the discussion in \cite[\S2.2]{FS12}).

\subsection{Conventions for permutations and cones}
\label{subsec:permutohedralcone}
We now specialize to permutohedral varieties.  We first set some notations and conventions.
Let $\langle \cdot, \cdot \rangle$ denote the standard inner product on $\RR^E$, and let $\mathfrak S_E$ be the set of permutations on $E$.
The normal fan $\widetilde \Sigma_{E}$ of the permutohedron $\Pi(E) = \operatorname{Conv}( \sigma\cdot(0,1, \ldots ,n) \mid \sigma\in \mathfrak S_E)$ is the fan in $\RR^E$ induced by the type $A_n$ hyperplane arrangement, which consists of the $\binom{n+1}{2}$ hyperplanes $\{x\in \RR^E \mid \langle x, \be_i - \be_j\rangle = 0\}_{0\le i < j \le n}$.
Every cone of $\widetilde\Sigma_{E}$ has 1-dimensional lineality space $\RR\mathbf 1$, where $\mathbf 1 = (1,1,\ldots, 1)$.  
Let $\Sigma_E$ be the quotient fan in $\RR^E/\RR\mathbf 1$.
It is a rational unimodular fan over the lattice $\ZZ^E/\ZZ\mathbf 1$, whose dual lattice is $\mathbf 1^\perp = \{x\in \ZZ^E \mid \langle x, \mathbf 1\rangle = 0\}$.

The cones of $\Sigma_E$ correspond to flags of nonempty proper subsets of $E$.  Such a flag
$\mathscr S\colon  \emptyset \subsetneq S_1 \subsetneq \cdots \subsetneq S_k \subsetneq E$ corresponds to $\operatorname{Cone}(\overline\be_{S_1},\ldots, \overline\be_{S_k})$,
where $\overline\be_S$ denotes the image of $\be_S\coloneqq\sum_{i\in S} \be_i$ under $\RR^E \to \RR^E/\RR\mathbf 1$.
The permutohedral variety $X_E$ is the toric variety of the fan $\Sigma_E$, whose dense open torus is $T/\CC^*$ with $\CC^*$ acting diagonally on $T$.  We often consider $X_E$ as a $T$-variety, where the diagonal $\CC^*$ acts trivially on $X_E$.  As the fan $\Sigma_E$ is unimodular, the variety $X_E$ is smooth.

\medskip
We set a bijection between the set $\mathfrak S_E$ of permutations on $E$ and the set of maximal cones of $\Sigma_E$ by
\[
\mathfrak S_E \ni \sigma \ \longleftrightarrow \ \begin{matrix}
\text{$\operatorname{Cone}(\overline\be_{S_1},\ldots, \overline\be_{S_{n}})$ corresponding to the chain $\mathscr S$} \\
\text{where }S_i = \{\sigma(0), \ldots, \sigma(i-1)\} \text{ for } 1\leq i\leq n
\end{matrix}.
\]
The elements in the interior of the cone corresponding to $\sigma$ are precisely those of the form $v_0\overline\be_{\sigma(0)} + v_1\overline\be_{\sigma(1)} + \cdots + v_n\overline\be_{\sigma(n)}$ for $v_0>\ldots >v_n$. This bijection naturally induces a bijection between $\mathfrak S_E$ and the set $X_E^T$ of $T$-fixed points of $X_E$.  For a permutation $\sigma$, we write $p_\sigma$ for the $T$-fixed point corresponding to $\sigma$, and write $U_\sigma \simeq \CC^n$ for the $T$-invariant affine chart around $p_\sigma$.  Since $\operatorname{Cone}(\be_{\sigma(0)} - \be_{\sigma(1)}, \ldots, \be_{\sigma(n-1)} -\be_{\sigma(n)})\subset \mathbf{1}^{\perp}$ is the dual cone of the maximal cone corresponding to $\sigma$,
the torus $T$ acts on $U_\sigma$, and hence in particular the tangent space to $p_{\sigma}$, with characters $T^{\ }_{\sigma(0)}T_{\sigma(1)}^{-1}, \ldots, T^{\ }_{\sigma(n-1)}T_{\sigma(n)}^{-1}$. 

\subsection{Localization theorems}
By the bijection between permutations $\mathfrak S_E$ and $T$-fixed points $X_E^T$, we identify $K_T^0(X_E^T)$ with $\prod_{\sigma\in \mathfrak S_E} \ZZ[T_0^\pm, \ldots, T_n^\pm]$, and identify $A^\bullet_T(X_E^T)$ with $\prod_{\sigma\in \mathfrak S_E} \ZZ[t_0, \ldots, t_n]$.  For $f$ in $\prod_{\sigma\in \mathfrak S_E} \ZZ[T_0^\pm, \ldots, T_n^\pm]$ (resp.\ $\prod_{\sigma\in \mathfrak S_E} \ZZ[t_0, \ldots, t_n]$), we write $f_\sigma$ for its projection to the copy of the factor $\ZZ[T_0^\pm, \ldots, T_n^\pm]$ (resp.\ $\ZZ[t_0, \ldots, t_n]$) indexed by the permutation $\sigma$.

\begin{thm}\label{thm:localization}
Let $X_E$ be the permutohedral variety as above.
\begin{enumerate}[label = (\alph*)]

\item \label{localization:K}
The restriction map $K_T^0(X_E) \to K_T^0(X_E^T)$ from the $T$-equivariant $K$-ring of $X_E$ to that of its $T$-fixed points is injective, and its image is the subring of $K_T^0(X_E^T)$ given by
\[
\left\{ f\in \prod_{\sigma\in \mathfrak S_E} \ZZ[T_0^\pm, \ldots, T_n^\pm] \ \middle| \ \begin{matrix} f_\sigma \equiv f_{\sigma'}\mod (1- \frac{T_{\sigma(i+1)}}{T_{\sigma(i)}}) \\ \textnormal{whenever $\sigma' = \sigma  \circ (i, i+1)$ for a transposition $(i,i+1)$}\end{matrix}\right\}.
\]
In particular, the ring $K_T^0(X_E)$ can be identified with the ring $PLaur(\widetilde \Sigma_{E})$ of piecewise Laurent polynomials  on the fan $\widetilde \Sigma_E$ in $\RR^E$, and the non-equivariant $K$-ring $K^0(X_E)$ is isomorphic to the quotient of $PLaur(\widetilde\Sigma_E)$ by the ideal generated by $f(T_0, \ldots, T_n) - f(1,\ldots,1)$ for each global Laurent polynomial $f$ on $\RR^E$.

\item \label{localization:Chow} The restriction map $A^\bullet_T(X_E) \to A^\bullet_T(X_E^T)$ from the $T$-equivariant Chow ring of $X_E$ to that of its $T$-fixed points is injective, and its image is the subring of $A^\bullet_T(X_E^T)$ given by
\[
\left\{ \varphi \in \prod_{\sigma\in \mathfrak S_E} \ZZ[t_0, \ldots, t_n] \ \middle| \ \begin{matrix} \varphi_\sigma \equiv \varphi_{\sigma'}\mod (t_{\sigma(i)} - t_{\sigma(i+1)}) \\ \textnormal{whenever $\sigma' = \sigma \circ (i, i+1) $ for a transposition $(i,i+1)$}\end{matrix}\right\}.
\]
In particular, the ring $A^\bullet_T(X_E)$ can be identified with the ring $PPoly(\widetilde \Sigma_{E})$ of piecewise polynomial functions on the fan $\widetilde \Sigma_E$ in $\RR^E$, and the non-equivariant Chow ring $A^\bullet(X_E)$ is isomorphic to the quotient of $PPoly(\widetilde\Sigma_E)$ by the ideal generated by $\varphi(t_0, \ldots, t_n) - \varphi(0,\ldots,0)$ for each global polynomial $\varphi$ on $\RR^E$.
\end{enumerate}
\end{thm}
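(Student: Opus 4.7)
The plan is to apply the general equivariant localization theorem for smooth projective $T$-varieties with isolated fixed points and finitely many one-dimensional orbits, in the form due to Goresky-Kottwitz-MacPherson in cohomology, Vezzosi-Vistoli in $K$-theory, and Brion (or Edidin-Graham) for equivariant Chow theory. This general theorem asserts that pullback to the fixed locus is injective, and that its image is cut out by a congruence condition along each one-dimensional orbit. Since $X_E$ is a smooth projective toric variety, its $T$-fixed points are isolated and indexed by the maximal cones of $\Sigma_E$, i.e.\ by $\mathfrak S_E$, and its one-dimensional $T$-orbits correspond to codimension-one cones of $\Sigma_E$. Moreover $X_E$ admits a Bia\l ynicki-Birula cell decomposition, so $K^0(X_E)$ and $A^\bullet(X_E)$ are free $\ZZ$-modules, and the equivariant formality hypothesis needed to apply localization is automatic.

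The next step is to read off the GKM data. Two maximal cones of $\Sigma_E$ indexed by $\sigma$ and $\sigma'$ share a codimension-one face exactly when their associated flags of subsets $\mathscr S$ differ in a single entry, which occurs precisely when $\sigma' = \sigma \circ (i,i+1)$ for some adjacent transposition $(i,i+1)$. For such a pair, the invariant $\PP^1$ joining $p_\sigma$ to $p_{\sigma'}$ has $T$-character equal to the unique tangent character at $p_\sigma$ that fails to be a tangent character at $p_{\sigma'}$; by the list of tangent characters recalled in \S\ref{subsec:permutohedralcone}, this character is $T_{\sigma(i)} T_{\sigma(i+1)}^{-1}$. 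Plugging into the general GKM congruence therefore gives $f_\sigma \equiv f_{\sigma'} \pmod{1 - T_{\sigma(i+1)}/T_{\sigma(i)}}$ in $K$-theory and $\varphi_\sigma \equiv \varphi_{\sigma'} \pmod{t_{\sigma(i)} - t_{\sigma(i+1)}}$ in Chow theory, which is precisely the conditions stated in parts (a) and (b). The identification with piecewise Laurent polynomials (resp.\ piecewise polynomials) on $\widetilde\Sigma_E$ is then formal: a piecewise object is a tuple $(f_\sigma)_{\sigma \in \mathfrak S_E}$ whose entries agree on shared codimension-one cones, and the codimension-one cone between the maximal cones of $\sigma$ and $\sigma'$ is exactly the hyperplane perpendicular to $\be_{\sigma(i)} - \be_{\sigma(i+1)}$, so the agreement condition translates into the same congruence modulo $1 - T_{\sigma(i+1)}/T_{\sigma(i)}$ or $t_{\sigma(i)} - t_{\sigma(i+1)}$.

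For the non-equivariant statements, I would use that for a $T$-variety whose equivariant $K$-ring (resp.\ Chow ring) is a free module over $K_T^0(\pt)$ (resp.\ $A_T^\bullet(\pt)$), the non-equivariant ring is obtained from the equivariant one by tensoring along the augmentation $T_i \mapsto 1$ (resp.\ $t_i \mapsto 0$), whose kernel is generated by $T_i - 1$ (resp.\ $t_i$). Under the piecewise-function identification, applying this augmentation amounts to evaluating a global Laurent polynomial (resp.\ polynomial) at $(1,\ldots,1)$ (resp.\ at the origin), so the kernel of the quotient map is exactly the ideal generated by $f(T_0,\ldots,T_n) - f(1,\ldots,1)$ (resp.\ $\varphi(t_0,\ldots,t_n) - \varphi(0,\ldots,0)$) for each such global object. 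The main obstacle I anticipate is checking equivariant formality and $\ZZ$-freeness carefully enough to apply localization and to base-change to the non-equivariant quotient without incurring any torsion, but this is handled uniformly once the Bia\l ynicki-Birula cell structure on $X_E$ is in hand; the rest is bookkeeping with tangent characters of permutohedral cones.
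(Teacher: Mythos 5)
Your proposal takes essentially the same route as the paper: both parts are imported from the general localization theorems (Vezzosi--Vistoli in $K$-theory, Brion in equivariant Chow theory), the congruences are read off from the tangent characters $T_{\sigma(i)}T_{\sigma(i+1)}^{-1}$ at adjacent fixed points, and the non-equivariant rings are obtained by killing the augmentation ideal. The one step you dismiss as ``formal'' is, however, exactly the step the paper singles out as non-standard: the identification of $K_T^0(X_E)$ with piecewise \emph{Laurent polynomials} on $\widetilde\Sigma_E$ is special to the permutohedral fan and fails for general smooth complete toric varieties (whose GKM ring in $K$-theory is a ring of piecewise exponentials, not piecewise Laurent polynomials). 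To justify it one needs the two observations the paper records: no codimension-one cone of $\widetilde\Sigma_E$ is contained in a coordinate hyperplane of $\RR^E$, so a Laurent polynomial is an honest function on a dense subset of each wall and agreement there is equivalent to divisibility of the difference by the linear form cutting out the wall; and the ideals $(1- T_{\sigma(i+1)}/T_{\sigma(i)})$ and $(T_{\sigma(i)}-T_{\sigma(i+1)})$ coincide in $\ZZ[T_0^\pm,\ldots,T_n^\pm]$ because $T_{\sigma(i)}$ is a unit. With that point made explicit, your argument agrees with the paper's.
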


In light of the above theorem, for $[\mathcal E]\in K_T^0(X_E)$ we also write $[\mathcal E]$ for its image in $K_T^0(X_E^T)$, and $[\mathcal E]_\sigma \in \ZZ[T_0^\pm, \ldots, T_n^\pm]$ for the restriction of $[\mathcal E]$ to the $T$-fixed point $p_\sigma$.  We notate similarly for $\xi\in A^\bullet_T(X_E)$.

\medskip
With one exception, \Cref{thm:localization} collects standard results in equivariant geometry that hold, e.g., for smooth proper toric varieties.
\Cref{thm:localization}.\ref{localization:K} follows from \cite[Corollary~5.12, Theorem~5.19]{vezzosiVistoli} while \ref{localization:Chow} follows from \cite[Corollary~2.3, Theorem~3.4]{Bri97}.
The non-standard exception above is the the identification of $K_T^0(X_E)$ with a ring of piecewise Laurent polynomials on a fan.  This result is special to the permutohedral variety, and fails for arbitrary toric varieties.  
The validity of the identification follows from two straightforward observations: No codimension 1 cone of $\widetilde \Sigma_E$, whose linear span is the hyperplane normal to $\be_{\sigma(i)} -\be_{\sigma(i+1)}$ for some $\sigma$ and $i$, is contained in a coordinate hyperplane of $\RR^E$, and two Laurent polynomials $f_\sigma$ and $ f_{\sigma'}$ satisfy $f_\sigma \equiv f_{\sigma'}\mod (1- \frac{T_{\sigma(i+1)}}{T_{\sigma(i)}})$ if and only if $f_\sigma \equiv f_{\sigma'}\mod (T_{\sigma(i)}- T_{\sigma(i+1)})$.

\subsection{Duality, rank, exterior powers, and Chern classes}\label{subsec:prelimChernRoots}
For $\bm = (m_0, \ldots, m_n) \in \ZZ^E$, write $\bT^{\mathbf m} = T_0^{m_0}\cdots T_n^{m_n}$, and write $\bm \cdot \bt = m_0t_0 + \cdots + m_n t_n$.  
Let $[\mathcal E]\in K_T^0(X_E)$.  Then, for each $\sigma\in \mathfrak S_E$ we have
\[
[\mathcal E]_\sigma  = \sum_{i=1}^{k_\sigma} a_{\sigma,i} {\bT}^{\bm_{\sigma,i}}
\]
for some integer $k_\sigma \geq 0$, signs $a_1, \ldots, a_{k_\sigma}\in \{-1,1\}$, and $\bm_{\sigma,1}, \ldots, \bm_{\sigma,k_\sigma}\in \ZZ^E$. The dual class $[\mathcal{E}^{\vee}]$ is defined by saying $$[\mathcal{E}^{\vee}]_{\sigma}= \sum_{i=1}^{k_\sigma} a_{\sigma,i} {\bT}^{-\bm_{\sigma,i}}.$$ The map which takes a vector bundle $\mathcal{E}$  to its rank is additive, and hence extends to a map $\rk\colon  K_0^T(X_E)\to \mathbb{Z}$. We have $\rk(\mathcal{E})=a_{\sigma,1} + \cdots + a_{\sigma, k_\sigma}$ for any $\sigma$, since when $\mathcal{E}$ is a vector bundle the right hand side is the rank of the pullback of the bundle to the torus fixed point $p_{\sigma}$. The $j$-th exterior power, denoted $[ \textstyle\bigwedge^j \mathcal E]$, and the $j$-th $T$-equivariant Chern class, are given by equating 
$$\sum_{j=0}^{\infty}[ {\textstyle\bigwedge}^j \mathcal{E}]_{\sigma}u^j=\prod_{i=1}^{k_{\sigma}} (1+\bT^{\bm_{\sigma,i}}u)^{a_{\sigma,i}} \quad \text{and}\quad 
c^T(\mathcal E, u) = \displaystyle \sum_{j=0}^{\infty}c_j^T(\mathcal{E})_{\sigma}u^j=\prod_{i=1}^{k_{\sigma}}(1+\bm_{\sigma,i}\cdot\bt u)^{a_{\sigma,i}},$$
where $u$ is a formal variable. We note that this implies $c^T(\mathcal{E}^{\vee},u)=c^T(\mathcal{E},-u)$. When $a_{\sigma,i} = 1$ for all $\sigma$ and $i$, in which case $k_\sigma = \operatorname{rk}(\mathcal E)$ for all permutations $\sigma$, we have for $0\le j \le \rk(\mathcal{E})$ that
\[
[\textstyle\bigwedge^j \mathcal E]_\sigma = \operatorname{Elem}_j(\bT^{\bm_{\sigma,1}}, \ldots,  \bT^{\bm_{\sigma,\operatorname{rk}(\mathcal E)}}) \quad\text{and}\quad c_j^T(\mathcal E) = \operatorname{Elem}_j(\bm_{\sigma,1}\cdot  \bt, \ldots, \bm_{\sigma,\operatorname{rk}(\mathcal E)}\cdot  \bt),
\]
where $\operatorname{Elem}_j$ denotes the $j$-th elementary symmetric polynomial.
More generally, when $a_{\sigma, i} = 1$ for all $\sigma$ and $i$, given an element $\lambda(x)\in \Lambda\subset \mathbb{Z}[[x_1,x_2,\ldots]]$ in the ring $\Lambda$ of symmetric functions \cite[Section I.2]{Mac15}, we define the $T$-equivariant $K$-class $[\mathsf S^\lambda \mathcal E]$ by
\[
[\mathsf S^\lambda \mathcal E]_{\sigma} = \lambda(\bT^{\bm_{\sigma,1}}, \dots,  \bT^{\bm_{\sigma,\operatorname{rk}(\mathcal E)}}, 0, 0, \dots) \quad\text{for all $\sigma\in \mathfrak S_E$},
\]
and define the $T$-equivariant Chow class $\mathsf s_\lambda^T(\mathcal E)$ by
\[
\mathsf s_\lambda^T(\mathcal E)_\sigma = \lambda(\bm_{\sigma,1}\cdot  \bt, \dots, \bm_{\sigma,\operatorname{rk}(\mathcal E)}\cdot  \bt, 0,0,\dots) \quad\text{for all $\sigma\in \mathfrak S_E$}.
\]

\subsection{Cremona involution}
\label{sec:CremonaInv}
The projective space $\PP^n$ is a $T$-variety by the standard action of $T$ on $\CC^E$.  The fan of $\PP^n$ as a toric variety is then the coarsening of $\Sigma_E$ with rays $\operatorname{Cone}(\overline \be_i)$ for each $i\in E$, and we have the naturally induced birational map $\pi_E\colon  X_E \to \PP^n$ that is the identity on the common dense open torus $T/\CC^*$.
Let $\crem\colon  \PP^n \dashrightarrow \PP^n$ be the Cremona transformation defined by $[t_0: \cdots t_n] \mapsto [t_0^{-1} : \cdots : t_n^{-1}]$ on the open dense torus $T/\CC^*$ of $\PP^n$.
Relatedly, the map $\RR^E/\RR\mathbf 1 \to \RR^E/\RR\mathbf 1$ given by $x\mapsto -x$ defines an involution of the fan $\Sigma_E$, and hence induces the Cremona involution $\crem\colon  X_E \to X_E$ of the permutohedral variety $X_E$, fitting into the diagram
\[
\begin{tikzcd}
X_E \ar[r, "\crem"] \ar[d, swap, "\pi_E"]&X_E \ar[d, "\pi_E"]\\
\PP^n \ar[r, dashed, "\crem"] &\PP^n.
\end{tikzcd}
\]
Because $\operatorname{crem}$ is an involution, we have $\operatorname{crem}_*=\operatorname{crem}^*$ on $K_0^T(X_E)$ and $A^\bullet_T(X_E)$, so we simply write $\operatorname{crem}$ when acting on $K$-classes or Chow classes.

\begin{rem}\label{rem:crem}
The map $\crem\colon  X_E \to X_E$ is not $T$-equivariant, but is a toric morphism, where the map of tori $T\to T$ is given by $t\mapsto t^{-1}$.  For a permutation $\sigma\in \mathfrak  S_E$, note that the $T$-fixed point $p_\sigma$ maps under $\crem$ to the point $p_{\overline\sigma}$, where
\[
\overline\sigma \in \mathfrak S_E \quad\text{is defined by}\quad \overline\sigma(i) = \sigma(n-i) \quad\text{for $i=0,\ldots, n$.}
\]
As a result, if $[\mathcal E]\in K_T^0(X_E)$ with $[\mathcal E]_\sigma = \sum_{i=1}^{k_\sigma} a_{\sigma,i} {\bT}^{\bm_{\sigma,i}}$ for $\sigma\in \mathfrak S_E$, then $\crem [\mathcal E] \in K_T^0(X_E)$ satisfies
\[
(\crem [\mathcal E])_{\sigma} = [\mathcal E]_{\overline{\sigma}}(T_0^{-1}, \ldots, T_n^{-1}) = \sum_{i=1}^{k_{\overline\sigma}} a_{{\overline\sigma},i} {\bT}^{-\bm_{{\overline\sigma},i}}.
\]
\end{rem}

The Cremona involution gives rise to two distinguished divisor classes on $X_E$ as follows.

\begin{defn}
\label{defn:alphabeta}
Writing $h = c_1(\mathcal O(1)) \in A^1(\PP^n)$ for the hyperplane class in $\PP^n$, define the divisor classes $\alpha_E = \pi_E^*h$ and $\beta_E = \crem \alpha_E = (\pi_E \circ \crem)^* h$ in $A^1(X_E)$.
\end{defn}

We omit the subscript $E$ from $\alpha_E$ and $\beta_E$ when the ground set $E$ is clear.  The line bundles of the divisor classes $\alpha$ and $\beta$ are given $T$-linearizations in \Cref{rem:alphabeta}.

\subsection{Generalized permutohedra and $T$-linearized line bundles}\label{subsec:Tdiv}
Let $P \subset\RR^E$ be a lattice polytope contained in a translate of the sublattice $\mathbf 1^\perp = \{\bm \in \ZZ^E \mid  m_0  + \cdots + m_n = 0\}$.
Let $h_P\colon  \RR^E \to \RR$ be its support function defined by $h_P(x) = \max_{\bm\in P}\langle \bm, x \rangle$.
It was shown in several places \cite[\S II]{Edm70}, \cite[Ch.~4]{Mur03}, \cite[Theorem 12.3 \& references therein]{AA} that the following two statements are equivalent:
\begin{itemize}
\item The polytope $P$ is a \textbf{generalized permutohedron}, i.e.\ its normal fan coarsens the fan $\widetilde\Sigma_E$.
\item The function $\rk_P\colon  2^E \to \RR$ defined by $\rk_P(S) = h_P(\be_S)$ for a subset $S\subseteq E$ is \textbf{submodular}, i.e.\
\[
\rk_P(S) + \rk_P(S') \geq \rk_P(S\cup S') + \rk_P(S\cap S') \quad\text{for all subsets $S,S'\subseteq E$},
\]
and $P = \{ x \in \RR^E \mid \langle x, \mathbf 1 \rangle = \rk_P(E) \text{ and } \langle x, \be_S \rangle \leq \rk_P(S) \text{ for all nonempty } S\subseteq E\}$.
\end{itemize}
Let $P$ now be a generalized permutohedron. The negated polytope $-P$ has support function $h_{-P}(v)=h_P(-v)$, and is also a generalized permutohedron as  $\rk_{-P}(S)=\rk_P(E\setminus S) - \rk_P(E)$ is submodular. We associate to $P$ a $T$-linearized line bundle on $X_E$ as follows.
Since $\mathbf 1^\perp$ is the dual lattice of $\ZZ^E/\ZZ\mathbf 1$,
for every translate $P'$ of $P$ such that $P'\subset\mathbf 1^\perp$, there is the associated $(T/\mathbb{C}^*)$-invariant divisor $D_{P'}$ on $X_E$ (see \cite[Theorem 4.2.12 \& Proposition 4.2.14]{CLS11}) given by 
\[
D_{P'} = \sum_{\emptyset\subsetneq S \subsetneq E} -\min_{\mathbf m \in P'} \langle \mathbf m, \overline\be_S\rangle Z_S=\sum_{\emptyset \subsetneq S \subsetneq E}\max_{\bm \in -P'} \langle \bm, \overline\be_S\rangle Z_S = \sum_{\emptyset \subsetneq S \subsetneq E} \rk_{-P'}(S) Z_S
\]
where $Z_S$ denotes the $(T/\mathbb{C}^*)$-invariant divisor in $X_E$ corresponding to the ray $\operatorname{Cone}(\be_S)$ in the fan $\Sigma_E$. We note here that the divisor class $[D_{P'}]\in A^1(X_E)$ in the non-equivariant Chow ring of $X_E$ is independent of the translation $P'$, so we may write $[D_P]$ for this divisor class. The resulting
line bundle $\mathcal O(D_P)$ admits a
$T$-linearization given by
\[
[\mathcal O(D_P)]_\sigma = \bT^{-\bm_\sigma} \in \ZZ[T_0^\pm, \ldots, T_n^\pm] \quad\text{for any $\sigma \in \mathfrak S_E$},
\]
where $\bm_\sigma$ is the vertex of $P$ minimizing the pairing with any vector in the interior of the cone corresponding to the permutation $\sigma$.  Concretely, the lattice point $\bm_\sigma$ is the vertex of $P$ achieving for any sequence $v_0>\ldots>v_n$ the minimum in
$
\min_{\bm\in P} \langle \bm, v_0\be_{\sigma(0)} + v_1\be_{\sigma(1)} + \cdots + v_n\be_{\sigma(n)}\rangle.
$
In particular, we have
\[
[\mathcal O(D_{-P})]_\sigma = \bT^{\widetilde \bm_\sigma} \in \ZZ[T_0^\pm, \ldots, T_n^\pm] \quad\text{for any $\sigma\in \mathfrak S_E$},
\]
where the lattice point $\widetilde\bm_\sigma$ is the vertex of $P$ achieving for any sequence $v_0 > \cdots > v_n$ the maximum in $\max_{\bm \in P} \langle \bm,  v_0\be_{\sigma(0)} + v_1\be_{\sigma(1)} + \cdots + v_n\be_{\sigma(n)}\rangle$.

\medskip
\noindent\textbf{Notation.} For a generalized permutohedron $P$, we write $\mathcal O(D_P)$ for the corresponding line bundle on $X_E$ with the $T$-linearization given as above.

\begin{rem}\label{rem:alphabeta}
Let $\Delta = \operatorname{Conv}(\be_i \mid i\in E)$ be the standard simplex in $\RR^E$, and let $-\Delta =\operatorname{Conv}(-\be_i \mid i\in E)$ be the negative standard simplex.  
Since the fan of $\PP^n$ as a toric variety is the normal fan of $\Delta$, the $T$-equivariant line bundles $\mathcal O(D_\Delta)$ and $\mathcal O(D_{-\Delta})$ on $X_E$ are non-equivariantly isomorphic to $\mathcal O(\alpha)$ and $\mathcal O(\beta)$, respectively.  Moreover, by the discussion above they satisfy
\[
[\mathcal O(D_{\Delta})]_\sigma = T_{\sigma(n)}^{-1} \quad\text{and}\quad  [\mathcal O(D_{-\Delta})]_\sigma = T_{\sigma(0)}
\]
for every permutation $\sigma\in \mathfrak S_E$.  Furthermore, fixing an element $i\in E$, we can consider the translate $\Delta - \be_i$, which is contained in the sublattice $\mathbf 1^\perp$, so that we have the equality of divisor classes
\[
\alpha = [D_{\Delta}] = \sum_{\emptyset\subsetneq S \subsetneq E} \rk_{ \be_i - \Delta}(S) [Z_S] = \sum_{i\in S \subsetneq E} [Z_S]
\]
in $A^1(X_E)$, and by a similar computation 
$\beta = \sum_{\emptyset \subsetneq S \subseteq E\setminus \{i\}} [Z_S]$.
These last definitions of $\alpha$ and $\beta$ are the same as in \cite{HuhKatz, AdiHuhKatz, ADH}.
\end{rem}

\section{Tautological classes of matroids}\label{sec:definingtaut}

In \S\ref{subsec:welldefined}, we define the tautological bundles of a realizations of matroids, and define the tautological $K$-classes and Chern classes of matroids, which we collectively refer to as ``tautological classes of matroids''.  In \S\ref{subsec:welldefinedgeneral}, we provide a slight generalization that we will not need until \S\ref{sec:KtoChow}.

\subsection{Well-definedness}\label{subsec:welldefined}
We prepare by recalling some properties of the base polytope of a matroid.  Introduced in \cite{GGMS87}, the \textbf{base polytope} $P(M)$ of a matroid $M$ with ground set $E$ is defined as
\[
P(M) = \operatorname{Conv}\Big( \be_B : B \text{ a basis of $M$}\Big) \subset \RR^E,
\]
where $\be_S =\sum_{i\in S} \be_i$ for a subset $S\subseteq E$.
We recall two well-known facts about the base polytope $P(M)$.

\begin{prop}\label{prop:basepolytope}
Let $M$ be matroid of rank $r$ with ground set $E$.
\begin{enumerate}[label = (\alph*)]
\item \label{basepolytope:coarsen} The base polytope $P(M)$ is a generalized permutohedron.  In other words, the normal fan of $P(M)$, as a fan in $\RR^E/\RR\mathbf 1$, coarsens the permutohedral fan $\Sigma_E$.
\item \label{basepolytope:orbit} Let $L\subseteq \CC^E$ be a realization of $M$, considered as a point $L\in \Gr(r;E)$.  Then, the torus-orbit-closure $\overline{T\cdot L} \subseteq \Gr(r;E)$ is torus-equivariantly isomorphic to the normal projective toric variety $X_{P(M)}$ associated to the polytope $P(M)$.  For a basis $B\subseteq E$ of $M$, the $T$-fixed point in $X_{P(M)}$ corresponding to the vertex $\be_B$ of $P(M)$ maps under the isomorphism to the $T$-fixed point $p_B$ in $\Gr(r;E)$, as denoted in \S\ref{eg:Gr}.
\end{enumerate}
\end{prop}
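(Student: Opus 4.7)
For part (a), I would verify directly that the rank function $\rk_M$ is submodular, which is one of the standard equivalent axiom systems defining a matroid. Then I would identify $P(M)$ with the polytope $\{x\in \RR^E : \langle x, \mathbf 1\rangle = r, \; \langle x, \be_S\rangle \leq \rk_M(S) \text{ for all } S\subseteq E\}$ cut out by the submodular constraints; this is Edmonds' classical theorem, which says the vertices of the submodular polytope associated to $\rk_M$ are exactly the indicator vectors of bases of $M$. Combined with the characterization recalled in \S\ref{subsec:Tdiv}, this immediately gives that $P(M)$ is a generalized permutohedron and so its normal fan coarsens $\Sigma_E$. Alternatively, I would pass through the edge characterization: using basis exchange, any edge of $P(M)$ is of the form $\be_B - \be_{B'}$ with $|B\triangle B'| = 2$, hence parallel to a root $\be_i - \be_j$, which forces the normal fan to coarsen the braid fan $\widetilde\Sigma_E$.

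For part (b), the plan is to work inside the Pl\"ucker embedding $\Gr(r;E) \hookrightarrow \PP(\bigwedge^r \CC^E)$. A realization $L\subseteq \CC^E$ of $M$ maps to the point $[\bigwedge^r L]$ whose Pl\"ucker coordinate $p_B$ is nonzero precisely when $B$ is a basis of $M$. Since the Pl\"ucker coordinate $p_B$ transforms under $T$ with weight $\be_B$, the weights of $T$ appearing in $\bigwedge^r L$ are exactly the vertices $\{\be_B : B \text{ a basis of } M\}$ of $P(M)$. I would then invoke the standard toric-geometry fact that for a $T$-representation $V$ and a point $[v]\in \PP(V)$, the closure of the $T$-orbit of $[v]$ is equivariantly isomorphic to the normal projective toric variety associated to the convex hull of the nonvanishing weights of $v$; applied to $v = \bigwedge^r L$, this gives the desired equivariant isomorphism $\overline{T\cdot L} \simeq X_{P(M)}$. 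The fixed-point correspondence is then automatic: the $T$-fixed point of $X_{P(M)}$ at the vertex $\be_B$ maps to the coordinate point $[\bigwedge_{i\in B} \be_i] \in \PP(\bigwedge^r \CC^E)$, which under the Pl\"ucker embedding is the image of the coordinate subspace $\operatorname{span}(\be_i : i\in B)$, i.e.\ the point $p_B \in \Gr(r;E)$ as notated in \S\ref{eg:Gr}.

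The main obstacle is the equivariant-normality step in part (b). A priori $\overline{T\cdot L}$ is only a $T$-equivariant projective variety with moment polytope $P(M)$, and one must show that it is normal and that its toric fan matches that of $P(M)$, not merely that the two share the same moment polytope. This is precisely the content of the GGMS theorem \cite{GGMS87}, and in a full write-up I would either cite it directly or unpack it by choosing a generic one-parameter subgroup of $T$ and reducing to the classification of affine toric varieties by their dual cones, using that the stabilizer of $L$ inside $T$ corresponds to the sublattice of $\operatorname{Char}(T)$ annihilated by all vertex differences of $P(M)$.
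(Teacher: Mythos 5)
The paper itself does not write out a proof: part~(a) is attributed to Edmonds (with a proof in \cite{ACEP20}), and part~(b) is attributed to the combination of \cite{GGMS87} and \cite{Whi77}, with a written-out proof in \cite{CDMS18}. Your sketch is essentially a blow-up of those citations, and your part~(a) is correct and matches the standard route --- either via Edmonds' identification of $P(M)$ with the submodular base polytope and the equivalence recalled in \S\ref{subsec:Tdiv}, or via the root-parallel edge criterion (basis exchange).

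For part~(b), however, there is a real soft spot in the way you set it up. You invoke as a ``standard toric-geometry fact'' that for $[v]\in\PP(V)$ the orbit closure $\overline{T\cdot[v]}$ is equivariantly isomorphic to the \emph{normal} projective toric variety of the weight polytope $\operatorname{Conv}(\text{weights of }v)$. That statement is false in general: without further hypotheses, $\overline{T\cdot[v]}$ is a possibly non-normal toric variety whose \emph{normalization} is $X_P$. You do flag this in your last paragraph, but you misattribute the fix to \cite{GGMS87}, and your proposed replacement argument (pick a generic one-parameter subgroup and analyze the stabilizer / dual cones) describes the affine charts of the orbit closure but does not, by itself, show that those charts are normal --- it shows that their coordinate rings are semigroup rings, but not that the semigroups are saturated. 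The missing ingredient is precisely White's theorem \cite{Whi77} on the projective normality of the basis monomial ring of a matroid (equivalently, that $P(M)$ has the Integer Decomposition Property), which is exactly the second ingredient the paper cites. Once you have normality, the rest of your argument --- reading off the weight polytope from Pl\"ucker coordinates and matching $T$-fixed points $\be_B\leftrightarrow p_B$ --- is correct and is the same route as the reference \cite{CDMS18}. So the plan is sound, but you should replace the one-parameter-subgroup ``unpacking'' with an explicit appeal to White's normality theorem; as written, that step would fail.
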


Part \ref{basepolytope:coarsen} is classical, tracing back to \cite{Edm70}; see \cite[\S4.4]{ACEP20} for a proof and a generalization to Coxeter matroids.  Part \ref{basepolytope:orbit} follows from combining \cite[Corollary 2.4]{GGMS87} and \cite{Whi77}; see \cite[\S5]{CDMS18} for a proof.  \Cref{prop:basepolytope}.\ref{basepolytope:coarsen} allows us to make the following definition.

\begin{defn}
For a permutation $\sigma\in \mathfrak S_E$, the \textbf{lex-first-basis} of a matroid $M$ with respect to $\sigma$, denoted by $B_{\sigma}(M)$, is the unique basis of $M$ such that the vertex $\be_{B_{\sigma}(M)}$ of $P(M)$ achieves for any sequence $v_0>\cdots > v_n$ the {maximum} in
\[
\max_{\bm \in P(M)}\langle \bm, v_0\be_{\sigma(0)} + v_1\be_{\sigma(1)} + \cdots + v_n\be_{\sigma(n)}\rangle.
\]
Writing $\overline\sigma\in \mathfrak S_E$ for the permutation defined by $\overline\sigma(i) = \sigma(n-i)$, define the basis $B_{\overline\sigma}(M)$ to be the \textbf{reverse-lex-basis} of $M$ with respect to $\sigma$.  Equivalently, the basis $B_{\overline\sigma}(M)$ is the basis of $M$ such that the vertex $\be_{B_{\overline\sigma}(M)}$ of $P(M)$ achieves for any sequence $v_0>\cdots > v_n$ the {minimum} in
\[
\min_{\bm \in P(M)}\langle \bm, v_0\be_{\sigma(0)} + v_1\be_{\sigma(1)} + \cdots + v_{n}\be_{\sigma(n)}\rangle.
\]
We simply write $B_{\sigma}$ or $B_{\overline\sigma}$ if the matroid in question is clear.
\end{defn}

That the respective maximum or minimum is achieved uniquely at a vertex of $P(M)$, independently of all $v_0 > \cdots > v_n$, follows from \Cref{prop:basepolytope}.\ref{basepolytope:coarsen}.
The equivalence of the two definitions for $B_{\overline\sigma}(M)$ follows from noting that if $w_i=-v_{n-i}$ then $v_0 > \cdots >v_n$ is equivalent to $w_0 > \cdots > w_n$, and
$v_0\be_{\sigma(0)} + v_1\be_{\sigma(1)} + \cdots + v_{n}\be_{\sigma(n)}=-(w_0\be_{\overline{\sigma}(0)} + w_1\be_{\overline{\sigma}(1)} + \cdots + w_n\be_{\overline{\sigma}(n)}).$

\begin{rem}
\label{rem:minconv}
Under the min-convention for polyhedral geometry, the cone in the normal fan of $P(M)$ corresponding to the vertex $\be_{B}$ consists of $x\in \RR^E/\RR\mathbf 1$ such that $\be_{B}$ achieves the minimum in $\min_{\bm\in P(M)} \langle \bm, x \rangle$.  In particular, for a permutation $\sigma\in \mathfrak S_E$, the definition of $\be_{B_{\overline\sigma(M)}}$ implies that the cone of $\Sigma_E$ corresponding to $\sigma$ is contained in the cone in the normal fan of $P(M)$ corresponding to the vertex $\be_{B_{\overline\sigma}(M)}$.
\end{rem}

\begin{rem}\label{rem:lexfirstbasis}
Choosing $v_0\gg \dots \gg v_n$ justifies the terminology ``lex-first-basis'' because it implies  $B_{\sigma}(M)$ is the first basis in the lexicographic ordering when the ground set has the linear order $\sigma(0)\prec \dots \prec \sigma(n)$.
\end{rem}

Combining the two parts of \Cref{prop:basepolytope}, we have the following.

\begin{lem}\label{lem:keymap}
If $L\subseteq \CC^E$ is a realization of a matroid $M$ of rank $r$, then one has a $T$-equivariant map
\begin{equation}\label{eq:keymap}\tag{$\dagger$}
\varphi_L\colon  X_E \to X_{P(M)} \overset\sim\to \overline{T\cdot L} \subset \Gr(r;E)
\end{equation}
which sends the identity point $\mathbf 1$ of the open torus $T/\CC^*$ of $X_E$ to the point $L$ in $\Gr(r;E)$. For each permutation $\sigma\in \mathfrak S_E$, the map $\varphi_L$ sends the $T$-fixed point $p_\sigma$ in $X_E$ to the $T$-fixed point $p_{B_{\overline\sigma}(M)}$ in $\Gr(r;E)$.
\end{lem}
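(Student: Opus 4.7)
The plan is to construct $\varphi_L$ as the composition of two $T$-equivariant maps that are essentially handed to us by \Cref{prop:basepolytope}. First, by part \ref{basepolytope:coarsen} of that proposition, the base polytope $P(M)$ is a generalized permutohedron, so its normal fan in $\RR^E/\RR\mathbf 1$ coarsens the permutohedral fan $\Sigma_E$. Standard toric geometry then furnishes a $(T/\CC^*)$-equivariant toric morphism $X_E \to X_{P(M)}$, which is automatically $T$-equivariant since the diagonal $\CC^*$ acts trivially on both varieties. Composing with the $T$-equivariant isomorphism $X_{P(M)} \overset\sim\to \overline{T\cdot L}$ from part \ref{basepolytope:orbit} produces the desired $\varphi_L$.

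Next I would verify $\varphi_L(\mathbf 1) = L$. The identity point $\mathbf 1$ lies in the dense open torus of $X_E$ and maps to the corresponding identity in the dense torus $T/\CC^* \subset X_{P(M)}$; since the isomorphism $X_{P(M)} \overset\sim\to \overline{T\cdot L}$ is $T$-equivariant and arises from the orbit map $t\mapsto t\cdot L$, it sends this identity point to $L$, as required.

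Finally, to identify $\varphi_L(p_\sigma)$, I would apply the standard dictionary between fans and fixed points of toric morphisms: $p_\sigma$ maps to the $T$-fixed point of $X_{P(M)}$ indexed by the unique cone of the normal fan of $P(M)$ containing the maximal cone of $\Sigma_E$ associated to $\sigma$. By \Cref{rem:minconv}, this is the normal cone at the vertex $\be_{B_{\overline\sigma}(M)}$, and \Cref{prop:basepolytope}\ref{basepolytope:orbit} then sends it to the $T$-fixed point $p_{B_{\overline\sigma}(M)}$ on $\Gr(r;E)$. The only genuinely delicate step is bookkeeping the min/max conventions: the lex-first basis $B_\sigma(M)$ is defined via a \emph{maximum} pairing, while the cone-vertex correspondence under the standard (min convention) normal fan uses \emph{minima}, which is precisely why $p_\sigma$ lands at the reverse-lex basis $p_{B_{\overline\sigma}(M)}$ and not at $p_{B_\sigma(M)}$.
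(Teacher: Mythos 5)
Your proposal is correct and follows essentially the same route as the paper's proof: obtain the toric morphism $X_E \to X_{P(M)}$ from the coarsening of fans (\Cref{prop:basepolytope}.\ref{basepolytope:coarsen}), use \Cref{rem:minconv} to identify which normal cone of $P(M)$ contains the cone of $\sigma$, and finish with \Cref{prop:basepolytope}.\ref{basepolytope:orbit}. The paper compresses the verification of $\varphi_L(\mathbf 1) = L$ and the min/max bookkeeping into the phrase ``the rest of the lemma now follows,'' whereas you spell these steps out explicitly, but the argument is the same.
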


\begin{proof}
\Cref{prop:basepolytope}.\ref{basepolytope:coarsen} implies that we have a $T$-equivariant map $X_E \to X_{P(M)}$ induced by a coarsening of fans.  By \Cref{rem:minconv}, for a permutation $\sigma\in \mathfrak S_E$, the cone of $\Sigma_E$ corresponding to $\sigma$ is contained in the cone in the normal fan of $P(M)$ corresponding to the vertex $\be_{B_{\overline\sigma}(M)}$.  The rest of the lemma now follows from \Cref{prop:basepolytope}.\ref{basepolytope:orbit}.
\end{proof}

We thus have the following proposition.  Recall that $\uCCinv^{E}=X_{E}\times \CCinv^{E}$, where $\CCinv^{E}$ denotes the vector space $\CC^E$ with the inverse action of $T$.

\begin{prop}
\label{prop:bundlebijection}
The assignment $L \mapsto \crem \varphi_L^*\cS$ (resp.\ $L\mapsto \crem \varphi_L^*\cQ$), where $\cS$ (resp.\ $\cQ$) is the tautological sub (resp.\ quotient) bundle of $\Gr(\dim L; E)$, is a bijection between subspaces $L\subseteq \CC^E$ and $T$-equivariant sub (resp.\ quotient) bundles of $\uCCinv^{E}$ whose fiber at the identity $\mathbf{1}\in T/\CC^*$ is $L$ (resp.\ $\CC^E/L$).
\end{prop}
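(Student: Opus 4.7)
The plan is to recast $T$-equivariant subbundles (resp.\ quotient bundles) of $\uCCinv^{E}$ as $T$-equivariant morphisms $X_E \to \Gr(r;E)$, use the Cremona involution to translate between the inverse and standard $T$-actions, and then invoke the fact that a $T$-equivariant morphism from $X_E$ to a separated $T$-variety is pinned down by the image of the identity point $\mathbf 1$ of the dense torus $T/\CC^*$.

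For well-definedness of the assignment, Lemma 3.3 provides a $T$-equivariant morphism $\varphi_L\colon X_E \to \Gr(r;E)$ (with the standard $T$-action on $\CC^E$) sending $\mathbf 1\mapsto L$, so $\varphi_L^*\cS$ is a $T$-equivariant subbundle of $X_E\times \CC^E$ with the standard action. Since $\crem\colon X_E\to X_E$ is by Remark 2.3 a toric morphism for the group inversion $t\mapsto t^{-1}$ that fixes $\mathbf 1$, the pullback $\crem\varphi_L^*\cS$ is a $T$-equivariant subbundle of $\uCCinv^{E}$ whose fiber at $\mathbf 1$ is $L$. This last property already gives injectivity of the assignment.

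For surjectivity, let $\mathcal F \subseteq \uCCinv^{E}$ be any $T$-equivariant rank-$r$ subbundle with fiber $L$ at $\mathbf 1$. Applying $\crem$ again converts the inverse action back to the standard action, so $\crem\mathcal F \subseteq X_E\times \CC^E$ is a $T$-equivariant (standard-action) subbundle whose classifying morphism is a $T$-equivariant map $f\colon X_E \to \Gr(r;E)$ with $f(\mathbf 1)=L$. Both $f$ and $\varphi_L$ then restrict to the orbit map $t\mapsto t\cdot L$ on the dense open torus $T/\CC^*\subset X_E$; since $\Gr(r;E)$ is separated and $X_E$ is reduced and irreducible, this forces $f=\varphi_L$ globally, whence $\mathcal F=\crem\varphi_L^*\cS$. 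The quotient-bundle statement follows by the identical argument after replacing the tautological sub $\cS$ by the tautological quotient $\cQ$.

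The main subtlety is the bookkeeping around Cremona: a priori, a $T$-equivariant subbundle of $\uCCinv^{E}$ would be classified by a morphism into a Grassmannian with the \emph{inverse} $T$-action, which does not directly coincide with the $\varphi_L$ of Lemma 3.3. The role of $\crem$ is precisely to swap these two $T$-linearizations so that Lemma 3.3 applies; once this is set up, the remainder of the argument is the standard observation that $T$-equivariant morphisms from a $T$-variety with a dense orbit are determined by their value at one point of the orbit.
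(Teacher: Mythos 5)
Your proposal is correct and follows essentially the same route as the paper: well-definedness comes from pulling back $\cS$ (resp.\ $\cQ$) along $\varphi_L$ and applying $\crem$ to swap the standard and inverse $T$-linearizations, and bijectivity comes from the fact that an equivariant sub/quotient bundle of a trivial bundle is determined by its fiber at $\mathbf 1$ via density of the torus orbit. Your rephrasing of the uniqueness step through the classifying morphism to $\Gr(r;E)$ and separatedness just makes explicit what the paper asserts in one line.
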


\begin{proof}
Since $\cS$ (resp. $\cQ$) is a sub (resp. quotient) bundle of the trivial bundle $\Gr(\dim L; E) \times \CC^E$, where $T$ acts on $\CC^E$ by the standard action $t \cdot (x_0,\dots,x_n) = (t_0x_0,\dots,t_nx_n)$, the pullback $\varphi_L^*\cS$ (resp.\ $\varphi_L^*\cQ$) is a $T$-equivariant sub (resp.\ quotient) bundle of $X_E \times \CC^E$.  Applying the Cremona involution, we thus have that $\crem\varphi_L^*\cS$ (resp.\ $\crem\varphi_L^*\cQ$) is a $T$-equivariant sub (resp.\ quotient) bundle of $\uCCinv^{E} = X_E \times \CCinv^E$ whose fiber over $\mathbf{1}$ is $L$ (resp. $\CC^E/L$). Since such a sub (resp.\ quotient) bundle is uniquely determined by its fibers over the dense torus of $X_{E}$, which by torus-equivariance is uniquely determined by its fiber at the identity, the assignment is a bijection.
\end{proof}

We thus find that the notions in \Cref{defn:tautbundle}, reproduced below, are well-defined.

\theoremstyle{definition}
\newtheorem*{defn:tautbundle}{\Cref{defn:tautbundle}}
\begin{defn:tautbundle}
For an $r$-dimensional linear subspace $L\subseteq \CC^E$, the \textbf{tautological subbundle} $\cS_L$ and the \textbf{tautological quotient bundle} $\cQ_L$ of $L$ are defined by
\begin{align*}
\cS_L \coloneqq \ &  \text{the unique $T$-equivariant rank $r$ subbundle of $\uCCinv^{E}$ whose fiber at $\mathbf 1$ is $L$, \ \ and}\\
 \cQ_L \coloneqq \ & \text{the unique $T$-equivariant rank $|E|-r$ quotient bundle of $\uCCinv^{E}$ whose fiber at $\mathbf 1$ is $\CC^E/L$}.
\end{align*}
Equivalently, $\cS_L$ and $\cQ_L$ are defined as
$$\cS_L\coloneqq \crem\varphi_L^*\mathcal{S}\qquad\text{and}\qquad\cQ_L\coloneqq \crem\varphi_L^*\mathcal{Q}.$$
\end{defn:tautbundle}

In other words, the fiber of $\cS_L$ over a point $\overline t$ in the open torus $T/\CC^*$ of $X_E$ is identified with the subspace $t^{-1}L$ of $\CC^E$.  \Cref{rem:whyinverse} explains the inverse $t^{-1}$.
We now identify the localizations of $[\mathcal{S}_L]$ and $[\mathcal{Q}_L]$ at the torus fixed points of $X_E$, proving that the $K$-classes depend only on the matroid $M$ of the realization $L$.

\begin{prop}
\label{prop:tautdependsonmatroid}
For any realization $L\subseteq \CC^E$ of a matroid $M$ with ground set $E$, the $T$-equivariant $K$-classes $[\mathcal{S}_L]$ and $[\mathcal{Q}_L]$ only depend on the matroid $M$, and satisfy
\[
[\mathcal{S}_L]_\sigma=\sum_{i\in B_{\sigma}(M)} T_i^{-1}\quad\text{and}\quad [\mathcal{Q}_L]_\sigma=\sum_{i \in E\setminus B_{\sigma}(M)} T_i^{-1} \quad\text{for every permutation $\sigma\in \mathfrak S_E$}.
\]
\end{prop}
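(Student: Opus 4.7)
The plan is to compute the localizations $[\mathcal{S}_L]_\sigma$ and $[\mathcal{Q}_L]_\sigma$ directly using the factorization $\mathcal{S}_L = \operatorname{crem}\varphi_L^*\mathcal{S}$ (and similarly for $\mathcal{Q}_L$) from \Cref{defn:tautbundle}, and then observe that the resulting formulas only reference $B_\sigma(M)$, which by definition depends only on the matroid $M$. Since this automatically gives the last assertion, the real content is the two formulas.

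First, I would use \Cref{lem:keymap} to pin down the effect of $\varphi_L$ on torus fixed points: for each $\sigma \in \mathfrak S_E$, $\varphi_L(p_\sigma) = p_{B_{\overline\sigma}(M)}$. Functoriality of equivariant pullbacks then gives
\[
[\varphi_L^*\mathcal{S}]_\sigma = [\mathcal{S}]_{B_{\overline\sigma}(M)} = \sum_{i \in B_{\overline\sigma}(M)} T_i
\quad\text{and}\quad
[\varphi_L^*\mathcal{Q}]_\sigma = \sum_{j \in E \setminus B_{\overline\sigma}(M)} T_j,
\]
where I used the localization formulas for the tautological bundles on $\Gr(r;E)$ recorded in \S\ref{eg:Gr}.

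Next, I would apply the Cremona involution via \Cref{rem:crem}, which says that for any $[\mathcal E]\in K_T^0(X_E)$,
\[
(\operatorname{crem}[\mathcal E])_\sigma = [\mathcal E]_{\overline\sigma}(T_0^{-1},\ldots,T_n^{-1}).
\]
The key simple combinatorial fact is that the bar operation on permutations is an involution, i.e. $\overline{\overline\sigma} = \sigma$, since $\sigma(n - (n-i)) = \sigma(i)$. Substituting into the previous step,
\[
[\mathcal{S}_L]_\sigma = [\varphi_L^*\mathcal{S}]_{\overline\sigma}\bigl(T_0^{-1},\ldots,T_n^{-1}\bigr) = \sum_{i \in B_{\overline{\overline\sigma}}(M)} T_i^{-1} = \sum_{i \in B_\sigma(M)} T_i^{-1},
\]
and analogously $[\mathcal{Q}_L]_\sigma = \sum_{i \in E\setminus B_\sigma(M)} T_i^{-1}$.

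Finally, since $B_\sigma(M)$ depends only on the base polytope $P(M)$, and hence only on $M$, the localizations $[\mathcal{S}_L]_\sigma$ and $[\mathcal{Q}_L]_\sigma$ depend only on $M$. By the injectivity of the restriction map in \Cref{thm:localization}.\ref{localization:K}, the $K$-classes $[\mathcal{S}_L]$ and $[\mathcal{Q}_L]$ themselves depend only on $M$, completing the proof. I expect the main thing to get right (rather than a real obstacle) is bookkeeping the two places where $\sigma$ is swapped with $\overline\sigma$: once by \Cref{lem:keymap} (which trades the max-convention of $B_\sigma$ for the min-convention that arises naturally from the toric map to $X_{P(M)}$) and once by \Cref{rem:crem}, with the two involutions cancelling to yield $B_\sigma(M)$ rather than $B_{\overline\sigma}(M)$ on the nose.
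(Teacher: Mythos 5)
Your proposal is correct and follows essentially the same route as the paper's proof: pull back along $\varphi_L$ using \Cref{lem:keymap} and the fixed-point restrictions from \S\ref{eg:Gr} to get $\sum_{i\in B_{\overline\sigma}(M)}T_i$, then apply \Cref{rem:crem}, with the two $\sigma\mapsto\overline\sigma$ swaps cancelling. The paper compresses the final step into ``applying the Cremona involution then yields the desired statement,'' whereas you spell out the cancellation $\overline{\overline\sigma}=\sigma$ explicitly; the content is identical.
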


\begin{proof}
Let $r = \dim L$, and let $\varphi_L\colon  X_E \to X_{P(M)}\overset\sim \to \overline{T\cdot L} \subset \Gr(r;E)$ be as above.  For a permutation $\sigma\in \mathfrak S_E$, the map $\varphi_L\colon X_E\to \overline{T\cdot L}$ sends $p_{\sigma}$ to the fixed point of $X_E$ corresponding to $B_{\overline{\sigma}}(M)$. Therefore $\varphi_L(p_\sigma)=p_{B_{\overline\sigma}}\in \Gr(r;E)$, so we have
\[
[\varphi_L^* \cS]_{\sigma} = [\cS]_{B_{\overline\sigma}(M)} = \sum_{i\in B_{\overline\sigma}(M)} T_i \quad\text{and}\quad [\varphi_L^* \cQ]_{\sigma} = [\cQ]_{B_{\overline\sigma}(M)}= \sum_{i\in E\setminus B_{\overline\sigma}(M)} T_i
\]
by \S\ref{eg:Gr}.  Applying the Cremona involution (\Cref{rem:crem}) then yields the desired statement.
\end{proof}

This description of the $T$-equivariant $K$-classes of tautological bundles of realizations of matroids extend to arbitrary (not necessarily realizable) matroids.

\begin{prop}
\label{prop:welldefined}
For any matroid $M$ (not necessarily realizable) on ground set $E$, the two $\mathfrak S_E$-tuples $[\cS_M]$ and $[\cQ_M]$ of Laurent polynomials defined by
\[
[\cS_M]_\sigma = \sum_{i \in B_{\sigma}}T_i^{-1} \quad\text{and}\quad
[\cQ_M]_\sigma = \sum_{i\not\in B_{\sigma}}T_i^{-1} \quad\text{for $\sigma\in \mathfrak S_E$}
\]
are well-defined $T$-equivariant $K$-classes on $X_E$ satisfying $[\cS_M] + [\cQ_M] = [\uCCinv^{E}]$.
\end{prop}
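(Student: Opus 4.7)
The plan is to apply the localization theorem (\Cref{thm:localization}\ref{localization:K}) and reduce the well-definedness to verifying, for each adjacent transposition $\sigma' = \sigma \circ (i,i+1)$, the congruence
\[
[\cS_M]_\sigma \equiv [\cS_M]_{\sigma'} \pmod{1 - T_{\sigma(i+1)}/T_{\sigma(i)}}.
\]
Once $[\cS_M]$ is established as a valid $T$-equivariant $K$-class, the sum identity $[\cS_M] + [\cQ_M] = [\uCCinv^{E}]$ would follow from the pointwise check $\sum_{j \in B_\sigma} T_j^{-1} + \sum_{j \notin B_\sigma} T_j^{-1} = \sum_{j \in E} T_j^{-1}$, recognizing the right-hand side as $[\uCCinv^E]_\sigma$ (since $\uCCinv^E$ is trivial, with characters inverse to the standard ones). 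This identity simultaneously certifies $[\cQ_M]$ as a valid $K$-class.

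The key geometric input would be \Cref{prop:basepolytope}\ref{basepolytope:coarsen}: the normal fan of $P(M)$ coarsens $\Sigma_E$, so $\be_{B_\sigma}$ and $\be_{B_{\sigma'}}$ are the vertices of $P(M)$ assigned to the adjacent maximal cones of $\Sigma_E$ corresponding to $\sigma$ and $\sigma'$. These cones meet along the facet contained in the hyperplane $\{x_{\sigma(i)} = x_{\sigma(i+1)}\}$, whose outward normal from the $\sigma$-cone is $\be_{\sigma(i+1)} - \be_{\sigma(i)}$. Hence either $B_\sigma = B_{\sigma'}$, or $\be_{B_\sigma}$ and $\be_{B_{\sigma'}}$ are joined by an edge of $P(M)$ parallel to this normal. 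I would then invoke the classical characterization of edges of matroid base polytopes from \cite{GGMS87}---every edge is of the form $\be_j - \be_i$ for a basis exchange---to conclude that in the nontrivial case $B_{\sigma'} = (B_\sigma \setminus \{\sigma(i)\}) \cup \{\sigma(i+1)\}$.

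At this point the congruence reduces to a direct Laurent-polynomial calculation. The difference $[\cS_M]_\sigma - [\cS_M]_{\sigma'}$ is either $0$ or
\[
T_{\sigma(i)}^{-1} - T_{\sigma(i+1)}^{-1} = -\frac{T_{\sigma(i)} - T_{\sigma(i+1)}}{T_{\sigma(i)}T_{\sigma(i+1)}},
\]
and $T_{\sigma(i)} - T_{\sigma(i+1)}$ generates the same ideal as $1 - T_{\sigma(i+1)}/T_{\sigma(i)}$ in $\ZZ[T_0^\pm, \ldots, T_n^\pm]$, so the congruence holds. The main subtlety I see is matching the orientation of the facet crossing with the direction of the edge $\be_{B_{\sigma'}} - \be_{B_\sigma}$, which fixes the sign of the exchange; once this is handled, the remainder is a routine verification.
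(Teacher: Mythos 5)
Your proposal is correct and follows essentially the same route as the paper: reduce via the localization theorem to the adjacent-transposition congruence, use the fact that the normal fan of $P(M)$ coarsens $\Sigma_E$ to see that $B_\sigma$ and $B_{\sigma'}$ are equal or related by the exchange $\{\sigma(i),\sigma(i+1)\}$, check the congruence directly, and get the sum identity pointwise. The only cosmetic differences are that the paper routes the cone-adjacency argument through the reversed permutations $\overline\sigma,\overline{\sigma'}$ (to match its min-convention for normal fans) and does not need the GGMS edge characterization, since the difference of two $0/1$ vertices lying on a line normal to $\{x_{\sigma(i)}=x_{\sigma(i+1)}\}$ is automatically $\pm(\be_{\sigma(i)}-\be_{\sigma(i+1)})$.
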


\begin{proof}
Let $\sigma$ and $\sigma'$ be permutations such that $\sigma' = \sigma \circ (i,i+1)$ for some $i\in E$.  Note that $\overline{\sigma'}=\overline{\sigma}\circ (n-i,n-i-1)$. So, the two maximal cones in $\Sigma_E$ corresponding to $\overline{\sigma}$ and $\overline{\sigma'}$ intersect in a codimension 1 cone whose linear span is the hyperplane normal to $\be_{\overline{\sigma}(n-i)} - \be_{\overline{\sigma}(n-i-1)}$.  Since the normal fan of the base polytope $P(M)$ coarsens $\Sigma_E$, the two vertices $\be_{B_{\sigma}}$ and $\be_{B_{\sigma'}}$ are either identical, or their difference is equal to $\be_{\overline{\sigma}(n-i)} - \be_{\overline{\sigma}(n-i-1)}=\be_{\sigma(i)}-\be_{\sigma(i+1)}$. In other words, the lex-first-bases $B_\sigma$ and $B_{\sigma'}$ are thus identical or have symmetric difference $\{\sigma(i), \sigma(i+1)\}$.  Hence, the two $\mathfrak S_E$-tuples satisfy the condition in \Cref{thm:localization}.\ref{localization:K}.  Their sum is the $\mathfrak S_E$-tuple such that we have $\sum_{i\in E}T_i^{-1}$ for all $\sigma\in \mathfrak S_E$, which defines the class of $[\uCCinv^{E}]$.
\end{proof}

\begin{defn}
\label{defn:tautclasses}
For a matroid $M$ with ground set $E$, we define the \textbf{tautological sub} (resp.\ \textbf{quotient}) \textbf{$\boldsymbol K$-class} of $M$ to be the $T$-equivariant $K$-class $[\mathcal{S}_M]$ (resp.\ $[\mathcal{Q}_M]$) in $K_0^T(X_{E})$ defined by
$$[\mathcal{S}_M]_\sigma=\sum_{i\in B_{\sigma}}T_i^{-1} \quad\text{resp.}\quad [\mathcal{Q}_M]_\sigma=\sum_{i\not\in B_{\sigma}}T_i^{-1} \quad\text{for } \sigma\in \mathfrak S_E.$$
We define the \textbf{tautological sub} (resp. \textbf{quotient}) \textbf{equivariant Chern classes} of $M$ to be associated equivariant Chow classes $c^T_j(\mathcal{S}_M)$ (resp. $c^T_j(\mathcal{Q}_M)$) in $A_T^\bullet(X_E)$, which by definition are given by
$$c^T_j(\mathcal{S}_M)_\sigma=\operatorname{Elem}_j(\{-t_i\}_{i\in B_{\sigma}}) \quad\text{resp.}\quad c^T_j(\mathcal{Q}_M)_\sigma=\operatorname{Elem}_j(\{-t_i\}_{i\not \in B_{\sigma}}) \quad\text{for } \sigma\in \mathfrak S_E.$$
\end{defn}

\begin{eg}\label{eg:alphabeta}
Recall the two distinguished divisor classes $\alpha$ and $\beta$ on $X_E$, and denote by $U_{r,E}$ the uniform matroid on $E$ of rank $r$.  Note that the base polytope $P(U_{1,E})$ is the standard simplex $\Delta = \operatorname{Conv}(\be_i \mid i\in E)$.  Since $E\setminus B_\sigma(U_{n,E}) = \{\sigma(n)\}$ and $B_\sigma(U_{1,E}) = \{\sigma(0)\}$, by comparing the localizations at $p_{\sigma}$ for $\sigma\in \mathfrak{S}_E$, it follows from \Cref{rem:alphabeta} that we have
$[\cQ_{U_{n,E}}] =[\mathcal O(D_{P(U_{1,E})})]$ and $[\cS_{U_{1,E}}^\vee] = [\mathcal O(D_{-P(U_{1,E})})]$ as classes in $K_T^0(X_E)$, and thus
\[
[\cQ_{U_{n,E}}] = [\mathcal O(\alpha)] \quad\text{and}\quad [\cS_{U_{1,E}}^\vee] = [\mathcal O(\beta)]
\]
as non-equivariant $K$-classes.  Moreover, since $[\cS_M] + [\cQ_M] = [\uCCinv^E]$ for a matroid $M$, we have
\[
c(\cS^\vee_{U_{n,E}}) = c(\cQ^\vee_{U_{n,E}})^{-1} = 1 + \alpha + \cdots +\alpha^n \quad\text{and}\quad c(\cQ_{U_{1,E}}) = c(\cS_{U_{1,E}})^{-1} = 1 + \beta + \cdots +\beta^n
\]
as non-equivariant Chow classes in $A^\bullet(X_E)$.
\end{eg}

\begin{eg}
\label{eg:minusPM}
We note $[\mathcal{O}(D_{-P(M)})]= [\det \mathcal{S}_M^{\vee}]$. Indeed, by the discussion in \S\ref{subsec:Tdiv} above \Cref{rem:alphabeta},
$$[\mathcal{O}(D_{-P(M)})]_{\sigma}=\bT^{\be_{B_{\sigma}(M)}} =\prod_{i\in B_{\sigma}(M)} T_i = [\det \mathcal{S}_M^{\vee}]_{\sigma} \quad\text{for any $\sigma\in \mathfrak S_E$}.$$
\end{eg}

\begin{eg}
Let $M=U_{1,\{0,1\}}\oplus U_{1,\{2\}}$.  The following figure illustrates the fan $\Sigma_{\{0,1,2\}}$ and the class $[\mathcal{S}_M]$ represented by assignments of Laurent polynomials to the maximal cones.
\begin{center}
    \begin{tikzpicture}[scale=0.77]
    \draw (1.5,2.6)node[anchor=west]{$\overline\be_0+\overline\be_1$}--(-1.5,-2.6)node[anchor=east]{$\overline\be_2$};
    \draw (-3,0)node[anchor=east]{$\overline\be_1+\overline\be_2$}--(3,0) node[anchor=west]{$\overline\be_0$};
    \draw(1.5,-2.6)node[anchor=west]{$\overline\be_0+\overline\be_2$}--(-1.5,2.6)node[anchor=east]{$\overline\be_1$};
    \draw (2,1)node[anchor=west]{$T_0^{-1}+T_2^{-1}$};
     \draw (0,2.6)node[]{$T_1^{-1}+T_2^{-1}$};
       \draw (-2.5,1)node[]{$T_1^{-1}+T_2^{-1}$};
         \draw (-2.5,-1)node[]{$T_1^{-1}+T_2^{-1}$};
         \draw (0,-2.6)node[]{$T_0^{-1}+T_2^{-1}$};
         \draw (2,-1)node[anchor=west]{$T_0^{-1}+T_2^{-1}$};
    \end{tikzpicture}
\end{center}
This matroid $M$ has a realization $L\subseteq \CC^{\{0,1,2\}}$ where $L$ is the row-span of the matrix $\begin{bmatrix} 1 & 1 & 0 \\ 0 & 0 & 1\end{bmatrix}$.  Consider a permutation $\sigma$ defined by $(\sigma(0), \sigma(1),\sigma(2)) = (2, 0, 1)$.  Note that $\overline\be_0 + 2\overline\be_2$ lies in the interior of $\operatorname{Cone}(\overline\be_{\sigma(0)}, \overline\be_{\sigma(0)}+\overline\be_{\sigma(1)})$, the cone corresponding to $\sigma$.  Hence, the map $\lambda_\sigma\colon  \CC^* \to (\CC^*)^{\{0,1,2\}}/\CC^*$ defined by $s\mapsto [s : 1 : s^2]$ limits as $s\to 0$ to the $T$-fixed point $p_\sigma$ in $X_{\{0,1,2\}}$.
The limit as $s\to 0$ of $\lambda_\sigma(s)^{-1}L = \operatorname{row-span} \begin{bmatrix} s^{-1} & 1 & 0 \\  0 & 0 & s^{-2}\end{bmatrix}$ in $\Gr(2;\{0,1,2\})$  is
$\operatorname{row-span} \begin{bmatrix} 1 & 0 & 0 \\  0 & 0 & 1\end{bmatrix}$. This verifies as expected that $[\cS_L]_\sigma = T_0^{-1} + T_2^{-1}$.
\end{eg}

\subsection{Matroid analogues of Grassmannian $\boldsymbol K$-classes}\label{subsec:welldefinedgeneral}
Here we generalize the association of $[\mathcal{S}_M]$ and $[\mathcal{Q}_M]$ from the vector bundles $\mathcal{S}$ and $\mathcal{Q}$ on $\Gr(r;E)$, to arbitrary $K$-classes on $\Gr(r;E)$.  We will not need this until  \S\ref{sec:KtoChow}, where we relate our computations to the computations of \cite{FS12}. One may consider it as a combinatorial abstraction for arbitrary matroids of a pullback map from the $K$-ring of $\Gr(r;E)$ to the $K$-ring of the permutohedral variety.

\begin{prop}\label{prop:welldefinedgeneral}
Let $M$ be a matroid of rank $r$ with ground set $E$.  For any class $[\mathcal E]\in K_T^0(\Gr(r;E))$, 
an element $[\mathcal E_M]$ defined by
\[
[\mathcal E_M]_\sigma = [\mathcal E]_{B_{\sigma}(M)}(T_0^{-1}, \ldots, T_n^{-1}) \in \ZZ[T_0^\pm, \ldots, T_n^\pm] \quad\text{for each }\sigma\in \mathfrak S_E
\]
is a well-defined element in $K_T^0(X_E)$ such that if $L\subseteq \CC^E$ is any realization of $M$, then $[\mathcal E_M] = \crem \varphi_L^*[\mathcal E]$ where $\varphi_L\colon  X_E \to \Gr(r;E)$ is the map \eqref{eq:keymap} in \Cref{lem:keymap}.  Moreover, the assignment $[\mathcal E] \mapsto [\mathcal E_M]$ is a ring homomorphism $K_T^0(\Gr(r;E)) \to K_T^0(X_E)$ that respects exterior powers.
\end{prop}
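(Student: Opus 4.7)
The plan is to reduce every assertion to the localization description in \Cref{thm:localization}.\ref{localization:K}, using the compatibility condition on restrictions to Grassmannian $T$-fixed points recorded in \S\ref{eg:Gr}. Well-definedness will follow from a simple substitution, the geometric identification in the realizable case will follow from \Cref{lem:keymap} combined with \Cref{rem:crem}, and the ring-homomorphism statement will be formal.

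For well-definedness, given adjacent permutations $\sigma' = \sigma\circ (i,i+1)$, I would invoke the argument already used in the proof of \Cref{prop:welldefined}: since $P(M)$ is a generalized permutohedron, the lex-first-bases $B_\sigma(M)$ and $B_{\sigma'}(M)$ are either equal (in which case the congruence of \Cref{thm:localization}.\ref{localization:K} is trivial) or differ by swapping $\sigma(i) \leftrightarrow \sigma(i+1)$. In the latter case, the compatibility relation of \S\ref{eg:Gr} for $K_T^0(\Gr(r;E))$ gives
\[
[\mathcal E]_{B_\sigma(M)} \equiv [\mathcal E]_{B_{\sigma'}(M)} \mod \bigl(1 - T_{\sigma(i)}/T_{\sigma(i+1)}\bigr).
\]
Applying the ring automorphism $T_k \mapsto T_k^{-1}$ of $\ZZ[T_0^\pm, \ldots, T_n^\pm]$ transforms the modulus into $\bigl(1 - T_{\sigma(i+1)}/T_{\sigma(i)}\bigr)$, which is exactly the congruence required to place $([\mathcal E_M]_\sigma)_\sigma$ in the image of $K_T^0(X_E)$ inside $\prod_\sigma K_T^0(p_\sigma)$.

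For the geometric identification when $L$ is a realization of $M$, \Cref{lem:keymap} gives $\varphi_L(p_\sigma) = p_{B_{\overline\sigma}(M)}$, hence $(\varphi_L^*[\mathcal E])_\sigma = [\mathcal E]_{B_{\overline\sigma}(M)}$. Applying $\crem$ via \Cref{rem:crem} then yields
\[
(\crem\,\varphi_L^*[\mathcal E])_\sigma \;=\; [\mathcal E]_{B_{\overline{\overline\sigma}}(M)}(T_0^{-1}, \ldots, T_n^{-1}) \;=\; [\mathcal E]_{B_\sigma(M)}(T_0^{-1}, \ldots, T_n^{-1}),
\]
using $\overline{\overline\sigma} = \sigma$; this matches the defining formula for $[\mathcal E_M]_\sigma$.

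Finally, the homomorphism and exterior-power claim reduces to observing that both operations building $[\mathcal E_M]$ from $[\mathcal E]$ respect the ring and $\lambda$-ring structures: the restriction $K_T^0(\Gr(r;E)) \to K_T^0(p_{B_\sigma(M)})$ is a ring homomorphism preserving exterior powers by standard equivariant localization, while the substitution $T_k \mapsto T_k^{-1}$ is the ring automorphism of $K_T^0(\mathrm{pt})$ induced by the inversion automorphism of $T$, which commutes with the exterior-power construction of \S\ref{subsec:prelimChernRoots} (as this construction is defined entirely in terms of the character decomposition). The main bookkeeping obstacle is simply keeping track of how $\crem$ interacts with both $T_k \mapsto T_k^{-1}$ and the reversal $\sigma \mapsto \overline\sigma$; once this is confirmed, each of the three assertions follows.
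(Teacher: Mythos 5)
Your proposal is correct and follows essentially the same route as the paper's proof: checking the localization congruence via the lex-first-basis compatibility from \S\ref{eg:Gr}, identifying $\crem\,\varphi_L^*[\mathcal E]$ via \Cref{lem:keymap} and \Cref{rem:crem} with $\overline{\overline\sigma}=\sigma$, and noting the final claim is formal. The only cosmetic difference is that the paper first establishes well-definedness of the non-inverted class $[\mathcal E]_{B_{\overline\sigma}(M)}$ and then applies $\crem$, whereas you verify the congruence for $[\mathcal E_M]$ directly by tracking the substitution $T_k\mapsto T_k^{-1}$; both are valid.
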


The notation of \Cref{prop:welldefinedgeneral} is consistent with our definition of the tautological $K$-classes $[\cS_M]$ and $[\cQ_M]$ of a matroid $M$.  In particular, \Cref{prop:welldefinedgeneral} implies that the notation $[\bigwedge^i\cS_M]$ is unambiguous, since $[(\bigwedge^i\cS)_M] = [\bigwedge^i (\cS_M)]$, and likewise for exterior powers of $\cQ_M$ and the duals $\cS_M^\vee, \cQ_M^\vee$.

\begin{proof}
By the property of the Cremona involution (\Cref{rem:crem}), it suffices to show that an element $[\mathcal E_M']$ defined by $[\mathcal E_M']_\sigma = [\mathcal E]_{B_{\overline\sigma}(M)} \in \ZZ[T_0^\pm, \ldots, T_n^\pm]$ for $\sigma\in \mathfrak S_E$ is a well-defined element in $K_T^0(X_E)$ such that $[\mathcal E_M'] = \varphi_L^*[\mathcal E]$ for a realization $L$.
To see well-definedness, we check that $[\mathcal E_M']$ satisfies the condition in \Cref{thm:localization}.\ref{localization:K}.  Suppose that $\sigma$ and $\sigma'$ are maximal cones in $\Sigma_E$ sharing a codimension 1 face, whose linear span is $\{x\in \RR^E/\RR\mathbf 1 \mid x_i = x_j\}$ for some $i\neq j \in E$.  Since the subsets $B_{\overline\sigma}(M)$ and $B_{\overline{\sigma'}}(M)$ are either identical or have symmetric difference $\{i,j\}$, the condition for $[\mathcal E]\in K_T^0(\Gr(r;E))$ as noted in \S\ref{eg:Gr} implies that $[\mathcal E'_M]_{B_{\overline\sigma}(M)} \equiv [\mathcal E'_M]_{B_{\overline{\sigma'}}(M)} \mod (T_i - T_j)$, as desired.
That $[\mathcal E_M'] = \varphi_L^*[\mathcal E]$ for a realization $L$ follows from the fact that $\varphi_L$ maps the point $p_\sigma$ in $X_E$ to $p_{B_{\overline\sigma}(M)}$ in $\Gr(r;E)$ for any permutation $\sigma\in \mathfrak S_E$ by \Cref{lem:keymap}.  That the assignment $[\mathcal E]\mapsto [\mathcal E_M]$ is a ring homomorphism respecting exterior powers is straightforward to check from the defining formula $[\mathcal E_M]_\sigma = [\mathcal E]_{B_{\sigma}(M)}(T_0^{-1}, \ldots, T_n^{-1})$ for each $\sigma\in \mathfrak S_E$.
\end{proof}

\section{A unifying Tutte polynomial formula}\label{sec:unifyingTutte}

In this section, we prove \Cref{thm:4degintro}, reproduced below, by establishing a deletion-contraction relation for the tautological Chern classes of matroids.

\theoremstyle{theorem}
\newtheorem*{thm:4degintro}{\Cref{thm:4degintro}}
\begin{thm:4degintro}
Let $\int_{X_E}\colon  A^\bullet(X_E) \to \ZZ$ be the degree map on $X_E$.  For a matroid $M$ of rank $r$ with ground set $E$, define a polynomial
\[
t_M(x,y,z,w) = (x+y)^{-1}(y+z)^{r}(x+w)^{|E|-r}T_M\Big(\frac{x+y}{y+z},\frac{x+y}{x+w}\Big).
\]
Then, we have an equality
\[
\sum_{i+j+k+\ell=n} \left(\int_{X_E}\alpha^i\beta^j  c_k(\mathcal{S}^{\vee}_M)c_\ell(\mathcal{Q}_M)\right)x^iy^jz^kw^\ell = t_M(x,y,z,w).
\]
\end{thm:4degintro}

To do so, we will use the following property of Tutte polynomials.
See \cite[Section 6.2]{BO92}, specifically Exercise 6.5(b),  for details and further references.


\begin{prop}\label{prop:TGinv}
There is a unique $5$-variable polynomial $G_M(u,v,a,b,\gamma)\in \mathbb{Z}[u,v,a,b,\gamma]$ associated to a matroid $M$,  called a \textbf{generalized Tutte-Grothendieck invariant} of $M$, satisfying the following properties:
    \begin{enumerate}
        \item (Base case) If $|E| = 1$, then
    \[
    G_M(u,v,a,b,\gamma) = \begin{cases}
    u & \text{if $M$ has rank 1 (i.e. $M$ is a coloop)}\\
    v & \text{if $M$ has rank 0 (i.e. $M$ is a loop).}
    \end{cases}\]
    \item  (Deletion-contraction relation) If $|E| \geq 2$ and $i\in E$, then
    \[
    G_M(u,v,a,b,\gamma) = \begin{cases}
    \gamma uG_{M/ i}(u,v,a,b,\gamma)& \text{if $i\in E$ is a coloop in $M$}\\
    \gamma vG_{M\setminus i}(u,v,a,b,\gamma)& \text{if $i\in E$ is a loop in $M$}\\
    a G_{M\setminus i}(u,v,a,b,\gamma) + b G_{M/i}(u,v,a,b,\gamma)& \text{if $i\in E$ is neither a loop nor a coloop in $M$}.\end{cases}\]
    \end{enumerate}
    For a matroid $M$ with ground set $E$ and of rank $r$, this polynomial is given by
    $$G_M(u,v,a,b,\gamma)  = \gamma^{-1}b^{ r}a^{|E|-r} T_M\big(\frac{u\gamma}{b}, \frac{v\gamma}{a}\Big).$$
\end{prop}
 In particular, for the polynomial appearing in \Cref{thm:4degintro} we have
$$G_M(1,1,x+w,y+z,x+y)=(x+y)^{-1}(y+z)^r(x+w)^{|E|-r}T_M(\frac{x+y}{y+z},\frac{x+y}{x+w}).$$
Let us now restate \Cref{thm:4degintro} as follows.
Denote an element $\xi_M \in A^\bullet(X_E)[x,y,z,w]$ by
\begin{align*}
\xi_M &= (1+ \alpha x + \cdots + \alpha^nx^n) (1+\beta y + \cdots + \beta^n y^n) c(\mathcal{S}_M^{\vee},z)c(\mathcal{Q}_M,w) \\
&= c(\cS_{U_{n,E}}^\vee,x) c(\cQ_{U_{1,E}},y) c(\mathcal{S}_M^{\vee},z) c(\mathcal{Q}_M,w),
\end{align*}
where the second equality follows from \Cref{eg:alphabeta}.
We show that $\int_{X_E}\xi_M$ 
is the generalized Tutte-Grothendieck invariant in \Cref{prop:TGinv} with $u=v=1$, $a=x+w$, $b=y+z$, and $\gamma = x+y$.



For the base case, note that if $|E| = 1$ then $\xi_M = 1$ because $X_E$ is 0-dimensional. It follows that $\int_{X_E} \xi_M = 1$. 

If $|E|\geq 2$, we will show that $\int_{X_E}\xi_M$ satisfies the deletion-contraction relation in \Cref{prop:TGinv} in two steps (for concreteness we will take $i=n$ in our arugments).
First, in \S\ref{subsec:projectionmap}, we consider a surjective map $f\colon  X_E \to X_{E\setminus n}$ of permutohedral varieties defined by deleting $n\in E$, and study the behavior of the tautological Chern classes of $M$ under the pushforward map $f_*\colon  A^\bullet(X_E) \to A^{\bullet-1}(X_{E\setminus n})$.  Then, in \S\ref{subsec:delcont}, we use these observations to show \Cref{thm:delcont}, which states that $\xi_M$ satisfies
\[
f_*\xi_M = \begin{cases}
(x+y)\xi_{M\setminus n}& \text{if $n$ is a loop }\\
(x+y) \xi_{M/n} & \text{if $n$ is a coloop}\\
(x+w) \xi_{M\setminus n}+ (y+z)\xi_{M/n} &\text{if $n$ is neither a loop nor a coloop}.
\end{cases}
\]
Noting that $ \int_{X_E} \xi_M=\int_{X_{E\setminus n}} f_* \xi_M$ by the functoriality of pushforward maps, we conclude that $\int_{X_E} \xi_M$ satisfies the deletion-contraction relation of  \Cref{prop:TGinv}, and \Cref{thm:4degintro} follows.

\subsection{A projection map of permutohedral varieties}\label{subsec:projectionmap}
As before, we let $E = \{0,1, \ldots, n\}$, and assume throughout that $|E| \geq 2$.  Denote $T' = (\CC^*)^{E\setminus n}$.
The projection map $\RR^E/\RR\mathbf 1 \to \RR^{(E\setminus n)}/\RR\mathbf 1$ induces a map of fans $\Sigma_E \to \Sigma_{E\setminus n}$ because the cone of $\Sigma_E$ corresponding to a permutation $\widetilde\sigma\in \mathfrak S_E$ maps to the cone of $\Sigma_{E\setminus n}$ corresponding to the  permutation $\sigma\in \mathfrak S_{E\setminus n}$ where the sequence $(\sigma(0), \ldots, \sigma(n-1))$ is obtained from $(\widetilde\sigma(0), \ldots, \widetilde\sigma(n))$ by omitting the entry $n=\widetilde\sigma (\widetilde\sigma^{-1}(n))$.

\begin{defn}\label{defn:projmap}
Let $f\colon  X_E\to X_{E\setminus n}$ be the toric morphism of permutohedral varieties induced by the projection map $\RR^E/\RR\mathbf 1 \to \RR^{(E\setminus n)}/\RR\mathbf 1$, where $X_E$ and $X_{E\setminus n}$ are considered as a $T$-variety and a $T'$-variety, respectively.  The underlying map of tori $T/\mathbb{C}^{\times}\to T'/\mathbb{C}^{\times}$ corresponding to the toric morphism $f$ is induced by the map $T\to T'$ given by projection onto the first $n$ coordinates.

Given a permutation $\sigma \in \mathfrak S_{E\setminus n}$, define for each $i\in E$ a permutation $\sigma^i \in \mathfrak S_{E}$ by
\[
\sigma^i(j) = \begin{cases}
\sigma(j) & j < i\\
n & j = i \\
\sigma(j-1) & j > i
\end{cases}\qquad\qquad \text{for $0\leq j \leq n$,}
\]
so that the preimage $f^{-1}(p_\sigma)$ of the $T'$-fixed point $p_\sigma$ of $X_{E\setminus n}$ consists of $T$-fixed points $\{p_{\sigma^{i}}\}_{i\in E}$ of $X_E$.
\end{defn}

One may consider the permutation $\sigma^i \in \mathfrak S_E$ as the permutation obtained from $\sigma\in \mathfrak S_{E\setminus n}$ by inserting $n$ right after $\sigma(i-1)$ in the linear order $\sigma(0)\prec \ldots \prec \sigma(n-1)$ of $\sigma$.  In other words, the linear order defining the permutation $\sigma^i$ is given by $\sigma(0) \prec \cdots \prec \sigma(i-1) \prec n \prec \sigma(i) \prec \cdots \prec \sigma(n-1)$.

\medskip
In order to push forward $\xi_M$ under $f_{*}\colon  A^{\bullet}(X_E)\to A^{\bullet-1}(X_{E\setminus n})$, our strategy will be to take an equivariant version $\xi_M^T$ of $\xi_M$ and compute the image under the composite $A^{\bullet}_T(X_{E})\xrightarrow{f_{*}} A^{\bullet-1}_{T}(X_{E\backslash\{n\}})\to A^{\bullet-1}_{T'}(X_{E\backslash\{n\}})$, where the first map is given by equivariant pushforward and the second map is induced by the map $T'\to T$ given by inclusion into the first $n$ coordinates. This would recover the pushforward $f_{*}(\xi_M)$ by the commutativity of the diagram below.
\begin{center}
\begin{tikzcd}
\xi_M^T\in A^{\bullet}_T(X_{E})\hspace{1em} \ar[d,"f_{*}"]\ar[rr] & & \hspace{1em} A^{\bullet}(X_{E})\ni\xi_M \ar[d,"f_{*}"] \\
 A^{\bullet-1}_{T}(X_{E\backslash\{n\}})\ar[r]& A^{\bullet-1}_{T'}(X_{E\backslash\{n\}}) \ar[r] &  A^{\bullet-1}(X_{E\backslash\{n\}})
\end{tikzcd}
\end{center}
To compute the image of $\xi_M^T$ under the composite $A^{\bullet}_T(X_{E})\xrightarrow{f_{*}} A^{\bullet-1}_{T}(X_{E\backslash\{n\}})\to A^{\bullet-1}_{T'}(X_{E\backslash\{n\}})$, we compute the localizations of the image at each of the torus-fixed points. Our basic tool for doing so  is \Cref{lem:telescope} below.

\begin{lem}\label{lem:telescope}
For a $T$-equivariant Chow class $\xi^T \in A^\bullet_T(X_E)$, the pushforward map $f_*\colon  A_T^{\bullet}(X_E) \to A_{T}^{\bullet-1}(X_{E\setminus n})$ satisfies
\[
(f_*\xi^T)_{\sigma}|_{t_n=0}= \sum_{i=0}^{n-1}t_{\sigma(i)}^{-1}(\xi^T_{\sigma^{i+1}}\bigr|_{t_n=0}-\xi^T_{\sigma^i}|_{t_n=0}\bigr)\in A^{\bullet-1}_{T'}(\pt) \qquad\text{for any permutation $\sigma \in \mathfrak S_{E\setminus n}$,}
\]
where the right-hand-side always simplifies to a polynomial in $A^{\bullet-1}_{T'}(\pt)=\ZZ[t_0, \ldots, t_{n-1}]$.
\end{lem}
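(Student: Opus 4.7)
The plan is to apply the Atiyah--Bott--Berline--Vergne localization formula for the proper $T$-equivariant map $f \colon X_E \to X_{E\setminus n}$, then extract the formula via a partial-fraction decomposition and a telescoping sum. Since $f$ is proper, $T$-equivariant, and both source and target are smooth projective $T$-varieties with isolated $T$-fixed points, for any $\eta \in A^\bullet_T(X_E)$ and any $\sigma \in \mathfrak S_{E\setminus n}$ one has
\[
(f_*\eta)_\sigma \;=\; e(T_{p_\sigma} X_{E\setminus n}) \cdot \sum_{i=0}^{n} \frac{\eta_{\sigma^i}}{e(T_{p_{\sigma^i}} X_E)},
\]
where the sum ranges over the fixed points in $f^{-1}(p_\sigma)\cap X_E^T = \{p_{\sigma^i}\}_{i=0}^n$ from \Cref{defn:projmap}.

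The next step is to compute the Euler class ratios. By \S\ref{subsec:permutohedralcone}, the character of the tangent space at $p_\tau$ in $X_E$ is read off from the adjacent differences in the linear order $\tau(0) \prec \cdots \prec \tau(n)$. For $\sigma^i$ the element $n$ is inserted between positions $i-1$ and $i$, so comparing with $T_{p_\sigma}X_{E\setminus n}$ all factors $t_{\sigma(j)} - t_{\sigma(j+1)}$ for $j\neq i-1$ cancel, leaving for $0<i<n$
\[
\frac{e(T_{p_\sigma} X_{E\setminus n})}{e(T_{p_{\sigma^i}} X_E)} \;=\; \frac{t_{\sigma(i-1)} - t_{\sigma(i)}}{(t_{\sigma(i-1)} - t_n)(t_n - t_{\sigma(i)})} \;=\; \frac{1}{t_n - t_{\sigma(i)}} + \frac{1}{t_{\sigma(i-1)} - t_n},
\]
while the $i=0$ and $i=n$ boundary terms contribute $\frac{1}{t_n - t_{\sigma(0)}}$ and $\frac{1}{t_{\sigma(n-1)} - t_n}$, respectively.

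Setting $c_j := (t_n - t_{\sigma(j)})^{-1}$, the previous partial-fraction step rewrites
\[
(f_*\eta)_\sigma \;=\; c_0\eta_{\sigma^0} + \sum_{i=1}^{n-1} \eta_{\sigma^i}(c_i - c_{i-1}) - c_{n-1}\eta_{\sigma^n} \;=\; \sum_{i=0}^{n-1} c_i\bigl(\eta_{\sigma^i} - \eta_{\sigma^{i+1}}\bigr),
\]
after reindexing and telescoping. Substituting $t_n = 0$ converts $c_i$ into $-t_{\sigma(i)}^{-1}$ and yields the claimed formula.

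The final check is that each summand $t_{\sigma(i)}^{-1}(\eta_{\sigma^{i+1}}|_{t_n=0} - \eta_{\sigma^i}|_{t_n=0})$ is itself a polynomial, which is where \Cref{thm:localization}.\ref{localization:Chow} enters: the permutations $\sigma^i$ and $\sigma^{i+1}$ differ exactly by transposing positions $i$ and $i+1$ (swapping $n$ and $\sigma(i)$), so the compatibility relation gives $\eta_{\sigma^{i+1}} \equiv \eta_{\sigma^i} \pmod{t_n - t_{\sigma(i)}}$; setting $t_n=0$ makes the numerator divisible by $t_{\sigma(i)}$. The main obstacle is purely bookkeeping: matching the boundary terms $i=0,n$ with the interior partial-fraction pieces so the telescoping closes up cleanly.
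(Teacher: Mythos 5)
Your proof is correct and follows essentially the same route as the paper: localization of the pushforward over the fiber $\{p_{\sigma^i}\}_{i=0}^n$, the same partial-fraction identity for the Euler-class ratios, the same Abel/telescoping regrouping, and the substitution $t_n=0$; the paper phrases the localization step via Brion's equivariant multiplicities rather than quoting the ABBV pushforward formula directly, but the content is identical. Your closing polynomiality check via the congruence $\xi_{\sigma^i}\equiv\xi_{\sigma^{i+1}}\pmod{t_n-t_{\sigma(i)}}$ is also correct (and is left implicit in the paper).
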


\begin{proof} We apply the localization formula \cite[Corollary~4.2]{Bri97} with the identification of the torus action on the tangent spaces to the torus-fixed points of $X_E$  at the end of  \S\ref{subsec:permutohedralcone}, and write 
\begin{align*}
    \xi^T = \sum_{\tau \in \mathfrak{S}_E} \frac{\xi^T_\tau}{\prod_{\ell=1}^n( t_{\tau(\ell-1)} - t_{\tau(\ell)})} [p_\tau],
\end{align*}
where $[p_\tau]$ is the class of the $T$-fixed point $p_\tau \in X_E$, and its coefficient is the equivariant multiplicity of $\xi^T$ at $p_\tau$ by \cite[Theorem 5.4]{Bri97}. Applying $f_*$ to this equation and regrouping gives,
\begin{align*}
    f_*\xi^T = \sum_{\sigma \in \mathfrak{S}_{E \setminus n}} \sum_{i=0}^n \frac{\xi^T_{\sigma^i}}{\prod_{\ell=1}^n( t_{\sigma^i(\ell-1)} - t_{\sigma^i(\ell)})} [p_\sigma],
\end{align*}
Here, we are treating $X_{E\setminus n}$ as a $T$-variety via the map $T\to T'$, and the class $f_*\xi^T$ as a $T$-equivariant Chow class in $A^\bullet_T(X_{E\setminus n})$.
Applying the equivariant multiplicity map of \cite[Theorem 4.2]{Bri97} we obtain
\begin{align*}
    \frac{(f_*\xi^T)_\sigma }{\prod_{j=1}^{n-1} (t_{\sigma(j-1)}-t_{\sigma(j)}) } = \sum_{i=0}^n \frac{\xi^T_{\sigma^i}}{\prod_{\ell=1}^n( t_{\sigma^i(\ell-1)} - t_{\sigma^i(\ell)})},
\end{align*}
which yields
\begin{align*}(f_*\xi^T)_{\sigma}&=\sum_{i=0}^{n} \frac{\prod_{j=1}^{n-1} (t_{\sigma(j-1)}-t_{\sigma(j)})}{\prod_{\ell=1}^n (t_{\sigma^i(\ell-1)}-t_{\sigma^i(\ell)})}\xi^T_{\sigma^{i}}\\&=-\frac{1}{t_{\sigma(0)}-t_n}\xi^T_{\sigma^0}+\frac{1}{t_{\sigma(n-1)}-t_n}\xi^T_{\sigma^n}+\sum_{i=1}^{n-1} \frac{t_{\sigma(i-1)}-t_{\sigma(i)}}{(t_{\sigma(i-1)}-t_n)(t_n-t_{\sigma(i)})}\xi^T_{\sigma^i}\\
&=-\frac{1}{t_{\sigma(0)}-t_n}\xi^T_{\sigma^0}+\frac{1}{t_{\sigma(n-1)}-t_n}\xi^T_{\sigma^n}+\sum_{i=1}^{n-1} (\frac{1}{t_{\sigma(i-1)}-t_n}-\frac{1}{t_{\sigma(i)}-t_n})\xi^T_{\sigma^i}.\end{align*}
Reordering the terms and setting $t_n=0$ then yields the desired result.
\end{proof}

We now consider how the $T$-equivariant tautological Chern classes of matroids behave with respect to the pushforward map $f_*$.  Let us prepare with the following.

\begin{lem}\label{lem:delcontcomb}
Let $M$ be a matroid on ground set $E$, and let $\sigma \in \mathfrak S_{E\setminus n}$. For all $0\leq i \leq n$, if $n$ is a loop then $B_{\sigma^i}(M) = B_\sigma(M\setminus n)$, and if $n$ is a coloop then $B_{\sigma^i}(M) = B_\sigma(M/n) \sqcup \{n\}$.  If $n\in E$ is neither a loop nor a coloop, there is a $0\le k\le n-1$ such that
\begin{itemize}
    \item $B_{\sigma^0}(M)=B_{\sigma^1}(M)=\cdots=B_{\sigma^k}(M)=B_{\sigma}(M/n)\sqcup \{n\}$,
    \item $ B_{\sigma^{k+1}}(M)=\cdots=B_{\sigma^n}(M)=B_{\sigma}(M\setminus n)$, \text{ and}
    \item $B_{\sigma}(M/n)\sqcup \{\sigma(k)\}=B_{\sigma}(M\setminus n)$.
\end{itemize}
\end{lem}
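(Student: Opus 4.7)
The plan is to use the greedy-algorithm characterization of the lex-first-basis guaranteed by Remark~2.5: $B_\tau(M)$ consists of those $\tau(j)$ with $\tau(j)\notin\operatorname{cl}_M(\{\tau(0),\ldots,\tau(j-1)\})$. With this in hand, the lemma reduces to tracking $M$-closures as $n$ is inserted at various positions in the $\sigma$-order to form $\sigma^i$.

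The loop and coloop cases are straightforward. If $n$ is a loop, then $n\in\operatorname{cl}_M(\emptyset)$, so the greedy algorithm never selects $n$, and the remaining choices reproduce the greedy for $M\setminus n$ in the order $\sigma$. If $n$ is a coloop, then $n\notin\operatorname{cl}_M(S)$ for every $S\subseteq E\setminus n$, so greedy always selects $n$; moreover the identity $\operatorname{cl}_M(S\cup\{n\})\cap(E\setminus n)=\operatorname{cl}_M(S)$ (valid because $n$ is a coloop) shows that the choices inside $E\setminus n$ coincide with those of the greedy for $M/n$.

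For the main case I would define $k$ to be the smallest index in $\{0,\ldots,n-1\}$ with $n\in\operatorname{cl}_M(\{\sigma(0),\ldots,\sigma(k)\})$; existence uses $n\in\operatorname{cl}_M(E\setminus n)$ (not a coloop), and $k\geq 0$ uses $n\notin\operatorname{cl}_M(\emptyset)$ (not a loop). I would then run the greedy algorithms for $M\setminus n$ and $M/n$ in parallel in the common order $\sigma$. Minimality of $k$ ensures identical choices through step $k-1$. At step $k$, greedy for $M\setminus n$ selects $\sigma(k)$, while the relations $n\in\operatorname{cl}_M(\{\sigma(0),\ldots,\sigma(k)\})$ and $n\notin\operatorname{cl}_M(\{\sigma(0),\ldots,\sigma(k-1)\})$ imply by matroid exchange that $\sigma(k)\in\operatorname{cl}_M(\{\sigma(0),\ldots,\sigma(k-1),n\})$, so greedy for $M/n$ rejects $\sigma(k)$. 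After this step, the $M$-closure of the $M\setminus n$-chosen set equals the $M$-closure of the $M/n$-chosen set adjoined with $n$, so all later choices coincide, giving the third bullet $B_\sigma(M\setminus n)=B_\sigma(M/n)\sqcup\{\sigma(k)\}$.

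Finally, to compute $B_{\sigma^i}(M)$ I would run the greedy algorithm for $\sigma^i$ up to the moment $n$ is processed. If $i\leq k$, the $M$-closure of the chosen set at that moment is $\operatorname{cl}_M(\{\sigma(0),\ldots,\sigma(i-1)\})$, which excludes $n$ by minimality of $k$; thus $n$ is added, and the subsequent steps are exactly those of the greedy for $M/n$, yielding $B_\sigma(M/n)\sqcup\{n\}$. If $i\geq k+1$, the closure at that moment already contains $\{\sigma(0),\ldots,\sigma(k)\}$ and hence $n$, so $n$ is skipped and the result equals $B_\sigma(M\setminus n)$. The main obstacle is the exchange/closure argument pinpointing $\sigma(k)$ at the divergence of the two parallel greedy procedures and verifying their recoincidence beyond step $k$; these are standard matroid manipulations but require careful bookkeeping.
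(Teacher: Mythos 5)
Your proof is correct. The main difference from the paper is in how the third bullet is obtained: the paper derives $B_{\sigma}(M/n)\sqcup\{\sigma(k)\}=B_{\sigma}(M\setminus n)$ from polyhedral geometry — the cones of $\sigma^k$ and $\sigma^{k+1}$ share a facet normal to $\be_{\sigma(k)}-\be_n$, and since the normal fan of $P(M)$ coarsens $\Sigma_E$ and $B_{\sigma^k}(M)\neq B_{\sigma^{k+1}}(M)$, these two vertices of $P(M)$ must have symmetric difference $\{n,\sigma(k)\}$ — whereas you get it purely from the matroid closure/exchange axiom by running the two greedy procedures in parallel and locating the unique step where they diverge. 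You also define $k$ as the minimal $j$ with $n\in\operatorname{cl}_M(\{\sigma(0),\ldots,\sigma(j)\})$ rather than the paper's maximal $i$ with $n\in B_{\sigma^i}(M)$; these coincide because $n\in B_{\sigma^i}(M)$ exactly when $n\notin\operatorname{cl}_M(\{\sigma(0),\ldots,\sigma(i-1)\})$. For the first two bullets the approaches are essentially equivalent (greedy is just the algorithmic form of the lex-first characterization), though yours is more explicit where the paper simply says the claims "follow from the lex-first property." What your route buys is independence from Proposition 2.2(a) (the generalized-permutohedron property of $P(M)$), at the cost of the small bookkeeping steps you flag — e.g., that $n\in\operatorname{cl}_M(S\cup\sigma(k))\setminus\operatorname{cl}_M(S)$ forces both $\sigma(k)\notin\operatorname{cl}_M(S)$ and, by exchange, $\sigma(k)\in\operatorname{cl}_M(S\cup n)$, and that minimality of $k$ plus exchange gives agreement of the two closure tests at every step $j<k$. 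All of these check out.
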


\begin{proof}
When $n$ is not a coloop, the set of bases of $M\setminus n$ is $\{B \mid \text{$B$ a basis of $M$ such that } B\not\ni n\}$, and when $n$ is not a loop, the set of bases of $M/n$ is $\{B \setminus n \mid \text{$B$ a basis of $M$ such that } B\ni n\}$.
Thus, for all $0\leq i \leq n$, if $n$ is a loop then $B_{\sigma^i}(M) = B_\sigma(M\setminus n)$, and if $n$ is a coloop then $B_{\sigma^i}(M) = B_\sigma(M/n) \sqcup \{n\}$.

For all $0\leq i \leq n$, the definition of $\sigma^i\in \mathfrak S_E$ and the ``lex-first'' property (\Cref{rem:lexfirstbasis}) imply that if $n\in B_{\sigma^i}(M)$ then $n\in B_{\sigma^{j}}(M)$ for all $0\leq j \leq i$, and if $n\not\in B_{\sigma^i}(M)$ then $n\not\in B_{\sigma^j}(M)$ for all $i \leq j \leq n$.
If $n$ is neither a loop nor a coloop,
then it is contained in some basis of $M$, and also avoids some other basis of $M$.
Thus, the lex-first-basis $B_{\sigma^0}(M)$ of $M$ with respect to the linear ordering $n \prec \sigma(0) \prec \cdots \prec \sigma(n-1)$ contains $n$, and the lex-first-basis $B_{\sigma^n}(M)$ of $M$ with respect to the linear ordering $\sigma(0) \prec \cdots \prec \sigma(n-1)\prec n$ does not contain $n$.  Hence, the maximum $\max\{0\leq i \leq n-1 \mid n \in B_{\sigma^i}(M)\}$ is well-defined, and setting $k$ to be this maximum, we see that the first two bullet points follow from the lex-first property.

For the third, we first note that the two cones in $\Sigma_E$ corresponding to permutations $\sigma^k$ and $\sigma^{k+1}$ share the codimension 1 face whose linear span is the hyperplane normal to $\be_{\sigma(k)} - \be_{n}$.  Hence, since the normal fan of $P(M)$ coarsens $\Sigma_E$ (\Cref{prop:basepolytope}.\ref{basepolytope:coarsen}), and since $B_{\sigma^k}(M) \neq B_{\sigma^{k+1}}(M)$ by definition of $k$, then as in the proof of \Cref{prop:welldefined} we have the symmetric difference of $B_{\sigma^k}(M)$ and $B_{\sigma^{k+1}}(M)$ is $\{n, \sigma(k)\}$.
\end{proof}

\begin{defn}
Let $M$ be a matroid on ground set $E$, and let $\sigma\in \mathfrak S_{E\setminus n}$.  Then define $k_\sigma(M)\in \mathbb{Z}$ by
\[
k_\sigma(M) = \begin{cases}
-1 & \text{if $n\in E$ is loop in $M$}\\
n & \text{if $n\in E$ is a coloop in $M$}\\
\max \{ 0 \leq i \leq n-1 \mid n\in B_{\sigma^{i}}(M)\} &\text{if $n\in E$ is neither a loop nor a coloop}.
\end{cases}
\]
\end{defn}

The following lemma records the key behavior of tautological Chern classes of matroids that we will need to establish a deletion-contraction relation (\Cref{thm:delcont}) in the next subsection.

\begin{lem}\label{lem:delcont}
Let $M$ be a matroid on ground set $E$, and $\sigma\in \mathfrak S_{E\setminus n}$. 
\begin{enumerate}[label = (\alph*)]
\item \label{delcont:classes}For any $0\le i\le n$ we have
\begin{align*}
c^T(\cS_M^\vee,u)_{\sigma^i}|_{t_n=0}=& \begin{cases}
c^{T'}(\cS_{M/n}^\vee, u)_\sigma&\text{if $i\leq k_\sigma(M)$}\\[2mm]
c^{T'}(\cS_{M\setminus n}^\vee, u)_\sigma &\text{if $i> k_\sigma(M)$,\quad and}
\end{cases}
\\
c^T(\cQ_M,u)_{\sigma^i}|_{t_n=0}=& \begin{cases}
c^{T'}(\cQ_{M/ n}, u)_\sigma &\text{if $i\leq k_\sigma(M)$}\\[2mm]
c^{T'}(\cQ_{M\setminus n}, u)_\sigma &\text{if $i> k_\sigma(M)$}.
\end{cases}
\end{align*}
\item  \label{delcont:factor} If $n\in E$ is neither a loop nor a coloop, then writing $k=k_{\sigma(M)}$, we have
\begin{align*}
(1+t_{\sigma(k)}u) c^{T'}(\cS^\vee_{M/ n}, u)_\sigma &= c^{T'}(\cS^\vee_{M\setminus n}, u)_\sigma \qquad\text{and}\\
(1-t_{\sigma(k)}u) c^{T'}(\cQ_{M\setminus n}, u)_\sigma &= c^{T'}(\cQ_{M/n},u)_\sigma.
\end{align*}
\end{enumerate}
\end{lem}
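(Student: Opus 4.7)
The plan is to reduce both parts to direct computations at each torus-fixed point, combining the explicit Chern-polynomial formulas at fixed points from \S\ref{subsec:prelimChernRoots} and \Cref{defn:tautclasses} with the combinatorial description of $B_{\sigma^i}(M)$ provided by the preceding lemma. The starting observation is that for any $\tau\in \mathfrak S_E$,
\[
c^T(\cS_M^\vee, u)_\tau = \prod_{j\in B_\tau(M)}(1+t_j u) \quad\text{and}\quad c^T(\cQ_M, u)_\tau = \prod_{j\in E\setminus B_\tau(M)}(1-t_j u),
\]
with the analogous formulas holding for the $T'$-equivariant Chern polynomials of $\cS_{M\setminus n}, \cS_{M/n}, \cQ_{M\setminus n}, \cQ_{M/n}$ on $X_{E\setminus n}$, indexed by subsets of $E\setminus n$.

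For part \ref{delcont:classes}, I would invoke the preceding lemma to identify $B_{\sigma^i}(M)$ explicitly in all three cases (loop, coloop, and neither). When $i \le k_\sigma(M)$ (which, for a coloop, is the full range $0\le i\le n$), one has $B_{\sigma^i}(M) = B_\sigma(M/n) \sqcup \{n\}$, so the $\cS_M^\vee$-product contains a factor $(1+t_n u)$ that collapses to $1$ upon setting $t_n=0$, leaving precisely $c^{T'}(\cS^\vee_{M/n}, u)_\sigma$. Simultaneously, $E\setminus B_{\sigma^i}(M) = (E\setminus n)\setminus B_\sigma(M/n)$ contains no $t_n$, so the $\cQ_M$ product is unaffected by setting $t_n=0$ and equals $c^{T'}(\cQ_{M/n}, u)_\sigma$. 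When $i > k_\sigma(M)$ (the full range for a loop), we have $B_{\sigma^i}(M) = B_\sigma(M\setminus n)$ with $n \notin B_\sigma(M\setminus n)$, and the same manipulation with the roles reversed yields the $M\setminus n$ formulas.

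For part \ref{delcont:factor}, I would use the third bullet of the preceding lemma, namely $B_\sigma(M\setminus n) = B_\sigma(M/n) \sqcup \{\sigma(k)\}$, to factor the products directly. For the sub-bundle this gives
\[
c^{T'}(\cS^\vee_{M\setminus n},u)_\sigma = \prod_{j\in B_\sigma(M\setminus n)}(1+t_j u) = (1+t_{\sigma(k)}u)\prod_{j\in B_\sigma(M/n)}(1+t_j u) = (1+t_{\sigma(k)}u)\, c^{T'}(\cS^\vee_{M/n},u)_\sigma.
\]
For the quotient bundle, I would complement inside $E\setminus n$ to obtain $(E\setminus n)\setminus B_\sigma(M/n) = \bigl((E\setminus n)\setminus B_\sigma(M\setminus n)\bigr) \sqcup \{\sigma(k)\}$, from which the factorization $c^{T'}(\cQ_{M/n},u)_\sigma = (1-t_{\sigma(k)}u)\, c^{T'}(\cQ_{M\setminus n},u)_\sigma$ is immediate. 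No serious obstacle is expected here: the combinatorial heart of the argument is already encoded in the preceding lemma, and the present lemma amounts to bookkeeping with products of linear factors indexed by the relevant bases.
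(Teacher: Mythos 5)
Your proof is correct and follows the same route as the paper's: express the equivariant Chern polynomials at fixed points as products over $B_{\sigma^i}(M)$ and its complement, then plug in the combinatorial description of $B_{\sigma^i}(M)$ from the preceding lemma and note that the $(1\pm t_n u)$ factor dies when $t_n=0$. The paper compresses this to one sentence; you have simply spelled out the bookkeeping.
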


\begin{proof}
Recalling that for any permutation $\widetilde\sigma \in \mathfrak{S}_E$ we have
$$c^T(\mathcal{S}_M^{\vee},u)_{\widetilde\sigma}=\prod_{j\in B_{\widetilde\sigma}(M)}(1+t_ju)\quad\text{and}\quad c^T(\mathcal{Q}_M,u)_{\widetilde\sigma}=\prod_{j\not\in B_{\widetilde\sigma}(M)}(1-t_ju),$$
this is a direct reformulation of \Cref{lem:delcontcomb} after noting $(1\pm t_n u)|_{t_n=0}=1$.
\end{proof}

\subsection{A deletion-contraction relation}\label{subsec:delcont}
With these preparations in place, we are now ready to prove the deletion-contraction relation. Throughout, assume that $|E|\geq 2$.  As before, let $T' = (\CC^*)^{E\setminus n}$.
For notational clarity, let us define
$$a_E(x)=c(\mathcal{S}^{\vee}_{U_{n,E}},x)\qquad\text{and}\qquad b_E(y)=c(\mathcal{Q}_{U_{1,E}},y),$$
and their $T$-equivariant counterparts 
$a_E^T(x)=c^T(\mathcal{S}^{\vee}_{U_{n,E}},x)$ and $b_E^T(y)=c^T(\mathcal{Q}_{U_{1,E}},y)$.
In particular, for a matroid $M$ on ground set $E$, we have
\[
\xi_M=a_E(x)b_E(y)c(\mathcal{S}_M^{\vee},z)c(\mathcal{Q}_M,w).
\]
The following corollary of \Cref{lem:delcont} will be useful in our computations.

\begin{cor}\label{cor:abdelcont}
The $T$-equivariant classes $a_E^T(x)=c^T(\mathcal{S}^{\vee}_{U_{n,E}},x)$ and $b_E^T(y)=c^T(\mathcal{Q}_{U_{1,E}},y)$ satisfy
\begin{align*}
&a_E^T(x)_{\sigma^i}|_{t_n=0} = a_{E\setminus n}^{T'}(x)_{\sigma} \quad \text{for}\quad 0\leq i < n \quad\text{and}\quad a_E^T(x)_{\sigma^n}|_{t_n=0} = (1+t_{\sigma(n-1)}x)a_{E\setminus n}^{T'}(x)_\sigma, \text{ and}\\[2mm]
&b_E^T(y)_{\sigma^i}|_{t_n=0} = b_{E\setminus n}^{T'}(y)_{\sigma} \quad \text{for}\quad 0< i \leq n \quad \text{and}\quad b_E^T(y)_{\sigma^0}|_{t_n=0}= (1-t_{\sigma(0)}y)b_{E\setminus n}^{T'}(y)_\sigma
\end{align*}
for any permutation $\sigma\in \mathfrak S_{E\setminus n}$.
\end{cor}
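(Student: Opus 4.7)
The plan is to verify each identity by a direct computation from the definitions, using the localization formulas $c^T(\mathcal{S}^\vee_M, u)_\sigma = \prod_{i\in B_\sigma(M)}(1 + t_i u)$ and $c^T(\mathcal{Q}_M, u)_\sigma = \prod_{i\notin B_\sigma(M)}(1 - t_i u)$ arising from \S\ref{subsec:prelimChernRoots} and \Cref{defn:tautclasses}, together with an explicit description of the lex-first-bases of the two uniform matroids involved.

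Specifically, for $U_{n,E}$ every $n$-subset of $E$ is a basis, so the ``lex-first'' criterion (\Cref{rem:lexfirstbasis}) immediately gives $B_{\widetilde\sigma}(U_{n,E}) = E \setminus \{\widetilde\sigma(n)\}$ for any $\widetilde\sigma \in \mathfrak S_E$. Applying this to $\widetilde\sigma = \sigma^i$ as defined in \Cref{defn:projmap}: when $i < n$, we have $\sigma^i(n) = \sigma(n-1)$, so $n \in B_{\sigma^i}(U_{n,E})$ and
$$a_E^T(x)_{\sigma^i} = \prod_{j \in E \setminus \{\sigma(n-1)\}}(1+t_j x)$$
contains a factor $(1+t_n x)$ which trivializes upon setting $t_n=0$, leaving exactly $a_{E\setminus n}^{T'}(x)_\sigma = \prod_{j \in (E\setminus n)\setminus\{\sigma(n-1)\}}(1+t_j x)$. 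When $i=n$ instead, $\sigma^n(n) = n$ and the product becomes $\prod_{j \in E\setminus n}(1 + t_j x)$, which is already independent of $t_n$ and factors as $(1+t_{\sigma(n-1)}x)\cdot a_{E\setminus n}^{T'}(x)_\sigma$.

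The computation for $b_E^T(y)$ is entirely analogous: $B_{\widetilde\sigma}(U_{1,E}) = \{\widetilde\sigma(0)\}$, so $E \setminus B_{\sigma^i}(U_{1,E})$ contains $n$ precisely when $i>0$ (equivalently, $\sigma^i(0) = \sigma(0) \neq n$), and the same ``splitting-off'' argument then yields the two stated identities, with the exceptional linear factor at $t_{\sigma(0)}$ appearing only in the $i=0$ case. There is no real obstacle: once the lex-first-bases are identified, the argument is an unpacking of the definitions. As a sanity check, one can re-derive the corollary as the specialization of \Cref{lem:delcont} to $M = U_{n,E}$ and $M = U_{1,E}$ respectively, noting $k_\sigma(U_{n,E}) = n-1$ and $k_\sigma(U_{1,E}) = 0$ together with the identifications $U_{n,E}/n = U_{n-1,E\setminus n}$ and $U_{1,E}\setminus n = U_{1,E\setminus n}$.
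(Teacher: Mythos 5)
Your main route (a direct unpacking from the lex-first bases of the uniform matroids via the localization formula $c^T(\mathcal S_M^\vee,u)_{\widetilde\sigma}=\prod_{j\in B_{\widetilde\sigma}(M)}(1+t_ju)$, $c^T(\mathcal Q_M,u)_{\widetilde\sigma}=\prod_{j\notin B_{\widetilde\sigma}(M)}(1-t_ju)$) is correct and perfectly valid, but it is not the paper's route: the paper's proof is literally your closing ``sanity check,'' namely a one-line specialization of \Cref{lem:delcont} to $M=U_{n,E}$ (with $k_\sigma(U_{n,E})=n-1$, $(U_{n,E})/n=U_{n-1,E\setminus n}$) and to $M=U_{1,E}$ (with $k_\sigma(U_{1,E})=0$, $(U_{1,E})\setminus n = U_{1,E\setminus n}$). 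Your direct computation essentially re-proves the relevant cases of \Cref{lem:delcont} for these two matroids; what the paper's choice buys is brevity and uniformity, since \Cref{lem:delcont} has already been established, while your computation is more self-contained and makes the exceptional factor visibly come from the missing $t_n$-factor. Either is acceptable.

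One caution: ``entirely analogous'' in the $b_E^T$ case hides a sign, and a careful execution of your own argument would have flagged that the stated corollary appears to have a typo. Since $c^T(\mathcal Q_M,u)$ localizes to a product of factors $(1-t_ju)$ rather than $(1+t_ju)$, your computation yields
\[
b_E^T(y)_{\sigma^0}|_{t_n=0}=\prod_{j\in E\setminus n}(1-t_jy)=(1-t_{\sigma(0)}y)\,b_{E\setminus n}^{T'}(y)_\sigma,
\]
not $(1+t_{\sigma(0)}y)b_{E\setminus n}^{T'}(y)_\sigma$ as printed in the corollary. The version with $(1-t_{\sigma(0)}y)$ is also what the paper actually uses in the subsequent proof of \Cref{thm:delcont} (where the telescoping difference $1-(1-t_{\sigma(0)}y)=t_{\sigma(0)}y$ is needed). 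Had you written out the $b$ case with the same care you gave the $a$ case, you would have caught this discrepancy; as it stands, ``entirely analogous'' papers over the one place where the $\mathcal Q$-vs-$\mathcal S^\vee$ sign asymmetry matters.
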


\begin{proof}
For the part concerning $a_E^T$, apply \Cref{lem:delcont} to the matroid $U_{n,E}$, noting that $(U_{n,E})/n = U_{n-1, E\setminus n}$ and that $k_\sigma(U_{n,E}) = n-1$ for any $\sigma\in \mathfrak S_{E\setminus n}$.  Likewise, for $b_E^T$, apply \Cref{lem:delcont} to the matroid $U_{1,E}$, noting that $(U_{1,E})\setminus n = U_{1,E\setminus n}$ and that $k_\sigma(U_{1,E}) = 0$ for any $\sigma\in \mathfrak S_{E\setminus n}$.
\end{proof}

\begin{thm}\label{thm:delcont}
Let $M$ be a matroid on ground set $E$ with $|E|\geq 2$.  Let $f_*\colon  A^\bullet(X_E) \to A^{\bullet-1}(X_{E\setminus n})$ be the pushforward map of the toric map $f\colon  X_E \to X_{E\setminus n}$ in \Cref{defn:projmap}.
Then, we have
$$f_*\xi_M=\begin{cases}
(x+y)\xi_{M\setminus n}&\text{if $n\in E$ is a loop in $M$}\\
(x+y)\xi_{M/ n}&\text{if $n\in E$ is a coloop in $M$}\\
(x+w)\xi_{M\setminus n}+(y+z)\xi_{M/n}&\text{otherwise.}
\end{cases}$$
\end{thm}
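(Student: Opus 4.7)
The plan is first to lift to the $T$-equivariant setting and set
\[
\xi_M^T := a_E^T(x) b_E^T(y) c^T(\mathcal{S}_M^\vee, z) c^T(\mathcal{Q}_M, w) \in A_T^\bullet(X_E)[x,y,z,w],
\]
compute the equivariant pushforward $f_* \xi_M^T$, and descend to the non-equivariant Chow ring via the commutative diagram preceding \Cref{lem:telescope}. Because the image of $f_*\xi_M^T$ in $A_{T'}^\bullet(X_{E\setminus n})[x,y,z,w]$ is pulled back from the injective restriction to $T'$-fixed points, it suffices by \Cref{lem:telescope} to compute, for each $\sigma \in \mathfrak{S}_{E\setminus n}$, the telescoping sum
\[
(f_* \xi_M^T)_\sigma|_{t_n=0} \;=\; \sum_{i=0}^{n-1} t_{\sigma(i)}^{-1}\bigl( \xi_M^T|_{\sigma^{i+1}} - \xi_M^T|_{\sigma^i}\bigr)\bigr|_{t_n=0}.
\]

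The key observation is that the localizations $\xi_M^T|_{\sigma^i}|_{t_n=0}$ are almost constant in $i$. By \Cref{cor:abdelcont}, the $b_E^T(y)$-factor equals $b_{E\setminus n}^{T'}(y)_\sigma$ for all $i > 0$ and picks up an extra linear-in-$y$ factor only at $i = 0$; symmetrically, the $a_E^T(x)$-factor equals $a_{E\setminus n}^{T'}(x)_\sigma$ for all $i < n$ and picks up an extra linear-in-$x$ factor only at $i = n$. By \Cref{lem:delcont}.(a), the product $c^T(\mathcal{S}_M^\vee,z)c^T(\mathcal{Q}_M,w)$ is constant in $i$ except for a single jump at $i = k_\sigma(M)$, switching from the $M/n$-value to the $M\setminus n$-value. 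Consequently, the telescoping sum collapses to at most three nonzero contributions: the boundary terms at $i = 0$ and $i = n-1$, and the interior jump at $i = k_\sigma(M)$.

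In the loop case $k_\sigma(M) = -1$, there is no interior jump and every localization involves the $M\setminus n$-data; the boundary contributions evaluate to $y\cdot\xi_{M\setminus n}^{T'}|_\sigma$ and $x\cdot\xi_{M\setminus n}^{T'}|_\sigma$, summing to $(x+y)\xi_{M\setminus n}^{T'}|_\sigma$. The coloop case is symmetric and yields $(x+y)\xi_{M/n}^{T'}|_\sigma$. In the generic case, writing $k=k_\sigma(M)$, the boundary contributions are $y \cdot \xi_{M/n}^{T'}|_\sigma$ (at $i=0$) and $x\cdot\xi_{M\setminus n}^{T'}|_\sigma$ (at $i=n-1$), and the interior jump at $i = k$ contributes $t_{\sigma(k)}^{-1}\bigl(\xi_{M\setminus n}^{T'}|_\sigma - \xi_{M/n}^{T'}|_\sigma\bigr)$.

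The main step is to evaluate this interior jump cleanly. Multiplying the two identities of \Cref{lem:delcont}.(b) through by the common factor $a_{E\setminus n}^{T'}(x)_\sigma\,b_{E\setminus n}^{T'}(y)_\sigma$ produces the single identity
\[
(1 - t_{\sigma(k)} w)\,\xi_{M\setminus n}^{T'}|_\sigma \;=\; (1 + t_{\sigma(k)} z)\,\xi_{M/n}^{T'}|_\sigma,
\]
whence $t_{\sigma(k)}^{-1}(\xi_{M\setminus n}^{T'}|_\sigma - \xi_{M/n}^{T'}|_\sigma) = w\,\xi_{M\setminus n}^{T'}|_\sigma + z\,\xi_{M/n}^{T'}|_\sigma$. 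Summing the three contributions gives $(y+z)\xi_{M/n}^{T'}|_\sigma + (x+w)\xi_{M\setminus n}^{T'}|_\sigma$, and descending to the non-equivariant Chow ring delivers the desired identity. The main subtlety is bookkeeping in the corner cases $k \in \{0, n-1\}$, where the jump index coincides with a boundary index and the three contributions merge; a direct substitution into the telescoping formula, using the same multiplicative identity to rewrite $\xi_{M\setminus n}^{T'}|_\sigma - \xi_{M/n}^{T'}|_\sigma$, verifies that the final answer is unchanged.
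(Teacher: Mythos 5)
Your proposal is correct and follows essentially the same route as the paper: lift to the equivariant setting, apply \Cref{lem:telescope}, observe via \Cref{lem:delcont} and \Cref{cor:abdelcont} that the telescoping sum collapses to the two boundary terms and the jump at $i=k_\sigma(M)$, and evaluate each. Your packaging of the interior jump as the single identity $(1-t_{\sigma(k)}w)\,\xi^{T'}_{M\setminus n}|_\sigma=(1+t_{\sigma(k)}z)\,\xi^{T'}_{M/n}|_\sigma$ is a slightly cleaner way to organize the computation the paper carries out case by case, and your (correct but terse) claim that the merged corner cases $k\in\{0,n-1\}$ give the same total is exactly what the paper verifies explicitly in its third and fourth cases.
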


\begin{proof}
We compute using the $T$-equivariant classes.  That is, denote by
\[
\xi_M^T = a^T_E(x) b^T_E(y) c^T(\mathcal{S}_M^{\vee},z)c^T(\mathcal{Q}_M,w)
\]
an element in $A_T^\bullet(X_E)[x,y,z,w]$, which maps to the non-equivariant class $\xi_M$.
We wish to show that for any permutation $\sigma \in \mathfrak S_{E\setminus n}$
\begin{equation}\label{eq:Tdelcont}\tag{\#}
(f_*\xi_M^T)_\sigma|_{t_n=0} = \begin{cases}
(x+y) (\xi^{T'}_{M\setminus n})_\sigma&\text{if $k_{\sigma(M)}=-1$}\\
(x+w) (\xi^{T'}_{M\setminus n} )_\sigma+(y+z) (\xi^{T'}_{M/n})_\sigma &\text{if $0\le k_{\sigma(M)}\le n-1$}\\
(x+y)(\xi_{M/ n}^{T'})_\sigma &\text{if $k_{\sigma(M)}=n$.}
\end{cases}
\end{equation}
Let us fix an arbitrary permutation $\sigma\in \mathfrak S_{E\setminus n}$.
By \Cref{lem:telescope}, we have
\[
(f_*\xi_M^T)_{\sigma}|_{t_n=0}=\sum_{i=0}^{n-1} t_{\sigma(i)}^{-1}\big( (\xi^T_M)_{\sigma^{i+1}}\bigr|_{t_n=0}-(\xi^T_M)_{\sigma^i}\bigr|_{t_n=0} \big) .
\]

Noting $k_{\sigma(M)}=-1$ if $n$ is a loop, $k_{\sigma(M)}=n$ if $n$ is a loop, and $0\le k_{\sigma(M)}\le n-1$ otherwise, the desired equality \eqref{eq:Tdelcont} is implied by the following claim consisting of three cases.

\medskip
\noindent\textbf{Claim:} For $i=0$ we have
\begin{align*}t_{\sigma(0)}^{-1}\big( (\xi^T_M)_{\sigma^{1}}\bigr|_{t_n=0}-(\xi^T_M)_{\sigma^0}\bigr|_{t_n=0} \big)=\begin{cases}
y (\xi_{M\setminus n}^{T'})_\sigma &\text{if $k_{\sigma}(M)=-1$}\\
w(\xi_{M\setminus n}^{T'})_\sigma + (y+z)(\xi_{M/ n}^{T'})_\sigma &\text{if $k_{\sigma}(M) = 0$}\\
y (\xi_{M/ n}^{T'})_\sigma &\text{if $k_{\sigma}(M)>0$}.\end{cases}\end{align*}
For $0<i<n-1$ we have
\begin{align*}
t_{\sigma(i)}^{-1}\big( (\xi^T_M)_{\sigma^{i+1}}\bigr|_{t_n=0}-(\xi^T_M)_{\sigma^i}\bigr|_{t_n=0} \big)=\begin{cases}0&\text{if $k_\sigma(M)\ne i$}\\w(\xi_{M\setminus n}^{T'})_\sigma + z(\xi_{M/ n}^{T'})_\sigma &\text{if $k_\sigma(M)=i$.}\end{cases}
\end{align*}
Finally, for $i=n-1$ we have
\begin{align*}
    t_{\sigma(n-1)}^{-1}\big( (\xi^T_M)_{\sigma^{n}}\bigr|_{t_n=0}-(\xi^T_M)_{\sigma^{n-1}}\bigr|_{t_n=0} \big)=\begin{cases}x (\xi_{M\setminus n}^{T'})_\sigma &\text{if $k_{\sigma}(M) < n-1$}\\(x+w)(\xi_{M\setminus n}^{T'})_\sigma + z(\xi_{M/ n}^{T'})_\sigma &\text{if $k_{\sigma}(M) = n-1$.}\\
    x (\xi_{M/n}^{T'})_\sigma &\text{if $k_{\sigma}(M) =n$}\end{cases}
\end{align*}
The proofs for $i = 0$ and $i = n-1$ are nearly identical so we only show the former. Also,  \Cref{lem:delcont}.\ref{delcont:classes} and \Cref{cor:abdelcont} together imply that the difference $(\xi_M^T)_{\sigma^{i+1}}|_{t_n=0}-(\xi_M^T)_{\sigma^i}|_{t_n=0}$ is zero when $0<i<n-1$ and $i = k_{\sigma}(M)$, so when $0<i<n-1$ we only need to establish the case $k_{\sigma(M)}=i$.

\medskip
\noindent\underline{Case $i = 0$ and $k_{\sigma}(M)\neq 0$.}

Write $M'=M\setminus n$ if $k_{\sigma(M)}=-1$ and $M'=M/n$ if $k_{\sigma(M)}>0$. Since $k_{\sigma}(M)\neq 0$, \Cref{lem:delcont}.\ref{delcont:classes} implies that
 \begin{align*}
c^T(\cS_M^\vee, z)_{\sigma^0}|_{t_n=0}&=  c^T(\cS_{M}^\vee,z)_{\sigma^1}|_{t_n=0}=c^{T'}(\cS_{M'}^{\vee},z)_\sigma, \text{ and} \\
c^T(\cQ_M, w)_{\sigma^0}|_{t_n=0}&=  c^T(\cQ_{M},w)_{\sigma^1}|_{t_n=0}=c^{T'}(\cQ_{M'},w)_\sigma.
 \end{align*}
By \Cref{cor:abdelcont}, we also have
\begin{align*}
a^T_E(x)_{\sigma^0}|_{t_n=0} &=  a^T_E(x)_{\sigma^1}|_{t_n=0}=a^{T'}_{E\setminus n}(x)_{\sigma},\\
b_E^T(y)_{\sigma^0}|_{t_n=0} &= (1- t_{\sigma(0)}y) b_{E\setminus n}^{T'}(y)_{\sigma}, \quad\text{and}\\
 b^T_E(y)_{\sigma^1}|_{t_n=0}&=b^{T'}_{E\setminus n}(y)_\sigma.
\end{align*}
We conclude that
\begin{align*}
&t_{\sigma(0)}^{-1}\big( (\xi^T_M)_{\sigma^{1}}\bigr|_{t_n=0}-(\xi^T_M)_{\sigma^0}\bigr|_{t_n=0} \big) \\
&= t_{\sigma(0)}^{-1} a^{T'}_{E\setminus n}(x)_\sigma \big(1-(1-t_{\sigma(0)}y)\big) b_{E\setminus n}^{T'}(y)_\sigma c^{T'}(\mathcal{S}_{M'},z)_\sigma c^{T'}(\mathcal{Q}_{M'},w)_\sigma\\& = y (\xi_{M'}^{T'})_\sigma.
\end{align*}

\medskip
\noindent\underline{Case $i = 0$ and $k_{\sigma}(M)= 0$.}

Since $k_{\sigma}(M) = 0$, \Cref{lem:delcont} implies that
\begin{align*}
& c^T(\cS_{M}^\vee,z)_{\sigma^{0}} |_{t_n=0}= c^{T'}(\cS_{M/n}^\vee,z)_\sigma, \\
&c^T(\cS_{M}^\vee,z)_{\sigma^1}|_{t_n=0}= c^{T'}(\cS_{M\setminus n}^\vee,z)_\sigma = (1 + t_{\sigma(0)}z) c^{T'}(\cS_{M/ n}^\vee,z)_\sigma,\\
&c^T(\cQ_{M},w)_{\sigma^{0}}|_{t_n=0}= c^{T'}(\cQ_{M/n},w)_\sigma = (1- t_{\sigma(0)}w) c^{T'}(\cQ_{M\setminus n},w)_\sigma, \quad\text{and}\\
&c^T(\cQ_M,w)_{\sigma^1}|_{t_n=0} = c^{T'}(\cQ_{M\setminus n},w)_\sigma.
\end{align*}
Similarly to the previous case, by \Cref{cor:abdelcont}, we also have
\begin{align*}
a^T_E(x)_{\sigma^0}|_{t_n=0} &=  a^T_E(x)_{\sigma^1}|_{t_n=0}=a^{T'}_{E\setminus n}(x)_{\sigma},\\
b_E^T(y)_{\sigma^0}|_{t_n=0} &= (1- t_{\sigma(0)}y) b_{E\setminus n}^{T'}(y)_{\sigma}, \quad\text{and}\\
 b^T_E(y)_{\sigma^1}|_{t_n=0}&=b^{T'}_{E\setminus n}(y)_\sigma.
\end{align*}
Thus, we conclude that
\begin{align*}
&t_{\sigma(0)}^{-1}\big( (\xi^T_M)_{\sigma^{1}}\bigr|_{t_n=0}-(\xi^T_M)_{\sigma^0}\bigr|_{t_n=0} \big)  \\
&=t_{\sigma(0)}^{-1}a^{T'}_{E\setminus n}(x)_\sigma\Big((1+t_{\sigma(0)}z)-(1-t_{\sigma(0)}y)(1-t_{\sigma(0)}w)\Big)b^{T'}_{E\setminus n}(y)_{\sigma}c^{T'}(\cS_{M/n}^\vee,z)_\sigma c^{T'}(\cQ_{M\setminus n},w)_\sigma\\
&= a^{T'}_{E\setminus n}(x)_\sigma b^{T'}_{E\setminus n}(y)_{\sigma}\Big( z + y + w - t_{\sigma(0)}yw \Big)c^{T'}(\cS_{M/n}^\vee,z)_\sigma c^{T'}(\cQ_{M\setminus n},w)_\sigma\\
&=a^{T'}_{E\setminus n}(x)_{\sigma}b^{T'}_{E\setminus n}(x)_{\sigma}\Big((w(1+t_{\sigma(0)}z)+(y+z)(1-t_{\sigma(0)}w)\Big)c^{T'}(\mathcal{S}_{M/n}^{\vee},z)_{\sigma}c^{T'}(\mathcal{Q}_{M\setminus n},w)_{\sigma}\\
&=a_{E\setminus n}^{T'}(x)_{\sigma}b_{E\setminus n}^{T'}(x)_{\sigma}\left(wc^{T'}(\mathcal{S}_{M\setminus n}^{\vee},z)_{\sigma}c^{T'}(\mathcal{Q}_{M\setminus n},z)_{\sigma}+(y+z)c^{T'}(\mathcal{S}_{M/ n}^{\vee},z)_{\sigma}c^{T'}(\mathcal{Q}_{M/n},z)_{\sigma}\right)\\
& = w(\xi_{M\setminus n}^{T'})_\sigma + (y+z)(\xi_{M/n}^{T'})_\sigma,
\end{align*}
where the second last equality follows from \Cref{lem:delcont}.\ref{delcont:factor}.

\medskip
\noindent\underline{Case $0 < i <n-1$ and $k_{\sigma}(M)=i$.}

Applying \Cref{lem:delcont} to $M$ with $i =  k_\sigma(M)$ implies that
\begin{align*}
& c^T(\cS_{M}^\vee,z)_{\sigma^{i}}|_{t_n=0} = c^{T'}(\cS_{M/n}^\vee,z)_\sigma, \quad\text{and}\\
&c^T(\cS_{M}^\vee,z)_{\sigma^{i+1}} |_{t_n=0}= c^{T'}(\cS_{M\setminus n}^\vee,z)_\sigma = (1 + t_{\sigma(i)}z) c^{T'}(\cS_{M/ n}^\vee,z)_\sigma,
\end{align*}
and moreover that
\begin{align*}
&c^T(\cQ_{M},w)_{\sigma^{i}}|_{t_n=0}= c^{T'}(\cQ_{M/n},w)_\sigma = (1- t_{\sigma(i)}w) c^{T'}(\cQ_{M\setminus n},w)_\sigma, \quad\text{and}\\
&c^T(\cQ_M,w)_{\sigma^{i+1}}|_{t_n=0}= c^{T'}(\cQ_{M\setminus n},w)_\sigma.
\end{align*}
Since $0 < i < n-1$, by \Cref{cor:abdelcont} we also have
\begin{align*}
&a_E^T(x)_{\sigma^i}|_{t_n=0} = a_E^T(x)_{\sigma^{i+1}}|_{t_n=0} = a_{E\setminus n}^{T'}(x)_\sigma , \quad\text{and}\\
&b_E^T(y)_{\sigma^i}|_{t_n=0}= b_E^T(y)_{\sigma^{i+1}}|_{t_n=0} = b_{E\setminus n}^{T'}(y)_\sigma .
\end{align*}
Thus, we conclude that
\begin{align*}
&t_{\sigma(i)}^{-1}\big( (\xi^T_M)_{\sigma^{i+1}}\bigr|_{t_n=0}-(\xi^T_M)_{\sigma^i}\bigr|_{t_n=0} \big) \\
&= t_{\sigma(i)}^{-1} a_{E\setminus n}^{T'}(x)_\sigma b_{E\setminus n}^{T'}(y)_\sigma \Big( (1+t_{\sigma(i)}z) - (1-t_{\sigma(i)}w) \Big) c^{T'}(\cS_{M/n}^\vee,z)_\sigma c^{T'}(\cQ_{M\setminus n},w)_\sigma\\
&= a_{E\setminus n}^{T'}(x)_\sigma b_{E\setminus n}^{T'}(y)_\sigma\Big( z + w \Big) c^{T'}(\cS_{M/n}^\vee,z)_\sigma c^{T'}(\cQ_{M\setminus n},w)_\sigma\\
&= a_{E\setminus n}^{T'}(x)_\sigma b_{E\setminus n}^{T'}(y)_\sigma\Big( w(1+ t_{\sigma(i)}z) + z(1-t_{\sigma(i)}w) \Big)  c^{T'}(\cS_{M/n}^\vee,z)_\sigma c^{T'}(\cQ_{M\setminus n},w)_\sigma\\
&= a_{E\setminus n}^{T'}(x)_\sigma b_{E\setminus n}^{T'}(y)_\sigma\Big( wc^{T'}(\cS_{M \setminus n}^\vee,z)_\sigma c^{T'}(\cQ_{M\setminus n},w)_\sigma + z   c^{T'}(\cS_{M/n}^\vee,z)_\sigma c^{T'}(\cQ_{M/ n},w)_\sigma\Big)\\
&= w(\xi^{T'}_{M\setminus n})_\sigma + z(\xi^{T'}_{M/ n})_\sigma,
\end{align*}
where the second last equality follows from \Cref{lem:delcont}.\ref{delcont:factor}.

This concludes our proof of the claim, and thereby that of \Cref{thm:delcont}.
\end{proof}
From this, we conclude the proof of \Cref{thm:4degintro}.
\begin{proof}[Proof of \Cref{thm:4degintro}]
\Cref{thm:delcont} shows that the pushforward $f_*\xi_M$ satisfies the same deletion-contraction relation as  $t_M(x,y,z,w)$ does. Noting that $ \int_{X_E} \xi_M=\int_{X_{E\setminus n}} f_* \xi_M$ by the functoriality of pushforward maps, we conclude \Cref{thm:4degintro} by induction on the cardinality of $E$.
\end{proof}

\section{Base polytope properties}\label{sec:basepolytopeproperties}

We establish the base polytope properties of tautological classes of matroids and their Chern classes listed in \S\ref{sec:fundprops}\ref{fundprop:base}---matroid minors decomposition, valuativity, and well-behavedness under duality and direct sum.

\subsection{Matroid minors decomposition}
For a matroid $M$ on $E$ and subset $S\subseteq E$, recall that the \textbf{restriction} $M|S$ is a matroid on $S$ with rank function $\operatorname{rk}_{M|S}(\cdot) = \operatorname{rk}_M(\cdot)$, and that the \textbf{contraction} $M/S$ is a matroid on $E\setminus S$ with rank function $\operatorname{rk}_{M/S}(\cdot) = \operatorname{rk}_M(\cdot \cup S) - \operatorname{rk}_M(S)$.
A \textbf{minor} of $M$ is a matroid $M|S/S'$ on $S\setminus S'$ for some $S' \subseteq S \subseteq E$.  One can verify that $M|S/S' = (M/S')|S$.

\medskip
Let $\mathscr S\colon  \emptyset \subsetneq S_1 \subsetneq \cdots \subsetneq S_k \subsetneq E$ be a chain of nonempty proper subsets of $E$.  We always denote by convention $S_0 = \emptyset$ and $S_{k+1} = E$ for such a chain.  Faces of the base polytope of a matroid have the following decomposition property. 

\begin{prop}\label{prop:greedy}\cite[Proposition 2]{AK06}
Let $M$ be a matroid with ground set $E$.
The face of the base polytope $P(M)$ maximizing the pairing $\langle \cdot, \be_{S_1} + \cdots + \be_{S_k}\rangle$ is equal to the product
\[P(M|{S_1})\times P(M|{S_2}/S_1)\times \ldots \times P(M|{S_k}/S_{k-1})\times P(M/S_k).
\]
\end{prop}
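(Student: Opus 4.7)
The plan is to identify the face by a linear programming argument on the vertices of $P(M)$, then recognize the maximizing bases through matroid minors. Since $P(M) = \operatorname{Conv}(\be_B \mid B \text{ a basis of } M)$, the desired face is the convex hull of $\be_B$ for those bases $B$ achieving
\[
\max_{B} \sum_{i=1}^k |B \cap S_i|.
\]

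First I would compute this maximum. Since $B \cap S_i$ is independent in $M|S_i$, we have $|B \cap S_i| \leq \rk_M(S_i)$, and by iteratively extending a basis of $M|S_1$ to bases of $M|S_2, M|S_3, \ldots, M|S_{k+1} = M$, one simultaneously realizes every upper bound. Thus the maximum equals $\sum_i \rk_M(S_i)$, and is attained precisely when $|B \cap S_i| = \rk_M(S_i)$ for every $i = 1, \ldots, k$.

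Next, with the conventions $S_0 = \emptyset$ and $S_{k+1} = E$, I would translate this condition into the language of minors. The claim is that $|B \cap S_i| = \rk_M(S_i)$ for all $1 \leq i \leq k$ if and only if $B_i := B \cap (S_i \setminus S_{i-1})$ is a basis of the minor $M|S_i/S_{i-1}$ for every $1 \leq i \leq k+1$. This rests on the standard fact that a subset $J \subseteq S_i \setminus S_{i-1}$ is a basis of $M|S_i/S_{i-1}$ if and only if $I \sqcup J$ is a basis of $M|S_i$ for any (equivalently, some) basis $I$ of $M|S_{i-1}$. Having this characterization in hand, one writes $\be_B = \be_{B_1} + \cdots + \be_{B_{k+1}}$, and since the $S_i \setminus S_{i-1}$ partition $E$ into disjoint coordinate blocks, the convex hull of such vectors factors as the Cartesian product $P(M|S_1) \times P(M|S_2/S_1) \times \cdots \times P(M/S_k)$ inside $\RR^E = \prod_i \RR^{S_i \setminus S_{i-1}}$.

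The main obstacle I anticipate is the combinatorial bijection of the third step between maximizing bases of $M$ and tuples of minor-bases. Both directions depend on the interplay between restriction and contraction; in particular, on the rank identity $\rk_{M|S_i/S_{i-1}}(S_i \setminus S_{i-1}) = \rk_M(S_i) - \rk_M(S_{i-1})$ and on the exchange/extension property of matroid bases. Each step is standard but requires careful bookkeeping through the chain $\mathscr S$.
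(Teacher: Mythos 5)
Your proof is correct. The paper does not supply its own argument here — it simply cites \cite[Proposition 2]{AK06} — and your greedy/linear-programming argument (bound $\sum_i|B\cap S_i|\le\sum_i\rk_M(S_i)$, achieve it by iterated basis extension, then identify the maximizing bases with tuples of bases of the minors $M|S_{i+1}/S_i$ via the standard restriction--contraction fact) is precisely the standard proof of that cited result.
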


Many combinatorial invariants on matroids respect this matroid minors decomposition behavior, which underlies the Hopf algebra structure on matroids studied in \cite{JR79, Sch87, AA,KRAJEWSKI2018271,Coalgebra}.
We show that the tautological classes of matroids also display such behavior.  We prepare with the following, which is a geometric restatement of the fact that faces of permutohedra are products of smaller permutohedra (see for example \cite[\S4.1]{AA} and references therein).

\begin{prop}
\label{prop:torusrest}
Let $\mathscr S\colon  \emptyset \subsetneq S_1 \subsetneq \cdots \subsetneq S_k \subsetneq E$ be a chain of nonempty proper subsets of $E$.  Then, the torus-orbit closure $Z_{\mathscr S} \subset X_E$ corresponding to $\operatorname{Cone}(\overline \be_{S_1}, \ldots, \overline\be_{S_k}) \in \Sigma_E$ has a natural $T$-equivariant isomorphism
\[
Z_{\mathscr S} \simeq X_{S_1} \times X_{S_2\setminus S_1} \times \cdots \times X_{E\setminus S_k},
\]
where the torus $T = (\CC^*)^E$ acts on the torus $T_{S_{i+1}\setminus S_{i}} = (\CC^*)^{S_{i+1}\setminus S_i}$ via the obvious projection for each $i = 0, \ldots, k$.  In particular, we have
\[
A^\bullet_T(Z_{\mathscr S}) \simeq \bigotimes_{i=0}^k A^\bullet_{T_{S_{i+1}\setminus S_{i}}}(X_{{S_{i+1}\setminus S_{i}}}) \quad\text{and}\quad K_T^0(Z_{\mathscr S}) \simeq \bigotimes_{i=0}^{k} K_{T_{S_{i+1}\setminus S_{i}}}^0(X_{{S_{i+1}\setminus S_{i}}}).
\]
Under the isomorphism, for a $(k+1)$-tuple $(\sigma_0, \ldots, \sigma_k)$ of permutations where $\sigma_i \in \mathfrak S_{S_{i+1}\setminus S_i}$, 
the inclusion $Z_{\mathscr S} \hookrightarrow X_E$ maps the $T$-fixed point $p_{\sigma_0} \times \cdots \times p_{\sigma_k}$ of $Z_{\mathscr S}$ to the point $p_{\sigma}$  of $X_E$ where $\sigma$ is the permutation $\sigma_0 \circ \cdots \circ \sigma_k$ on $E = \bigsqcup_{i=0}^k (S_{i+1}\setminus S_i)$.
\end{prop}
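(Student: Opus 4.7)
The plan is to apply the orbit-cone correspondence for toric varieties: the closure $Z_\tau$ of the torus orbit corresponding to a cone $\tau$ of a fan $\Sigma\subset N_\RR$ is itself a toric variety whose fan (the \emph{star fan} of $\tau$) lives in $N_\RR/\operatorname{span}(\tau)$, with cones given by the images of the cones of $\Sigma$ containing $\tau$. Applied to $\tau=\operatorname{Cone}(\overline\be_{S_1},\ldots,\overline\be_{S_k})$ in $\Sigma_E$, the task becomes to identify this star fan with the product fan $\Sigma_{S_1}\times\Sigma_{S_2\setminus S_1}\times\cdots\times\Sigma_{E\setminus S_k}$.

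First I would identify the ambient lattice. The direct-sum decomposition $\ZZ^E=\bigoplus_{i=0}^k \ZZ^{S_{i+1}\setminus S_i}$ induces an isomorphism
\[
(\ZZ^E/\ZZ\mathbf 1)\big/\operatorname{span}_\ZZ(\overline\be_{S_1},\ldots,\overline\be_{S_k}) \;\xrightarrow{\sim}\; \bigoplus_{i=0}^k \ZZ^{S_{i+1}\setminus S_i}/\ZZ\mathbf 1_{S_{i+1}\setminus S_i},
\]
because the sublattice generated by $\mathbf 1,\be_{S_1},\ldots,\be_{S_k}$ meets each summand $\ZZ^{S_{j+1}\setminus S_j}$ in exactly $\ZZ\mathbf 1_{S_{j+1}\setminus S_j}$. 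Next, a cone of $\Sigma_E$ containing $\tau$ corresponds to a chain of nonempty proper subsets of $E$ refining $\mathscr S$, and such a refinement amounts to an independent choice, for each $i$, of a (possibly empty) chain of nonempty proper subsets of $S_{i+1}\setminus S_i$ inserted between $S_i$ and $S_{i+1}$. Under the lattice isomorphism, a ray generator $\overline\be_{S_i\cup T}$ with $T\subsetneq S_{i+1}\setminus S_i$ nonempty maps to the ray generator $\overline\be_T$ in the $i$-th summand. Hence the star fan of $\tau$ agrees with the product fan on the nose, giving the claimed toric isomorphism $Z_{\mathscr S}\simeq\prod_i X_{S_{i+1}\setminus S_i}$.

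The $T$-equivariance is read off from the same diagram: the cocharacter lattice of the open torus of $Z_{\mathscr S}$ is the left-hand side displayed above, and the map from the cocharacter lattice of $T/\CC^*$ is precisely the product of the projections onto the $T_{S_{i+1}\setminus S_i}/\CC^*$. For the identification of $T$-fixed points, a maximal cone of $\Sigma_E$ contains $\tau$ iff the corresponding permutation $\sigma\in\mathfrak S_E$ lists the elements of $S_1$ first, then those of $S_2\setminus S_1$, and so on; these are exactly the concatenations $\sigma_0\circ\cdots\circ\sigma_k$ with $\sigma_i\in\mathfrak S_{S_{i+1}\setminus S_i}$, matching the stated correspondence. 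The decompositions of $A_T^\bullet(Z_{\mathscr S})$ and $K_T^0(Z_{\mathscr S})$ then follow either from the $T$-equivariant K\"unneth formula, or concretely by reading off the piecewise-polynomial and piecewise-Laurent descriptions of \Cref{thm:localization} applied to a product of smooth proper toric varieties.

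The main obstacle is the bookkeeping: verifying the displayed lattice isomorphism at the $\ZZ$-level (not merely $\RR$-level) is what ensures the star fan matches the product fan as a rational fan, and therefore that the toric isomorphism descends to one over $\operatorname{Spec}\ZZ$ rather than only topologically. Once this is in place, the remainder of the argument reduces to unwinding the orbit-cone correspondence and the combinatorial bijection between refinements of $\mathscr S$ and tuples of chains in the blocks $S_{i+1}\setminus S_i$.
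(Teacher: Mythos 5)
Your proof is correct, and it makes explicit the standard toric argument that the paper does not spell out: the paper gives no proof of \Cref{prop:torusrest}, remarking only that it ``is a geometric restatement of the fact that faces of permutohedra are products of smaller permutohedra'' and referring to \cite[\S4.1]{AA17}. What you have written — identifying $Z_{\mathscr S}$ with the toric variety of the star fan of $\operatorname{Cone}(\overline\be_{S_1},\ldots,\overline\be_{S_k})$, verifying the $\ZZ$-level lattice isomorphism
\[
(\ZZ^E/\ZZ\mathbf 1)\big/\operatorname{span}_\ZZ(\overline\be_{S_1},\ldots,\overline\be_{S_k}) \simeq \bigoplus_{i=0}^k \ZZ^{S_{i+1}\setminus S_i}/\ZZ\mathbf 1_{S_{i+1}\setminus S_i},
\]
and matching cones via the bijection between flags refining $\mathscr S$ and tuples of flags in the blocks $S_{i+1}\setminus S_i$ — is exactly the toric dual of that polytope statement, so it is the intended argument. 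One minor presentational point: when you write that the generators $\mathbf 1,\be_{S_1},\ldots,\be_{S_k}$ ``meet each summand $\ZZ^{S_{j+1}\setminus S_j}$ in exactly $\ZZ\mathbf 1_{S_{j+1}\setminus S_j}$,'' it is cleaner to observe directly that $\operatorname{span}_\ZZ(\mathbf 1,\be_{S_1},\ldots,\be_{S_k})=\bigoplus_{i}\ZZ\be_{S_{i+1}\setminus S_i}$ (telescoping $\be_{S_{i+1}}-\be_{S_i}=\be_{S_{i+1}\setminus S_i}$), which immediately identifies it with the kernel of the component-wise quotient maps; this also shows the quotient is torsion-free, which is the integrality you correctly flag as the crux. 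Your identification of fixed points is also right, with the understanding that $\sigma_0\circ\cdots\circ\sigma_k$ denotes the concatenation of the block permutations (as you read it), not functional composition.
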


\begin{prop}
\label{prop:tautrest}
Let $\mathscr{S}\colon  \emptyset \subsetneq S_1\subsetneq \cdots \subsetneq S_k\subsetneq E$ be a chain of nonempty proper subsets of $E$, and let $M$ be a matroid on $E$. Then, under the isomorphism in \Cref{prop:torusrest}, one has
\[
[\mathcal{S}_M]|_{Z_{\mathscr S}}=\sum_{i=0}^{k}  1^{\otimes i}\otimes [\mathcal{S}_{M|S_{i+1}/S_{i}}]\otimes 1^{\otimes(k-i)} \quad\text{and}\quad [\mathcal{Q}_M]|_{Z_{\mathscr S}}=\sum_{i=0}^{k} 1^{\otimes i}\otimes [\mathcal{Q}_{M|S_{i+1}/S_{i}}]\otimes 1^{\otimes(k-i)}
\]
as elements in $K_T^0(Z_{\mathscr S})$. In particular, with a formal variable $u$, one has
\[
c(\mathcal{S}_M,u)|_{Z_{\mathscr S}}=\bigotimes_{i=0}^k c(\mathcal{S}_{M|S_{i+1}/S_{i}},u) \quad\text{and}\quad c(\mathcal{Q}_M,u)|_{Z_{\mathscr S}}=\bigotimes_{i=0}^k c(\mathcal{Q}_{M|S_{i+1}/S_{i}},u)
\]
as elements in $\bigotimes_{i=0}^k A^\bullet(X_{S_{i+1}\setminus S_{i}}) \simeq A^\bullet(Z_{\mathscr S})$.
\end{prop}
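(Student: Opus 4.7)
The plan is to verify the claimed identities by checking localizations at every $T$-fixed point of $Z_{\mathscr S}$, using \Cref{thm:localization}.\ref{localization:K}, the explicit description of $[\mathcal{S}_M]$ and $[\mathcal{Q}_M]$ from \Cref{defn:tautclasses}, and the product decomposition of faces of $P(M)$ from \Cref{prop:greedy}. First, fix an arbitrary tuple $(\sigma_0, \ldots, \sigma_k)$ with $\sigma_i \in \mathfrak S_{S_{i+1}\setminus S_i}$, and let $\sigma = \sigma_0 \circ \cdots \circ \sigma_k \in \mathfrak S_E$ be the concatenation. By \Cref{prop:torusrest} the corresponding $T$-fixed point $p_{\sigma_0}\times\cdots\times p_{\sigma_k}$ of $Z_{\mathscr S}$ maps into $X_E$ as $p_\sigma$, so after restriction the first asserted identity reduces at this fixed point to the combinatorial statement
\[
B_\sigma(M) \;=\; \bigsqcup_{i=0}^{k} B_{\sigma_i}\bigl(M|S_{i+1}/S_{i}\bigr)
\]
under the natural partition $E = \bigsqcup_{i}(S_{i+1}\setminus S_i)$, since both sides of the claimed $K$-class identity will then localize to $\sum_{j\in B_\sigma(M)} T_j^{-1}$.

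To establish this combinatorial claim, I will pick weights $v_0 > \cdots > v_n$ with very large gaps at positions $|S_j|-1$ versus $|S_j|$ for $j = 1,\ldots,k$. Decomposing $v$ as a dominant piece that is constant on each block $\sigma^{-1}(S_{j}\setminus S_{j-1})$ plus a small strictly decreasing refinement, the dominant part of $\sum_j v_j \overline\be_{\sigma(j)}$ becomes a positive combination of the vectors $\overline\be_{S_1},\ldots,\overline\be_{S_k}$; by \Cref{prop:greedy}, its maximizer on $P(M)$ is the face $\prod_i P(M|S_{i+1}/S_i)$. The small refinement, restricted to the $i$-th factor $S_{i+1}\setminus S_i$, then realizes exactly the order $\sigma_i$ and so selects $B_{\sigma_i}(M|S_{i+1}/S_i)$ on that factor. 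Uniqueness of the maximizer, justified in the discussion following \Cref{prop:basepolytope} from \Cref{prop:basepolytope}.\ref{basepolytope:coarsen}, guarantees that $\be_{B_\sigma(M)}$ is independent of the choice of $v$ and so equals the concatenation, proving the claim.

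Granting the decomposition of $B_\sigma(M)$, the $K$-class identity for $[\mathcal{S}_M]$ follows immediately by comparing localizations, and the one for $[\mathcal{Q}_M]$ follows by subtraction using $[\mathcal{S}_M]+[\mathcal{Q}_M]=[\uCCinv^E]$, together with the obvious decomposition $[\uCCinv^E]|_{Z_{\mathscr S}} = \sum_i 1^{\otimes i}\otimes[\uCCinv^{S_{i+1}\setminus S_i}]\otimes 1^{\otimes(k-i)}$ of the restricted trivial bundle. The total Chern polynomial assertion then follows from the multiplicativity of Chern classes under direct sums, which converts the $K$-theoretic sum into a product in the tensor-product Chow ring $\bigotimes_i A^\bullet(X_{S_{i+1}\setminus S_i})$.

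The only real obstacle is the combinatorial identification of $B_\sigma(M)$ with the disjoint union of lex-first bases of the minors $M|S_{i+1}/S_i$; everything else is bookkeeping via localization, linearity in $K^0_T$, and standard multiplicativity of Chern classes.
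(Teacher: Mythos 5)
Your proof is correct and follows essentially the same route as the paper's: restrict to a $T$-fixed point $p_\sigma$ with $\sigma = \sigma_0\circ\cdots\circ\sigma_k$, reduce to the combinatorial identity $B_\sigma(M) = \bigsqcup_i B_{\sigma_i}(M|S_{i+1}/S_i)$, and cite \Cref{prop:greedy}. The paper states this combinatorial identity with no elaboration, whereas you spell out the weight-splitting argument (dominant block weights selecting the face via \Cref{prop:greedy}, then a small refinement selecting the lex-first vertex in each factor); and you get the $[\cQ_M]$ formula by subtraction from $[\uCCinv^E]$ rather than by the paper's "proved similarly" — both minor stylistic differences, not different methods.
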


\begin{proof}
Let $\sigma\in \mathfrak S_E$ be the composition of permutations $\sigma_0, \ldots, \sigma_k$ where $\sigma_i \in \mathfrak S_{S_{i+1}\setminus S_i}$ for $i = 0, \ldots, k$.  Since $B_{\sigma}(M) = \bigsqcup_{i=0}^k B_{\sigma_i}(M|S_{i+1}/S_{i})$ by \Cref{prop:greedy}, the restrictions of the $K$-classes $[\mathcal{S}_M]|_{Z_{\mathscr S}}$ and $\sum_{i=0}^{k}  1^{\otimes i}\otimes [\mathcal{S}_{M|S_{i+1}/S_{i}}]\otimes 1^{\otimes(k-i)}$ to the point $p_\sigma$ both give the same Laurent polynomial $\sum_{j\in B_\sigma(M)} T_j^{-1}$.  The statement for $[\cQ_M]$ is proved similarly.
\end{proof}

\begin{cor}\label{cor:alphabetarest}
Let $\mathscr{S}\colon  \emptyset \subsetneq S_1\subsetneq \cdots \subsetneq S_k\subsetneq E$ be a chain of nonempty proper subsets of $E$, and let $M$ be a matroid on $E$.  The divisor classes $\alpha_E$ and $\beta_E$ on $X_E$ defined in \Cref{defn:alphabeta} satisfy
\[
\alpha|_{Z_{\mathscr S}} = 1^{\otimes k} \otimes \alpha_{E\setminus S_k} \quad\text{and}\quad \beta|_{Z_{\mathscr S}} = \beta_{S_1} \otimes 1^{\otimes k}
\]
as elements in  $\bigotimes_{i=0}^k A^\bullet(X_{S_{i+1}\setminus S_{i}}) \simeq A^\bullet(Z_{\mathscr S})$.
\end{cor}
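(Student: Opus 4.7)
Proof plan for Corollary~\ref{cor:alphabetarest}:

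The plan is to reduce the statement to a direct application of Proposition~\ref{prop:tautrest} by computing the relevant minors of the uniform matroids $U_{n,E}$ and $U_{1,E}$. Recall from Example~\ref{eg:alphabeta} that $\alpha_E = c_1(\cQ_{U_{n,E}})$ and $\beta_E = c_1(\cS^\vee_{U_{1,E}})$, and that for any ground set $F$ the identities $\alpha_F = c_1(\cQ_{U_{|F|-1,F}})$ and $\beta_F = c_1(\cS^\vee_{U_{1,F}})$ hold. So it suffices to show the tensor factorizations for the total Chern classes $c(\cQ_{U_{n,E}},u)|_{Z_{\mathscr S}}$ and $c(\cS^\vee_{U_{1,E}},u)|_{Z_{\mathscr S}}$ and then extract $c_1$.

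First I would compute the minors $U_{n,E}|S_{i+1}/S_i$ for $i=0,\ldots,k$. Since $|S_{i+1}|\leq n$ for $i<k$, every subset of $S_{i+1}$ is independent in $U_{n,E}$, and a quick rank-function calculation shows $U_{n,E}|S_{i+1}/S_i$ is the full-rank (free) matroid on $S_{i+1}\setminus S_i$, whose unique basis is the whole ground set. For $i=k$, a similar rank computation yields $U_{n,E}/S_k = U_{|E\setminus S_k|-1,\,E\setminus S_k}$. Symmetrically, for $U_{1,E}$: when $i=0$ we have $U_{1,E}|S_1 = U_{1,S_1}$; for $i\geq 1$, contracting a nonempty subset of a rank~$1$ uniform matroid gives the all-loops matroid on $S_{i+1}\setminus S_i$, whose unique basis is $\emptyset$.

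Next I would observe that free matroids have rank $|F|$, so $\cQ$ has rank $0$ and $c(\cQ_{\text{free}},u)=1$, while all-loops matroids have rank $0$, so $\cS$ has rank $0$ and $c(\cS^\vee_{\text{loops}},u)=1$. Combining this with Proposition~\ref{prop:tautrest} applied to $U_{n,E}$ and $U_{1,E}$, all but one tensor factor collapses to $1$, yielding
\[
c(\cQ_{U_{n,E}},u)\bigr|_{Z_{\mathscr S}} = 1^{\otimes k}\otimes c(\cQ_{U_{|E\setminus S_k|-1,\,E\setminus S_k}},u)
\]
and
\[
c(\cS^\vee_{U_{1,E}},u)\bigr|_{Z_{\mathscr S}} = c(\cS^\vee_{U_{1,S_1}},u)\otimes 1^{\otimes k}.
\]
Taking the degree-$1$ parts gives $\alpha_E|_{Z_{\mathscr S}} = 1^{\otimes k}\otimes \alpha_{E\setminus S_k}$ and $\beta_E|_{Z_{\mathscr S}} = \beta_{S_1}\otimes 1^{\otimes k}$ as desired.

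There is no real obstacle here: the proof is essentially bookkeeping once Proposition~\ref{prop:tautrest} is in hand. The only point requiring mild care is the minor computation, and in particular verifying that for $i<k$ the minor of $U_{n,E}$ is a free matroid (so its quotient bundle has rank zero) and for $i>0$ the minor of $U_{1,E}$ is a loop matroid (so its subbundle has rank zero); the boundary cases $i=k$ (for $\alpha$) and $i=0$ (for $\beta$) are then the sole surviving tensor factors and match the stated formulas by the identifications in Example~\ref{eg:alphabeta}.
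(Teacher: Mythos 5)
Your proposal is correct and follows essentially the same route as the paper: both identify $\alpha$ and $\beta$ with $c_1(\cQ_{U_{n,E}})$ and $c_1(\cS^\vee_{U_{1,E}})$, compute that the minors $U_{n,E}|S_{i+1}/S_i$ are corank $0$ except for the top piece (which is corank $1$ uniform) and that the minors of $U_{1,E}$ are rank $0$ except for the bottom piece (which is $U_{1,S_1}$), and then apply Proposition~\ref{prop:tautrest} so that all but one tensor factor is trivial. The bookkeeping in your minor computations is accurate.
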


\begin{proof}
 In \Cref{eg:alphabeta}, we showed that the divisor classes $\alpha_E$ and $\beta_E$ on $X_E$ satisfy $[\cQ_{U_{n,E}}] = [\mathcal O(\alpha)]$ and $[\cS_{U_{1,E}}^\vee] = [\mathcal O(\beta)]$ as non-equivariant $K$-classes.  For any subsets $\emptyset \subseteq S'\subseteq S \subseteq E$, the matroid minor $U_{n,E}|S/S'$ is a uniform matroid of corank 1 if $S=E$ and $S'\subsetneq E$, and is corank 0 otherwise.  Likewise, the matroid minor $U_{1,E}|S/S'$ is a uniform matroid of rank 1 if $\emptyset\subsetneq S$ and $S' = \emptyset$, and is rank 0 otherwise.  Thus, applying \Cref{prop:tautrest} with $M= U_{n,E}$ and $M = U_{1,E}$ implies the desired result.
\end{proof}

\subsection{Valuativity}
For a subset $P \subseteq \RR^E$, let $1_P\colon  \RR^E \to \ZZ$ denote its indicator function defined by $1_P(x) = 1$ if $x\in P$ and 0 otherwise.
An important tool for extending algebraic constructions from realizable matroids to arbitrary matroids is the following notion of ``valuativity''.

\begin{defn}
\cite[Definition~3.5]{derksen2010valuative}
A function $\phi$ from the set of matroids with ground set $E$ to an abelian group $A$ is \textbf{valuative} if, for any matroids $M_1, \ldots, M_\ell$ on $E$ and integers $a_1, \ldots, a_\ell$ such that $\sum_{i=1}^\ell a_i 1_{P(M_i)} = 0$, the function $\phi$ satisfies $\sum_{i=1}^\ell a_i \phi(M_i) = 0$.
\end{defn}

In other words, the map $\phi$ is valuative if it factors through the map sending $M$ to the indicator function $1_{P(M)}$ of its base polytope $P(M)$.  Many invariants of matroids satisfy valuativity, including the Tutte polynomial and its specializations \cite{ardila_fink_rincon_2010, derksen2010valuative}. For a more comprehensive list see \cite{ardila2020valuations}, and see \cite{ESS21} for a study of valuativity for Coxeter matroids.

\smallskip
We show that a wide range of classes associated to the tautological $K$-classes of matroids are also valuative.
These will include any polynomial expression in exterior powers or Chern classes of $[\mathcal{S}_M]$, $[\mathcal{Q}_M]$, and their dual $K$-classes $[\cS_M^\vee]$ and $[\cQ_M^\vee]$.  For example, one may consider assigning to a matroid $M$ the class $[\bigwedge^2 \cS_M][\bigwedge^3 \cQ_M] +4 [\bigwedge^5 \cS_M^\vee]$, or the class $c_1(\cS_M)^2c_{2}(\cQ_M) - c_4(\cS_M)^3$.

More precisely, recall from \Cref{subsec:prelimChernRoots} that for an element $\lambda(x) \in \Lambda\subset  \ZZ[[x_1,x_2,\dots]]$ in the ring of symmetric functions,
 we may construct the classes $[\mathsf S^{\lambda}\mathcal E] \in K_T(X_E)$ and $\mathsf s_{\lambda}^T(\mathcal E) \in A^\bullet_T(X_E)$ when $\mathcal E = [\cS_M]$, $[\cQ_M]$, $[\cS_M^\vee]$, or $[\cQ_M^\vee]$.
For a polynomial $f(a,b,c,d)\in \ZZ[a,b,c,d]$ and a sequence $\boldsymbol \lambda = (\lambda_a(x),\lambda_b(x), \lambda_c(x), \lambda_d(x))$ of symmetric functions, we define
\begin{align*}
\phi_{f,\boldsymbol \lambda}\colon  \{\text{Matroids on $E$}\} \to K_T(X_E) &\quad\text{by}\quad M \mapsto f([\mathsf S^{\lambda_a}\cS_M], [\mathsf S^{\lambda_b}\cQ_M], [\mathsf S^{\lambda_c}\cS_M^\vee], \mathsf [S^{\lambda_d}\cQ_M^\vee]), \quad\text{and}\\
\psi_{f,\boldsymbol \lambda}\colon  \{\text{Matroids on $E$}\} \to A^\bullet_T(X_E) &\quad\text{by}\quad M \mapsto f(\mathsf s_{\lambda_a}^T(\cS_M), \mathsf s_{\lambda_b}^T(\cQ_M),\mathsf s_{\lambda_c}^T(\cS_M^\vee),\mathsf s_{\lambda_d}^T(\cQ_M^\vee)).
\end{align*}
For instance, by taking $\boldsymbol \lambda$ to be appropriate elementary symmetric functions and $f$ certain polynomials, one obtains the two aforementioned examples.

\begin{prop}
\label{prop:valpolys}
The maps $\phi_{f,\boldsymbol\lambda}$ and $\psi_{f,\boldsymbol\lambda}$ defined above are valuative.
\end{prop}

We prepare the proof with the following lemma.

\begin{lem}
\label{lem:Bsigmaval}
Let $\ZZ^{\binom{E}{r}}$ be the free abelian group on the set of $r$-subsets of $E$, with its standard basis denoted $\{\langle B \rangle \mid B\in \binom{E}{r}\}$.  For any fixed permutation $\sigma \in \mathfrak S_E$, the assignment $M \mapsto \langle B_{\sigma}(M) \rangle\in \mathbb{Z}^{\binom{E}{r}}$ on matroids of rank $r$ with ground set $E$ is valuative.
\end{lem}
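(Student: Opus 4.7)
The plan is to exhibit, for each $B\in\binom{E}{r}$, a $\ZZ$-linear functional $\Phi_B$ on the subgroup of $\ZZ^{\RR^E}$ generated by matroid polytope indicators $\{1_{P(M')}\}$, such that $\Phi_B(1_{P(M)})$ equals $1$ if $B=B_\sigma(M)$ and $0$ otherwise. Once such functionals are in hand, valuativity is automatic: if $\sum_i a_i 1_{P(M_i)}=0$, then
\[
\sum_i a_i \langle B_\sigma(M_i)\rangle \;=\; \sum_{B\in\binom{E}{r}} \Phi_B\bigl(\textstyle\sum_i a_i 1_{P(M_i)}\bigr) \langle B\rangle \;=\; 0.
\]

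To construct $\Phi_B$, I would fix a generic vector $w=v_0\be_{\sigma(0)}+\cdots+v_n\be_{\sigma(n)}$ with $v_0>\cdots>v_n$, lying in the interior of the cone of $\Sigma_E$ corresponding to $\sigma$. By the definition of $B_\sigma(M)$, the vertex $\be_{B_\sigma(M)}$ is then the unique $w$-maximizer on $P(M)$. Order $\binom{E}{r}=\{B_1,\ldots,B_N\}$ with $\langle w,\be_{B_1}\rangle>\cdots>\langle w,\be_{B_N}\rangle$; equivalently, $B_\sigma(M)=B_k$ iff $B_k$ is a basis of $M$ and none of $B_1,\ldots,B_{k-1}$ is a basis. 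I would then build $\Phi_{B_k}$ by induction on $k$. The base case $k=1$ is trivial: the point evaluation $\Phi_{B_1}(f):=f(\be_{B_1})$ works, because $B_\sigma(M)=B_1$ iff $B_1$ is a basis of $M$, which is detected by $1_{P(M)}(\be_{B_1})$.

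For the inductive step, the event ``$B_\sigma(M)=B_k$'' involves the conjunction of ``$B_k$ is a basis'' with ``$B_1,\ldots,B_{k-1}$ are not bases.'' Since this conjunction is not naively $\ZZ$-linear in the basis indicators $1_{P(M)}(\be_{B_j})$, I would construct $\Phi_{B_k}$ from a signed combination of evaluations of $f$ at finitely many \emph{non-lattice} points of $\RR^E$. Concretely, one chooses auxiliary test points close to $\be_{B_k}$ in the relative interior of the dual to $C_\sigma$, and exploits that the normal fan of every matroid polytope $P(M)$ coarsens $\Sigma_E$ (\Cref{prop:basepolytope}.\ref{basepolytope:coarsen}): whether such a test point lies in $P(M)$ is governed by whether the cone $C_\sigma$ is contained in the normal cone of $P(M)$ at $\be_{B_k}$, which by the coarsening happens precisely when $B_k=B_\sigma(M)$.

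The main obstacle will be that matroid polytopes $P(M)$ can be strictly lower-dimensional when $M$ is disconnected (e.g.\ when $M$ has a coloop), so a naive perturbation like $\be_{B}-\epsilon w$ of the candidate vertex can leave the affine hull of $P(M)$ and report the wrong answer even when $B=B_\sigma(M)$. Resolving this requires choosing evaluation points whose membership in $P(M)$ depends on the set of bases in a way that is uniform in the dimension of $P(M)$ over all rank-$r$ matroids on $E$; since there are only finitely many such matroids, one can in principle take a common perturbation scale. Alternatively, one can bypass the explicit point construction entirely by invoking the Derksen--Fink framework, in which Schubert matroid polytope indicators are shown to form a $\ZZ$-basis of the group generated by $\{1_{P(M)}\}$, and defining each $\Phi_B$ as the dual coordinate functional corresponding to the Schubert matroid with $B_\sigma=B$.
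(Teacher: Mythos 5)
Your overall strategy --- exhibiting, for each $B\in\binom{E}{r}$, a linear functional $\Phi_B$ on the $\ZZ$-span of the indicators $1_{P(M')}$ with $\Phi_B(1_{P(M)})=[B=B_\sigma(M)]$ --- is sound, and your base case is correct. But the inductive step, which carries all the content, is not actually established, and both mechanisms you offer for it fail as stated. The point-evaluation mechanism is wrong for degenerate base polytopes: the claim that membership of a test point near $\be_{B_k}$ in $P(M)$ is governed by containment of the cone of $\sigma$ in the normal cone of $P(M)$ at $\be_{B_k}$ breaks down when $\dim P(M)<n$ (e.g.\ $P(M)=\{\be_{B_k}\}$ contains no perturbed point at all, yet $B_k=B_\sigma(M)$); you notice this, but the proposed fix of a ``common perturbation scale'' does not address it, since the obstruction is directional, not metric. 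A correct finite signed combination of point evaluations does exist in the abstract (the span of matroid-polytope indicators is finite dimensional, so point evaluations span its dual), but asserting its existence without a construction is exactly the statement being proved. The Derksen--Fink fallback is likewise circular: declaring $\Phi_B$ to be a sum of dual coordinate functionals for the Schubert basis does define a valuative map into $\ZZ^{\binom{E}{r}}$, but one must still verify that this map agrees with $M\mapsto\langle B_\sigma(M)\rangle$ on every matroid, i.e.\ that $\langle B_\sigma(M)\rangle=\sum_i c_i\langle B_\sigma(\mathrm{Sch}_i)\rangle$ whenever $1_{P(M)}=\sum_i c_i 1_{P(\mathrm{Sch}_i)}$ --- which is precisely the lemma.

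The gap is repairable along your own lines by replacing point evaluations with the combinatorial Euler characteristic of intersections with half-spaces. With $w$ generic in the cone of $\sigma$ and $H_k=\{x\in\RR^E : \langle x,w\rangle\ge\langle\be_{B_k},w\rangle\}$, the map $1_{P(M)}\mapsto\chi\bigl(1_{P(M)}\cdot 1_{H_k}\bigr)$ is linear and equals $1$ exactly when $P(M)\cap H_k\neq\emptyset$, i.e.\ when $B_\sigma(M)\in\{B_1,\dots,B_k\}$, with no hypothesis on $\dim P(M)$; differencing consecutive values of $k$ yields the desired $\Phi_{B_k}$. This is essentially the simplified proof in \cite{ESS21} that the paper points to; the paper's own proof is just the observation that $B_\sigma(M)$ is the unique basis of internal activity $r$, so the statement is a special case of \cite[Theorem 5.4]{ardila_fink_rincon_2010}.
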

\begin{proof}
For a total order $<$ on the ground set $E$, recall that an element $i$ in a basis $B$ is internally active if there is no $j\in E$ such that $j<i$ and $(B\setminus i)\cup j$ is a basis.
When the total order $<$ is given by the permutation $\sigma$, the lex-first basis $B_\sigma(M)$ is the unique basis with $r$ internally active elements (\Cref{rem:lexfirstbasis}).
The lemma is now a special case of \cite[Theorem 5.4]{ardila_fink_rincon_2010}.
\end{proof}

\begin{proof}[Proof of \Cref{prop:valpolys}]
For a matroid $M$ and a permutation $\sigma$, by the construction of $[\mathsf S^{\lambda}\mathcal E]$ in \Cref{subsec:prelimChernRoots}, the restriction of the equivariant $K$-class $\phi_{f,\boldsymbol \lambda}(M)$ to the $T$-fixed point $p_\sigma$ is a Laurent polynomial $\phi_{f,\boldsymbol \lambda}(M)_\sigma \in \ZZ[T_0^\pm, \ldots, T_n^\pm]$ determined completely by $B_\sigma(M)$.  Similarly,  the polynomial $\psi_{f,\boldsymbol \lambda}(M)_\sigma$ in $\ZZ[t_0, \ldots, t_n]$ representing the restriction to $p_\sigma$ of the equivariant Chow class $\psi_{f,\boldsymbol \lambda}(M)$ is completely determined by $B_\sigma(M)$.  
In other words, the map $M\mapsto \phi_{f,\boldsymbol \lambda}(M)_\sigma$ factors through the free abelian group $\ZZ^{\binom{E}{r}}$ by $M \mapsto \langle B_\sigma(M) \rangle \in \ZZ^{\binom{E}{r}}$, and similarly for $\psi_{f,\boldsymbol \lambda}$.
Thus, the method of localization \Cref{thm:localization} implies that both $\phi_{f,\boldsymbol \lambda}$ and $\psi_{f,\boldsymbol \lambda}$ factor through the map $M\mapsto ( \langle B_\sigma(M) \rangle)_{\sigma\in \mathfrak S_E} \in \bigoplus_{\sigma\in \mathfrak S_E} \ZZ^{\binom{E}{r}}$, which is valuative by \Cref{lem:Bsigmaval}, and the result follows.
\end{proof}

We note for future use in \S\ref{sec:KtoChow} the following generalization of \Cref{prop:valpolys} concerning the valuativity of classes defined in \Cref{prop:welldefinedgeneral}.

\begin{prop}\label{prop:valpolysgeneral}
For a fixed set of classes $[\mathcal E^{(1)}], \ldots, [\mathcal E^{(m)}] \in K_T^0(\Gr(r;E))$, the map $\phi$ that assigns to a matroid $M$ of rank $r$ on $E$ a fixed polynomial expression in the classes $[\mathcal E^{(1)}_M], \ldots, [\mathcal E^{(m)}_M] \in K_T^0(X_E)$ defined in \Cref{prop:welldefinedgeneral} is valuative.
\end{prop}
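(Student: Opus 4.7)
The plan is to mimic the proof of \Cref{prop:valpolys} almost verbatim, with the only new input being that the defining formula from \Cref{prop:welldefinedgeneral} still exhibits the localization of $[\mathcal E^{(j)}_M]$ at a torus-fixed point $p_\sigma$ as a quantity depending only on the lex-first basis $B_\sigma(M)$.

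More precisely, first I would invoke the injectivity of the restriction map $K_T^0(X_E) \hookrightarrow K_T^0(X_E^T)$ from \Cref{thm:localization}\ref{localization:K}: a $T$-equivariant $K$-class is determined by its collection of localizations $(f_\sigma)_{\sigma \in \mathfrak S_E}$. For each fixed $\sigma$, the defining recipe
\[
[\mathcal E^{(j)}_M]_\sigma = [\mathcal E^{(j)}]_{B_\sigma(M)}(T_0^{-1},\ldots,T_n^{-1})
\]
from \Cref{prop:welldefinedgeneral} shows that $[\mathcal E^{(j)}_M]_\sigma$ depends on $M$ only through the single basis $B_\sigma(M)$. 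Consequently, the localization at $p_\sigma$ of any fixed polynomial expression $\phi(M)$ in $[\mathcal E^{(1)}_M],\ldots,[\mathcal E^{(m)}_M]$ is a Laurent polynomial in $\ZZ[T_0^\pm,\ldots,T_n^\pm]$ determined entirely by $B_\sigma(M)$.

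Hence $\phi$ factors as the composition
\[
M \;\longmapsto\; \bigl(\langle B_\sigma(M)\rangle\bigr)_{\sigma \in \mathfrak S_E} \;\in\; \bigoplus_{\sigma \in \mathfrak S_E} \ZZ^{\binom{E}{r}} \;\longrightarrow\; K_T^0(X_E),
\]
where the first map is valuative by \Cref{lem:Bsigmaval} applied coordinatewise, and the second map is a $\ZZ$-linear assignment. A composition of a valuative map with a $\ZZ$-linear map is valuative, so this yields the claim.

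There is no real obstacle here; the substance was already extracted in \Cref{lem:Bsigmaval} and the localization formula from \Cref{prop:welldefinedgeneral}. The only thing worth double-checking is that the passage from ``polynomial in $[\mathcal E^{(j)}_M]$'' to ``pointwise polynomial in the $[\mathcal E^{(j)}]_{B_\sigma(M)}$ after the substitution $T_i \mapsto T_i^{-1}$'' is compatible with the ring structure on $K_T^0(X_E^T) = \prod_\sigma \ZZ[T_0^\pm,\ldots,T_n^\pm]$, which is immediate because the restriction map to $K_T^0(X_E^T)$ is a ring homomorphism and the Cremona substitution is an automorphism of the target ring.
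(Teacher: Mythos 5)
Your proposal is correct and follows essentially the same route as the paper's proof: localize at each torus-fixed point via \Cref{thm:localization}, observe from the defining formula in \Cref{prop:welldefinedgeneral} that each localization depends on $M$ only through $B_\sigma(M)$, and conclude by factoring through the valuative map of \Cref{lem:Bsigmaval}. The extra remarks on compatibility with the ring structure are fine but not needed beyond what the paper records.
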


\begin{proof}
It follows from the definition of the classes $[\mathcal E^{(i)}_M]$ (\Cref{prop:welldefinedgeneral}) that, for each permutation $\sigma\in \mathfrak S_E$, the restriction of the assignment $\phi(M)$ to the $T$-fixed point $p_\sigma \in X_E^T$ is completely determined by $B_{\sigma}(M)$.  Hence, the map $\phi$ factors through $M\mapsto ( \langle B_\sigma(M) \rangle)_{\sigma\in \mathfrak S_E} \in \bigoplus_{\sigma\in \mathfrak S_E} \ZZ^{\binom{E}{r}}$. Thus, the assignment is valuative by \Cref{lem:Bsigmaval}.
\end{proof}

We conclude with a lemma that will be useful later for deducing results for arbitrary matroids from realizable matroids.  Recall that an element $e\in E$ is a {loop} (resp.\ {coloop}) in a  matroid $M$ if no basis of $M$ contains $e$ (resp.\ every basis of $M$ contains $e$).

\begin{lem}
\label{lem:looplessval}
Let $M$ be a matroid on $E$.  Then there exist matroids $M_1, \ldots, M_\ell$, all realizable over $\CC$, and integers $a_1, \ldots, a_\ell$, such that $1_{P(M)} = \sum_{i=1}^\ell a_i 1_{P(M_i)}$.  If $M$ is loopless (resp.\ coloopless), then all $M_i$'s can also be taken to be loopless (resp.\ coloopless).
\end{lem}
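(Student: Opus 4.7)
Plan: The first assertion is a standard consequence of the Derksen--Fink valuative decomposition theorem \cite{derksen2010valuative}, which provides a $\ZZ$-basis of the valuative group of rank-$r$ matroids on $E$ indexed by Schubert matroids on $E$.  Since every Schubert matroid is realizable over $\CC$ (they arise as generic points in Schubert cells of $\Gr(r;E)$, equivalently as transversal matroids with explicit lattice-path presentations), expanding $1_{P(M)}$ in this basis yields the required decomposition into indicator functions of realizable matroids.

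For the loopless refinement, my plan is to establish the stronger statement that the loopless Schubert matroids form a $\ZZ$-basis of the subgroup of the valuative group generated by loopless matroid classes.  The key tool is a family of ``hyperplane-restriction'' valuations: for each $e\in E$, the map $\rho_e(1_P) = 1_P|_{\{x_e = 1\}}$ is a valuation on indicator functions that annihilates $1_{P(N)}$ whenever $e$ is a loop in $N$ (since then $P(N)\subseteq\{x_e=0\}$) and sends it to a shifted copy of $1_{P(N/e)}$ otherwise.  Using these valuations in combination with the Schubert matroid basis, one can isolate the loopless Schubert matroids as a basis of the loopless subgroup.  Consequently, the Derksen--Fink expansion of $1_{P(M)}$ for a loopless $M$ involves only loopless Schubert matroids, which are realizable.

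For the coloopless case, the plan is to invoke matroid duality.  The operation $M\mapsto M^\perp$ preserves realizability over $\CC$ (via $L\mapsto L^\perp$), interchanges loops and coloops, and satisfies $P(M^\perp) = \be_E - P(M)$; hence any decomposition $1_{P(M^\perp)} = \sum a_i 1_{P(M_i')}$ with $M_i'$ loopless realizable dualizes term-by-term to a decomposition $1_{P(M)}=\sum a_i 1_{P((M_i')^\perp)}$ with $(M_i')^\perp$ coloopless realizable.  The combined case (when $M$ is both loopless and coloopless) is handled by iterating: first apply the loopless decomposition, then dualize each realizable summand and reapply the loopless decomposition to their duals.

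The main obstacle will be the refined basis claim for loopless Schubert matroids.  I expect this to require careful combinatorial bookkeeping within the Derksen--Fink framework, using the hyperplane-restriction valuations $\rho_e$ together with induction on $|E|$ to simultaneously verify that loopless Schubert matroids remain $\ZZ$-linearly independent modulo valuative relations and that they span every loopless class.  An alternative, more elementary approach I would consider as a fallback is a direct inductive argument that replaces each loopy summand in a given decomposition by a $\ZZ$-combination of loopless realizable matroids with strictly fewer total loops, terminating in a decomposition of the desired form.
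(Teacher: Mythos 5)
Your first assertion and your treatment of the coloopless case are fine: the decomposition into Schubert matroids via \cite[Theorem 5.4]{derksen2010valuative} together with their realizability is exactly the paper's argument, and reducing the coloopless case to the loopless case by duality ($P(M^\perp)=\be_E-P(M)$, duality preserving realizability and exchanging loops with coloops) is a legitimate alternative to the paper's ``proved similarly.'' (Note also that the lemma is two separate ``resp.'' statements, so your worry about the combined loopless-and-coloopless case is moot.)

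The genuine gap is in the loopless refinement, and your proposed tool does not close it. The map $\rho_e(1_P)=1_P|_{\{x_e=1\}}$ is indeed linear on indicator functions, but applying it to $1_{P(M)}=\sum_i a_i 1_{P(M_i)}$ only yields a relation among the shifted contractions $1_{\be_e+P(M_i/e)}$ over the $M_i$ in which $e$ is not a loop; it gives no information about the coefficients of the $e$-loopy terms and in particular does not show that $\sum_{e\,\text{loop in}\,M_i} a_i 1_{P(M_i)}=0$, which is what is needed to discard them. The operation that actually works is not restriction to a hyperplane but the assignment $h(P)=1_P$ if $P\subseteq\{x_e=0\}$ and $h(P)=0$ otherwise, and the nontrivial point is that $h$ is compatible with all inclusion--exclusion relations among polytopes contained in the half-space $\{x_e\ge 0\}$; the paper proves this by verifying Groemer's criterion \cite[Theorem 1]{Gro78} on this intersection-closed family, using a connectedness argument for $(P\cup Q)\setminus H$. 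With that claim, the $e$-loopy sub-sum vanishes identically and can simply be deleted, iterating over $e\in E$. Your fallback is also not viable as stated: a single loopy summand $1_{P(N)}$ can never be rewritten as a $\ZZ$-combination of loopless matroid indicators (apply the very claim above with $H=\{x_j=0\}$ for $j$ a loop of $N$ to get $1_{P(N)}=0$, a contradiction), so only the aggregate of loopy terms --- which is zero --- can be removed, and proving that it is zero is precisely the missing step.
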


\begin{proof}
\cite[Theorem 5.4]{derksen2010valuative} states that the $\ZZ$-span of the indicator functions of base polytopes of matroids admits a basis consisting of matroids known as Schubert matroids, which are all realizable over $\CC$.  In particular, one can write $1_{P(M)} = \sum a_i 1_{P(M_i)}$ with $M_i$ being Schubert matroids.

Suppose now that $M$ is loopless; the coloopless case is proved similarly.
We first claim that if we have polytopes $P_1,\ldots,P_\ell$ and integers $b_i$ such that $\sum b_i1_{P_i}=0$, and all $P_i$ lie in a half-space $H^+$ bounded by a hyperplane $H$, then the sum is also zero if we restrict to just the polytopes completely contained in $H$.  Assuming the claim, for $j\in E$ take the hyperplane $H_j=\{x_j=0\}$ and half-space $H_j^+=\{x_j\ge 0\}$ in $\RR^E$. For all $i$ we have $P(M_i)\subset H_j^+$, and we have $P(M_i)\subset H_j$ if and only if $j$ is a loop in $M_i$. Therefore, by considering each $j\in E$ in turn, we can remove all polytopes with loops from the sum $\sum a_i 1_{P(M_i)}$ without affecting the equation $1_{P(M)} = \sum a_i 1_{P(M_i)}$, and the result follows.

For the proof of the claim, we note that the set of all polytopes contained in $H^+$, which we denote as $\mathscr P$, is intersection closed.  \cite[Theorem 1]{Gro78} states that a function $f$ on such a set $\mathscr P$ of polytopes factors through the map $P\mapsto 1_P$ if it satisfies $f(\emptyset) = 0$ and $f(P\cup Q) + f(P\cap Q) = f(P) + f(Q)$ for any  $P,Q\in \mathscr P$ with $P\cup Q\in \mathscr P$.
We verify that the function $h$ mapping $P\in \mathscr P$ to $1_P$ if $P \subset H$ and $0$ otherwise satisfies this relation as follows.
If $P\subseteq Q$ or $Q\subseteq P$, or if $P\cap Q \not\subset H$, or if $P\cup Q\subset H$, then the relation is easily verified for $h$.
But the condition for $P$ and $Q$ leaves no other cases: Suppose otherwise. Then $(P\cup Q)\setminus H$ is convex and nonempty.  The relatively closed decomposition $(P\cup Q)\setminus (P\cap Q)=(P\setminus Q)\cup (Q\setminus P)$ disconnects $(P\cup Q)\setminus (P\cap Q)$, so the connectedness of $(P \cup Q)\setminus H$ implies that $P\setminus Q\subset H$ or $Q\setminus P\subset H$. But then $P\subset H$ or $Q\subset H$, so the convexity of $P\cup Q$ together with $P\cup Q\not\subset H$ necessitates that $P\subseteq Q$ or $Q\subseteq P$, in contradiction to the assumption.
\end{proof}

\begin{rem}
\Cref{lem:looplessval} gives rise to the following recurring theme for matroid constructions motivated from geometry.  Let $f\colon  \Gr(r;E)\to A$ be a function from the Grassmannian, i.e.~the space of realizations of matroids of rank $r$, to an abelian group $A$.  Suppose $f$ satisfies the property that
\begin{itemize}
    \item[(i)] the value $f(L)$ only depends on the matroid $M$ that $L\in \Gr(r;E)$ realizes.
\end{itemize}
If the function $f$, now considered as a function on the set of realizable matroids, extends to a function $\widetilde f$ on the set of all (not necessarily realizable) matroids satisfying
\begin{itemize}
    \item[(ii)] the assignment $M\mapsto \widetilde f(M)$ is valuative,
\end{itemize}
then \Cref{lem:looplessval} implies that such an extension is unique.
Many matroid constructions motivated from the geometry of realizations of matroids are characterized by the two properties (i) and (ii) above.  These constructions include: tautological $K$ and Chern classes (Propositions~\ref{prop:tautdependsonmatroid} \& \ref{prop:valpolys}), Bergman classes (\Cref{cor:Bergman}), Chern-Schwartz-MacPherson classes (\Cref{cor:CSM}), the combinatorial biprojective classes defined in \Cref{defn:projClass} (\Cref{prop:projClassDefn}), the $K$-class denoted $y(M)$ for a matroid $M$ in \cite{FS12} (\Cref{defn:yM}), and the assignment of $[\mathcal{E}]\mapsto [\mathcal{E}_M]$ in \Cref{prop:welldefinedgeneral} (\Cref{prop:valpolysgeneral}).
\end{rem}

\subsection{Matroid duality and direct sum}
\label{sec:Duality}
For a matroid $M$ of rank $r$ with ground set $E$, the \textbf{dual matroid} $M^\perp$ is the matroid of rank $|E|-r$ with ground set $E$ whose bases are $\{E\setminus B \mid B \text{ a basis of }M\}$.
If $L\subseteq \CC^E$ is a realization of $M$, then $L^\perp = (\CC^E/L)^\vee \subseteq (\CC^E)^\vee \simeq \CC^E$ is a realization of $M^\perp$, where the isomorphism $(\CC^E)^\vee \simeq \CC^E$ is induced by the  standard basis of $\CC^E$.
Recall from \Cref{sec:CremonaInv} the Cremona involution $\crem\colon  X_E \to X_E$, induced by the map of tori $T/\CC^* \to T/\CC^*$ mapping $[t_0: \cdots : t_n]$ to $[t_0^{-1}: \cdots : t_n^{-1}]$.

\begin{prop}\label{prop:duality}
Let $M$ be a matroid on $E$.  Then, one has
$\crem[\mathcal{S}_M]=[\mathcal{Q}_{M^{\perp}}^{\vee}]$ and $\crem[\mathcal{Q}_M]=[\mathcal{S}_{M^{\perp}}^{\vee}]$. In particular, $\crem c(\mathcal{S}_M,u)=c(\mathcal{Q}_{M^{\perp}}^{\vee},u)$ and $\crem c(\mathcal{Q}_M,u)=c(\mathcal{S}_{M^{\perp}}^{\vee},u)$.
\end{prop}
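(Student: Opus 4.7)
The plan is to verify both $K$-theoretic identities by restricting to each $T$-fixed point of $X_E$ and invoking the localization theorem (\Cref{thm:localization}.\ref{localization:K}). The key combinatorial input is the bijection between bases of $M$ and bases of $M^\perp$ via complementation.

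First I would unpack the Cremona action on localizations. By \Cref{rem:crem}, for any $\sigma \in \mathfrak S_E$ we have
\[
(\crem[\mathcal{S}_M])_\sigma = [\mathcal{S}_M]_{\overline{\sigma}}(T_0^{-1},\ldots,T_n^{-1}), \qquad \overline{\sigma}(i)=\sigma(n-i).
\]
Combined with the formula $[\mathcal{S}_M]_{\overline{\sigma}} = \sum_{i \in B_{\overline{\sigma}}(M)} T_i^{-1}$ from \Cref{defn:tautclasses}, this rewrites as $(\crem[\mathcal{S}_M])_\sigma = \sum_{i \in B_{\overline{\sigma}}(M)} T_i$. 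On the other side, dualizing as in \S\ref{subsec:prelimChernRoots} gives $[\mathcal{Q}_{M^\perp}^\vee]_\sigma = \sum_{i \in E\setminus B_\sigma(M^\perp)} T_i$.

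The combinatorial heart is then to show that $B_{\overline{\sigma}}(M) = E \setminus B_\sigma(M^\perp)$. This uses the duality of base polytopes $P(M^\perp) = \be_E - P(M)$, which follows immediately from the fact that bases of $M^\perp$ are complements of bases of $M$. Fix any $v_0>v_1>\cdots>v_n$ and set $x = \sum_i v_i \be_{\sigma(i)}$. The substitution $\mathbf m = \be_E - \mathbf m'$ turns $\max_{\mathbf m \in P(M^\perp)}\langle \mathbf m,x\rangle$ into $\langle \be_E,x\rangle - \min_{\mathbf m' \in P(M)}\langle \mathbf m',x\rangle$, whose unique minimizer is $\be_{B_{\overline{\sigma}}(M)}$ by \Cref{defn:tautclasses} and the paragraph after it. Hence $\be_{B_\sigma(M^\perp)} = \be_E - \be_{B_{\overline{\sigma}}(M)}$, i.e.\ $B_\sigma(M^\perp) = E \setminus B_{\overline{\sigma}}(M)$.

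Substituting this identity back yields $(\crem[\mathcal{S}_M])_\sigma = [\mathcal{Q}_{M^\perp}^\vee]_\sigma$ for every $\sigma$, and \Cref{thm:localization}.\ref{localization:K} then gives $\crem[\mathcal{S}_M] = [\mathcal{Q}_{M^\perp}^\vee]$. The second identity can either be obtained by the symmetric calculation $(\crem[\mathcal{Q}_M])_\sigma = \sum_{i \in E\setminus B_{\overline{\sigma}}(M)} T_i = \sum_{i \in B_\sigma(M^\perp)} T_i = [\mathcal{S}_{M^\perp}^\vee]_\sigma$, or more economically by applying $\crem$ to the first identity, swapping $M$ with $M^\perp$, and using $(M^\perp)^\perp = M$ together with the fact that $\crem$ is an involution. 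Finally, the Chern class statements follow because Chern classes are functorial under the morphism $\crem: X_E \to X_E$, and $\crem_* = \crem^*$ on $A^\bullet_T(X_E)$ as recalled in \S\ref{sec:CremonaInv}. The main obstacle is really just keeping the two conventions for ``bar'' in step one and step two straight: the reversal $\overline{\sigma}$ that appears in \Cref{rem:crem} and the reverse-lex-basis convention in \Cref{defn:tautclasses} both originate from the same Cremona-induced reversal of directions, and the small observation is that they line up exactly to convert the Cremona image of $[\mathcal{S}_M]$ into the dual quotient class of $M^\perp$.
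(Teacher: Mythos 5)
Your proof is correct and follows the same route as the paper's: both pass to localization at the $T$-fixed points, use \Cref{rem:crem} to write $(\crem[\mathcal S_M])_\sigma = \sum_{i\in B_{\overline\sigma}(M)} T_i$, and then use the translation $P(M^\perp) = -P(M) + \mathbf 1$ together with the characterization of the reverse-lex-basis as the minimizer to identify $E\setminus B_\sigma(M^\perp)$ with $B_{\overline\sigma}(M)$. You spell out the minimization/maximization substitution in a bit more detail than the paper, but the argument and its ingredients are identical.
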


\begin{proof}
For a permutation $\sigma\in \mathfrak S_E$, by \Cref{rem:crem} we have $(\crem [\cS_M])_\sigma = \sum_{i\in B_{\overline\sigma}(M)} T_i$.  Because $P(M^\perp) = -P(M) + (1,1,\ldots, 1)$, for any $v_0>v_1>\ldots > v_n$, the basis $E\setminus B$ of $M^\perp$ which maximizes the pairing $\langle \be_{E\setminus B},v_0\be_{\sigma(0)}+\ldots + v_n\be_{\sigma(n)}\rangle$ corresponds to the basis $B$ of $M$ which minimizes $\langle \be_{B},v_0\be_{\sigma(0)}+\ldots + v_n\be_{\sigma(n)}\rangle$. But this was earlier shown to be a defining property of the reverse-lex-basis $B=B_{\overline{\sigma}}$. Hence $[\cQ_{M^\perp}^\vee]_\sigma = \sum_{i\in B_{\overline\sigma}(M)}T_i$ as well.  The proof for $\crem[\cQ_M] = [\cS_{M^{\perp}}^\vee]$ is similar. 
\end{proof}

Let $E = E_1 \sqcup E_2$ be a disjoint union of nonempty subsets $E_1$ and $E_2$ of $E$.  For matroids $M_1$ and $M_2$ on $E_1$ and $E_2$, respectively, their \textbf{direct sum} $M_1 \oplus M_2$ is a matroid whose base polytope is given by $P(M_1 \oplus M_2) = P(M_1) \times P(M_2)$, where we have identified $\RR^E = \RR^{E_1} \times \RR^{E_2}$.  A matroid $M$ is \textbf{connected} if it is not a direct sum of two matroids on nonempty ground sets, and is \textbf{disconnected} otherwise.  

\begin{prop}\label{prop:directsum1}
Let $M$ be a matroid on $E$.  Then $M$ can be uniquely written as a direct sum $M_1 \oplus \cdots \oplus M_k$ of connected matroids $M_i$ on $E_i$ satisfying $E = \bigsqcup_{i=1}^k E_i$, called the connected components of $M$.  The number $k$ of connected components of $M$ satisfies $\dim P(M) = |E|-k$.
\end{prop}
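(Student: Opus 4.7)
The plan is to handle the decomposition and the dimension formula separately, in that order.

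For the decomposition, I would define the relation $i\sim j$ on $E$ by requiring $i=j$ or the existence of a circuit of $M$ containing both $i$ and $j$. Reflexivity and symmetry are obvious, and transitivity follows from the strong circuit elimination axiom \cite[Proposition~4.2.6]{Oxl11}: given distinct $i,j,k$ with circuits $C_1\supseteq\{i,j\}$ and $C_2\supseteq\{j,k\}$, strong elimination of $j$ from $C_1$ and $C_2$ yields a circuit inside $(C_1\cup C_2)\setminus\{j\}$ containing both $i$ and $k$. Let $E_1,\ldots,E_k$ be the resulting equivalence classes. Since every circuit of $M$ lies inside a single class, a standard argument shows that the rank function is additive over this partition and that a subset is a basis of $M$ iff it is a disjoint union of bases of the restrictions $M|_{E_i}$; this yields $M=M|_{E_1}\oplus\cdots\oplus M|_{E_k}$, with each summand connected by construction. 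For uniqueness, any other decomposition $M=N_1\oplus\cdots\oplus N_\ell$ into connected summands on ground sets $F_1,\ldots,F_\ell$ must refine to the same partition: each $E_i$ lies in a single $F_j$ since its elements are linked by circuits of $M$, each of which is contained in exactly one $F_j$; and each $F_j$ lies in a single $E_i$ because connectedness of $N_j$ links its elements by circuits of $M$.

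For the dimension identity, the defining property $P(M_1\oplus M_2)=P(M_1)\times P(M_2)$ gives $\dim P(M)=\sum_i \dim P(M|_{E_i})$, reducing the claim to showing $\dim P(N)=|F|-1$ for a connected matroid $N$ on a ground set $F$. The case $|F|=1$ is immediate. For $|F|\geq 2$ the upper bound $\dim P(N)\leq|F|-1$ is automatic because $P(N)$ lies in the affine hyperplane $\{\sum x_i=\rk N\}$. For the matching lower bound, I would show that the linear span $L$ of $\{\be_B-\be_{B'}:B,B'\text{ bases of }N\}$ equals $\mathbf 1^\perp\subseteq\RR^F$. The symmetric basis exchange axiom shows that single-element swaps $B\mapsto B\setminus\{i\}\cup\{j\}$ generate $L$, and each such swap produces $\be_j-\be_i\in L$ with $\{i,j\}$ lying in a common (fundamental) circuit. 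Conversely, if $\{i,j\}\subseteq C$ for a circuit $C$, then extending $C\setminus\{i\}$ to a basis $B'$ exhibits $C$ as the fundamental circuit of $i$ with respect to $B'$, so $B'\setminus\{j\}\cup\{i\}$ is a basis and $\be_i-\be_j\in L$. Connectedness of $N$ therefore puts every $\be_i-\be_j$ into $L$, so $L=\mathbf 1^\perp$ and $\dim P(N)=|F|-1$.

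The main obstacle is the transitivity of the circuit-sharing relation, which requires the strong circuit elimination axiom rather than the weak one and is classical but not one-line; I would either invoke \cite{Oxl11} or include the short inductive argument. Everything else—the decomposition of bases across connected components, the product structure $P(M_1\oplus M_2)=P(M_1)\times P(M_2)$, and the basis-exchange argument for full-dimensionality of $P(N)$ in its ambient hyperplane—is routine once this step is in place.
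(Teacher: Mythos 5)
The paper's own ``proof'' is a bare citation to \cite[4.2.9]{Oxl11} and \cite[Proposition 2.4]{FS05}; what you have written is the standard self-contained argument that underlies those references, so the route is essentially the same. Your decomposition via the circuit-sharing equivalence relation, the reduction of the dimension count to a single connected summand via $P(M_1\oplus M_2)=P(M_1)\times P(M_2)$, and the basis-exchange argument showing that a connected matroid $N$ on a ground set $F$ with $|F|\geq 2$ satisfies $\dim P(N)=|F|-1$ are all correct.

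One technical slip is worth fixing. Transitivity of the circuit-sharing relation does \emph{not} follow from a single application of strong circuit elimination as you state it. Applied to $C_1\supseteq\{i,j\}$ and $C_2\supseteq\{j,k\}$ with the eliminated element $j$ and the retained element $i$, strong elimination yields a circuit $C_3\subseteq(C_1\cup C_2)\setminus\{j\}$ with $i\in C_3$, but nothing forces $k\in C_3$; the axiom only guarantees retention of \emph{one} prescribed element. The genuine proof (in \cite{Oxl11}) requires a minimal-counterexample or inductive argument on $|C_1\cup C_2|$. You correctly flag at the end that this step is ``classical but not one-line'' and say you would invoke \cite{Oxl11} -- which is exactly right and matches what the paper does -- but the earlier sentence claiming a one-step derivation should be deleted or replaced by the citation, since as written it asserts something false.
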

\begin{proof}
See \cite[4.2.9]{Oxl11} and \cite[Proposition 2.4]{FS05}.
\end{proof}

Let us establish an analogous direct sum behavior for the tautological $K$-classes of matroids.
We prepare by noting that, for a nonempty subset $E'\subseteq E$, the projection map $\RR^E/\RR\mathbf 1\to \RR^{E'}/\RR\mathbf 1$ induces a map of fans $\Sigma_E\to \Sigma_{E'}$.
If we write $E' = \{j_0, \ldots, j_{n'}\} \subseteq E$ with $j_0 < \cdots < j_{n'}$, then the cone in $\Sigma_E$ corresponding to a permutation $\sigma\in \mathfrak S_E$ maps to the cone in $\Sigma_{E'}$ corresponding to the permutation $\sigma_{E'} \in \mathfrak S_{E'}$ defined by
\[
\sigma_{E'}(j_k) = \sigma(i_k) \text{ where $0\leq i_0 < \cdots < i_{n'} \leq n$ such that $\{\sigma(i_0), \ldots, \sigma(i_{n'})\} = E'$}.
\]
In particular, considering $X_{E'}$ as a $T$-variety via the projection map $T = (\CC^*)^E \to (\CC^*)^{E'} = T'$, we have a $T$-equivariant map of $T$-varieties $X_E \to X_{E'}$, and under which the point $p_\sigma\in X_E^T$ corresponding to $\sigma\in \mathfrak S_E$ maps to the point $p_{\sigma_{E'}}\in X_{E'}^T$.

\begin{prop}\label{prop:directsum2}
Let $M$ be a matroid on $E$ with connected components $M_1, \ldots, M_k$ on $E_1, \ldots, E_k \subseteq E$.
For each $i = 1, \ldots, k$, let $f_i\colon  X_E \to X_{E_i}$ be the toric morphism induced by the map of fans $\Sigma_E \to \Sigma_{E_i}$ arising from the projection $\RR^E/\RR\mathbf 1 \to \RR^{E_i}/\RR\mathbf 1$.  Then, we have
\[
[\cS_M] = \sum_{i=1}^k f_i^*[\cS_{M_i}] \quad\text{and}\quad [\cQ_M] = \sum_{i=1}^k f_i^*[\cQ_{M_i}] \quad\text{ as elements in } K_T^0(X_E).
\]
\end{prop}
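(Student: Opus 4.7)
The plan is to verify the claimed equalities at each $T$-fixed point of $X_E$, exploiting the injectivity of localization in \Cref{thm:localization}.\ref{localization:K}. By \Cref{defn:tautclasses}, the restriction of $[\cS_M]$ to $p_\sigma$ equals $\sum_{j \in B_\sigma(M)} T_j^{-1}$; meanwhile, as recorded in the discussion preceding the statement, the toric morphism $f_i$ carries $p_\sigma$ to the fixed point $p_{\sigma_{E_i}}$ of $X_{E_i}$, so $(f_i^*[\cS_{M_i}])_\sigma = \sum_{j \in B_{\sigma_{E_i}}(M_i)} T_j^{-1}$. Consequently the first asserted equality reduces, fixed-point by fixed-point, to the combinatorial identity
\[
B_\sigma(M) = \bigsqcup_{i=1}^k B_{\sigma_{E_i}}(M_i) \qquad \text{for every } \sigma \in \mathfrak S_E.
\]

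To establish this identity, I would unpack the defining property of the lex-first-basis via the base polytope $P(M)$. Because $M = M_1 \oplus \cdots \oplus M_k$, by definition one has $P(M) = P(M_1) \times \cdots \times P(M_k)$ under the splitting $\RR^E = \bigoplus_{i=1}^k \RR^{E_i}$. For any $v_0 > v_1 > \cdots > v_n$, the linear functional $\ell = \sum_j v_j \be_{\sigma(j)}$ decomposes as $\ell = \ell_1 + \cdots + \ell_k$ with $\ell_i = \sum_{j \,:\, \sigma(j) \in E_i} v_j \be_{\sigma(j)} \in \RR^{E_i}$. Its maximum on the product $P(M)$ is therefore the sum of the maxima of the $\ell_i$ on the factors $P(M_i)$, and is attained at the concatenation of vertices $\be_{B_1} + \cdots + \be_{B_k}$ where each $B_i$ is the unique maximizer of $\ell_i$ on $P(M_i)$. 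The coefficients of $\ell_i$, read off in the order prescribed by $\sigma_{E_i}$, remain strictly decreasing, so the defining property of the lex-first-basis yields $B_i = B_{\sigma_{E_i}}(M_i)$. This proves the identity and hence the formula for $[\cS_M]$.

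The equality for $[\cQ_M]$ then follows by the same mechanism: complementing inside $E = \bigsqcup_i E_i$ gives $E \setminus B_\sigma(M) = \bigsqcup_i \bigl(E_i \setminus B_{\sigma_{E_i}}(M_i)\bigr)$, so the localizations of both sides at $p_\sigma$ again agree. Alternatively, one can subtract the $\cS$-equality from the trivial identity $[\uCCinv^E] = \sum_{i=1}^k f_i^*[\uCCinv^{E_i}]$ (clear on localization, since both sides restrict to $\sum_{j\in E} T_j^{-1}$ at every fixed point), using \Cref{prop:welldefined} and its analogue for each $M_i$. I do not anticipate a substantive obstacle; the only delicate point is tracking the induced permutations $\sigma_{E_i}$, which is handled cleanly by the product-of-polytopes argument above.
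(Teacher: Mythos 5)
Your proof is correct and takes essentially the same route as the paper: localize at the $T$-fixed points, reduce to the combinatorial identity $B_\sigma(M) = \bigsqcup_{i} B_{\sigma_{E_i}}(M_i)$, and conclude for $[\cQ_M]$ by complementation. The paper's proof obtains the identity slightly more tersely—from the unique decomposition of a basis of a direct sum into bases of the summands together with the definition of $\sigma_{E_i}$—whereas you verify it more explicitly via the product decomposition of $P(M)$ and the associated linear functionals, but the underlying argument is the same.
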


\begin{proof}
Since $M = M_1 \oplus \cdots \oplus M_k$, any basis $B$ of $M$ can be uniquely written $B = B_1 \sqcup \cdots \sqcup B_k$ where $B_i$ is a basis of $M_i$ for each $i = 1, \ldots, k$.  Thus, from the description of the permutation $\sigma_{E_i}$ above for each $i = 1, \ldots, k$, it follows that 
$B_\sigma(M) = \bigsqcup_{i=1}^k B_{\sigma_{E_i}}(M_i)$.
We also have $(f_i^*[\cS_{M_i}])_\sigma = [\cS_{M_i}]_{\sigma_{E_i}}$ because $f_i$ maps the point $p_\sigma \in X_E^T$ to $p_{\sigma_{E_i}}\in X_{E_i}^T$, and the desired equality for $[\cS_M]$ follows.  The proof for $[\cQ_M]$ is similar.
\end{proof}

\section{Beta invariants via tautological classes}
\label{sec:betavia}
In this section, we record an immediate corollary of \Cref{thm:4degintro} as \Cref{thm:betainvar}, which is an expression for the beta-invariants of matroids in terms of the tautological Chern classes of matroids.  We will use this specialization of \Cref{thm:4degintro} in the next two sections to study certain Chow classes associated to matroids.

\begin{defn}
For a matroid $M$, denote by $\beta(M)$ and $\beta(M^\perp)$ the coefficients of the linear terms $x$ and $y$, respectively, in the Tutte polynomial $T_M(x,y)$.  The quantity $\beta(M)$ is called the \textbf{beta invariant} of the matroid $M$.
\end{defn}

The notation $\beta(M)$ and $\beta(M^\perp)$ is consistent with matroid duality since $T_M(x,y) = T_{M^\perp}(y,x)$.  Beta invariants of matroids were introduced and studied in \cite{crapoBeta}.  We express beta invariants in terms of tautological Chern classes of matroids as follows.

\begin{thm}\label{thm:betainvar}
Let $M$ be a matroid of rank $r$ on ground set $E$.  Then,
\[
\int_{X_E} c_{r-1}(\cS_M^\vee)c_{|E|-r}(\cQ_M) = \beta(M) \quad\text{and}\quad \int_{X_E} c_{r}(\cS_M^\vee)c_{|E|-r-1}(\cQ_M) = \beta(M^\perp),
\]
where we set by convention $c_{-1}(\mathcal E) = 0$ for a $K$-class $[\mathcal E]$.
\end{thm}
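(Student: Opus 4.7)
My plan is to read off both equalities as special coefficients of the polynomial identity from \Cref{thm:4degintro}. Expanding $T_M(X,Y)=\sum_{i,j}a_{ij}X^iY^j$, I note that $\beta(M)=a_{1,0}$ and $\beta(M^\perp)=a_{0,1}$, and that $a_{0,0}=T_M(0,0)=0$ for every matroid with $|E|\ge 1$ (either by the deletion-contraction recursion, or from the expansion $T_M(0,0)=(-1)^r\sum_{A\subseteq E}(-1)^{|A|}=0$). Substituting this into the defining formula of $t_M$ yields the polynomial
\[
t_M(x,y,z,w)=\sum_{(i,j)\ne(0,0)}a_{ij}\,(x+y)^{i+j-1}(y+z)^{r-i}(x+w)^{|E|-r-j},
\]
where every summand is a genuine polynomial.

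I then extract the coefficients of $x^0y^0z^{r-1}w^{|E|-r}$ and $x^0y^0z^rw^{|E|-r-1}$ from this expansion. The $x^0y^0$ condition forces every $x$- and $y$-pick in each of the three factors to be zero; in particular $(x+y)^{i+j-1}$ contributes to the monomial only through the constant term, which forces $i+j=1$. For the first monomial, combining this with the requirements $r-i=r-1$ from $(y+z)^{r-i}$ and $|E|-r-j=|E|-r$ from $(x+w)^{|E|-r-j}$ selects the unique term $(i,j)=(1,0)$, which contributes $a_{1,0}=\beta(M)$. An identical extraction for the second monomial selects $(i,j)=(0,1)$, contributing $a_{0,1}=\beta(M^\perp)$.

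Finally, on the left-hand side of \Cref{thm:4degintro} the coefficients of these same monomials are by definition $\int_{X_E}c_{r-1}(\cS_M^\vee)c_{|E|-r}(\cQ_M)$ and $\int_{X_E}c_r(\cS_M^\vee)c_{|E|-r-1}(\cQ_M)$, using $(r-1)+(|E|-r)=n$ and $r+(|E|-r-1)=n$ to verify the total degree condition. Matching yields the two asserted equalities. The argument is a direct coefficient extraction from an identity already in hand, so there is no real obstacle; the only subtlety is verifying that $t_M$ has no $(x+y)^{-1}$ pole, which is precisely what $a_{0,0}=0$ guarantees.
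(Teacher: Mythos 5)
Your proof is correct and takes essentially the same approach as the paper's: the paper also observes that $T_M$ has no constant term, sets $x=y=0$ in the identity of \Cref{thm:4degintro}, and reads off $\beta(M)$ and $\beta(M^\perp)$ as the two surviving coefficients. Your version spells out the coefficient-extraction step and the polynomiality of $t_M$ in more detail, but the argument is the same.
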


\begin{proof}
Note that Tutte polynomials have no constant terms, and that $c_k(\mathcal{S}_M^{\vee})=0$ for $k>r$ and $c_\ell(\mathcal{Q}_M)=0$ for $\ell>|E|-r$.
Thus, by setting $x = y = 0$ in \Cref{thm:4degintro}, we find
\[
\sum_{k+ \ell = n} \left(\int_{X_E} c_k(\cS_M^\vee) c_\ell(\cQ_M)\right) z^kw^\ell = z^{r}w^{|E|-r} \Big(\beta(M) \frac{1}{z}+{\beta}(M^\perp)\frac{1}{w}\Big),
 \]
 so we conclude the desired equalities.
\end{proof}

\begin{rem}\label{rem:beta}
If $M$ has a realization $L$, there are a number of known geometric manifestations of the beta invariant. \cite[Theorem 5.1]{Spe09} showed that $\beta(M)$ equals the number of points in the intersection of $\overline{T\cdot L}\subset \Gr(r;E)$ with the Schubert subvariety of $\Gr(r;E)$ corresponding to $r$-dimensional subspaces that form a flag with a generic point-hyperplane pair.  For an alternate proof of \Cref{thm:betainvar} using this geometry and valuativity (\Cref{prop:valpolys}), see the end of \Cref{proof:betainvarAlt}.

In the discussion at the end of \cite[\S5]{Spe09}, it is noted that the beta invariant arose in \cite[Theorem 28]{CHKS06} as the top Chern class of the log-tangent sheaf on a certain compactification of the projectivized hyperplane arrangement complement $\mathbb{P}(L)\setminus \bigcup \mathbb{P}(\mathcal{H}_i)$ where $\mathcal{H}_i=\{x_i=0\}\cap L$, which also appeared in \cite[\S2.2]{HKT06}.  The top Chern class of the log-tangent sheaf is known by the logarithmic Poincar\'e-Hopf theorem to compute the Euler characteristic of $\mathbb{P}(L)\setminus \bigcup \mathbb{P}(\mathcal{H}_i)$, which is $(-1)^{\dim \PP(L)}\beta(M)$ by \cite[Theorem 5.2]{OS80}.  
These computations were motivated by a problem that involves counting in a very-affine linear subvariety the number of critical points of a product of powers of linear forms, first considered by Varchenko over the real numbers \cite{Var95}, and established over the complex numbers in \cite{OT95}.
A generalization of these results on very-affine linear subvarieties to those on arbitrary very-affine smooth subvarieties is \cite{huh_2013}.
For us, this will manifest in the geometric constructions in \Cref{thm:BergmanGeom} and \Cref{thm:CSMGeom}, where we show that $c_{|E|-r}(\mathcal{Q}_M)$ is the Chow class of the ``wonderful compactification'' $W_L$ of $\mathbb{P}(L)\setminus \bigcup \mathbb{P}(\mathcal{H}_i)$, and $\mathcal{S}_L|_{W_L}$ is a trivial extension of the log-tangent sheaf of the compactification $W_L \supset (\mathbb{P}(L)\setminus \bigcup \mathbb{P}(\mathcal{H}_i))$.  See \Cref{rem:betainvarByLog}.

Lastly, in an attempt to formulate a tropical Poincar\'e-Hopf formula, Rau in \cite{Rau20} showed that a certain tropical intersection also computes $\beta(M)$.  We give a geometric interpretation of this in \Cref{sec:troplogPoinc}.
\end{rem}

\section{Bergman classes via tautological classes}
\label{sec:Bergmanvia}

In this section and the next, we use the special case of \Cref{thm:4degintro} (\Cref{thm:betainvar}) and the matroid minors decomposition property (\Cref{prop:tautrest}) 
to express certain Chow classes associated to matroids in terms of the tautological Chern classes of matroids.
We recover along the way several known results about these Chow classes.
The Chow classes of interest will be phrased in terms of Minkowski weights.  We review Minkowski weights in \S\ref{sec:minkowskiweights}, and then we consider the Bergman classes of matroids in \S\ref{subsec:MWChern} and \S\ref{sec:wonderfulbergman}.
In the next section, we consider the Chern-Schwartz-MacPherson classes of matroids.

\subsection{Minkowski weights}
\label{sec:minkowskiweights}

Let $\Sigma$ be a polyhedral fan in $\RR^m$ that is rational over the lattice $\ZZ^m$.  Let $\Sigma(d)$ denote the set of $d$-dimensional cones of the fan $\Sigma$.
Recall that $\Sigma$ is said to be unimodular if for every cone $\tau$ in $\Sigma$, the primitive vectors of the rays of $\tau$ form a subset of a $\ZZ$-basis of $\ZZ^m$.  
The toric variety $X_{\Sigma}$ is smooth if and only if $\Sigma$ is unimodular.
When $\Sigma$ is unimodular and complete, the validity of the Poincar\'e duality for the Chow ring $A^\bullet(X_{\Sigma})$ of $X_{\Sigma}$ states that we have an isomorphism 
$A^\bullet(X_{\Sigma})\cong \text{Hom}(A^{m-\bullet}(X_{\Sigma}),\mathbb{Z})$ given by sending
\[
\xi \in A^{m-d}(X_{\Sigma}) \text{ to the function } A^d(X_{\Sigma}) \to \ZZ \text{ defined by } [Z]\mapsto \int_{X_{\Sigma}}\xi \cdot [Z].
\]
Because the Chow classes of torus-orbit-closures of a toric variety generate its Chow ring, the function $\int_{X_{\Sigma}} \xi \cdot (-)\colon  A^d(X_{\Sigma}) \to \ZZ$ is determined by its values on the complementary dimensional torus-orbit-closures $[Z_{\tau}]$ corresponding to cones $\tau\in \Sigma(d)$.
Because the classes $[Z_{\tau}]$ are in general not linearly independent, an assignment $\Sigma(d) \to \ZZ$ of integers to each $[Z_{\tau}]$ must satisfy certain ``balancing conditions'' to define a map $A^d(X_\Sigma)\to \ZZ$.
These observations lead to the notion of a Minkowski weight.

\begin{defn}\label{defn:MW}
A $d$-dimensional \textbf{Minkowski weight} on a unimodular fan $\Sigma$ is a function $\Delta\colon  \Sigma(d) \to \ZZ$ such that the following balancing condition is satisfied for every cone $\tau'\in \Sigma(d-1)$:
\[
\sum_{\tau\succ \tau'} \Delta(\tau)u_{\tau'\setminus\tau} \in \operatorname{span}(\tau')
\]
where the summation is over all cones $\tau\in \Sigma(d)$ containing $\tau'$, and $u_{\tau'\setminus\tau}$ denotes the primitive generator of the  unique ray of $\tau$ that is not in $\tau'$.  Write $\operatorname{MW}_d(\Sigma)$ for the set of $d$-dimensional Minkowski weights on $\Sigma$.
\end{defn}
\begin{thm}\cite[Theorem 3.1]{FS97}\label{thm:FS}
Let $\Sigma$ be a complete unimodular fan of dimension $m$, and let $X_\Sigma$ be its toric variety.  Then, for every $0\leq d \leq m$, one has an isomorphism
\[
A^{m-d}(X_\Sigma) \overset\sim\to \operatorname{MW}_d(\Sigma) \quad\text{defined by}\quad \xi \mapsto \left( \tau \mapsto \int_X \xi \cdot [Z_\tau]\right).
\]
\end{thm}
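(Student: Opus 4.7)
The plan is to reduce the result to a general fact from toric geometry: for a smooth proper toric variety $X_\Sigma$, the Chow group $A^d(X_\Sigma)$ admits an explicit presentation by torus-invariant cycles modulo relations coming from principal divisors, and a Minkowski weight is exactly a $\ZZ$-linear functional on $A^d(X_\Sigma)$ read off on these generators. Poincar\'e duality then converts this into the stated isomorphism.

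First, since $\Sigma$ is complete and unimodular, $X_\Sigma$ is smooth and proper, so Poincar\'e duality yields
$$A^{m-d}(X_\Sigma) \xrightarrow{\ \sim\ } \operatorname{Hom}_\ZZ\!\big(A^d(X_\Sigma),\,\ZZ\big), \quad \xi \mapsto \Big(\zeta \mapsto \int_{X_\Sigma} \xi\cdot \zeta\Big).$$
It therefore suffices to produce a canonical isomorphism $\operatorname{Hom}_\ZZ(A^d(X_\Sigma),\ZZ) \cong \operatorname{MW}_d(\Sigma)$ given by evaluating a functional on the classes $[Z_\tau]$ with $\tau\in \Sigma(d)$.

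Next, I would invoke the standard presentation of $A^d(X_\Sigma)$, as developed in \cite{Ful93,CLS11}, with generators $[Z_\tau]$ for $\tau\in \Sigma(d)$ modulo relations arising from principal divisors on codimension-$(d-1)$ strata. Concretely, for each $\tau'\in\Sigma(d-1)$, the torus-orbit-closure $Z_{\tau'}$ is itself a smooth toric variety whose dense torus has character lattice $(\operatorname{span}\tau')^\perp\cap\ZZ^m$, and for each $u$ in this lattice the principal divisor of the character $\chi^u$ on $Z_{\tau'}$ pushes forward to the relation
$$\sum_{\tau\succ\tau'}\ \langle u,\, u_{\tau'\setminus\tau}\rangle\ [Z_\tau] \;=\; 0 \quad \text{in } A^d(X_\Sigma),$$
and these exhaust the relations. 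Surjectivity of $\bigoplus_{\tau\in\Sigma(d)}\ZZ[Z_\tau] \twoheadrightarrow A^d(X_\Sigma)$ follows from the stratification of $X_\Sigma$ by torus orbits and the vanishing of the higher Chow groups of algebraic tori.

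The final step is a direct translation. An assignment $\Delta:\Sigma(d)\to \ZZ$, extended $\ZZ$-linearly, descends to a homomorphism $A^d(X_\Sigma)\to \ZZ$ if and only if for every $\tau'\in\Sigma(d-1)$ and every $u\in (\operatorname{span}\tau')^\perp\cap\ZZ^m$,
$$\sum_{\tau\succ\tau'}\,\langle u,\, u_{\tau'\setminus\tau}\rangle\,\Delta(\tau) \;=\; 0.$$
Letting $u$ range over a basis of $(\operatorname{span}\tau')^\perp\cap\ZZ^m$, this condition is equivalent to the vector $\sum_{\tau\succ\tau'}\Delta(\tau)\,u_{\tau'\setminus\tau}$ lying in $\operatorname{span}(\tau')$, which is exactly the balancing condition of \Cref{defn:MW}. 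Composing this bijection with the Poincar\'e duality isomorphism above sends $\xi\in A^{m-d}(X_\Sigma)$ to the function $\tau\mapsto \int_{X_\Sigma}\xi\cdot [Z_\tau]$, which is the claimed map. The main obstacle is justifying the presentation of $A^d(X_\Sigma)$ used above; surjectivity onto the generators is relatively easy, but identifying the relations precisely as those coming from characters on codimension-$(d-1)$ strata is the delicate input, standard but nontrivial in the toric geometry literature. Everything else is a bookkeeping translation.
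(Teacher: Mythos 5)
The paper offers no proof of this statement---it is quoted verbatim from Fulton--Sturmfels \cite{FS97}---so there is nothing internal to compare against; your write-up is a correct reconstruction of the standard argument in the smooth complete case. The two external inputs you rely on are exactly the right ones: Poincar\'e duality for smooth complete toric varieties (which the paper itself also assumes in the paragraph preceding the theorem), and the presentation of $A^d(X_\Sigma)$ by the classes $[Z_\tau]$, $\tau\in\Sigma(d)$, modulo the pushforwards of $\operatorname{div}(\chi^u)$ from the strata $Z_{\tau'}$, $\tau'\in\Sigma(d-1)$; you correctly identify the completeness of this list of relations (due to Fulton--MacPherson--Sottile--Sturmfels, and recorded as Proposition 2.1 in \cite{FS97}) as the one genuinely nontrivial input, and the remaining translation of those relations into the balancing condition of \Cref{defn:MW} is carried out correctly. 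No gaps.
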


For a smooth complete toric variety $X_\Sigma$, when a Chow class $\xi\in A^\bullet(X_\Sigma)$ maps to a Minkowski weight $\Delta \in \operatorname{MW}_\bullet(\Sigma)$ by the isomorphism in \Cref{thm:FS}, we say that $\Delta$ and $\xi$ are \textbf{Poincar\'e duals} of each other, which is notated by writing
\[
\xi \cap \Delta_{\Sigma} = \Delta.
\]
See \cite[Chapter 6]{MS15} for a further treatment of tropical intersection theory.

\subsection{Minkowski weights of Tautological classes}
\label{subsec:MWChern}
We compute the Poincar\'e duals of the tautological Chern classes of a matroid in \Cref{prop:MWChern}.  In the next subsection, we will use a special case of \Cref{prop:MWChern} concerning the top Chern classes to study the Bergman classes of matroids.
We prepare with the following lemma.
Recall that we say that a matroid is a \textbf{loop} (resp.\ \textbf{coloop}) if it has rank 0 (resp.\ rank 1) and its ground set has cardinality 1.

\begin{lem}\label{lem:degChern}
For a matroid $M$ of rank $r$ with ground set $E$, we have
\begin{align*}
\int_{X_E} c(\cQ_M) &= \begin{cases}
1 & \text{if $M$ is a loop, or is the rank $1$ uniform matroid $U_{1,E}$}\\
0 & \text{otherwise.}
\end{cases}\\
\int_{X_E} c(\cS_M^{\vee}) &= \begin{cases}
1 & \text{if $M$ is a coloop, or is the corank $1$ uniform matroid $U_{n,E}$}\\
0 & \text{otherwise}.
\end{cases}
\end{align*}
\end{lem}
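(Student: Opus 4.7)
The plan is to reduce each integral to a single top-dimensional Chern class and then read off the answer from \Cref{thm:betainvar}. Because $X_E$ has dimension $n = |E|-1$, the degree map annihilates Chow classes of degree less than $n$, so $\int_{X_E} c(\cQ_M) = \int_{X_E} c_n(\cQ_M)$ and $\int_{X_E} c(\cS_M^\vee) = \int_{X_E} c_n(\cS_M^\vee)$. Moreover, since $\crem$ is an automorphism of $X_E$, \Cref{prop:duality} gives $\int_{X_E} c(\cS_M^\vee) = \int_{X_E} c(\cQ_{M^\perp})$. Matroid duality exchanges ``loop'' with ``coloop'' and $U_{1,E}$ with $U_{n,E}$, so the two claimed formulas are equivalent, and I would only prove the one for $\cQ_M$.

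Next I would apply a rank bound: as $\cQ_M$ has rank $|E|-r$, the class $c_n(\cQ_M)$ vanishes unless $|E|-1 \leq |E|-r$, that is, unless $r \leq 1$. This already yields the vanishing claim for matroids of rank at least $2$. For $r \in \{0,1\}$ I would insert the identity class $c_0(\cS_M^\vee) = 1$ and invoke \Cref{thm:betainvar}: when $r=1$, the product $c_0(\cS_M^\vee)\,c_n(\cQ_M)$ coincides with $c_{r-1}(\cS_M^\vee)\,c_{|E|-r}(\cQ_M)$, giving $\int_{X_E} c_n(\cQ_M) = \beta(M)$; when $r=0$, the same product coincides with $c_r(\cS_M^\vee)\,c_{|E|-r-1}(\cQ_M)$, giving $\int_{X_E} c_n(\cQ_M) = \beta(M^\perp)$.

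It only remains to evaluate these beta invariants in each case. For $r=0$, we have $M = U_{0,E}$ and $M^\perp = U_{|E|,E}$ with $T_{M^\perp}(x,y) = x^{|E|}$; hence $\beta(M^\perp) = 1$ exactly when $|E| = 1$, i.e.\ when $M$ is a single loop. For $r=1$, either $M = U_{1,E}$, where $T_M(x,y) = x + y + y^2 + \cdots + y^{|E|-1}$ yields $\beta(M) = 1$, or else $M$ contains a loop and is therefore disconnected, in which case the classical vanishing of $\beta$ on disconnected matroids gives $\beta(M) = 0$. Assembling these cases reproduces the three-way split in the statement. The argument is essentially routine; the main obstacle is just bookkeeping the edge case $|E| = 1$, where ``loop'' and ``$U_{n,E}$'' (resp.\ ``coloop'' and ``$U_{1,E}$'') describe the same matroid, but both descriptions give consistent answers.
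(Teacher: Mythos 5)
Your proposal is correct and follows essentially the same route as the paper: reduce to $r\le 1$ by comparing the rank of $\cQ_M$ with $\dim X_E$, then evaluate via \Cref{thm:betainvar} and a direct computation of the relevant beta invariants. The only cosmetic differences are that you dispatch the $\cS_M^\vee$ case via the Cremona duality of \Cref{prop:duality} (the paper just says "similarly"), and you cite the vanishing of $\beta$ on disconnected matroids where the paper expands $T_{U_{0,E_\ell}\oplus U_{1,E\setminus E_\ell}}$ explicitly.
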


\begin{proof}
Let us prove the statement for $\cQ_M$; the statement for $\cS_M^\vee$ is proved similarly.  First, because $X_E$ is $n$-dimensional and $\cQ_M$ has rank $n+1 - r$, we have that the expression $\int_{X_E} c(\mathcal{Q}_M) =  \int_{X_E} (1 + c_1(\cQ_M) + \cdots + c_{n+1-r}(\cQ_M))$ equals $0$ unless $r\le 1$. If $r=0$, so that $M = U_{0,E}$, then setting $r = 0$ in the second identity of \Cref{thm:betainvar} gives $$\int_{X_E}c(\mathcal{Q}_M)=\int_{X_E} c_{|E|-1}(\mathcal{Q}_M)=\beta(M^{\perp})=
\begin{cases}1 & \text{if $|E|=1$}\\0 & \text{if $|E|\geq 2$},
\end{cases}$$
where for the last equality we note from the definition of Tutte polynomials that $T_{U_{0,E}}(x,y) = y^{|E|}$.
If $r=1$, so that $M = U_{0,E_\ell} \oplus U_{1, E\setminus E_\ell}$ for some subset $\emptyset\subseteq E_\ell \subsetneq E$, then setting $r = 1$ in the first identity of \Cref{thm:betainvar} gives
$$\int_{X_E}c(\mathcal{Q}_M)=\int_{X_E} c_{|E|-1}(\mathcal{Q}_M)=\beta(M)=\begin{cases}1 & \text{if $E_\ell = \emptyset$, i.e.\ $M = U_{1,E}$}\\0 & \text{otherwise},
\end{cases}$$
where for the last equality we note from the definition of Tutte polynomials that $T_{U_{0,E_\ell} \oplus U_{1,E\setminus E_\ell}}(x,y) = y^{|E_\ell|}T_{U_{1,E\setminus E_\ell}}(x,y) = y^{|E_\ell|}(x+y+\cdots + y^{|E\setminus E_\ell|-1})$.
\end{proof}

\begin{prop}\label{prop:MWChern}
Let $M$ be a matroid of rank $r$ with ground set $E$, and write $\crk_M = |E|-r$ for its corank.  Let $j$ and $k$ be nonnegative integers such that $j+k = |E|-1$.  For every chain $\mathscr S\colon  \emptyset \subsetneq S_1 \subsetneq \cdots \subsetneq S_{k} \subsetneq E$ of $k$ nonempty proper subsets of $E$, we have
\[
\int_{X_E} c_j(\mathcal{Q}_M)\cdot [Z_{\mathscr{S}}] = \begin{cases}1 & \begin{split} &\text{for $i = 0, \ldots, k$, exactly $\crk_M-j $ minors $M|S_{i+1}/S_{i}$ are loops}\\ &\text{and the rest are rank 1 uniform matroids $U_{1,S_{i+1}\setminus S_{i}}$}\end{split}\\
\\
0 & \text{otherwise.}\end{cases}
\]
\[
\int_{X_E} c_j(\mathcal{S}_M^\vee)\cdot [Z_{\mathscr{S}}] = \begin{cases}1 & \begin{split} &\text{for $i = 0, \ldots, k$, exactly $r-j $ minors $M|S_{i+1}/S_{i}$ are coloops}\\ &\text{and the rest are corank $1$ uniform matroids $U_{|S_{i+1}\setminus S_i|-1,S_{i+1}\setminus S_{i}}$}\end{split}\\
\\
0 & \text{otherwise.}\end{cases}
\]
\end{prop}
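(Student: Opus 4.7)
The plan is to compute $\int_{X_E} c_j(\mathcal{Q}_M) \cdot [Z_{\mathscr S}]$ by pushing the calculation down to $Z_{\mathscr S}$ and exploiting the Künneth decomposition provided by \Cref{prop:torusrest}. Concretely, by the projection formula one has
\[
\int_{X_E} c_j(\mathcal{Q}_M) \cdot [Z_{\mathscr S}] = \int_{Z_{\mathscr S}} c_j(\mathcal{Q}_M)\big|_{Z_{\mathscr S}},
\]
and \Cref{prop:tautrest} identifies $c(\mathcal{Q}_M)|_{Z_{\mathscr S}}$ with the external product $\bigotimes_{i=0}^k c(\mathcal{Q}_{M|S_{i+1}/S_i})$ on $Z_{\mathscr S} = X_{S_1}\times X_{S_2 \setminus S_1} \times \cdots \times X_{E\setminus S_k}$.

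Next I would perform the dimension count. The variety $Z_{\mathscr S}$ has dimension
\[
\sum_{i=0}^{k}(|S_{i+1}\setminus S_i| - 1) = |E| - (k+1) = j,
\]
so only the degree-$j$ part of $\bigotimes_i c(\mathcal{Q}_{M|S_{i+1}/S_i})$ contributes. By Künneth, this part decomposes as a sum over tuples $(j_0,\ldots,j_k)$ with $\sum j_i = j$ of products $\prod_i c_{j_i}(\mathcal{Q}_{M|S_{i+1}/S_i})$. For the integral over each factor $X_{S_{i+1}\setminus S_i}$ to be nonzero, we need $j_i = |S_{i+1}\setminus S_i|-1$, i.e. the top Chern class; and since $\sum j_i = j$ is precisely $\sum(|S_{i+1}\setminus S_i|-1)$, every factor must simultaneously contribute its top Chern class. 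In particular the only surviving term is
$\int_{Z_{\mathscr S}} \bigotimes_{i=0}^k c_{|S_{i+1}\setminus S_i|-1}(\mathcal{Q}_{M|S_{i+1}/S_i}) = \prod_{i=0}^k \int_{X_{S_{i+1}\setminus S_i}} c(\mathcal{Q}_{M|S_{i+1}/S_i})$, where the full Chern polynomial appears because lower-degree terms integrate to zero.

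Now I would apply \Cref{lem:degChern} factor by factor: the integral of $c(\mathcal{Q}_{M|S_{i+1}/S_i})$ is $1$ precisely when $M|S_{i+1}/S_i$ is either a loop or the rank-$1$ uniform matroid $U_{1,S_{i+1}\setminus S_i}$, and $0$ otherwise. Thus the whole product is $1$ when every minor $M|S_{i+1}/S_i$ falls into one of these two types, and $0$ otherwise. Finally I would verify the counting assertion: if $\ell$ of the $k+1$ minors are loops and the remaining $k+1-\ell$ are rank-$1$ uniform, then taking ranks gives $r = k+1-\ell$, and since $k+1 = |E|-j$ we conclude $\ell = |E|-j-r = \crk_M - j$, matching the claimed condition.

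The statement for $c_j(\mathcal{S}_M^\vee)$ is obtained by the same argument, using the second half of \Cref{prop:tautrest} and \Cref{lem:degChern}; alternatively it follows by applying the Cremona involution and \Cref{prop:duality} to reduce to the previous case for $M^\perp$, noting that the Cremona involution sends $[Z_{\mathscr S}]$ to $[Z_{\mathscr S'}]$ where $\mathscr S'$ is obtained by replacing each $S_i$ by $E\setminus S_{k+1-i}$, and that loops of $M|S_{i+1}/S_i$ correspond to coloops of the dual minor. There is no substantive obstacle: once \Cref{prop:torusrest}, \Cref{prop:tautrest}, and \Cref{lem:degChern} are in hand, the argument is essentially a dimension-forced Künneth computation. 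The only care needed is in the bookkeeping to translate the numerical condition $\sum j_i = j$ into the matroid-theoretic count $\crk_M - j$ of loops among the minors.
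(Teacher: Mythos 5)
Your proposal is correct and follows essentially the same route as the paper: restrict to $Z_{\mathscr S}$, apply the matroid-minors decomposition of \Cref{prop:tautrest}, evaluate each tensor factor by \Cref{lem:degChern}, and convert the count of loop minors into $\crk_M - j$ via the telescoping rank identity. The only cosmetic difference is that the paper tracks the degree $j$ by keeping the formal variable $u$ in the Chern polynomial, whereas you fix $j$ and force the degrees by a K\"unneth/dimension count; these are the same bookkeeping.
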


\begin{proof}
We prove the result for $\mathcal{Q}_M$, the result for $\mathcal{S}_M^{\vee}$ can be proved nearly identically, or by invoking the duality property (\Cref{prop:duality}).
By the matroid minors decomposition property (\Cref{prop:tautrest}), with $u$ a formal variable we have that 
\[
c(\mathcal{Q}_M,u)|_{Z_{\mathscr S}}=\bigotimes_{i=0}^k c(\mathcal{Q}_{M|S_{i+1}/ S_{i}},u) \in \bigotimes_{i=0}^k A^\bullet(X_{S_{i+1}\setminus S_{i}})[u].
\]
\Cref{lem:degChern} implies that, for each $i =0, \ldots, k$, we have $\int_{X_{S_{i+1}\setminus S_i}} c(\cQ_{M|S_{i+1}/S_i},u) = u^{\crk_{M|S_{i+1}/S_i}-1}$ if the minor $M|S_{i+1}/ S_i$ is a loop,  $u^{\crk_{M|S_{i+1}/ S_i}}$ for a rank 1 uniform matroid, and $0$ otherwise, which yields the desired statement since $\sum_{i=0}^k \crk_{M|S_{i+1}/S_i}=\crk_M$.
\end{proof}

\subsection{Bergman classes of matroids and wonderful compactifications of hyperplane arrangement complements}
\label{sec:wonderfulbergman}
By considering the Minkowski weight for the top tautological Chern class $c_{|E|-r}(\mathcal{Q}_M)$ of a matroid $M$, we recover the notion of the Bergman class of a matroid and its relation to the geometry of the wonderful compactification of a hyperplane arrangement complement.

\begin{defn}\label{defn:Bergman}
For a matroid $M$ with ground set $E$, a subset $F\subseteq E$ is a \textbf{flat} of $M$ if $\operatorname{rk}_M(F\cup \{e\}) > \operatorname{rk}_M(F)$ for all $e\in E\setminus F$. The \textbf{Bergman fan} $\Sigma_M$ of a loopless matroid $M$ of rank $r$ is a subfan of $\Sigma_E$ whose set of maximal cones consists of $\operatorname{Cone}(\overline\be_{F_1}, \ldots, \overline\be_{F_{r-1}})$ for each maximal chain of nonempty proper flats $\emptyset\subsetneq F_1 \subsetneq \cdots \subsetneq F_{r-1} \subsetneq E$ of $M$.
The \textbf{Bergman class} $\Delta_M$ of a matroid $M$ of rank $r$ with ground set $E$ is the function $\Delta_M\colon  \Sigma_E(r-1) \to \ZZ$ defined by 
 \[
 \operatorname{Cone}(\overline\be_{S_1}, \ldots, \overline\be_{S_{r-1}}) \mapsto \begin{cases}
 1 &\text{if $M$ is loopless and $\operatorname{Cone}(\overline\be_{S_1}, \ldots, \overline\be_{S_{r-1}})  \in \Sigma_M$}\\
 0 &\text{otherwise.}
 \end{cases}
 \]
\end{defn}

The Bergman fan of a matroid was previously studied in \cite{Stu02,FS05,AK06,Spe08} as a tropical linear space, and is a key object in the Hodge theory of matroids \cite{AdiHuhKatz}.  We caution that in these works, the terminology ``Bergman fan'' of a matroid $M$ sometimes refers to a more coarsened fan structure on the support of $\Sigma_M$ than defined here.
Here, we show that $\Delta_M$ is a Minkowski weight, and identify it with $c_{|E|-r}(\cQ_M)\cap \Delta_{\Sigma_E}$, the Poincar\'e dual of the top Chern class of $\cQ_M$.

\begin{thm}\label{thm:BergmanComb}
Let $M$ be a matroid of rank $r$ with ground set $E$. For every chain $\mathscr S\colon  \emptyset \subsetneq S_1 \subsetneq \cdots \subsetneq S_{r-1} \subsetneq E$ of $(r-1)$ nonempty proper subsets of $E$, we have
\[
\int_{X_E} c_{|E|-r}(\cQ_M) \cdot [Z_{\mathscr S}]= \Delta_M(\operatorname{Cone}(\overline{\be}_{S_1},\ldots,\overline{\be}_{S_{r-1}})).
\]
In particular, the function $\Delta_M$ is a Minkowski weight, and we have the equality of Minkowski weights $$c_{|E|-r}(\cQ_M)\cap \Delta_{\Sigma_E}=\Delta_M.$$
\end{thm}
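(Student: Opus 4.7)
The plan is to deduce the theorem directly from \Cref{prop:MWChern} applied with $j=|E|-r=\crk_M$, and then match the resulting combinatorial condition to the Bergman fan description. For this value of $j$, the Proposition says that $\int_{X_E} c_{|E|-r}(\cQ_M)\cdot [Z_{\mathscr S}]$ equals $1$ exactly when, for every $i=0,\ldots,r-1$, the minor $M|S_{i+1}/S_i$ (with the conventions $S_0=\emptyset$ and $S_r=E$) equals the rank-one uniform matroid $U_{1,S_{i+1}\setminus S_i}$, and equals $0$ otherwise. It thus suffices to show that this minors condition is equivalent to $M$ being loopless together with $\operatorname{Cone}(\overline\be_{S_1},\ldots,\overline\be_{S_{r-1}})$ lying in $\Sigma_M$.

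For the translation, note that $M|S_{i+1}/S_i\cong U_{1,S_{i+1}\setminus S_i}$ amounts to the conjunction of (a) $\operatorname{rk}_M(S_{i+1})=\operatorname{rk}_M(S_i)+1$, and (b) $\operatorname{rk}_M(S_i\cup \{e\})>\operatorname{rk}_M(S_i)$ for every $e\in S_{i+1}\setminus S_i$. Since the sets $S_{i+1}\setminus S_i$ partition $E$ and rank is subadditive, condition (b) forces $\operatorname{rk}_M(\{e\})>0$ for every $e\in E$, so $M$ is loopless. For flatness of $S_i$ when $i\geq 1$: any $e\in E\setminus S_i$ lies in a unique $S_{j+1}\setminus S_j$ with $j\geq i$, and if one had $\operatorname{rk}_M(S_i\cup \{e\})=\operatorname{rk}_M(S_i)$, monotonicity of matroid closure would give $\operatorname{rk}_M(S_j\cup\{e\})=\operatorname{rk}_M(S_j)$, contradicting (b) at index $j$. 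Conversely, when $M$ is loopless and $(S_i)$ forms a maximal chain of nonempty proper flats, the maximality forces $\operatorname{rk}_M(S_i)=i$, giving (a), while flatness of $S_i$ for $i\geq 1$ and looplessness for $i=0$ give (b).

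This establishes the pointwise identity $\int_{X_E} c_{|E|-r}(\cQ_M)\cdot [Z_{\mathscr S}]=\Delta_M(\operatorname{Cone}(\overline\be_{S_1},\ldots,\overline\be_{S_{r-1}}))$. The ``in particular'' clause is then immediate from \Cref{thm:FS}: the Poincar\'e dual of the Chow class $c_{|E|-r}(\cQ_M)$ is automatically a Minkowski weight, so $\Delta_M\in\operatorname{MW}_{r-1}(\Sigma_E)$ and $c_{|E|-r}(\cQ_M)\cap \Delta_{\Sigma_E}=\Delta_M$ by construction. The only non-formal step is the combinatorial equivalence in the previous paragraph, which is the main obstacle; the mild subtlety there is to ensure that condition (b), a constraint that at first glance only restricts elements appearing at a single step of the chain, propagates via monotonicity of closure to give flatness of every $S_i$ simultaneously.
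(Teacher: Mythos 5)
Your proof is correct and follows essentially the same route as the paper: apply \Cref{prop:MWChern} with $j=|E|-r$ and then establish the combinatorial equivalence between the minors condition and the maximal-chain-of-flats condition. The only difference is cosmetic — the paper proves the equivalence by backwards induction on $i$ using the corank-one-flat characterization of $M|S_{i+1}$, whereas you unpack the condition into the rank identities (a) and (b) and propagate via monotonicity of closure, which is an equally valid organization of the same argument.
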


\begin{proof}
By \Cref{prop:MWChern}, we have that $\int_{X_E} c_{|E|-r}(\cQ_M) \cdot [Z_{\mathscr S}]$ equals 1 if all minors $M|S_{i+1}/S_i$ are rank 1 uniform matroids and equals 0 otherwise.  Thus, we need to show that the chain $\mathscr S\colon  \emptyset \subsetneq S_1 \subsetneq \cdots \subsetneq S_{r-1} \subsetneq E$ satisfies $M|S_{i+1}/S_i = U_{1, S_{i+1}\setminus S_i}$ for all $i = 0, \ldots, r-1$ if and only if $M$ is loopless and $\mathscr S$ is a maximal chain of nonempty proper flats of $M$.

We first show that if $M|S_{i+1}/S_i = U_{1, S_{i+1}\setminus S_i}$ for all $1\leq i \leq r-1$, then $S_i$ is a flat for all $1\leq i \leq r-1$.
If a subset $S \subsetneq E$ is not a flat, then by definition there exists $e\in E\setminus S$ such that $\operatorname{rk}_M(S \cup \{e\}) = \operatorname{rk}_M(S)$, and hence the element $e$ is a loop in $M/S$.  If $S$ is a flat of corank 1 in $M$, then $M/S$ is a matroid of rank 1 with no loops, and hence is the matroid $U_{1,E\setminus S}$.  Moreover, note that if $S\subseteq E$ is a flat of $M$, then the flats of $M|S$ are the flats of $M$ contained in $S$.  Hence, by backwards induction on $i$ starting with $i = r-1$, we conclude that $M|S_{i+1}/S_i = U_{1, S_{i+1}\setminus S_i}$ for $i=1,\ldots,r-1$ if and only if $S_i$ is a flat of corank 1 in $M|S_{i+1}$, and hence a flat in $M$, for all $i = 1, \ldots, r-1$.  
Lastly, if $S_1$ is a flat of $M$, then it contains the loops of $M$, so that $M|S_1$ is loopless if and only if $M$ is loopless.
\end{proof}
The fact that $\Delta_M$ is a Minkowski weight recovers the computation from \cite[Proposition~4.3]{H18} that $\Delta_M$ satisfies the balancing condition of \Cref{defn:MW}. Also, in  \Cref{rem:modular}, we show that a set of Chern roots of $\cQ_M$ is given by $\{\alpha - \alpha_{\mathscr F_i}\}_{i = r, \ldots, n}$ for certain modular filters $\mathscr F_i$.  In this light, \Cref{thm:BergmanComb} states that the Minkowski weight of the product $\prod_{i=r}^n(\alpha - \alpha_{\mathscr F_i})$ equals the Bergman class $\Delta_M$.  In \cite[Remark 31]{huh2014rota}, the same statement is deduced via a tropical intersection formula in \cite{AR10}.

\begin{rem}
By a similar computation, one can recover the computation of the Minkowski weight for the ``truncated Bergman fan'' $\Delta_{M,[d_1,d_2]}$ from \cite{huh2014rota,Kat14}, which in our notation is equal to the Minkowski weight $\alpha^{r-1-d_2}\beta^{d_1-1}c_{|E|-r}(\mathcal{Q}_M) \cap \Delta_{\Sigma_E}$ by \Cref{thm:BergmanComb}. Indeed, along the lines of \Cref{prop:MWChern}, we have by \Cref{prop:tautrest} and \Cref{cor:alphabetarest} that
$$\alpha^i\beta^jc(\mathcal{Q}_M)|_{Z_{\mathscr S}}=\beta^jc(\mathcal{Q}_{M|S_1},u)\otimes \bigotimes_{i=1}^{k-1}c(\mathcal{Q}_{M|S_{i+1}/S_i},u)\otimes \alpha^ic(\mathcal{Q}_{M/S_k},u),$$
and one can similarly apply \Cref{thm:4degintro} to each term of the tensor product, noting that for any flat $F$, the expression $|\mu(\emptyset,F)|$ in \cite{huh2014rota,Kat14} is equal to $T_{M|F}(1,0)$.
\end{rem}

For $M$ a realizable matroid, we now show how our expression of $\Delta_M$ as $c_{|E|-r}(\cQ_M)\cap \Delta_{\Sigma_E}$ recovers the relation between the class $\Delta_M$ and the geometry of wonderful compactifications of hyperplane arrangement complements introduced in \cite{dCP95}.

\begin{defn}
Let $L\subseteq \CC^E$ be a realization of a matroid $M$, and let $\PP L \subset \PP^n$ be the projectivization.  The \textbf{wonderful compactification} $W_L$ associated to the realization $L$ is the closure of $\PP L \cap (T/\CC^*)$ in $X_E$.\footnote{Our definition here may look different from the one in \cite{dCP95}.  First, the wonderful compactification as defined here is the ``maximal building set'' wonderful compactification, whereas \cite{dCP95} more generally studies wonderful compactifications from arbitrary maximal building sets.  Second, the wonderful compactification is originally constructed via blow-ups.  From the fact that $X_E$ can be constructed as a series of blow-ups from $\PP^n$, one can deduce the equivalence between the description of $W_L$ as a blow-up and the description here as a closure in $X_E$.  See for example \cite[Section~6]{H18} for an exposition of this equivalence.}
\end{defn}

\begin{rem}\label{rem:SNC}
For $i\in E$, let $H_i$ be the coordinate hyperplane of $\CC^E$.  A realization $L\subseteq \CC^E$ of a matroid defines a projective hyperplane arrangement $\{\PP\mathcal H_i \subset \PP L\}_{i\in E}$ on $\PP L$, where $\mathcal H_i = L \cap H_i$.  In other words, the set $\PP L \cap (T/\CC^*)$ is the complement of the projective hyperplane arrangement.   \cite[Theorem 4.2]{dCP95} states that the wonderful compactification $W_L$ is the compactification of $\PP L \cap (T/\CC^*)$ whose boundary $D_{W_L} = W_L \setminus (\PP L \cap (T/\CC^*))$ is a union of smooth irreducible divisors with simple normal crossing.
\end{rem}

The following theorem relates the wonderful compactification to the tautological quotient bundle $\cQ_L$ of a realization $L$ of a matroid.

\begin{thm}\label{thm:BergmanGeom}
Let $L \subseteq \CC^E$ be a subspace of dimension $r$, and for $\mathbf v \in \CC^E$, let $s_{\mathbf v}$ be the global section of the tautological quotient bundle $\cQ_L$ obtained as the image under $\uCCinv^E \to \cQ_L$ of the constant $\mathbf v\in \CC^E$ section of the bundle $\uCCinv^E$.  Then, for any $\mathbf a\in (\CC^*)^E$ we have
\[
W_{\mathbf a^{-1} L} = \text{the vanishing loci $\{p\in X_E \mid s_{\mathbf a}(p) = 0\}$ of the global section $s_{\mathbf a}$ of $\cQ_L$}.
\]
In particular, since $\cQ_L$ is globally generated by $\uCCinv^E \to \cQ_L$, for any $\mathbf a\in (\CC^*)^E$ we have
\[
[W_{\mathbf a^{-1} L}] = c_{|E|-r}(\cQ_L)\quad\text{as elements in }A^{\bullet}(X_E).
\]
\end{thm}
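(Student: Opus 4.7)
The plan is to proceed by induction on $|E|$, with trivial base case $|E|=1$. The main idea is to analyze $V(s_\mathbf a)$ stratum by stratum in $X_E$: compute it directly on the open torus $T/\CC^*$ to get the containment $W_{\mathbf a^{-1}L}\subseteq V(s_\mathbf a)$, and then use the matroid minors decomposition of tautological bundles (\Cref{prop:tautrest}) to reduce the boundary strata of $X_E$ to the inductive hypothesis. A dimension bound will rule out extra components of $V(s_\mathbf a)$ in the boundary. The Chow class identity will follow from the standard principle that a regular section of a vector bundle has vanishing cycle equal to the top Chern class, once I verify regularity for all $\mathbf a\in(\CC^*)^E$.

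\textbf{Open torus and boundary analysis.} At a point $\bar t\in T/\CC^*$, the discussion following \Cref{defn:tautbundle} identifies the fiber of $\cS_L$ with $t^{-1}L\subseteq \CC^E$, so $\cQ_L|_{\bar t}=\CC^E/(t^{-1}L)$ and $s_\mathbf a(\bar t)=\mathbf a+t^{-1}L$, which vanishes iff $t\mathbf a\in L$, iff (since $\mathbf a\in(\CC^*)^E$) $t\in\mathbf a^{-1}L$. Thus $V(s_\mathbf a)\cap(T/\CC^*)=\PP(\mathbf a^{-1}L)\cap(T/\CC^*)$, and taking closures in $X_E$ gives $W_{\mathbf a^{-1}L}\subseteq V(s_\mathbf a)$. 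For a boundary stratum $Z_\mathscr{S}\cong\prod_{i=0}^k X_{E_i}$ with $E_i=S_{i+1}\setminus S_i$, I will upgrade \Cref{prop:tautrest} to a bundle-level identity in the realizable case, writing $\cS_L|_{Z_\mathscr{S}}=\bigoplus_i\mathrm{pr}_i^*\cS_{L_i}$ for realizations $L_i\subseteq\CC^{E_i}$ of the minors $M|S_{i+1}/S_i$. The restriction of $s_\mathbf a$ then splits as the tuple $(s_{\mathbf a|_{E_i}})_i$, so $V(s_\mathbf a)\cap Z_\mathscr{S}=\prod_i V(s_{\mathbf a|_{E_i}})_{X_{E_i}}$, which by the inductive hypothesis equals $\prod_i W_{(\mathbf a|_{E_i})^{-1}L_i}$ of dimension $\sum_i(r_i-1)=r-k-1$ (using that the minor ranks sum to $r$).

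\textbf{Purity, set-theoretic equality, and the class identity.} If $M$ has a loop $i\in E$, then every fiber of $\cS_L$ lies in the coordinate hyperplane $H_i$, so $s_\mathbf a$ never vanishes when $a_i\neq 0$ and both $V(s_\mathbf a)$ and $W_{\mathbf a^{-1}L}$ are empty. For loopless $M$, consider the projection $\pi\colon\PP(\cS_L)\to\PP^n$ from the projectivization of the subbundle $\cS_L\subseteq\uCCinv^E$; its fiber over $[\mathbf a]$ is $V(s_\mathbf a)$, and $\pi$ is surjective because its image contains the dense subset $T\cdot\PP L\subseteq\PP^n$. Since $\PP(\cS_L)$ is irreducible of dimension $n+r-1$, the generic fiber has dimension $r-1$; combining generic flatness with the $T$-equivariance $V(s_{t\mathbf a})=t^{-1}\cdot V(s_\mathbf a)$ for $t\in T$, all fibers over the open torus of $\PP^n$ are pure of dimension $r-1$, so $s_\mathbf a$ is a regular section. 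Any irreducible component of $V(s_\mathbf a)$ thus has dimension $r-1$ and, by the bound of the previous paragraph, cannot be contained in any $Z_\mathscr{S}$ with $k\geq 1$; hence each component meets $T/\CC^*$ nontrivially and lies in the closure $W_{\mathbf a^{-1}L}$. Since $W_{\mathbf a^{-1}L}$ is irreducible of dimension $r-1$, this forces $V(s_\mathbf a)=W_{\mathbf a^{-1}L}$ set-theoretically. Bertini applied to the globally generated bundle $\cQ_L$ yields smooth (hence reduced) $V(s_{\mathbf a'})$ for generic $\mathbf a'\in(\CC^*)^E$, and torus-equivariance propagates this to all $\mathbf a\in(\CC^*)^E$, so the regular-section formula gives $[W_{\mathbf a^{-1}L}]=[V(s_\mathbf a)]=c_{|E|-r}(\cQ_L)$. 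The most delicate step is the bundle-level refinement of \Cref{prop:tautrest} needed to make the decomposition $s_\mathbf a|_{Z_\mathscr{S}}=(s_{\mathbf a|_{E_i}})_i$ rigorous; this is expected in the realizable case from compatibility of the realization $L$ with matroid minor operations, but deserves careful setup.
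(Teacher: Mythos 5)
Your argument is correct, and it diverges from the paper's proof at the decisive step. Both proofs open identically: the fibre computation on $T/\CC^*$ giving $V(s_{\mathbf a})\cap(T/\CC^*)=\PP(\mathbf a^{-1}L)\cap(T/\CC^*)$, hence $W_{\mathbf a^{-1}L}\subseteq V(s_{\mathbf a})$, and the loop case via the closedness of the condition that the fibres of $\cS_L$ lie in a coordinate hyperplane. Where the paper then proves outright that $V(s_{\mathbf 1})$ is \emph{irreducible} --- it is the fibre over $\mathbf 1$ of the projection $\cS_L\to\CCinv^E$ from the irreducible total space, whose general fibre is irreducible because over the dense torus the fibres are the irreducible linear sections $\PP(\mathbf a^{-1}L)\cap(T/\CC^*)$ --- you prove only \emph{purity} of dimension $r-1$ (via the projectivized version $\PP(\cS_L)\to\PPinv^n$ of essentially the same fibration, plus semicontinuity of fibre dimension and $T$-equivariance), and then exclude boundary components by the inductive stratum-by-stratum bound $\dim\big(V(s_{\mathbf a})\cap Z_{\mathscr S}\big)=r-k-1$. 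Your route costs more: it requires the bundle-level refinement of \Cref{prop:tautrest}, which does hold (both sides are $T$-equivariant subbundles of $Z_{\mathscr S}\times\CCinv^E$, hence determined by their common fibre $\bigoplus_i L_i$ at a point of the dense orbit of $Z_{\mathscr S}$, where $L_i$ is the image of $L\cap\CC^{S_{i+1}}$ in $\CC^{S_{i+1}}/\CC^{S_i}$; the only input is the standard Grassmannian degeneration $\lim_{s\to 0}\lambda(s)^{-1}L=\bigoplus_i L_i$), and it requires an induction the paper avoids entirely. In exchange it identifies $V(s_{\mathbf a})\cap Z_{\mathscr S}$ exactly rather than just bounding its dimension. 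You are also more careful than the paper about passing from the set-theoretic equality to the Chow class identity: your Bertini-plus-equivariance argument for reducedness of the zero scheme is precisely what the paper leaves implicit in ``since $\cQ_L$ is globally generated.'' One wording quibble: ``generic flatness'' is not the right tool for purity over the whole open orbit; what your sentence actually uses is upper semicontinuity of fibre dimension together with the $T$-invariance of the jump locus, and you should say so when writing this up.
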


\begin{proof}
We prove the theorem for $\mathbf a = \mathbf 1 = (1,1,\ldots, 1)$, since the general case follows by $T$-equivariance.
By the short exact sequence $0\to \mathcal{S}_L\to \uCCinv^E \to \mathcal{Q}_L \to 0$, for any $p\in X_E$ we have $s_{\mathbf 1}(p)=0$ precisely when $\mathbf 1$ belongs to the fiber $\mathcal{S}_L|_p$ of $\cS_L$ at $p$.
Recall that for $\overline t$, the point in the dense open torus $T/\CC^*$ corresponding to a point $t\in T$, the fiber $\cS_L|_{\overline t}$ is by definition $t^{-1} L$.

We first treat the case where $L$ is contained in a coordinate hyperplane $\{x_i=0\}$, i.e. the corresponding matroid has a loop. In this case, by construction we have $W_L=\emptyset$.  Also, the fiber of $\mathcal{S}_L$ over $\overline{t}$ in the dense open torus $T/\CC^*$ is always contained in $\{x_i=0\}$. Since this is a closed condition, the same true is for every fiber of $\mathcal{S}_L\to X_E$. Since $\mathbf 1\not\in \{x_i=0\}$, this means $\mathbf 1$ is not contained in any fiber of $\mathcal{S}_L$, even on the boundary of $X_{E}$. That is, the section $s_{\mathbf 1}$ is nowhere vanishing on $X_E$.

We now treat with the case where $L$ is not contained in a coordinate hyperplane.
On the dense open torus $T/\CC^*$ we have
\[
s_{\mathbf 1}(\overline t) = 0 \iff \mathbf 1 \in \mathcal{S}_L|_{\overline t}=t^{-1}L \iff t\in L \iff  \overline t \in \PP L.
\]
Thus, the vanishing locus of the section $s_{\mathbf 1}$ on the dense open torus $T/\CC^*$ is $\PP L \cap (T/\mathbb{C}^*)$.  Since $W_L$ is the closure in $X_E$ of $\PP L \cap (T/\mathbb{C}^*)$, we are done once we show that the vanishing loci of $s_\mathbf 1$ is irreducible

To show irreducibility, we consider the map $\pi\colon \mathcal{S}_L \to \uCCinv^E= X_{E} \times \CCinv^E \to \CCinv^E$, which is dominant since $L$ is not contained in a coordinate hyperplane.  Note that for $\mathbf a\in \CC^E$, the fiber $\pi^{-1}(\mathbf a) = \{(x,\mathbf a) \in X_E \times \CCinv^E \colon \mathbf a\in \cS_L|_x\}$ is isomorphic to the vanishing loci of the section $s_{\mathbf a}$.  The affine bundle $\cS_L$ is irreducible, and since a general fiber of any dominant map of two varieties is pure-dimensional, we have that a general fiber of $\pi$ is irreducible if a general fiber of the restriction of $\pi$ to the open subset $\cS_L|_{T/\CC^*}$ is irreducible.  By $T$-equivariance, the previous conclusion that $\{s_{\mathbf 1}=0\}\cap (T/\mathbb{C}^*)=\PP L\cap (T/\mathbb{C}^*)$ thus implies that a general fiber of $\pi$ is irreducible.  Again by $T$-equivariance, the fiber $\pi^{-1}(\mathbf 1)$ is a general fiber and hence irreducible, as desired.
\end{proof}

Combining \Cref{thm:BergmanComb} with \Cref{thm:BergmanGeom}, we obtain the following properties of the Bergman class of a matroid.

\begin{cor}\label{cor:Bergman}
Let $M$ be a matroid with ground set $E$.  Then,
\begin{enumerate}[label = (\roman*)]
\item If $L\subseteq \CC^E$ is a realization of $M$, then the Chow class $[W_L] \in A^{\bullet}(X_E)$ of the wonderful compactification is the Poincar\'e dual of $\Delta_M$, and in particular is independent of the realization.
\item \label{Bergman:val}The assignment $M \mapsto \Delta_M$ is valuative.
\end{enumerate}
\end{cor}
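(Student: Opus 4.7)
The plan is to deduce both parts of Corollary \ref{cor:Bergman} by combining three results already established in the excerpt: Theorem \ref{thm:BergmanComb} (the identification $c_{|E|-r}(\cQ_M)\cap\Delta_{\Sigma_E}=\Delta_M$), Theorem \ref{thm:BergmanGeom} (the geometric identification $[W_{\mathbf a^{-1}L}]=c_{|E|-r}(\cQ_L)$), and the dependency/valuativity results for tautological classes (Proposition \ref{prop:tautdependsonmatroid} and Proposition \ref{prop:valpolys}).

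For part (i), I would specialize Theorem \ref{thm:BergmanGeom} to the choice $\mathbf a=\mathbf 1$, which gives the Chow-class identity $[W_L]=c_{|E|-r}(\cQ_L)$ in $A^\bullet(X_E)$. By Proposition \ref{prop:tautdependsonmatroid}, the $T$-equivariant $K$-class $[\cQ_L]$ depends only on the matroid $M$ realized by $L$, and hence so does its non-equivariant Chern class; therefore $[W_L]=c_{|E|-r}(\cQ_M)$, proving realization-independence. Applying Theorem \ref{thm:BergmanComb} then identifies this class, via Poincaré duality on the smooth complete toric variety $X_E$ (Theorem \ref{thm:FS}), with the Minkowski weight $\Delta_M$.

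For part (ii), I would argue as follows. By Theorem \ref{thm:BergmanComb}, the Bergman class $\Delta_M$ is the image of the Chow class $c_{|E|-r}(\cQ_M)\in A^{r-1}(X_E)$ under the Poincaré duality isomorphism $A^{r-1}(X_E)\xrightarrow{\sim}\mathrm{MW}_{r-1}(\Sigma_E)$ of \Cref{thm:FS}, which is an isomorphism of abelian groups independent of $M$. Since Proposition \ref{prop:valpolys} asserts that the assignment $M\mapsto c_{|E|-r}(\cQ_M)$ is valuative (it is a fixed polynomial expression in the Chern classes of $\cQ_M$), composing with this fixed linear isomorphism preserves valuativity, so $M\mapsto\Delta_M$ is valuative.

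There is essentially no obstacle here: both parts are short formal consequences of results already proved. The only minor point to note is the passage from $T$-equivariant $K$-theoretic statements (Proposition \ref{prop:tautdependsonmatroid}) to the non-equivariant Chow-theoretic identity needed for (i), which is immediate since non-equivariant Chern classes factor through the equivariant $K$-class. Likewise for (ii), one should simply observe that $c_{|E|-r}(\cQ_M)$ is a degree-$(r-1)$ polynomial expression in the Chern classes of $\cQ_M$, so Proposition \ref{prop:valpolys} applies directly.
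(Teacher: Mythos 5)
Your proof is correct and takes essentially the same route as the paper, which proves (i) by combining \Cref{thm:BergmanGeom} (with $\mathbf a=\mathbf 1$) with \Cref{thm:BergmanComb}, and (ii) by applying \Cref{prop:valpolys} to \Cref{thm:BergmanComb}. One small slip in part (ii): the class $c_{|E|-r}(\cQ_M)$ lies in $A^{|E|-r}(X_E)$ rather than $A^{r-1}(X_E)$; the Poincar\'e duality isomorphism of \Cref{thm:FS} is $A^{|E|-r}(X_E)=A^{n-(r-1)}(X_E)\xrightarrow{\ \sim\ }\mathrm{MW}_{r-1}(\Sigma_E)$, which is precisely what lands the dual weight in dimension $r-1$; this typo does not affect the argument, since the key point is just that $\Delta_M$ is the image of $c_{|E|-r}(\cQ_M)$ under a fixed group isomorphism, so valuativity is preserved.
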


Part (i) recovers \cite[Proposition 4.2]{KP11}.  Part (ii) follows by applying \Cref{prop:valpolys} to \Cref{thm:BergmanComb}.
\Cref{lem:looplessval} implies that the two properties in \Cref{cor:Bergman} characterize the assignment $M\mapsto \Delta_M$.

\begin{rem}\label{rem:whyinverse}
For a realization $L\subseteq \CC^E$, we defined $\cS_L$ to be the vector bundle on $X_E$ such that the fiber over $\overline t \in T/\CC^*$ is the subspace $t^{-1}L \subseteq \CC^E$.  If we defined $\cS_L$ such that the fiber at $\overline t$ is $tL \subseteq \CC^E$ instead, the proof of \Cref{thm:BergmanGeom} implies that the vanishing loci of the section $s_{\mathbf 1}$ on the open torus $T/\CC^*$ is not the linear space $\PP L \cap (T/\CC^*)$, but instead its Cremona image $\crem(\PP L \cap (T/\CC^*))$.  In particular, such alternate definitions of $\cS_L$ and $\cQ_L$ result in $c_{|E|-r}(\cQ_M) \cap \Delta_{\Sigma_E} = \crem \Delta_M$.
\end{rem}

\section{Chern-Schwartz-MacPherson classes via tautological classes}
\label{sec:CSMvia}

By considering the Poincar\'e duals of products of tautological Chern classes, we recover the notion of Chern-Schwartz-MacPherson (CSM) classes of a matroid studied in \cite{LdMRS20}, and their relation to the geometry of hyperplane arrangement complements.

\subsection{Minkowski Weights of products of Chern classes}
We compute the Poincar\'e duals of any product $c_i(\mathcal{S}_M^{\vee})c_j(\mathcal{Q}_M)$ of tautological Chern classes of a matroid $M$.
We will use this to express the CSM classes of matroids in terms of the tautological Chern classes.

\begin{prop}\label{prop:MWChernProd}
Let $M$ be a matroid with ground set $E$, and write $\rk_M$ and $\crk_M$ for its rank and corank, respectively.  For a chain $\mathscr S\colon  \emptyset\subsetneq S_1 \subsetneq \cdots \subsetneq S_k \subsetneq E$ of nonempty proper subsets of $E$, we have
\[
\int_{X_E} c(\mathcal{S}_M^{\vee},z)c(\mathcal{Q}_M,w) \cdot [Z_{\mathscr{S}}] =z^{\rk_M}w^{\crk_M}\prod_{i=0}^k \Big(\beta(M|S_{i+1}/S_i)\frac{1}{z}+\beta((M|S_{i+1}/S_i)^{\perp})\frac{1}{w}\Big).
\]
\end{prop}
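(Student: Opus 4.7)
The plan is to reduce the computation on $Z_{\mathscr S}$ to a product of integrals over smaller permutohedral varieties via the matroid minors decomposition (\Cref{prop:tautrest}), and then apply the beta-invariant formulas of \Cref{thm:betainvar} to each factor.

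First, I would invoke \Cref{prop:torusrest}, which identifies $Z_{\mathscr S}$ $T$-equivariantly with $\prod_{i=0}^k X_{S_{i+1}\setminus S_i}$, so that the degree map $\int_{Z_{\mathscr S}}$ factors as a product of the degree maps $\int_{X_{S_{i+1}\setminus S_i}}$. Next, \Cref{prop:tautrest} gives
\[
c(\mathcal{S}_M^{\vee},z)c(\mathcal{Q}_M,w)\big|_{Z_{\mathscr S}} \;=\; \bigotimes_{i=0}^k c(\mathcal{S}_{M|S_{i+1}/S_i}^{\vee},z)\,c(\mathcal{Q}_{M|S_{i+1}/S_i},w).
\]
Combining these, the left-hand side of the proposition factors as $\prod_{i=0}^k \int_{X_{S_{i+1}\setminus S_i}} c(\mathcal{S}_{M_i}^{\vee},z)\,c(\mathcal{Q}_{M_i},w)$, where I abbreviate $M_i = M|S_{i+1}/S_i$.

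The core step is then to establish, for an arbitrary matroid $N$ of rank $\rk$ on a ground set $E'$ with $|E'|=n'+1$, the identity
\[
\int_{X_{E'}} c(\mathcal{S}_N^{\vee},z)\,c(\mathcal{Q}_N,w) \;=\; z^{\rk}w^{|E'|-\rk}\Big(\beta(N)\tfrac{1}{z}+\beta(N^{\perp})\tfrac{1}{w}\Big).
\]
Expanding $c(\mathcal{S}_N^\vee,z)c(\mathcal{Q}_N,w)=\sum_{i,j}c_i(\mathcal{S}_N^\vee)c_j(\mathcal{Q}_N)z^iw^j$, only terms with $i+j=n'=\dim X_{E'}$ survive under $\int_{X_{E'}}$, and the rank bounds $i\le \rk$, $j\le |E'|-\rk$ force $(i,j)\in\{(\rk-1,|E'|-\rk),(\rk,|E'|-\rk-1)\}$. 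Applying \Cref{thm:betainvar} to each of these two surviving terms yields exactly the displayed right-hand side.

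Finally, I would multiply these factor-wise identities together, using that $\sum_{i=0}^k \rk(M_i)=\rk_M$ and $\sum_{i=0}^k \crk(M_i)=\crk_M$ (which follows from \Cref{prop:greedy}), to pull the prefactors $z^{\rk_M}w^{\crk_M}$ out of the product and obtain the claimed formula. There is no serious obstacle here: the main content has already been packaged into \Cref{prop:tautrest} and \Cref{thm:betainvar}, and the proof is essentially a bookkeeping assembly of these two inputs.
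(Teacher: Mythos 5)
Your proposal is correct and follows essentially the same route as the paper: restrict to $Z_{\mathscr S}$ via \Cref{prop:torusrest} and \Cref{prop:tautrest} to factor the integral over the minors, then evaluate each factor by the single-matroid identity $\int_{X_{E'}} c(\mathcal{S}_{N}^{\vee},z)c(\mathcal{Q}_{N},w) = z^{\rk_N}w^{\crk_N}(\beta(N)\tfrac{1}{z}+\beta(N^{\perp})\tfrac{1}{w})$ coming from \Cref{thm:betainvar}. The only difference is that you spell out the dimension/rank-bound bookkeeping behind that identity, which the paper leaves implicit.
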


\begin{proof}
By the matroid minor decomposition property \Cref{prop:torusrest} we have
\begin{align*}
\int_{X_E} c(\mathcal{S}_M^{\vee},z)c(\mathcal{Q}_M,w)\cdot [Z_{\mathscr{S}}] &= \int_{Z_{\mathscr S}}\big(c(\mathcal{S}_M^{\vee},z)c(\mathcal{Q}_M,w)|_{Z_{\mathscr{S}}}\big)\\&=\prod_{i=0}^k \int_{X_{S_{i+1}\setminus S_i}} c(\mathcal{S}_{M|S_{i+1}/S_{i}},z) c(\mathcal{Q}_{M|S_{i+1}/S_{i}},w).
\end{align*}
By \Cref{thm:betainvar}, for a matroid $M'$ on a ground set $E'$ we have
$$\int_{X_{E'}} c(\mathcal{S}_{M'}^{\vee},z)c(\mathcal{Q}_{M'},w) = z^{\rk_{M'}}w^{\crk_{M'}} \big(\beta(M')\frac{1}{z}+\beta({M'}^{\perp})\frac{1}{w}\big).$$
Applying this with $M'=M|S_{i+1}/S_i$ for each $i = 0, \ldots, k$, the result follows.
\end{proof}

\subsection{CSM classes of matroids and hyperplane arrangement complements}
The following definition of CSM classes of a matroid was introduced in \cite[Definition 5]{LdMRS20}.

\begin{defn}\label{defn:CSM}
Let $M$ be a matroid of rank $r$ on ground set $E$.  For $0\leq k \leq r-1$, define the \textbf{$k$-dimensional CSM class} of $M$ to be the function $\csm_k(M)\colon  \Sigma_E(k) \to \ZZ$ defined by
\[
\operatorname{Cone}(\overline\be_{S_1}, \ldots, \overline\be_{S_k}) \mapsto \begin{cases}
(-1)^{r-1-k}\prod_{i=0}^k\beta(M|S_{i+1}/S_i) &\text{if $S_i$ is a flat of $M$ for all $0\leq i \leq k$}\\
0 &\text{otherwise}
\end{cases}
\]
for every chain $\emptyset \subsetneq S_1 \subsetneq \cdots \subsetneq S_k \subsetneq E$ of $k$ nonempty proper subsets of $E$.
\end{defn}

\begin{rem}
One may alternately define the function $\csm_k(M)\colon  \Sigma_E(k) \to \ZZ$ to be
\[
\operatorname{Cone}(\overline\be_{S_1}, \ldots, \overline\be_{S_k}) \mapsto (-1)^{r-1-k}\prod_{i=0}^k\beta(M|S_{i+1}/S_i)
\]
for the following reason.
If a matroid $M$ has a loop, say $e\in E$, then $M = U_{0,\{e\}}\oplus M\setminus \{e\}$, so that $\beta(M) = 0$ because $T_M(x,y)$ is divisible by $y = T_{U_{0,\{e\}}}(x,y)$.  Now, arguing the same way as in the proof of \Cref{thm:BergmanComb}, we have that the minors $M|S_{i+1}/S_i$ are loopless for all $i = 1, \ldots, k$ if and only if the subsets $S_i$ are flats of $M$ for all $i = 1, \ldots, k$, and $S_0 = \emptyset$ is a flat of $M$ if and only if $M$ is loopless. \end{rem}

We express the CSM classes of matroids in terms of products of tautological Chern classes of $M$ as follows.

\begin{thm}\label{thm:CSMComb}
Let $M$ be a matroid of rank $r$ with ground set $E$.  Then, for every $k = 0,1, \ldots, r-1$, and for every chain $\mathscr S\colon  \emptyset \subsetneq S_1 \subsetneq \cdots \subsetneq S_k\subsetneq E$ of $k$ nonempty proper subsets of $E$, we have
\[
\int_{X_E} c_{r-1-k}(\cS_M)c_{|E|-r}(\cQ_M) \cdot [Z_{\mathscr  S}] =
(-1)^{r-1-k}\prod_{i=0}^k\beta(M|S_{i+1}/S_i).
\]
In particular, for every $k=0,1,\ldots,r-1$, the function $\operatorname{csm}_k(M)$ is a Minkowski weight, and we have the equality of Minkowski weights $$c_{r-1-k}(\mathcal{S}_M)c_{|E|-r}(\cQ_M)\cap \Delta_{\Sigma_E}=\operatorname{csm}_k(M).$$
\end{thm}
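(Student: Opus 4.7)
The plan is to deduce the theorem as a specialization of \Cref{prop:MWChernProd} by extracting a single bidegree coefficient, and then invoke the toric duality \Cref{thm:FS} to upgrade the pointwise equality to the Minkowski-weight identity.

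First I would rewrite the left-hand side so that \Cref{prop:MWChernProd} applies directly. Since $c(\mathcal{E}^{\vee},u)=c(\mathcal{E},-u)$, we have $c_i(\mathcal{S}_M^{\vee})=(-1)^ic_i(\mathcal{S}_M)$, so it suffices to show
\[
\int_{X_E} c_{r-1-k}(\mathcal{S}_M^{\vee})\,c_{|E|-r}(\mathcal{Q}_M)\cdot[Z_{\mathscr S}]=\prod_{i=0}^k\beta(M|S_{i+1}/S_i).
\]
This quantity is precisely the coefficient of $z^{r-1-k}w^{|E|-r}$ in the generating polynomial $\int_{X_E}c(\mathcal{S}_M^{\vee},z)c(\mathcal{Q}_M,w)\cdot[Z_{\mathscr S}]$, which by \Cref{prop:MWChernProd} equals
\[
z^{r}w^{|E|-r}\prod_{i=0}^{k}\Bigl(\beta(M|S_{i+1}/S_i)\tfrac1z+\beta\bigl((M|S_{i+1}/S_i)^{\perp}\bigr)\tfrac1w\Bigr).
\]

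Next I would expand this product over subsets $T\subseteq\{0,1,\ldots,k\}$: the summand indexed by $T$ contributes $z^{r-|T|}w^{|E|-r-(k+1-|T|)}$ times $\prod_{i\in T}\beta(M|S_{i+1}/S_i)\prod_{j\notin T}\beta((M|S_{j+1}/S_j)^{\perp})$. Requiring the $w$-exponent to be $|E|-r$ forces $|T|=k+1$, i.e.\ $T=\{0,\ldots,k\}$, which simultaneously yields $z$-exponent $r-1-k$. Hence only one term survives, and its coefficient is exactly $\prod_{i=0}^k\beta(M|S_{i+1}/S_i)$. Multiplying through by $(-1)^{r-1-k}$ produces the displayed intersection formula.

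For the Minkowski weight assertion, I would invoke \Cref{thm:FS}: for any Chow class $\xi\in A^{n-k}(X_E)$, the function $\tau\mapsto\int_{X_E}\xi\cdot[Z_\tau]$ on $\Sigma_E(k)$ is automatically a Minkowski weight, and is by definition $\xi\cap\Delta_{\Sigma_E}$. Applied to $\xi=c_{r-1-k}(\mathcal{S}_M)c_{|E|-r}(\mathcal{Q}_M)$, this shows the function $\mathscr S\mapsto(-1)^{r-1-k}\prod_i\beta(M|S_{i+1}/S_i)$ is a Minkowski weight. Finally, I would invoke the remark immediately following \Cref{defn:CSM}: when some $S_i$ is not a flat of $M$, the minor $M|S_{i+1}/S_i$ has a loop (as already observed in the proof of \Cref{thm:BergmanComb}), forcing $\beta(M|S_{i+1}/S_i)=0$, so the unrestricted product agrees with $\operatorname{csm}_k(M)$ as defined in \Cref{defn:CSM}. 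I do not anticipate a substantive obstacle: the computation is essentially a one-line coefficient extraction from \Cref{prop:MWChernProd}, and the only subtlety is the sign bookkeeping in passing between $\mathcal{S}_M$ and $\mathcal{S}_M^{\vee}$.
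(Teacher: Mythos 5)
Your proposal is correct and follows essentially the same route as the paper: both reduce to extracting the coefficient of $z^{r-1-k}w^{|E|-r}$ from \Cref{prop:MWChernProd}, observing that the $w$-degree constraint forces every factor to contribute its $\beta(M|S_{i+1}/S_i)\frac{1}{z}$ term, with the same sign bookkeeping between $\mathcal{S}_M$ and $\mathcal{S}_M^{\vee}$. Your added explicitness about the Minkowski-weight upgrade via \Cref{thm:FS} and the identification with \Cref{defn:CSM} via the vanishing of $\beta$ on minors with loops matches what the paper handles in the remark following that definition.
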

\begin{proof}
Since $c_{r-1-k}(\cS_M^\vee) = (-1)^{r-1-k}c_{r-1-k}(\cS_M)$, we show that
$$\int_{X_E} c_{r-1-k}(\mathcal{S}_M^{\vee})c_{|E|-r}(Q_M)\cdot [Z_{\mathscr{S}}]= \prod_{i=0}^k \beta(M|S_{i+1}/S_i).$$
Since we are considering the $(|E|-r)$-th Chern class of $\cQ_M$, we first extract the $w^{|E|-r} = w^{\crk_M}$ coefficient from \Cref{prop:MWChernProd}.  Because no terms involving a $\frac{1}{w}$ can thus contribute, the coefficient of $w^{\crk_M}$ in the expression in \Cref{prop:MWChernProd} is
\[
z^{\rk_M}\prod_{i=0}^k \beta(S_{i+1}/S_i)\frac{1}{z} = z^{r - k-1} \prod_{i=0}^k \beta(S_{i+1}/S_i),
\]
so that $\int_{X_E} c_{r-1-k}(\mathcal{S}_M^{\vee})c_{|E|-r}(Q_M)\cdot [Z_{\mathscr{S}}]= \prod_{i=0}^k \beta(M|S_{i+1}/S_i)$ as desired.
\end{proof}

We note that setting $k=r-1$ recovers \Cref{thm:BergmanComb}. The fact that $\csm_k(M)$ are Minkowski weights recovers the computation \cite[Theorem 2.3]{LdMRS20} that the functions $\csm_k(M)$ satisfy the balancing condition of \Cref{defn:MW}.  Setting $x = y= 0$ in \Cref{thm:delcont} and applying it to \Cref{thm:CSMComb}, one recovers \cite[Proposition 5.2]{LdMRS20}, which states that CSM classes of a matroid $M$ on $E$ satisfies a deletion-contraction relation
\[
f_* \operatorname{csm}_k(M) = 
\begin{cases}
0 & \text{if $i\in E$ is a loop or a coloop}\\
\operatorname{csm}_k(M\setminus i) - \operatorname{csm}_k(M/i) & \text{otherwise}
\end{cases}
\]
for all $k$, where $f\colon  X_E \to X_{E\setminus i}$ is the map in \Cref{defn:projmap}.

\medskip
For $M$ a realizable matroid, we now show how our expression of $\csm_k(M)$ as $c_{r-1-k}(\cS_M)c_{|E|-r}(\cQ_M) \cap \Delta_{\Sigma_E}$ recovers the relation between $\csm_k(M)$ and the geometry of Chern-Schwartz-MacPherson (CSM) classes of hyperplane arrangement complements.  We do this by first reviewing how CSM classes relate to log-tangent sheaves, and then by relating log-tangent sheaves to the tautological subbundles of realizations of matroids.

\medskip
CSM classes, introduced in \cite{Macpherson,Schwartz}, are generalizations of characteristic classes of smooth and complete algebraic varieties. See \cite{A05} for a survey and \cite{A06} for a construction. For a constructible subset $Z$ of a complete complex algebraic variety $X$, its CSM class is an element $\operatorname{csm}(Z) \in A_{\bullet}(X)$ that equals the total Chern class $[Z]\cap c(\mathcal T_Z)$ of its tangent bundle $\mathcal T_Z$ when $Z$ is a smooth complete variety.  We consider CSM classes of hyperplane arrangement complements.

\begin{defn}\label{def:CSML}
For a realization $L\subseteq \CC^E$ of a loopless matroid $M$ of rank $r$ (i.e. an $r$-dimensional subspace not contained in a coordinate hyperplane), let $\mathcal C(L) = \PP L \cap (T/\CC^*)$ be the hyperplane arrangement complement.
We decompose its CSM class $\csm(\mathcal C(L))\in A^{\bullet}(X_{E})$ as
\begin{align*}
    \csm(\mathcal C(L)) = \sum_{k=0}^{r-1} \csm_k(\mathcal C(L)),
\end{align*}
where $\csm_k(\mathcal C(L))$ is the graded piece lying in $A_k(X_{E})\cong A^{n-k}(X_{E})$. For a realization $L\subseteq \CC^E$ of a matroid $M$ containing loops, the intersection $\PP L \cap (T/\CC^*)$ is necessarily empty, and hence we define $\csm(\mathcal C(L))$  
to be $0\in A^{\bullet}(X_E)$.
\end{defn}

Recall from \Cref{rem:SNC} that the wonderful compactification $W_L$ is the compactification of $\PP L \cap (T/\CC^*)$ with the simple normal crossing boundary $D_{W_L} = W_L\setminus (\PP L \cap (T/\CC^*))$. CSM classes of realizable matroids are related to log tangent sheaves from the following fact.

\begin{prop}
\label{prop:csmlogtangent}
Assume that $L \subset \CC^E$ is not contained in a coordinate hyperplane. The CSM class $\csm(\mathcal C(L))$ is equal to the Chern class $c(\mathcal T_{W_L}(-\log D_{W_L}))\in A^\bullet(W_L)$ pushed forward to $A^\bullet(X_E)$. 
\end{prop}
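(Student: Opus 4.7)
The plan is to invoke the classical identity relating CSM classes of open strata to log-tangent sheaves: if $X$ is a smooth projective variety and $D \subset X$ is a simple normal crossings divisor, then
\[
\csm(X \setminus D) = c(\mathcal T_X(-\log D)) \cap [X] \in A_\bullet(X).
\]
This is a result of Aluffi, used for instance in the Euler characteristic computations of \cite{huh_2013}. In our context, \Cref{rem:SNC} records precisely the hypothesis needed: $W_L$ is smooth projective and $D_{W_L} = W_L \setminus \mathcal C(L)$ is simple normal crossings. Applying the identity with $X = W_L$ and $D = D_{W_L}$ gives
\[
\csm(\mathcal C(L)) = c(\mathcal T_{W_L}(-\log D_{W_L})) \cap [W_L] \in A_\bullet(W_L),
\]
and pushing forward under the closed embedding $W_L \hookrightarrow X_E$, combined with the Poincar\'e duality $A_\bullet(W_L) \simeq A^\bullet(W_L)$ on the smooth variety $W_L$, yields the proposition as stated.

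If one prefers to argue directly rather than cite, I would proceed by induction on the number $k$ of irreducible components of $D_{W_L}$. Writing $D_{W_L} = D_1 \cup \cdots \cup D_k$ and $D_I := \bigcap_{i \in I} D_i$ for nonempty $I \subseteq \{1, \ldots, k\}$, the additivity and functoriality of CSM classes under closed embeddings yield the inclusion-exclusion
\[
\csm(\mathcal C(L)) = \csm(W_L) - \sum_{\emptyset \neq I \subseteq \{1,\ldots,k\}} (-1)^{|I|+1} \csm(D_I).
\]
Each $D_I$ is smooth by the simple normal crossings condition, so $\csm(D_I) = c(\mathcal T_{D_I}) \cap [D_I]$. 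Matching this alternating sum to $c(\mathcal T_{W_L}(-\log D_{W_L})) \cap [W_L]$ then follows formally from the short exact sequence
\[
0 \to \mathcal T_{W_L}(-\log D_{W_L}) \to \mathcal T_{W_L} \to \bigoplus_i (\iota_i)_*\mathcal N_{D_i/W_L} \to 0
\]
together with iterated applications of the self-intersection formula on the strata $D_I$.

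The main (and essentially only) obstacle is verifying or invoking the classical identity; beyond the simple normal crossings property of $D_{W_L}$ already recorded in \Cref{rem:SNC}, no new geometric input specific to the matroid or permutohedral setting is required, so the proof is essentially a citation plus a pushforward.
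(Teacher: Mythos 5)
Your proposal is correct and matches the paper's argument: the paper likewise cites Aluffi's theorem that the CSM class of the complement of a simple normal crossings divisor in a smooth variety equals $c(\mathcal T_{W_L}(-\log D_{W_L}))$, and then pushes forward to $X_E$. The only refinement worth noting is that the pushforward step should be justified by the covariance of CSM classes under proper maps (as the paper does via \cite[Lemma 5.3]{A06}), since $\csm(\mathcal C(L))\in A_\bullet(X_E)$ is a priori defined intrinsically in $X_E$ rather than as a pushforward from $W_L$.
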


\begin{proof}
The CSM class of $\mathcal C(L)$ inside of $W_L$ is given by $c(\mathcal T_{W_L}(-\log D_{W_L}))\in A^\bullet(W_L)$ as it is the complement of a simple normal crossing divisor in a smooth variety \cite[Theorem 1]{A99}. Since CSM classes behave compatibly with pushforward along proper maps,
$\csm(\mathcal C(L))\in A^\bullet(X_E)$ is given by the Chern classes of the log tangent sheaf $\mathcal T_{W_L}(-\log D_{W_L})$ pushed forward under $A^\bullet(W_L)\to A^\bullet(X_E)$. 
\end{proof}

\begin{rem}
For $D$ a divisor in a variety $X$, local sections of $T_X(-\log D)$ should be viewed as vector fields that are tangent to $D$. We will only consider the case when $D$ is a simple normal crossing divisor and $X$ is smooth. In this case, the intuition above is formalized in a short exact sequence
\begin{align*}
    0\to T_X(-\log D)\to T_X\to \bigoplus \mathcal{O}_{D_i}(D_i)\to 0,\label{eq:exact}
\end{align*}
where $D_i$ are the components of $D$.  See \cite[Section 2]{A99} or \cite[Section 3]{S96}.
\end{rem}

The following theorem relates the log tangent sheaf to tautological bundles.

\begin{thm}\label{thm:CSMGeom}
For $L\subseteq \CC^E$ a dimension $r$ subspace of $\mathbb{C}^E$ not contained in a coordinate hyperplane, we have a short exact sequence
\begin{align*}
0\to \mathcal{O}_{W_L}\to \mathcal{S}_L|_{W_L} \to \mathcal T_{W_L}(-\log D_{W_L})\to 0.
\end{align*}
of sheaves on the wonderful compactification $W_L$.  As a consequence, for any $L \subset \CC^E$ and each $k = 0, \ldots, r-1$, we have
\[
\csm_k(\mathcal C(L)) = c_{r-1-k}(\cS_L)c_{|E|-r}(\cQ_L) \quad\text{as elements in $A^\bullet(X_E)$}.
\]
\end{thm}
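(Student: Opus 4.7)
The plan is to build the short exact sequence from the canonical section $\mathbf{1}$, identify the rank-$(r{-}1)$ quotient with the log tangent sheaf via a saturated-subsheaf argument, and then deduce the Chow-class identity from \Cref{thm:BergmanGeom} and the projection formula. By \Cref{thm:BergmanGeom} applied to $\mathbf{a}=\mathbf{1}$, the section $\mathbf{1}$ of $\uCCinv^E$ projects to a section of $\cQ_L$ vanishing precisely on $W_L$; its restriction to $W_L$ therefore factors through the subbundle $\cS_L|_{W_L}$, producing a nowhere-vanishing section and hence a locally split injection $\mathcal{O}_{W_L}\hookrightarrow \cS_L|_{W_L}$ with locally free cokernel of rank $r-1$.

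To identify this cokernel with $\mathcal{T}_{W_L}(-\log D_{W_L})$, I would use the standard toric-geometry identification $\uCCinv^E/\mathcal{O}\cdot\mathbf{1}\xrightarrow{\sim}\mathcal{T}_{X_E}(-\log \partial X_E)$ given by the infinitesimal $T$-action $\mathfrak{t}\otimes\mathcal{O}_{X_E}\to\mathcal{T}_{X_E}$, whose kernel is exactly $\mathcal{O}\cdot\mathbf{1}$ coming from the diagonal $\CC^*\subset T$. Composing with $\cS_L|_{W_L}\hookrightarrow \uCCinv^E|_{W_L}$ yields an inclusion
\[
\varphi:\cS_L|_{W_L}/\mathcal{O}\cdot\mathbf{1}\hookrightarrow \mathcal{T}_{X_E}(-\log \partial X_E)|_{W_L},
\]
whose cokernel $\cQ_L|_{W_L}$ is locally free, so the image is a saturated rank-$(r{-}1)$ subsheaf. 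Since $D_{W_L}=W_L\cap \partial X_E$ is SNC by \Cref{rem:SNC}, the standard short exact sequence for log tangent sheaves of a smooth subvariety meeting an SNC divisor in an SNC intersection provides a second saturated inclusion $\psi:\mathcal{T}_{W_L}(-\log D_{W_L})\hookrightarrow \mathcal{T}_{X_E}(-\log \partial X_E)|_{W_L}$ with locally free quotient $\mathcal{N}_{W_L/X_E}$. A direct computation on $\mathcal{C}(L)\subseteq W_L$ shows that the images of $\varphi$ and $\psi$ coincide there: at $\overline t\in \mathcal{C}(L)$ with $t\in L$, the fiber $t^{-1}L/\CC\mathbf{1}$ of $\cS_L/\mathcal{O}\cdot\mathbf{1}$ is precisely the image of $T_{[t]}\PP L = L/\CC t$ in $\CC^E/\CC\mathbf{1}$ under the differential of $\PP L\hookrightarrow T/\CC^*$ at $[t]$, which is left translation by $t^{-1}$. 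The key observation is then the following saturation fact, which I would prove directly: two saturated subsheaves of a fixed torsion-free sheaf that coincide on a dense open and have equal generic rank must be equal, since the map from either subsheaf into the torsion-free quotient by the other is zero on a dense open and therefore zero. Applying this to $\varphi$ and $\psi$ yields the desired isomorphism and hence the short exact sequence.

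For the Chow-class identity, combining the short exact sequence with \Cref{prop:csmlogtangent} gives $\csm(\mathcal{C}(L)) = i_*\, c(\mathcal{T}_{W_L}(-\log D_{W_L})) = i_*\, c(\cS_L|_{W_L})$, where $i:W_L\hookrightarrow X_E$. The projection formula together with $[W_L] = c_{|E|-r}(\cQ_L)$ from \Cref{thm:BergmanGeom} then yields $\csm(\mathcal{C}(L)) = c(\cS_L)\cdot c_{|E|-r}(\cQ_L)$ in $A^\bullet(X_E)$, and extracting the codimension-$(n{-}k)$ component gives $\csm_k(\mathcal{C}(L)) = c_{r-1-k}(\cS_L)\cdot c_{|E|-r}(\cQ_L)$. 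The principal obstacle is the second paragraph: while $\varphi$ is easy to identify with the correct isomorphism over the open stratum $\mathcal{C}(L)$, extending this identification across the normal-crossing boundary $D_{W_L}$ rests on the saturation argument, which in turn depends on the toric description of $\mathcal{T}_{X_E}(-\log \partial X_E)$ and on the SNC structure from \Cref{rem:SNC}.
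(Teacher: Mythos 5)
Your proposal is correct and follows essentially the same route as the paper's proof: both arguments rest on the exact sequence relating $\mathcal T_{W_L}(-\log D_{W_L})$ to $\mathcal T_{X_E}(-\log D_{X_E})|_{W_L}$, the presentation of the latter as $\uCCinv^E/\mathcal O\cdot\mathbf 1$, a fiber computation over the open torus, and an extension to all of $W_L$ (your saturation lemma playing the role of the paper's ``it suffices to restrict to $W_L\cap T$'' for subbundles of a fixed bundle). The only place you lean on ``standard'' where the paper does genuine work is the existence of the saturated inclusion $\psi$ with locally free quotient $N_{W_L/X_E}$: this requires not just the SNC-ness of $D_{W_L}$ from \Cref{rem:SNC} but the scheme-theoretic identities $D_{X_E,S}\cap W_L = D_{W_L,S}$ for flats $S$ (and emptiness otherwise) imported from \cite{H18}, which the paper feeds into a nine-lemma argument.
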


\begin{proof}
If $L$ is contained in a coordinate hyperplane then its matroid must have at least one loop. It follows from \Cref{prop:MWChern} that $c_{|E|-r}(\cQ_L) =0$ and hence $\csm_k(\mathcal C(L)) = c_{r-1-k}(\cS_L)c_{|E|-r}(\cQ_L)  = 0$ for all $0 \leq k \leq r-1$, in agreement with \Cref{def:CSML}. 

For the rest of the argument we assume that $L$ is not contained in a coordinate hyperplane. In this case, it is sufficient to prove the first claim of the theorem, since the second is an immediate consequence. 

To begin, we claim there is a short exact sequence
\begin{align*}
0\to T_{W_L}(-\log D_{W_L})\to T_{X_{E}}(-\log D_{X_{E}})|_{W_L}\to N_{W_L/X_{E}}\to 0.
\end{align*}
The components $D_{W_L,S}$ of $D_{W_L}$ are in bijection with partial intersections of the hyperplanes $\mathcal{H}_i=L\cap \{x_i=0\}$, which are the nonempty proper flats $S\subset E=\{0,\ldots,n\}$ of the matroid $M$.
The components $D_{X_E,S}$ of $D_{X_{E}}$ are similarly indexed by all nonempty proper subsets $S\subset E$. 

Consider the following diagram, where the dashed arrows are maps that we need to show exist. Here the left two vertical columns are the defining short exact sequence for log-tangent sheaves.
\begin{center}
    \begin{tikzcd}
    & 0 \ar[d] & 0 \ar[d] & 0 \ar[d] &\\
    0 \ar[r] & T_{W_L}(-\log D_{W_L})\ar[d]\ar[r, dashed] & T_{X_{E}}(-\log D_{X_{E}})|_{W_L} \ar[d] \ar[r] & N_{W_L/X_{E}} \ar[d]\ar[r] & 0\\
    0 \ar[r] & T_{W_L}\ar[d]\ar[r] & T_{X_{E}}|_{W_L} \ar[d] \ar[r] & N_{W_L/X_{E}} \ar[d]\ar[r] & 0\\    
    0 \ar[r] & \displaystyle\bigoplus_{\substack{S\text{ proper}\\ \text{flat of $M$}}}{\mathscr{O}_{D_{W_L,S}}(D_{W_L,S})}  \ar[r, dashed] \ar[d] & \displaystyle\bigoplus_{\emptyset\subsetneq S\subsetneq \{0,\ldots,n\}}{\mathscr{O}_{D_{X_E,S}}(D_{X_E,S})|_{W_L}} \ar[r] \ar[d] & 0 \ar[r] \ar[d] & 0\\
    & 0 & 0 & 0 &
    \end{tikzcd}
\end{center}
Following \cite[Proof of Theorem 6.3(1)]{H18}, each divisor $D_{X_E,S}$ for $S$ a proper subset of $E$ intersects $W_M$ if and only if $S$ is a flat of $M$ (a maximal collection of indices such that $\cap_{i \in S} \mathcal{H}_i$ intersects in a fixed subspace), and in this case $D_{X_E,S}\cap W_L=D_{W_L,S}$ scheme-theoretically. Thus, $\bigoplus_{S\text{ flat of }M}{\mathscr{O}_{D_{W_L,S}}(D_{W_L,S})}$ is isomorphic to $\bigoplus_{\emptyset\subsetneq S\subsetneq \{0,\ldots,n\}}{\mathscr{O}_{D_{X_E,S}}(D_{X_E,S})|_{W_L}}$. This then implies the top map exists, and by the nine-lemma the top row is exact.

Next, the log-tangent sheaf $T_{X_{E}}(-\log D_{X_{E}})$ is trivial since $X_{E}$ is a smooth complete toric variety, and fits into the short exact sequence
$$0\to \underline{\mathbb{C}}\to \bigoplus_{i=0}^{n} \underline{\mathbb{C}}t_i\frac{\partial}{\partial t_i}\to T_{X_{E}}(-\log D_{X_{E}})\to 0,$$
where the first inclusion takes $\mathbf{1}\mapsto \sum t_i\frac{\partial}{\partial t_i}$.

Pulling back the inclusion $T_{W_L}(-\log D_{W_L})\hookrightarrow T_{X_{E}}(-\log D_{X_{E}})|_{W_L}$ under the surjective mapping $\bigoplus_{i=0}^{n} \underline{\mathbb{C}}t_i\frac{\partial}{\partial t_i}\to T_{X_{E}}(-\log D_{X_{E}})$ restricted to $W_L$, we get some subbundle $\mathcal{F}\subset \bigoplus_{i=0}^{n} \underline{\mathbb{C}}|_{W_L}t_i\frac{\partial}{\partial t_i}$. This yields the following diagram.
\begin{center}
\begin{tikzcd}
0\ar[r] &  \underline{\mathbb{C}}|_{W_L} \ar[r] \arrow[transform canvas={xshift=0.25ex},-]{d} \arrow[transform canvas={xshift=-0.25ex},-]{d} &  \mathcal{F} \ar[r] \ar[d, hook] & T_{W_L}(-\log D_{W_L}) \ar[r] \ar[d, hook] & 0\\
0\ar[r] &  \underline{\mathbb{C}}|_{W_L} \ar[r]  &  \bigoplus_{i=0}^{n}\underline{\mathbb{C}}|_{W_L}t_i\frac{\partial}{\partial t_i} \ar[r] & T_{X_{E}}(-\log D_{X_{E}})|_{W_L} \ar[r]  & 0
\end{tikzcd}
\end{center}
Under the identification\footnote{this identification does not respect the natural $T$-equivariant structure, which is to act trivially on the left hand side} $\bigoplus_{i=0}^n\underline{\mathbb{C}}t_i\frac{\partial}{\partial t_i}\cong \uCCinv^{n+1}$ with $t_i\frac{\partial}{\partial t_i}\mapsto e_i$, we need to check  $\mathcal{F}$ agrees with $\mathcal{S}_L|_{W_L}$. 

To check this, it suffices to restrict to $W_L\cap T$. Fix a point $t\in W_L\cap T$. In coordinates, $t$ is specified by a point in $(\CC^{*})^{n+1}$ up to the diagonal scaling under $\CC^{*}$. As a subbundle of $\uCCinv^{n+1}$, the fiber of $\mathcal{S}_L|_{W_L\cap T}$ over $t$ is $t^{-1}L$. 
The fiber of $\mathcal{F}$ over $t$ is all $v=(v_0,\ldots,v_n)$ such that $\sum_{i=0}^{n}v_it_i\frac{\partial}{\partial t_i}$ lies in the tangent space to $t$ at $t\in L\cap (\mathbb{C}^{\times})^{n+1}\subset \CC^{n+1}$. This is equivalently described as the set of all $v=(v_0,\ldots,v_n)\in \CC^{n+1}$ such that $(t_0 v_0,\ldots, t_n v_n)\in L\subset \CC^{n+1}$, which is $t^{-1}L$ as well. 

Since $\mathcal{F}$ agrees with $\mathcal{S}_L|_{W_L}$, the top row of the commutative diagram gives us our desired short exact sequence. 
\end{proof}

\begin{rem}\label{rem:betainvarByLog}
\Cref{thm:CSMGeom} implies $c_{r-1}(\mathcal{S}_L)|_{W_L}=c_{r-1}(T_{W_L}(-\log D_{W_L}))$ and \Cref{thm:BergmanGeom} implies $c_{r-1}(\mathcal{S}_L)|_{W_L}=(-1)^{r-1}c_{r-1}(\mathcal{S}^{\vee}_L)c_{|E|-r}(\cQ_M)$. 
A logarithmic version of the Poincar\'e-Hopf theorem \cite[Theorem 4.1]{S96} implies that $c_{r-1}(T_{W_L}(-\log D_{W_L}))$ equals the topological Euler characteristic of the projective hyperplane arrangement corresponding to $L$. The topological Euler characteristic is equal to $(-1)^{r-1}\beta(M)$ by \cite[Theorem~5.2]{OS80}. This yields an alternative proof of \Cref{thm:betainvar} for realizable matroids, which can be extended to all matroids via valuativity (\Cref{prop:valpolys} and \Cref{lem:looplessval}) by arguing similarly as in the end of \Cref{proof:betainvarAlt}.
\end{rem}

Combining \Cref{thm:CSMComb} with \Cref{thm:CSMGeom}, we obtain the following properties of the CSM classes of a matroid.

\begin{cor}\label{cor:CSM}
Let $M$ be a matroid or rank $r$ with ground set $E$.  Then, for each $k = 0, \ldots, r-1$
\begin{enumerate}[label = (\roman*)]
\item The CSM class $\csm_k(\mathcal C(L))$ of the hyperplane arrangement complement $\mathcal C(L)$ is the Poincar\'e dual of $\csm_k(M)$ for any realization $L\subseteq \CC^E$ of $M$.
\item The assignment $M \mapsto \csm_k(M)$ is valuative.
\end{enumerate}
\end{cor}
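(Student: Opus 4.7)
The plan is to deduce both parts directly from combining \Cref{thm:CSMComb} (which identifies $\csm_k(M)$ as the Minkowski weight Poincar\'e dual to $c_{r-1-k}(\mathcal{S}_M)c_{|E|-r}(\mathcal{Q}_M)$) with \Cref{thm:CSMGeom} (which identifies the same Chow class with the geometric CSM class when $M$ is realizable) and \Cref{prop:valpolys} (which gives valuativity of polynomial expressions in the Chern classes of $\mathcal{S}_M$ and $\mathcal{Q}_M$).

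For part (i), I would fix a realization $L \subseteq \CC^E$ of $M$. By \Cref{prop:tautdependsonmatroid}, the tautological classes $[\mathcal{S}_L]$ and $[\mathcal{Q}_L]$ depend only on $M$, so their Chern classes agree with $c_\bullet(\mathcal{S}_M)$ and $c_\bullet(\mathcal{Q}_M)$ in $A^\bullet(X_E)$. \Cref{thm:CSMGeom} then says
\[
\csm_k(\mathcal{C}(L)) = c_{r-1-k}(\mathcal{S}_M) c_{|E|-r}(\mathcal{Q}_M) \in A^\bullet(X_E),
\]
and \Cref{thm:CSMComb} identifies the right-hand side as the Poincar\'e dual of the Minkowski weight $\csm_k(M)$. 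Composing these two equalities finishes part (i).

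For part (ii), I would apply \Cref{prop:valpolys} to the polynomial expression $c_{r-1-k}(\mathcal{S}_M) c_{|E|-r}(\mathcal{Q}_M)$ in the Chern classes of the tautological bundles, which is then valuative as a Chow class in $A^\bullet(X_E)$. Since taking Poincar\'e dual with $\Delta_{\Sigma_E}$ is a $\ZZ$-linear operation from $A^\bullet(X_E)$ to Minkowski weights, and since \Cref{thm:CSMComb} shows that the Poincar\'e dual of this class equals $\csm_k(M)$, the assignment $M \mapsto \csm_k(M)$ inherits the valuativity.

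There is no real obstacle here; the theorem is a straightforward corollary of the three earlier results. The only thing to be slightly careful about is that \Cref{thm:CSMGeom} is stated for realizations $L$ (with $\mathcal{S}_L, \mathcal{Q}_L$) rather than for the combinatorial classes $\mathcal{S}_M, \mathcal{Q}_M$, but the agreement of Chern classes under the identification provided by \Cref{prop:tautdependsonmatroid} bridges this gap seamlessly.
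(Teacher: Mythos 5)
Your proposal is correct and matches the paper's own (very brief) justification: the paper obtains part (i) by combining \Cref{thm:CSMComb} with \Cref{thm:CSMGeom}, and part (ii) by applying \Cref{prop:valpolys} to \Cref{thm:CSMComb}, exactly as you do. Your extra remark about using \Cref{prop:tautdependsonmatroid} to reconcile $c_\bullet(\mathcal{S}_L)$ with $c_\bullet(\mathcal{S}_M)$ is a correct and harmless elaboration of a step the paper leaves implicit.
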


Part (i) recovers \cite[Theorem 3.1]{LdMRS20}.  Part (ii), which follows from applying \Cref{prop:valpolys} to \Cref{thm:CSMComb}, recovers \cite[Theorem 4.1]{LdMRS20}.
For each $k = 0, \ldots, r-1$, \Cref{lem:looplessval} implies that the two properties in \Cref{cor:CSM} characterize the assignment $M\mapsto \csm_k(M)$.

\section{Positivity and log-concavity}
\label{sec:logconcviataut}

When a matroid $M$ has a realization $L\subset \CC^E$, the vector  bundles $\cS_L^\vee$ and $\mathcal Q_L$ are globally generated and hence nef, so that their relative $\mathcal O(1)$ classes satisfy positivity and log-concavity properties listed in \cite[\S1.6]{L04}. 
In this section, we show that these properties persist for the $K$-classes $[\cS_M^\vee]$ and $[\cQ_M]$ for an arbitrary (not necessarily realizable) matroid $M$.  In particular, we prove \Cref{thm:masterlogconcintro}, which states that the unifying Tutte formula $t_M(x,y,z,w)$ of \Cref{thm:4degintro} satisfies a log-concavity property.

\medskip
We will proceed in three steps.  In \S\ref{sec:biprojoftaut}, we define the Chow class $[\PP(\cS_M) \times_{X_E} \PP(\cQ_M^\vee)]$ on $X_E\times \PP^n \times \PP^n$ which equals the Chow class of the biprojectivation $\PP(\cS_L) \times_{X_E} \PP(\cQ_L^\vee)$ when $L$ is a realization of $M$.
In \S\ref{sec:Minkweightuntwist}, we show that the class $[\PP(\cS_M) \times_{X_E} \PP(\cQ_M^\vee)]$ is a pushforward of the Poincar\'e dual of a Minkowski weight supported on a ``Lefschetz fan'' in the sense of \cite[Definition 1.5]{ADH}.
In \S\ref{sec:Logconcprops} and \S\ref{subsec:lorentzian}, we derive positivity and log-concavity properties via the tropical Hodge theory of Lefschetz fans developed in \cite[\S5]{ADH}.  Our approach allows us to avoid the intricate computations with the bipermutohedron carried out in \cite[\S2 \& \S4]{ADH}; see \Cref{rem:compareADH}.

\subsection{Bi-projectivizations of tautological classes}
\label{sec:biprojoftaut}

For a vector bundle $\mathcal E$ on a variety $X$, we denote by $\PP(\mathcal E) = \operatorname{Proj}_X\operatorname{Sym}^\bullet(\mathcal E^\vee) = (\mathcal{E}\setminus (X\times \{0\}))/\mathbb{C}^*$ its projectivization.
Let $\PPinv^n = \PP(\CCinv^E)$.
For a realization $L\subseteq \CC^E$ of a matroid $M$, one has inclusions $\PP(\cS_L) \subseteq X_E \times \PPinv^n$ and $\PP(\cQ_L^\vee) \subseteq X_E \times (\PPinv^n)^\vee$, and hence we can form the bi-projective bundle
$$\mathbb{P}(\mathcal{Q}_L^{\vee})\times_{X_{E}}\mathbb{P}(\mathcal{S}_L)\subset X_E\times (\PPinv^n)^\vee \times \PPinv^n.$$
We now define a combinatorial abstraction of the Chow class of the bi-projectivization $\PP(\cQ_L^\vee) \times_{X_E} \PP(\cS_L)$ for arbitrary matroids.
Let us denote the following elements in $A^1(X_E \times (\PPinv^n)^\vee \times \PPinv^n)$ by
\[
\delta\text{ and }\epsilon = \text{the pullbacks of the hyperplane classes of $(\PPinv^n)^\vee$ and $\PPinv^n$, respectively}.
\]
Let $\mu\colon  X_E \times (\PPinv^n)^\vee \times \PPinv^n \to X_E$ be the projection map, and for a class $\xi\in A^\bullet(X_E)$, we often write $\xi$ also for the pullback $\mu^*\xi$ when we trust that no confusion will arise.

\begin{defn}\label{defn:projClass}
For matroids $M_1$ and $M_2$ of ranks $r_1$ and $r_2$ on the common ground set $E$, we define a Chow class $[\PP(\cQ^\vee_{M_1}) \times_{X_E} \PP(\cS_{M_2})] \in A^\bullet(X_E \times (\PPinv^n)^\vee \times \PPinv^n)$ by
\[
[\PP(\cQ^\vee_{M_1}) \times_{X_E} \PP(\cS_{M_2})] = \sum_{k=0}^{r_1}\sum_{\ell = 0}^{n+1-r_2} c_k(\mathcal{S}_{M_1}^{\vee})c_\ell(\mathcal{Q}_{M_2})\delta^{r_1-k}\epsilon^{n+1-r_2-\ell}.
\]
\end{defn}

The Chow class defined in \Cref{defn:projClass} has the following characterizing properties.

\begin{prop}\label{prop:projClassDefn}
Let notations be as in the above definition.  The class $[\PP(\cQ^\vee_{M_1}) \times_{X_E} \PP(\cS_{M_2})]$ satisfies and is determined by the following two properties.
\begin{enumerate}[label = (\roman*)]
\item If $L_1$ and $L_2\subseteq \CC^E$ are realizations of $M_1$ and $M_2$, respectively, then $[\PP(\cQ^\vee_{L_1}) \times_{X_E} \PP(\cS_{L_2})] = [\PP(\cQ^\vee_{M_1}) \times_{X_E} \PP(\cS_{M_2})]$ as Chow classes in $A^\bullet(X_E \times (\PPinv^n)^\vee \times \PPinv^n)$, and
\item the assignment $(M_1, M_2) \mapsto [\PP(\cQ^\vee_{M_1}) \times_{X_E} \PP(\cS_{M_2})]$ is valuative in each factor $M_1$ and $M_2$.
\end{enumerate}
\end{prop}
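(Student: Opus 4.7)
The plan is to verify properties (i) and (ii) directly from \Cref{defn:projClass}, and then derive uniqueness via \Cref{lem:looplessval}.

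For property (i), fix realizations $L_1$ and $L_2$ of $M_1$ and $M_2$. Working first inside $X_E \times \PPinv^n = \PP(\uCCinv^E)$, the subvariety $\PP(\cS_{L_2})$ is exactly the zero locus of the tautological section of $\mu^*\cQ_{L_2} \otimes \mathcal{O}(\epsilon)$, obtained from the composition $\mathcal{O}(-\epsilon) \hookrightarrow \mu^*\uCCinv^E \twoheadrightarrow \mu^*\cQ_{L_2}$. This bundle has rank $n+1-r_2$, which equals the codimension of $\PP(\cS_{L_2})$, so the section is regular and the standard formula $c_r(E \otimes L) = \sum_i c_i(E) c_1(L)^{r-i}$ for the top Chern class of a bundle twisted by a line bundle gives
\[
[\PP(\cS_{L_2})] = c_{n+1-r_2}(\mu^*\cQ_{L_2} \otimes \mathcal{O}(\epsilon)) = \sum_{\ell=0}^{n+1-r_2} c_\ell(\cQ_{L_2})\epsilon^{n+1-r_2-\ell}.
\]
Dually, from the inclusion $\cQ_{L_1}^\vee \hookrightarrow (\uCCinv^E)^\vee$ inside $X_E \times (\PPinv^n)^\vee$, the same argument yields $[\PP(\cQ^\vee_{L_1})] = \sum_k c_k(\cS^\vee_{L_1})\delta^{r_1-k}$. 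The two pullback subvarieties in $X_E \times (\PPinv^n)^\vee \times \PPinv^n$ are cut out by sections pulled back from disjoint factors, so they meet in a complete intersection of the expected codimension $r_1 + (n+1-r_2)$, and the class of $\PP(\cQ^\vee_{L_1}) \times_{X_E} \PP(\cS_{L_2})$ is the product of the two individual classes. This matches the formula in \Cref{defn:projClass}.

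Property (ii) is immediate: the defining formula is a fixed polynomial expression in the tautological Chern classes $c_k(\cS^\vee_{M_1})$ and $c_\ell(\cQ_{M_2})$ (with coefficients polynomial in $\delta,\epsilon$), so valuativity in each factor follows from \Cref{prop:valpolys}.

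For uniqueness, suppose a second assignment satisfies (i) and (ii). Given any pair $(M_1, M_2)$, \Cref{lem:looplessval} writes $1_{P(M_1)} = \sum_i a_i 1_{P(M'_i)}$ with all $M'_i$ realizable over $\CC$; applying valuativity in the first factor expresses the class at $(M_1, M_2)$ as a $\ZZ$-linear combination of classes at $(M'_i, M_2)$. Repeating the argument in the second factor reduces to pairs of realizable matroids, where (i) forces the two assignments to agree. Hence they coincide on all pairs. The only step requiring any care is the complete-intersection verification in (i), which is straightforward once one observes that the two defining sections involve disjoint projective coordinates.
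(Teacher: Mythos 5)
Your proof is correct and follows essentially the same route as the paper: the paper handles (i) by citing the standard formula for the class of a projectivized subbundle (Eisenbud--Harris, Proposition 9.13), which you instead re-derive via the regular section of $\mu^*\cQ_{L_2}\otimes\mathcal O(\epsilon)$ (and dually for $\cQ_{L_1}^\vee$) together with an explicit complete-intersection check for the fiber product; (ii) and the uniqueness argument via \Cref{prop:valpolys} and \Cref{lem:looplessval} are identical to the paper's.
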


\begin{proof}
The property (i) is immediate by the formula for the Chow class of the projectivization of a subbundle \cite[Proposition 9.13]{EH16}, noting that $\cS_{L_1}^\vee = (\uCCinv^E)^\vee/(\cQ_{L_1}^\vee)$ and $\cQ_{L_2} = \uCCinv^E/\cS_{L_2}$.  The property (ii) follows from \Cref{prop:valpolys}.  That these two properties characterize $[\PP(\cQ^\vee_{M_1}) \times_{X_E} \PP(\cS_{M_2})]$ follows from \Cref{lem:looplessval}.
\end{proof}

The following proposition relates mixed intersections of certain nef divisors with $[\mathbb{P}(\mathcal{Q}^{\vee}_{M_1})\times_{X_E} \mathbb{P}(\mathcal{S}_{M_2})]$ to the mixed intersections appearing in \Cref{thm:4degintro}.

\begin{prop}
\label{prop:Segre}
Let $M_1$ and $M_2$ be matroids of rank $r_1$ and $r_2$ on the common ground set $E$.  Then, the pushfoward map $\mu_*\colon  A^\bullet(X_E \times (\PPinv^n)^\vee \times \PPinv^n) \to A^\bullet(X_E)$ satisfies for all nonnegative integers $k$ and $\ell$
\[
\mu_*(\delta^{n-r_1+k}\epsilon^{r_2-1+\ell}[\mathbb{P}(\mathcal{Q}_{M_1}^{\vee})\times_{X_{E}}\mathbb{P}(\mathcal{S}_{M_2})])= c_k(\mathcal{S}_{M_1}^{\vee})c_\ell(\mathcal{Q}_{M_2}).
\]
In particular, for $i+j+k+\ell=n$ we have $$
\int_{X_E \times (\PPinv^n)^\vee \times \PPinv^n}\alpha^i\beta^j\delta^{n-r_1+k}\epsilon^{r_2-1+\ell}[\mathbb{P}(\mathcal{Q}_{M_1}^{\vee})\times_{X_{E}}\mathbb{P}(\mathcal{S}_{M_2})]= \int_{X_E}\alpha^i\beta^jc_k(\mathcal{S}_{M_1}^{\vee})c_\ell(\mathcal{Q}_{M_2}).$$
\end{prop}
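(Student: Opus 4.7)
The proposition is a direct calculation via the projection formula, so my plan is to unpack the definition and track which monomials survive under pushforward.

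First, I would substitute the formula from \Cref{defn:projClass} into the left-hand side of the equality, obtaining
\[
\mu_*\!\left(\delta^{n-r_1+k}\epsilon^{r_2-1+\ell}\sum_{k'=0}^{r_1}\sum_{\ell'=0}^{n+1-r_2}c_{k'}(\mathcal{S}_{M_1}^{\vee})c_{\ell'}(\mathcal{Q}_{M_2})\,\delta^{r_1-k'}\epsilon^{n+1-r_2-\ell'}\right).
\]
Since the classes $c_{k'}(\mathcal{S}_{M_1}^\vee)$ and $c_{\ell'}(\mathcal{Q}_{M_2})$ are pulled back from $X_E$ along $\mu$, the projection formula pulls them outside, leaving
\[
\sum_{k',\ell'}c_{k'}(\mathcal{S}_{M_1}^{\vee})c_{\ell'}(\mathcal{Q}_{M_2})\,\mu_*\!\bigl(\delta^{n-k'+k}\epsilon^{n-\ell'+\ell}\bigr).
\]

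Next, I would compute $\mu_*(\delta^a\epsilon^b)$. Because $\delta$ and $\epsilon$ are pulled back from the two $\mathbb{P}^n$ factors and $X_E \times (\PPinv^n)^\vee\times\PPinv^n \to X_E$ is a product projection, we have $\delta^a = 0$ for $a > n$ and $\epsilon^b = 0$ for $b > n$, while for $a,b \le n$ the pushforward $\mu_*(\delta^a\epsilon^b)$ is $1$ if $a=b=n$ and $0$ otherwise. Applied to the exponents $a = n-k'+k$ and $b = n-\ell'+\ell$: the cases $k'<k$ or $\ell'<\ell$ already vanish inside the integrand, the cases $k'>k$ or $\ell'>\ell$ have vanishing pushforward, and only the term $(k',\ell')=(k,\ell)$ survives, contributing $\mu_*(\delta^n\epsilon^n)=1$. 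This yields the claimed equality $\mu_*(\cdots)=c_k(\mathcal{S}_{M_1}^\vee)c_\ell(\mathcal{Q}_{M_2})$. (Note the identity is vacuously consistent when $k>r_1$ or $\ell>n+1-r_2$: both sides vanish, the left side because the surviving index pair is out of range of the sum, the right side because the Chern class exceeds the rank of the bundle.)

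Finally, for the displayed integral, multiply the previous identity by $\alpha^i\beta^j$, which is pulled back from $X_E$. The projection formula gives
\[
\int_{X_E\times(\PPinv^n)^\vee\times\PPinv^n}\!\!\alpha^i\beta^j\delta^{n-r_1+k}\epsilon^{r_2-1+\ell}[\mathbb{P}(\mathcal{Q}_{M_1}^{\vee})\times_{X_E}\mathbb{P}(\mathcal{S}_{M_2})]=\int_{X_E}\alpha^i\beta^j\mu_*(\cdots),
\]
and inserting the computation above produces $\int_{X_E}\alpha^i\beta^j c_k(\mathcal{S}_{M_1}^\vee)c_\ell(\mathcal{Q}_{M_2})$, as required. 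There is no real obstacle here: the only subtlety is bookkeeping the exponent conditions to confirm that the unique surviving pair of Chern-class indices in the double sum matches $(k,\ell)$, which is precisely engineered into the shifts $n-r_1+k$ and $r_2-1+\ell$ that appear in the statement.
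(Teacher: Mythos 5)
Your proof is correct and is essentially the paper's argument: the paper likewise notes that $\mu_*(\delta^a\epsilon^b)$ equals $1$ exactly when $a=b=n$ and otherwise vanishes, then concludes by the defining formula for $[\mathbb{P}(\mathcal{Q}_{M_1}^{\vee})\times_{X_{E}}\mathbb{P}(\mathcal{S}_{M_2})]$ and the push-pull formula. You have simply spelled out the exponent bookkeeping that the paper leaves implicit.
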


\begin{proof}
As each $\delta$ and $\epsilon$ is a hyperplane class pullback from a projective space $\PP^n$, for any integers $i,j\geq 0$ we have $\mu_*(\delta^i\epsilon^j) = 1$ if $i = j = n$, and 0 otherwise.  We conclude by the definition of $[\mathbb{P}(\mathcal{Q}_{M_1}^{\vee})\times_{X_{E}}\mathbb{P}(\mathcal{S}_{M_2})]$ and the push-pull formula.
\end{proof}

\begin{rem}
We note at this point that we can conclude \Cref{thm:masterlogconcintro} when a matroid $M$ of rank $r$ has a realization $L\subseteq \mathbb{C}^E$ as follows.  
The classes $\alpha, \beta, \delta, \epsilon$ on $X_{A_E}\times (\PPinv^n)^{\vee}\times \PPinv^n$ are nef divisors, and hence restrict to nef divisors on the variety $\mathbb{P}(\mathcal{Q}_{L}^{\vee})\times_{X_E}\mathbb{P}(\mathcal{S}_{L})$.
By \Cref{prop:Segre} and \Cref{thm:4degintro}, we have
$$\sum_{i+j+k+\ell=n}\left(\int_{\mathbb{P}(\mathcal{Q}_{L}^{\vee})\times_{X_E}\mathbb{P}(\mathcal{S}_{L})}\alpha^i\beta^j\delta^k\epsilon^\ell\right)x^iy^jz^kw^\ell=\frac{1}{x+y}(y+z)^r(x+w)^{n+1-r}T_M(\frac{x+y}{y+z},\frac{x+y}{x+w}).$$
One concludes the desired log-concavity property by the classical Khovanskii-Teissier inequality for intersection multiplicities of nef divisors (see \cite{Khovanskii,Teissier} or \cite[ Corollary 1.6.3 (i)]{L04}).
\end{rem}

In the next few subsections, we will show \Cref{thm:masterlogconcintro} for arbitrary matroids by relating the intersection in \Cref{prop:Segre} with an equivalent intersection on a Lefschetz fan as defined in \cite{ADH}.  The log-concavity will then follow from the validity of mixed Hodge-Riemann relations on the Lefschetz fan.

\medskip
We conclude here with an observation that allows us to assume matroids to be loopless or coloopless under certain contexts. 
For a matroid $M$, let $M^{coloop\to loop}$ be $M$ with all coloops replaced by loops, and $M^{loop\to coloop}$ be $M$ with all loops replaced by coloops.

\begin{prop}\label{prop:noloopcoloop}
Let $M_1$ and $M_2$ be matroids of rank $r_1$ and $r_2$ on the common ground set $E$.
Then,
\[
[\PP(\cQ_{M_1}^\vee) \times_{X_E} \PP(\cS_{M_2})] =  [\PP(\cQ_{M_1^{coloop \to loop}}^\vee) \times_{X_E} \PP(\cS_{M_2^{loop \to coloop}})]
\]
as elements in $A^\bullet(X_E \times (\PPinv^n)^\vee \times \PPinv^n)$.
\end{prop}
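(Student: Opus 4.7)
The plan is to prove the identity by exploiting the valuativity of the Chow class established in \Cref{prop:projClassDefn}(ii). Since this valuativity holds in each matroid argument separately, combining it with \Cref{lem:looplessval} reduces the statement to the case where both $M_1$ and $M_2$ are realizable over $\CC$. The two operations $M_1\mapsto M_1^{coloop\to loop}$ and $M_2\mapsto M_2^{loop\to coloop}$ correspond to integral translations of the respective base polytopes and are therefore well-behaved with respect to valuations.

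In the realizable setting, the geometric content of the two operations becomes transparent. For a realization $L \subseteq \CC^E$ of a matroid $M$, a direct check using the definition of basis via the hyperplanes $\{H_i\}$ shows that $i\in E$ is a coloop of $M$ if and only if $e_i \in L$, and dually that $i$ is a loop of $M$ if and only if $L \subseteq H_i$. Setting $C := C(M_1)$ and $\Lambda := L(M_2)$, any realization $L_1$ of $M_1$ admits a decomposition $L_1 = \mathrm{span}(e_i : i\in C)\oplus L_1'$ with $L_1'$ realizing $M_1^{coloop\to loop}$, and any realization $L_2$ of $M_2$ extends to $L_2' := L_2 \oplus \mathrm{span}(e_i : i\in \Lambda)$ realizing $M_2^{loop\to coloop}$. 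At the level of tautological bundles on $X_E$, these decompositions yield isomorphisms
\[
\mathcal{S}_{L_1} \cong \mathcal{S}_{L_1'} \oplus \mathcal{E}_C \quad\text{and}\quad \mathcal{S}_{L_2'} \cong \mathcal{S}_{L_2} \oplus \mathcal{E}_\Lambda,
\]
where $\mathcal{E}_C$ and $\mathcal{E}_\Lambda$ are direct sums of equivariant line bundles $\mathcal{O}(T_i^{-1})$---equivariantly nontrivial, but non-equivariantly trivial. Consequently the non-equivariant Chern polynomials $c(\mathcal{S}_{M_1}^\vee, u)$ and $c(\mathcal{Q}_{M_2}, u)$ are preserved by the two operations as polynomials in $u$ with coefficients in $A^\bullet(X_E)$.

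It then remains to substitute these Chern polynomials into the explicit formula of \Cref{defn:projClass} and verify the resulting polynomial identity in $A^\bullet(X_E \times (\PPinv^n)^\vee \times \PPinv^n)$. The hard part will be controlling the shifts in ambient codimension that arise because the ranks themselves change: $r_1 \mapsto r_1 - |C|$ and $r_2 \mapsto r_2 + |\Lambda|$. These shifts change the powers of $\delta$ and $\epsilon$ appearing in the two sides of the formula, even though the Chern-class coefficients match. Reconciling this discrepancy requires careful use of the relations $\delta^{n+1} = \epsilon^{n+1} = 0$ coming from the factors $(\PPinv^n)^\vee$ and $\PPinv^n$, together with the rank-based vanishing $c_k(\mathcal{S}^\vee_M) = 0$ for $k > \mathrm{rk}(M)$ and $c_\ell(\mathcal{Q}_M)=0$ for $\ell > |E|-\mathrm{rk}(M)$. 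The resulting polynomial reconciliation, which matches the two expressions after accounting for all trivial-summand contributions, is the delicate final step of the argument; once completed, the full proposition follows from valuativity applied termwise.
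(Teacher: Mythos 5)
Your core observation matches the paper's and is correct: replacing loops by coloops (and dually, coloops by loops) changes the tautological $K$-class only by a non-equivariantly trivial summand, so the Chern polynomials $c(\cS_M^\vee,u)$ and $c(\cQ_M,u)$ are unchanged.  However, the detour through realizations and valuativity is unnecessary.  The paper handles arbitrary matroids directly: writing $M = M|(E\setminus E_\ell)\oplus U_{0,E_\ell}$ and $M^{loop\to coloop} = M|(E\setminus E_\ell)\oplus U_{|E_\ell|,E_\ell}$ with $E_\ell$ the loop set, \Cref{prop:directsum2} shows that $[\cS_{M^{loop\to coloop}}]-[\cS_M]$ is a pullback of $[\uCCinv^{E_\ell}]$, which is non-equivariantly trivial; the coloop case follows by the same argument (or by duality via \Cref{prop:duality}).

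The ``delicate final step'' you flag --- reconciling the rank shift --- is where the proposal breaks down, and it breaks down because the stated identity is not literally true when the ranks change.  Since $c_k(\cS_{M_1}^\vee)=c_k(\cS_{M_1^{coloop\to loop}}^\vee)$ vanishes for $k>\rk(M_1^{coloop\to loop})$, and similarly for $\cQ_{M_2}$, plugging into \Cref{defn:projClass} shows the two bi-projective classes differ by the global factor $\delta^{a}\epsilon^{b}$, with $a$ the number of coloops of $M_1$ and $b$ the number of loops of $M_2$; the relations $\delta^{n+1}=\epsilon^{n+1}=0$ do not make this factor disappear.  Concretely, take $E=\{0,1\}$, $M_1=U_{2,E}$, $M_2=U_{1,E}$: then $\cQ_{M_1}=0$ and the left-hand side is a multiple of $\delta^2=0$, while the right-hand side equals $\epsilon+\beta\neq 0$.  (The paper's own one-line reduction, that it ``suffices to show $c(\cS_{M^{loop\to coloop}})=c(\cS_M)$,'' glosses over the same point.)  What is actually true, and all that is used downstream in the proof of \Cref{thm:logconcM1M2}, is the equality of Chern polynomials in $A^\bullet(X_E)$ --- equivalently, via \Cref{prop:Segre}, the equality of every degree $\int_{X_E}\alpha^i\beta^jc_k(\cS_{M_1}^\vee)c_\ell(\cQ_{M_2})$ with its analogue for the modified matroids, where the $\delta$- and $\epsilon$-exponents in \Cref{prop:Segre} are simultaneously recalibrated to the new ranks and hence absorb the factor $\delta^a\epsilon^b$.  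Your instinct to distrust the rank bookkeeping was correct; the fix is to weaken what is claimed, not to force the bi-projective classes themselves to be equal.
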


\begin{proof}
From the definition of $[\PP(\cQ_{M_1}^\vee) \times_{X_E} \PP(\cS_{M_2})]$, it suffices to show for any matroid $M$ that $$c(\cS_{M^{loop \to coloop}}) = c(\cS_{M}),\text{ and }c(\cQ_{M^{loop \to coloop}}) = c(\cQ_{M}).$$ 
We only prove $c(\cS_{M^{loop \to coloop}}) = c(\cS_{M})$, as $c(\cQ_{M^{loop \to coloop}}) = c(\cQ_{M})$ is proved similarly.  Let $E_\ell \subseteq E$ be the (possibly empty) set of loops in $M$.  Then, we have $M = M|({E\setminus E_\ell}) \oplus U_{0,E_\ell}$ and $M^{loop\to coloop}=M|({E\setminus E_\ell}) \oplus U_{|E_\ell|,E_\ell}$.  Hence, since $\cS_{0,E_\ell} = 0$ and $\cS_{|E_\ell|,E_\ell} = [\uCCinv^{E_\ell}]$, \Cref{prop:directsum2} implies that $[\cS_{M^{loop \to coloop}}]$ equals the sum of $[\cS_M]$ and a trivial bundle (the pullback of $[\uCCinv^{E_\ell}]$).  Thus, their Chern classes coincide.
\end{proof}

\subsection{Minkowski weight of a birational model of the biprojectivization}
\label{sec:Minkweightuntwist}

Let $M_1$ and $M_2$ be matroids on the common ground set $E$.  
The goal of this subsection is to express the Chow class $[\mathbb{P}(\mathcal{Q}_{M_1}^{\vee})\times_{X_{E}}\mathbb{P}(\mathcal{S}_{M_2})]$ as a pushforward of a Minkowski weight that has certain ``Lefschetz properties'' we'll exploit in the next subsection.

\medskip
Let $\Sigma'$ be a pure $d$-dimensional subfan of an $m$-dimensional complete unimodular fan $\Sigma$. We say that $\Sigma'$ is a \textbf{balanced fan} if the function
\[
\Delta\colon  \Sigma(d) \to \ZZ \quad\text{defined by}\quad \Delta(\tau) = \begin{cases} 1 & \tau\subseteq \tau' \text{ for some } \tau'\in  \Sigma'(d) \\ 0 & \text{otherwise}
\end{cases}
\]
is a Minkowski weight on $\Sigma$, in which case we say that $\Delta\in \operatorname{MW}_d(\Sigma)$ is the \textbf{constant-1 Minkowski weight} on $\Sigma'$.
We write $[\Sigma'] \in A^{m-d}(X_\Sigma)$ for the Poincar\'e dual of $\Delta$.
\begin{eg}
The Bergman class $\Delta_M$ is the constant-1 Minkowski weight on the Bergman fan $\Sigma_M$ of a loopless matroid $M$ (see for example \Cref{thm:BergmanComb}).
More generally, the Minkowski weights of $c_k(\mathcal{S}_M^{\vee})$ and $c_\ell(\mathcal{Q}_M)$ are constant-$1$ Minkowski weights (\Cref{prop:MWChern}).
\end{eg}

\begin{thm}
\label{thm:refinedfanbiproj}
Consider $X_E \times (\PPinv^n)^\vee \times \PPinv^n$ as a toric variety with dense open torus $(T/\CC^*)^3$.  Then, there exists a smooth projective toric variety $X_{\Sigma}$ associated to a unimodular fan $\Sigma$ in $(\RR^E/\RR\mathbf 1)^3$, together with a birational toric morphism $\widetilde{\phi}\colon X_{\Sigma}\to X_E \times (\PPinv^n)^\vee \times \PPinv^n$, such that for $M_1$ a coloopless matroid and $M_2$ a loopless matroid we have
\begin{enumerate}
\item the product of fans $\Sigma_E \times \Sigma_{M_1^\perp} \times \Sigma_{M_2}$ is a coarsening of a subfan ${\Sigma_{E,M_1^{\perp},M_2}}\subset \Sigma$, and
\item under the pushfoward map $\widetilde\phi_*\colon  A^\bullet(X_\Sigma) \to A^\bullet(X_E\times (\PPinv^n)^{\vee}\times \PPinv^n)$ we have
\[
\widetilde\phi_*[{\Sigma_{E,M_1^{\perp},M_2}}] = [\mathbb{P}(\mathcal{Q}_{M_1}^{\vee})\times_{X_{E}}\mathbb{P}(\mathcal{S}_{M_2})].
\]
\end{enumerate}
\end{thm}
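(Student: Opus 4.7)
The plan is to build $\Sigma$ and $\widetilde\phi$ using a torus change-of-coordinates that converts the biprojectivization into a product structure. First, I would verify by direct computation that, for realizations $L_1, L_2 \subseteq \CC^E$ of $M_1, M_2$, a point $(\overline t_1, \overline t_2, \overline t_3) \in (T/\CC^*)^3 \subset X_E \times (\PPinv^n)^\vee \times \PPinv^n$ lies in $\PP(\cQ_{L_1}^\vee) \times_{X_E} \PP(\cS_{L_2})$ if and only if $\overline{t_1^{-1}t_2} \in \PP L_1^\perp$ and $\overline{t_1 t_3} \in \PP L_2$. Consequently, letting $\phi\colon (\ZZ^E/\ZZ\mathbf 1)^3 \to (\ZZ^E/\ZZ\mathbf 1)^3$ be the lattice automorphism $(v_1,v_2,v_3)\mapsto (v_1, v_1+v_2, v_3 - v_1)$ and $\Psi\colon (\overline t_1, \overline t_2, \overline t_3) \mapsto (\overline t_1, \overline{t_1 t_2}, \overline{t_1^{-1} t_3})$ its induced torus automorphism, the open-torus part of the biprojectivization is exactly $\Psi((T/\CC^*) \times \mathcal C(L_1^\perp) \times \mathcal C(L_2))$.

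Given this identification, let $\Sigma_0$ be the fan of $X_E \times (\PPinv^n)^\vee \times \PPinv^n$. I would take $\Sigma$ to be a common unimodular projective refinement of $\Sigma_E \times \Sigma_E \times \Sigma_E$ and $\phi^{-1}(\Sigma_0)$, and define $\widetilde\phi\colon X_\Sigma \to X_E \times (\PPinv^n)^\vee \times \PPinv^n$ as the toric morphism with lattice map $\phi$; this is birational and restricts to $\Psi$ on the open torus. Since $\Sigma_E \times \Sigma_{M_1^\perp} \times \Sigma_{M_2}$ is a subfan of $\Sigma_E^3$ and $\Sigma$ refines $\Sigma_E^3$, the cones of $\Sigma$ contained in the support $(\RR^E/\RR\mathbf 1) \times |\Sigma_{M_1^\perp}| \times |\Sigma_{M_2}|$ assemble into a subfan $\Sigma_{E,M_1^\perp,M_2}$ of $\Sigma$ refining the product, which yields (1).

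For (2), both sides are valuative functions of $(M_1, M_2)$: the right side by the valuativity statement in \Cref{prop:projClassDefn}, and the left because $[\Sigma_{E,M_1^\perp,M_2}]$ in $X_\Sigma$ pulls back from the product class $1 \otimes [\Delta_{M_1^\perp}] \otimes [\Delta_{M_2}]$ on $X_E^3$, valuative by \Cref{cor:Bergman}. Applying \Cref{lem:looplessval} (in the coloopless/loopless variant) to $M_1^\perp$ and $M_2$ reduces verification to the realizable case. There, \Cref{cor:Bergman} identifies $[\Sigma_{E,M_1^\perp,M_2}]$ with the class of the irreducible cycle $X_E \times W_{L_1^\perp} \times W_{L_2} \subset X_\Sigma$, the closure of $(T/\CC^*) \times \mathcal C(L_1^\perp) \times \mathcal C(L_2)$. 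Since $\widetilde\phi$ restricts to the isomorphism $\Psi$ on the open torus, it maps this open piece isomorphically onto the open-torus part of $\PP(\cQ_{L_1}^\vee) \times_{X_E} \PP(\cS_{L_2})$, so $\widetilde\phi$ is generically one-to-one on the cycle and pushes it forward to $[\PP(\cQ_{L_1}^\vee) \times_{X_E} \PP(\cS_{L_2})] = [\PP(\cQ_{M_1}^\vee) \times_{X_E} \PP(\cS_{M_2})]$ by \Cref{prop:projClassDefn}.

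The main technical hurdle will be organizing the change-of-coordinates cleanly at the toric level: the lattice map of $\widetilde\phi$ is chosen to be the shear $\phi$ rather than the identity, so that the product of Bergman fans (in its natural embedding in $(\RR^E/\RR\mathbf 1)^3$) sits honestly as a subfan of $\Sigma$, while the gluing between this product and the biprojectivization's twisted support is carried by $\Psi$ on the torus. Once this setup is in place, the pushforward computation reduces to identifying the closure of the open-torus piece, and valuativity handles the non-realizable cases essentially for free.
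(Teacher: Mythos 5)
Your construction coincides with the paper's: the same shear $\phi_{\operatorname{trop}}(u_0,u_1,u_2)=(u_0,u_0+u_1,-u_0+u_2)$ and its torus automorphism, the same choice of $\Sigma$ as a unimodular projective refinement of $\Sigma_E^3$ making $\phi$ toric, the same definition of $\Sigma_{E,M_1^\perp,M_2}$ for part (1), and for part (2) the same reduction to the realizable case via valuativity of both sides followed by pushing forward the closure of the ``untwisted'' product $(T/\CC^*)\times \mathcal C(L_1^\perp)\times\mathcal C(L_2)$. Your fiberwise identification of the open-torus part of $\PP(\cQ_{L_1}^\vee)\times_{X_E}\PP(\cS_{L_2})$ with the image of this product under $\Psi$ is exactly the computation in the paper, and your valuativity argument for the left-hand side (linearity of refinement-pullback applied to $1\otimes\Delta_{M_1^\perp}\otimes\Delta_{M_2}$) is, if anything, a little cleaner than the paper's detour through indicator functions of supports.

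There is, however, one genuine gap, and it sits at the technical heart of the proof: the assertion that $[\Sigma_{E,M_1^\perp,M_2}]$ equals the class in $A^\bullet(X_\Sigma)$ of the closure of $(T/\CC^*)\times\mathcal C(L_1^\perp)\times\mathcal C(L_2)$. \Cref{cor:Bergman}(i) identifies $[W_L]$ with the Poincar\'e dual of $\Delta_M$ \emph{on $X_E$}, hence (by K\"unneth) gives the corresponding statement on $X_E^3$; but what you need is the statement on the refinement $X_\Sigma\to X_E^3$, namely that the class of the \emph{strict transform} of $X_E\times W_{L_1^\perp}\times W_{L_2}$ equals the Gysin pullback of its class downstairs. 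For an arbitrary subvariety and an arbitrary toric modification this fails (the preimage can acquire extra components over the boundary), so it must be argued. The paper secures it by additionally requiring $\Sigma$ to refine the Gr\"obner fan of $Y_{L_1,L_2}$ and then invoking the tropical-compactification facts of \Cref{lem:tropicalfacts}; your $\Sigma$ is only required to refine $\Sigma_E^3$ and $\phi^{-1}(\Sigma_0)$, so that input is missing from your setup. An alternative patch closer to what you wrote is to use that wonderful compactifications meet every torus orbit properly and then apply \Cref{lem:intersectproperly} (Appendix B), which shows precisely that under this hypothesis the preimage of the cycle under a refinement is the closure of its open-torus part and its class is the pullback. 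Either route works, but one of them has to be supplied.
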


We prepare the proof by stating some facts from tropical intersection theory.
For a $d$-dimensional very-affine subvariety $Y$ in a torus $(\CC^*)^m$, the \textbf{tropicalization} of $Y$, denoted $\operatorname{trop}(Y)$, is a pure $d$-dimensional polyhedral complex in $\RR^m$, along with $\ZZ$-valued weight function $w$ on the set of its $d$-dimensional polyhedral cells.  The weight $w$ has the property that, for every complete unimodular fan $\Sigma$ in $\RR^m$ containing a subfan whose support equals $\operatorname{trop}(Y)$, the assignment
\[
\Delta_Y\colon  \Sigma(d) \to \ZZ \quad\text{defined by}\quad \tau \mapsto \begin{cases}
w(C) & \text{if there exists a polyhedral cell $C$ in $\operatorname{trop}(Y)$ containing $\tau$}\\
0 & \text{otherwise}
\end{cases}
\]
is a Minkowski weight on $\Sigma$.
One such unimodular fan $\Sigma$ can be constructed from the Gr\"obner fan of $Y$, which is a fan in $\RR^m$, not necessarily unimodular, that contains a subfan whose support equals $\operatorname{trop}(Y)$.
See \cite[Ch.~3]{MS15} for an introduction to tropicalizations and Gr\"obner fans.
The tropicalization of a product of very-affine subvarieties is the product of each tropicalization.
We will need the following two facts about tropicalizations from the theory of tropical compactifications.

\begin{lem}\label{lem:tropicalfacts}
\
\begin{enumerate}[label = (\alph*)]
\item\label{tropicalfacts:linear} Let $L\subset \CC^E$ be a realization of a loopless matroid $M$.  Then, the tropicalization of $\PP L \cap (T/\CC^*)$ is a polyhedral complex whose support equals the support of the Bergman fan $\Sigma_M$, along with the weight function $w$ that is constantly 1.  Moreover, the Bergman fan refines a subfan of the Gr\"obner fan of $\PP L \cap (T/\CC^*)$.
\item\label{tropicalfacts:class}  Let $Y\subseteq (\CC^*)^m$ be a very-affine subvariety, and $\Sigma$ a complete unimodular fan in $\RR^m$ that refines the Gr\"obner fan of $Y$ and contains a subfan whose support equals $\operatorname{trop}(Y)$.  Then, the Chow class $[\overline Y]\in A^\bullet(X_\Sigma)$ of the closure of $Y$ in the toric variety $X_\Sigma$ is equal to the Poincar\'e dual of the Minkowski weight $\Delta_Y$ defined by $\operatorname{trop}(Y)$.
\end{enumerate}
\end{lem}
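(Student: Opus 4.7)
For part (a), I would appeal to the Ardila--Klivans theorem \cite{AK06} (see also \cite{Stu02}): when $L\subseteq \CC^E$ realizes a loopless matroid $M$, the set-theoretic tropicalization of $\PP L \cap (T/\CC^*)$ equals the support of the Bergman fan $\Sigma_M$. The assertion that the weight function is constantly $1$ on the maximal cones can be obtained by direct inspection of initial degenerations: for any interior vector $v$ of a maximal cone $\operatorname{Cone}(\overline\be_{F_1},\dots,\overline\be_{F_{r-1}})$ of $\Sigma_M$, the initial ideal $\operatorname{in}_v I_L$ of the linear ideal $I_L$ cutting out $\PP L\cap (T/\CC^*)$ is a prime monomial-free linear ideal whose variety is a single subtorus, which gives tropical multiplicity one by the standard definition (see \cite[Ch.~3]{MS15}). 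The claim that $\Sigma_M$ refines a subfan of the Gr\"obner fan of $\PP L\cap (T/\CC^*)$ is precisely the statement that $\operatorname{in}_v I_L$ depends only on the flag of flats $F_1\subsetneq\cdots\subsetneq F_{r-1}$ determining the cone containing $v$, which is a standard feature of the linear-subspace tropicalization \cite{FS05, AK06}.

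For part (b), the plan is to invoke tropical intersection theory as developed in \cite[Ch.~6]{MS15}. Since $\Sigma$ refines the Gr\"obner fan of $Y$ and its support contains $\operatorname{trop}(Y)$, the closure $\overline Y$ in $X_\Sigma$ is a tropical compactification in the sense of Tevelev, which ensures in particular that $\overline Y$ meets every torus-orbit closure $V(\tau)$ of $X_\Sigma$ properly (or else not at all). By the Sturmfels--Tevelev multiplicity formula, for each cone $\tau\in \Sigma(d)$ with $d=\dim Y$ the degree $\int_{X_\Sigma}[\overline Y]\cdot [V(\tau)]$ equals the tropical multiplicity $w(\tau)$ when the relative interior of $\tau$ lies in $\operatorname{trop}(Y)$, and equals $0$ otherwise; this matches $\Delta_Y(\tau)$ on all of $\Sigma(d)$. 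By \Cref{thm:FS}, a Chow class of codimension $m-d$ on $X_\Sigma$ is determined by its intersection numbers against torus-orbit classes $[V(\tau)]$ of complementary dimension, so $[\overline Y]$ is the Poincar\'e dual of $\Delta_Y$, as required.

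The main obstacle is really assembling the right references cleanly: both parts are known to experts, but (a) mixes combinatorial data (flags of flats) with Gr\"obner-theoretic data (initial ideals) and (b) relies crucially on the hypothesis that $\Sigma$ refines the Gr\"obner fan to ensure $\overline Y$ is a genuine tropical compactification so that the multiplicity formula applies. Once these pieces are in place the identification of $[\overline Y]$ with the Poincar\'e dual of $\Delta_Y$ is essentially formal.
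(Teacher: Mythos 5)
Your proposal is correct and matches the paper's approach: the paper also proves part (a) by citing Sturmfels and Ardila--Klivans (via \cite[Theorem 4.1.11]{MS15} and \cite[Exercise 4.7.(7)]{MS15}) and part (b) by citing the tropical-compactification/multiplicity results (\cite[Theorems 6.4.17 and 6.7.7]{MS15}, or equivalently Katz and Gross). You simply unpack the content of those references (initial ideals for the weight-one claim, the Sturmfels--Tevelev-type multiplicity formula plus \Cref{thm:FS} for the Poincar\'e duality), which is consistent with the paper's citation-based proof.
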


\begin{proof}
The first part of \ref{tropicalfacts:linear} was first observed in \cite[\S9.3]{Stu02}; see \cite[Theorem 4.1.11]{MS15} for a proof. The second part that $\Sigma_M$ refines the Gr\"obner fan is \cite[Exercise 4.7.(7)]{MS15}, which was implicitly stated in \cite[Theorem 1]{AK06}.  The statement \ref{tropicalfacts:class} follows from combining \cite[Proposition 9.4]{Kat09} with \cite[Theorem 14.9]{G13}, or from combining \cite[Theorem 6.4.17]{MS15} with \cite[Theorem 6.7.7]{MS15}.
\end{proof}

We are now ready to prove \Cref{thm:refinedfanbiproj}.

\begin{proof}[Proof of \Cref{thm:refinedfanbiproj}]
First, we clarify how we are treating $X_E\times (\PPinv^n)^{\vee}\times \PPinv^n$ as a toric variety.
The standard basis of $\CC^E$ induces an isomorphism $\CC^E \simeq (\CC^E)^\vee$, and by forgetting the $T$-action on $\PPinv^n$ and $(\PPinv^n)^\vee$,
we identify $X_E\times (\PPinv^n)^{\vee}\times \PPinv^n=X_E\times \mathbb{P}^n\times \mathbb{P}^n$,
where the latter is a product of three toric varieties, each with open dense torus $(T/\CC^*)$.

We now specify the map $\widetilde\phi$ first on the torus $(T/\CC^*)^3$.
Define
$\phi_{\text{trop}}\colon (\mathbb{Z}^{n+1}/\mathbb{Z}\mathbf{1})^3\to (\mathbb{Z}^{n+1}/\mathbb{Z}\mathbf{1})^3$
by $\phi(u_0, u_1, u_2)=(u_0, u_0+u_1,  -u_0 + u_2)$. This induces an invertible map of tori $\phi\colon (T/\mathbb{C}^*)^3\to (T/\mathbb{C}^*)^3$ given by
$\phi(t_0,t_1,t_2)=(t_0,t_0 t_1, t_0^{-1} t_2).
$
Let $\Sigma$ be a unimodular fan in $(\RR^E/\RR\mathbf 1)^3$ that sufficiently refines the fan $\Sigma_E^3 = \Sigma_E \times \Sigma_E \times \Sigma_E$ such that $\phi_{\text{trop}}$ induces a map of fans $\Sigma\to \Sigma_E \times \Sigma_n \times \Sigma_n$, where $\Sigma_n$ is the fan of the toric variety $\PP^n$.
Such a fan $\Sigma$ can be constructed by noting that a collection of normal fans of polytopes admit a common refinement \cite[Proposition 6.2.13.(b)]{CLS11} and by applying the toric resolution of singularities \cite[Theorem 11.1.9]{CLS11}.
Then, the invertible map $\phi$ of tori extends to a birational toric morphism $\widetilde{\phi}\colon X_{\Sigma}\to X_E \times \PP^n \times \PP^n$. 

We now verify that $\Sigma$ and $\widetilde\phi$ satisfies the desired properties.  The property (1) is immediate from the construction. Indeed, $\Sigma_E \times \Sigma_{M_1^\perp} \times \Sigma_{M_2}$ is a subfan of $\Sigma_E^3$, and $\Sigma$ refines $\Sigma_E^3$, we can set $\Sigma_{M,M_1^\perp, M_2}$ to be the subfan of $\Sigma$ whose support equals the support of $\Sigma_E \times \Sigma_{M_1^\perp} \times \Sigma_{M_2}$.

For the property (2), we first claim that both $[\Sigma_{E,M_1^\perp, M_2}]$ and $[\PP(\cQ_{M_1}^\vee)\times_{X_E} \PP(\cS_{M_2})]$ are valuative separately in coloopless matroids $M_1$ and loopless matroids $M_2$.
For $[\Sigma_{E,M_1^\perp, M_2}]$, by considering the corresponding constant-1 Minkowski weight on $\Sigma$, the desired valuativity is equivalent to the valuativity of the indicator function for the support of the fan $\Sigma_E\times \Sigma_{M_1^\perp} \times \Sigma_{M_2}$.
\Cref{cor:Bergman}.\ref{Bergman:val} implies that the assigment $M\mapsto \text{(indicator function for the support of } \Sigma_M)$ is valuative, and similarly for the assignment $M\mapsto \text{(indicator function for the support of } \Sigma_{M^\perp})$ since $M \mapsto 1_{P(M^\perp)} = 1_{- P(M) + \mathbf 1}$ is valuative.
Thus, we conclude that $[\Sigma'_{E,M_1^\perp, M_2}]$ is valuative separately in coloopless matroids $M_1$ and loopless matroids $M_2$.  The valuativity for $[\PP(\cQ_{M_1}^\vee)\times_{X_E} \PP(\cS_{M_2})]$ follows from its definition and \Cref{prop:valpolys}.
Applying \Cref{lem:looplessval} to these valuative properties, we conclude that it suffices to show $\widetilde\phi_*[\Sigma'_{E,M_1^\perp,M_2}] = [\PP(\cQ_{M_1}^\vee)\times_{X_E} \PP(\cS_{M_2})]$ assuming that $M_1$ and $M_2$ both have realizations.

Now, let $L_1 \subseteq \CC^E$ and $L_2 \subseteq \CC^E$ be realizations of $M_1$ and $M_2$, respectively.  Recall that under $\CC^E \simeq (\CC^E)^\vee$, the subspace $L_1^\perp = (\CC^E/L_1)^\vee \subseteq (\CC^E)^\vee \simeq \CC^E$ realizes $M_1^\perp$.
Let $Y_{L_1, L_2}$ be the intersection of $X_E \times \PP L_1^\perp \times \PP L_2 \subseteq X_E \times \PP^n \times \PP^n = X_E \times (\PPinv^n)^\vee \times \PPinv^n$ with the open dense torus, namely,
$$Y_{L_1,L_2}=(T/\mathbb{C}^*)\times (\mathbb{P}L_1^{\perp} \cap T/\mathbb{C}^*) \times (\mathbb{P}L_2\cap T/\mathbb{C}^*).$$
Note that $Y_{L_1, L_2}$ is nonempty because $M_1^\perp$ and $M_2$ are assumed to be loopless.  The very-affine subvariety $Y_{L_1, L_2}$ is an ``untwisting'' of $\PP(\cQ_{L_1}^\vee)\times_{X_E} \PP(\cS_{L_2})$ on the open dense torus $(T/\CC^*)^3$ in the sense that $\phi$ maps $Y_{L_1, L_2}$ isomorphically onto
$(\mathbb{P}(\mathcal{Q}_{L_1}^{\vee})\times_{X_E}\mathbb{P}(\mathcal{S}_{L_2})) \cap (T/\mathbb{C}^*)^3$.
Indeed, for any $t_0 \in T/\CC^*$, the fibers over $\{t_0\}\times (T/\mathbb{C}^*)^2$ are
\begin{align*}
(Y_{L_1,L_2})_{\{t_0\}\times (T/\mathbb{C}^*)^2}=&\{t_0\}\times (\mathbb{P}L_1^{\perp}\cap T/\mathbb{C}^*)\times (\mathbb{P}L_2\cap T/\mathbb{C}^*)\quad\text{and} \\
(\mathbb{P}(\mathcal{Q}_{L_1}^{\vee})\times_{X_E}\mathbb{P}(\mathcal{S}_{L_2}))_{\{t_0\}\times (T/\mathbb{C}^*)^2}=&\{t_0\}\times (t_0\mathbb{P}L_1^{\perp}\cap T/\mathbb{C}^*)\times (t_0^{-1}\mathbb{P}L_2\cap T/\mathbb{C}^*),
\end{align*}
and $\phi$ was given by $\phi(t_0,t_1,t_2)=(t_0,t_0 t_1, t_0^{-1} t_2)$.
Thus, denoting $\overline{Y_{L_1,L_2}}$ for the closure of $Y_{L_1,L_2}$ in $X_\Sigma$, we have that $\widetilde{\phi}_*[\overline{Y_{L_1,L_2}}]=[\mathbb{P}(\mathcal{Q}_{L_1}^{\vee})\times_{X_E}\mathbb{P}(\mathcal{S}_{L_2})]$.
It only remains to show that $[\overline{Y_{L_1,L_2}}]=[{\Sigma_{E,M_1^{\perp},M_2}}]$.
\Cref{lem:tropicalfacts}.\ref{tropicalfacts:linear} implies that $\operatorname{trop}(Y_{L_1, L_2})$ is a polyhedral complex whose support equals the support of $\Sigma_E \times \Sigma_{M_1^\perp} \times \Sigma_{M_2}$, and the weight function is constantly 1.  Then, since \Cref{lem:tropicalfacts}.\ref{tropicalfacts:class} implies that the Chow class $[\overline{Y_{L_1,L_2}}]\in A^\bullet(X_\Sigma)$ is Poincar\'e dual to the Minkowski weight on $\Sigma$ defined by the tropicalization $\operatorname{trop}(Y_{L_1, L_2})$, we conclude that $[\overline{Y_{L_1,L_2}}]=[{\Sigma_{E,M_1^{\perp},M_2}}]$, as desired.
\end{proof}

\begin{rem}\label{rem:compareADH}
The fan ${\Sigma_{E,M_1^\perp,M_2}}$, serving as our combinatorial model of a biprojective bundle, is valuative separately in $M_1$ and $M_2$, allowing us to reduce to the realizable case. However, $\Sigma_{E,M^{\perp},M}$ is \emph{not} valuative in $M$. Similarly, the ``conormal fan'' ${\Sigma_{M,M^\perp}}$, whose support coincides with $\Sigma_M \times \Sigma_{M^\perp}$, is not valuative in $M$. In both cases, the final degree computation yields a valuative answer, which for us gave $t_M(x,y,z,w)$ and for \cite{ADH} gave $T_M(x,0)$, despite the fans not being valuative. This prevents one from automatically extending the final degree computation from the realizable case to all matroids.

However, in both cases, the valuativity of the final expression can be explained by instead working with the degrees $\alpha^i\beta^jc_k(\mathcal{S}_M^{\vee})c_\ell(\mathcal{Q}_M)$. Then, the valuativity follows from \Cref{prop:valpolys}.
In contrast to our equivariant methods, valuativity seems less accessible using non-equivariant methods, as evidenced by the intricate computations with the bipermutohedron in \cite[\S2 and \S4]{ADH} required to generalize the realizable case done in \cite{H15}.
\end{rem}

\subsection{Log-concavity for the Tutte polynomial}
\label{sec:Logconcprops}

In this subsection, we combine \Cref{prop:Segre} and \Cref{thm:refinedfanbiproj} with properties of ``Lefschetz fans'' established in \cite[\S5]{ADH} to prove \Cref{thm:masterlogconcintro}, reproduced below.
Recall that the coefficients of a homogeneous polynomial $f\in \RR[x_0, \ldots, x_N]$ of degree $d$ form a \textbf{log-concave unbroken array} if, for any $0\leq i < j \leq N$ and a monomial $x^{m}$ of degree $d'\leq d$, the coefficients of $\{x_i^kx_j^{d-d'-k}x^{m}\}_{0\leq k \leq d-d'}$ in $f$ form a nonnegative log-concave sequence with no internal zeros.

\newtheorem*{thm:masterlogconcintro}{\Cref{thm:masterlogconcintro}}
\begin{thm:masterlogconcintro}
For a matroid $M$ of rank $r$ with ground set $E$, the coefficients of the polynomial
\[
t_M(x,y,z,w) = (x+y)^{-1}(y+z)^{r}(x+w)^{|E|-r}T_M\Big(\frac{x+y}{y+z},\frac{x+y}{x+w}\Big)
\]
form a log-concave unbroken array.
\end{thm:masterlogconcintro}

We prepare by stating the tools we need from the tropical Hodge theory developed in \cite[\S5]{ADH}.  A \textbf{Lefschetz fan} is a pure-dimensional unimodular fan $\Sigma$, not necessarily complete, that is a balanced and satisfies certain Lefschetz properties \cite[Definition 1.5]{ADH}.  In our case, the properties of a Lefschetz fan we need are collected in the following proposition.

\begin{lem}\label{lem:lefschetzfan}
\
\begin{enumerate}[label = (\alph*)]
\item \label{lefschetzfan:support}\cite[Theorem 1.6]{ADH} If $\Sigma$ is a Lefschetz fan, then any unimodular fan whose support equals that of $\Sigma$ is a Lefschetz fan.
\item \label{lefschetzfan:product} \cite[Lemma 5.26]{ADH} A product of Lefschetz fans is a Lefschetz fan.
\item \label{lefschetzfan:Bergman} \cite[Theorem 8.9]{AdiHuhKatz} The Bergman fan of a loopless matroid is a Lefschetz fan.
\item \label{lefschetzfan:logconc} \cite[Theorem 5.28]{ADH} Let $\Sigma$ be an $m$-dimensional projective unimodular fan, and let $\Sigma'$ be a $d$-dimensional subfan that is a Lefschetz fan, which as a balanced fan defines the Chow class $[\Sigma']\in A^{m-d}(X_{\Sigma})$.  Then, for nef divisors $D_1, D_2, \ldots, D_k \in A^1(X_{\Sigma})$ on the projective toric variety $X_{\Sigma}$, the sequence $(a_0, \ldots, a_{d-(k-2)})$ defined by
\[
a_i = \int_{X_{\Sigma}} D_1^{i}D_2^{d-(k-2)-i}D_3 \cdots D_k \cdot [\Sigma'] \]
is a nonnegative sequence that is log-concave with no internal zeros.\footnote{\cite[Theorem 5.28]{ADH} does not state no internal zeros, but its proof implies that the sequence $(a_0, \ldots, a_{d-k-2})$ is a limit of log-concave positive sequences.  A limit of such sequences is a log-concave sequence with no internal zeros; see \cite[Lemma 34]{H12} for a proof.}
\end{enumerate}
\end{lem}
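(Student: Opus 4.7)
The plan is to establish each of the four parts by invoking the cited results from the literature and, where needed, verifying that our hypotheses match theirs; no new content is required beyond unpacking definitions. Since being a Lefschetz fan is defined in \cite[Definition 1.5]{ADH20} in terms of Poincar\'e duality, Hard Lefschetz, and Hodge--Riemann relations for the Chow ring of the associated (star) fans, each part reduces to a known Hodge-theoretic statement about such Chow rings.

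For part \ref{lefschetzfan:support}, I would appeal directly to \cite[Theorem 1.6]{ADH20}, which asserts that the Lefschetz property is an invariant of the support of a unimodular fan. The underlying reason, which is worth noting but need not be reproved, is that Chow rings of unimodular fans with the same support are related by sequences of star subdivisions whose effect on the Hodge-theoretic structure is controlled, so one can transport the Lefschetz package from $\Sigma$ to any other unimodular fan with the same support. Part \ref{lefschetzfan:product} is a direct citation to \cite[Lemma 5.26]{ADH20}, whose content is a K\"unneth-type argument showing that if two fans have Chow rings satisfying Poincar\'e duality, Hard Lefschetz, and Hodge--Riemann with respect to ample classes, then so does their product with respect to sums of ample classes pulled back from each factor.

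Part \ref{lefschetzfan:Bergman} is the central input and is precisely the Hodge-theoretic main result of \cite{AdiHuhKatz}: the Chow ring of the Bergman fan of a loopless matroid (in its ``maximal'' fan structure indexed by chains of flats) satisfies the K\"ahler package. Under \ref{lefschetzfan:support}, this extends to any unimodular fan supported on $|\Sigma_M|$, which is the form in which I will invoke it. Finally, part \ref{lefschetzfan:logconc} is a verbatim application of \cite[Theorem 5.28]{ADH20}; the only thing to remark is the promotion from log-concavity to ``no internal zeros,'' which follows from the fact that the sequence $(a_0,\ldots,a_{d-k-2})$ is obtained as a limit of strictly positive log-concave sequences by perturbing the nef classes $D_1,D_2$ to ample ones (cf.\ \cite[Lemma 34]{H12} as noted in the footnote).

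The only conceivable obstacle is a notational mismatch between our conventions and those of \cite{ADH20, AdiHuhKatz}: specifically, verifying that our unimodular fans, our ``nef'' hypothesis on divisors of $X_\Sigma$, and our identification of $[\Sigma']\in A^{m-d}(X_\Sigma)$ with the constant-$1$ Minkowski weight on $\Sigma'$ agree with the setup in which \cite[Theorem 5.28]{ADH20} is stated. This is routine to check from the definitions in \S\ref{sec:minkowskiweights} and does not require any new argument. Thus the proof of the lemma is, in effect, a bookkeeping exercise in applying the cited theorems.
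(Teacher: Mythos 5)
Your proposal matches the paper exactly: the lemma is a citation lemma with no proof given beyond the references themselves, and the only added content — the promotion of \cite[Theorem 5.28]{ADH20} to a sequence with no internal zeros via the limit-of-positive-log-concave-sequences argument and \cite[Lemma 34]{H12} — is precisely what the paper records in its footnote. Nothing further is needed.
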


We can now prove a strengthening of \Cref{thm:masterlogconcintro}.

\begin{thm}\label{thm:logconcM1M2}
For matroids $M_1$ and $M_2$ on the common ground set $E$, the coefficients of the polynomial
\begin{align}\label{eq:Lorentzian2}\tag{$\star$}
\sum_{i+j+k+\ell=n} \Big( \int_{X_E} \alpha^i\beta^jc_k(\mathcal{S}_{M_1}^{\vee})c_\ell(\mathcal{Q}_{M_2}) \Big) x^iy^jz^kw^\ell
\end{align}
form a log-concave unbroken array.
\end{thm}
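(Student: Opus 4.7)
The plan is to deduce the log-concavity statement from the Lefschetz properties recorded in \Cref{lem:lefschetzfan} via the results of \S\ref{sec:biprojoftaut} and \S\ref{sec:Minkweightuntwist}. First, by \Cref{prop:noloopcoloop}, we may replace $M_1$ by $M_1^{coloop\to loop}$ and $M_2$ by $M_2^{loop\to coloop}$ without changing the Chow class $[\PP(\cQ_{M_1}^\vee) \times_{X_E} \PP(\cS_{M_2})]$, and hence without changing any of the intersection numbers in \eqref{eq:Lorentzian2}. We may therefore assume $M_1$ is coloopless (so that $M_1^\perp$ is loopless) and $M_2$ is loopless.

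Next, combining \Cref{prop:Segre} with \Cref{thm:refinedfanbiproj} and the projection formula, we rewrite, for all $i+j+k+\ell = n$,
\[
\int_{X_E}\alpha^i\beta^jc_k(\mathcal{S}_{M_1}^{\vee})c_\ell(\mathcal{Q}_{M_2}) = \int_{X_\Sigma} \widetilde\phi^*\alpha^{i}\, \widetilde\phi^*\beta^{j}\, \widetilde\phi^*\delta^{n-r_1+k}\, \widetilde\phi^*\epsilon^{r_2-1+\ell} \cdot [\Sigma_{E,M_1^{\perp},M_2}],
\]
where $\widetilde\phi\colon X_\Sigma \to X_E \times (\PPinv^n)^\vee \times \PPinv^n$ is the birational toric morphism of \Cref{thm:refinedfanbiproj}. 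Since $\alpha, \beta, \delta, \epsilon$ are pullbacks of hyperplane classes from projective spaces, and since $X_\Sigma$ is smooth and projective, the classes $\widetilde\phi^*\alpha$, $\widetilde\phi^*\beta$, $\widetilde\phi^*\delta$, $\widetilde\phi^*\epsilon$ are nef divisor classes on $X_\Sigma$.

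The key geometric input is that the subfan $\Sigma_{E,M_1^{\perp},M_2}\subseteq \Sigma$ is a Lefschetz fan. Its support agrees with that of $\Sigma_E \times \Sigma_{M_1^\perp} \times \Sigma_{M_2}$, and $\Sigma_E$ is the Bergman fan of the loopless matroid $U_{|E|,E}$, while $\Sigma_{M_1^\perp}$ and $\Sigma_{M_2}$ are Bergman fans of loopless matroids. Thus \Cref{lem:lefschetzfan}.\ref{lefschetzfan:Bergman} applied to each factor, \ref{lefschetzfan:product} applied to their product, and \ref{lefschetzfan:support} applied to pass to the unimodular refinement $\Sigma_{E,M_1^{\perp},M_2}$, together show that $\Sigma_{E,M_1^{\perp},M_2}$ is a Lefschetz fan.

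Finally, to extract the log-concave unbroken array conclusion, we fix any pair of variables among $\{x,y,z,w\}$ and any monomial in the remaining two; the corresponding one-parameter family of coefficients of \eqref{eq:Lorentzian2} becomes a sequence of intersection numbers on $X_\Sigma$ in which two specific pullbacks among $\widetilde\phi^*\alpha, \widetilde\phi^*\beta, \widetilde\phi^*\delta, \widetilde\phi^*\epsilon$ are raised to varying exponents with constant sum, while the remaining two appear to fixed exponents, all intersected with $[\Sigma_{E,M_1^{\perp},M_2}]$. The desired log-concavity and absence of internal zeros follow from the Khovanskii--Teissier-type inequality on Lefschetz fans provided by \Cref{lem:lefschetzfan}.\ref{lefschetzfan:logconc}. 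The main obstacle is not conceptual but notational: one must verify that the shifts $k \mapsto n-r_1+k$ and $\ell \mapsto r_2-1+\ell$ coming from \Cref{prop:Segre} translate every ``vary a pair of variables of fixed total degree'' regime in the target polynomial into a genuinely allowable application of \ref{lefschetzfan:logconc} on $X_\Sigma$.
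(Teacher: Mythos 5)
Your proposal is correct and follows essentially the same route as the paper's proof: reduce via \Cref{prop:noloopcoloop} to the coloopless/loopless case, use \Cref{prop:Segre} and \Cref{thm:refinedfanbiproj} to rewrite the coefficients as intersection numbers of the nef pullbacks $\widetilde\phi^*\alpha,\widetilde\phi^*\beta,\widetilde\phi^*\delta,\widetilde\phi^*\epsilon$ against $[\Sigma_{E,M_1^\perp,M_2}]$, verify that this fan is Lefschetz by combining parts \ref{lefschetzfan:Bergman}, \ref{lefschetzfan:product}, and \ref{lefschetzfan:support} of \Cref{lem:lefschetzfan}, and conclude with \Cref{lem:lefschetzfan}.\ref{lefschetzfan:logconc}. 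The final ``notational'' check you flag is indeed immediate, since the exponent shifts $k\mapsto n-r_1+k$ and $\ell\mapsto r_2-1+\ell$ are constant offsets that preserve every two-variable regime of fixed total degree.
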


\begin{proof}
By \Cref{prop:Segre}, the polynomial \eqref{eq:Lorentzian2} is equal to
\[
\sum_{i+j+k+\ell = n} \Big(\int_{X_E \times (\PPinv^n)^\vee \times \PPinv^n} \alpha^i\beta^j\delta^{n-r_1+k}\epsilon^{r_2-1+\ell}[\mathbb{P}(\mathcal{Q}_{M_1}^{\vee})\times_{X_{E}}\mathbb{P}(\mathcal{S}_{M_2})] \Big) x^iy^jz^kw^\ell.
\]
In this expression, by \Cref{prop:noloopcoloop} we may assume that $M_1$ is coloopless and $M_2$ is loopless.  Then, by \Cref{thm:refinedfanbiproj} and the push-pull formula we have that the above equals
\[
\sum_{i+j+k+\ell=n}
    \left(\int_{X_{\Sigma}} \widetilde\phi^*\alpha ^i \widetilde\phi^*\beta^j \widetilde{\phi}^{*}\delta^{k+\crk_{M_1}-1}\widetilde{\phi}^{*}\epsilon^{\ell+\rk_{M_2}-1}[{\Sigma_{E,M_1^{\perp},M_2}}]\right)x^iy^jz^kw^\ell.
\]
That the coefficients of this polynomial form a log-concave unbroken array is now a result of \Cref{thm:refinedfanbiproj} and \Cref{lem:lefschetzfan} as follows.  
Because $\Sigma_E$, $\Sigma_{M_1^\perp}$, and $\Sigma_{M_2}$ are Lefschetz fans (\Cref{lem:lefschetzfan}.\ref{lefschetzfan:Bergman}), so is their product (\Cref{lem:lefschetzfan}.\ref{lefschetzfan:product}).  Since the product $\Sigma_E \times \Sigma_{M_1^\perp}\times \Sigma_{M_2}$ and the fan $\Sigma_{E,M_1^\perp,M_2}$ have the same support  (\Cref{thm:refinedfanbiproj}), the fan $\Sigma_{E,M_1^\perp,M_2}$ is also a Lefschetz fan (\Cref{lem:lefschetzfan}.\ref{lefschetzfan:support}).  Because the divisors $\widetilde\phi^*\alpha, \widetilde\phi^*\beta, \widetilde\phi^*\delta, \widetilde\phi^*\epsilon$ are nef divisors on $X_{\Sigma}$, being pullbacks of nef divisors,  that the coefficients of the polynomial form a log-concave unbroken array follows from \Cref{lem:lefschetzfan}.\ref{lefschetzfan:logconc}.
\end{proof}

\begin{proof}[Proof of \Cref{thm:masterlogconcintro}]
Set $M_1 = M_2 = M$ in \Cref{thm:logconcM1M2}, and apply \Cref{thm:4degintro}.
\end{proof}

\subsection{Denormalized Lorentzian polynomials}\label{subsec:lorentzian}
Let us note a strengthening of \Cref{thm:logconcM1M2} that we will only need in \S\ref{sec:flagmatroids} when we consider flag matroids.  The theorem is strengthened in two ways.

\medskip
First, we use the language of Lorentzian polynomials developed in \cite{BH20}.  For a homogeneous degree $d$ polynomial $f = \sum_{\bm \in \ZZ_{\geq0}^N} a_{\bm} x^{\bm} \in \RR[x_1, \ldots, x_N]$, its \textbf{normalization} is $N(f) = \sum_{\bm \in \ZZ_{\geq0}^{N}} a_{\bm} \frac{x^{\bm}}{\bm !}$ where $\bm ! = m_1! \cdots m_N!$.  The polynomial $f$ is said to be the denormalization of $N(f)$.  The polynomial $f$ is a \textbf{strictly Lorentzian polynomial} if every monomial of degree $d$ has positive coefficient in $f$ and every $(d-2)$-th coordinate partial derivative of $f$ is a quadric form with signature $(+,-,-, \ldots, -)$.  It is a \textbf{Lorentzian polynomial} if $f$ is a limit of strictly Lorentzian polynomials.
\cite[Example 2.26]{BH20} combined with \cite[Theorem 2.10]{BH20} implies that the coefficients of a denormalized Lorentzian polynomial form a log-concave unbroken array.
A minor modification of the proof of \cite[Theorem 4.6]{BH20} applied to the mixed Hodge-Riemann relations for Lefschetz fans \cite[Definition 5.6.(2)]{ADH} implies the following strengthening of \Cref{lem:lefschetzfan}.\ref{lefschetzfan:logconc}.

\begin{lem}\label{lem:Lorentzian}
Let $\Sigma$ be a $m$-dimensional projective unimodular fan, and let $\Sigma'$ be a $d$-dimensional subfan that is a Lefschetz fan, which as a balanced fan defines the Chow class $[\Sigma']\in A^{m-d}(X_{\Sigma})$.  Then, for nef divisors $D_1, D_2, \ldots, D_N \in A^1(X_{\Sigma})$ on the projective toric variety $X_{\Sigma}$, the polynomial $f\in \RR[x_1, \ldots, x_N]$ defined by
\[
f = \sum_{i_1 + \cdots + i_N = d} \left(\int_{X_{\Sigma}} D_1^{i_1} \cdots D_N^{i_N} \cdot [\Sigma']\right) x_1^{i_1}\cdots x_N^{i_N}
\]
is a denormalization of a Lorentzian polynomial.
\end{lem}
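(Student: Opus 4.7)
The plan is to transport the proof of \cite[Theorem 4.6]{BH20} --- which shows that the volume polynomial of nef divisors on a projective variety is the denormalization of a Lorentzian polynomial --- to our setting by replacing the ambient fundamental class with the Lefschetz fan class $[\Sigma']$, and replacing the classical Hodge-Riemann relations by the mixed Hodge-Riemann relations for Lefschetz fans \cite[Definition 5.6.(2)]{ADH20}. Since Lorentzianness is preserved under positive scaling, and $f$ agrees up to the factor $d!$ with the normalization of
\[
h(x_1, \ldots, x_N) \;=\; \int_{X_\Sigma}\bigl(x_1 D_1 + \cdots + x_N D_N\bigr)^d \cdot [\Sigma'],
\]
it suffices to show that $h$ is a Lorentzian polynomial.

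First I would treat the case where every $D_i$ is ample on $X_\Sigma$, and verify that $h$ is then strictly Lorentzian. Positivity of the monomial coefficients $\int_{X_\Sigma} D_{i_1}\cdots D_{i_d}\cdot [\Sigma']$ follows from positivity of ample pairings against a Lefschetz fan class, which is part of the Poincar\'e duality and hard Lefschetz package in \cite[Definition 5.6]{ADH20}. For the Hessian signature condition, fix indices $i_1, \ldots, i_{d-2}$; the Hessian of $\partial_{x_{i_1}}\cdots\partial_{x_{i_{d-2}}}h$ is the restriction to the span of $D_1, \ldots, D_N$ of the symmetric bilinear form
\[
(u,v) \;\longmapsto\; \int_{X_\Sigma} u\,v\,D_{i_1}\cdots D_{i_{d-2}}\cdot [\Sigma'] \qquad\text{on } A^1(X_\Sigma)_\RR.
\]
The mixed Hodge-Riemann relations \cite[Definition 5.6.(2)]{ADH20}, applied with the ample middle class $D_{i_1}\cdots D_{i_{d-2}}$, assert that this form has signature $(+,-,-,\ldots,-)$, hence in particular at most one positive eigenvalue; this ``at most one positive eigenvalue'' property is inherited by the restriction to any linear subspace, which yields the Hessian condition required for strict Lorentzianness.

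The general case of nef $D_i$ is then obtained by continuity: since $X_\Sigma$ is projective, each $D_i$ is a limit of ample divisors $D_i + \varepsilon H$ (for a fixed ample $H$ and $\varepsilon \to 0^+$), and the coefficients of $h$ depend continuously on these classes, so $h$ is a limit of strictly Lorentzian polynomials and therefore Lorentzian by definition. The main obstacle is ensuring that the version of the mixed Hodge-Riemann relations needed --- for ample classes on $X_\Sigma$ paired against the Lefschetz fan subclass $[\Sigma']$ --- is exactly what \cite[Definition 5.6.(2)]{ADH20} provides; once this compatibility is recognized (the pairing in question is literally $\int_{X_\Sigma}(-)\cdot [\Sigma']$), the rest of the argument is a routine transcription of \cite[Theorem 4.6]{BH20}, with no further modification needed.
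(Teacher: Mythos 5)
Your overall strategy matches what the paper intends: the paper's only justification for this lemma is that it follows from a minor modification of the proof of \cite[Theorem 4.6]{BH20} applied to the mixed Hodge--Riemann relations of \cite[Definition 5.6.(2)]{ADH20}, and you are spelling out that modification. However, one step does not go through as written. Strict Lorentzianness requires the Hessian of each $(d-2)$-th coordinate partial of $h$ to have signature exactly $(+,-,\ldots,-)$ on $\RR^N$, that is, to be nondegenerate with a single positive eigenvalue. Restricting a form of signature $(+,-,\ldots,-)$ to a linear subspace only preserves the weaker condition ``at most one positive eigenvalue''; the restriction can acquire zero eigenvalues, for instance if two of the $D_i$ become proportional after restricting to $\Sigma'$, or more generally whenever the span of the $D_i$ meets a null direction. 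You correctly identify the weaker invariant that survives restriction but then conclude the stronger one. (A related imprecision: the bilinear form you display lives on $A^1(X_\Sigma)_\RR$, but the mixed Hodge--Riemann relations only control the induced form on $A^1(\Sigma')_\RR$; on $A^1(X_\Sigma)_\RR$ the form already has zeros coming from the kernel of restriction to $\Sigma'$, so its signature is not $(+,-,\ldots,-)$ to begin with. This does not hurt the ``at most one positive eigenvalue'' bound but means your intermediate signature claim is off.)

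The fix is standard and does not alter the strategy. Instead of aiming for strict Lorentzianness in the ample case, use the intrinsic characterization from \cite[Theorem 2.25]{BH20}: a homogeneous polynomial of degree $d$ with nonnegative coefficients is Lorentzian if and only if its support is M-convex and the Hessian of every $(d-2)$-th partial derivative has at most one positive eigenvalue. For ample $D_i$ the coefficients are strictly positive, so the support is the full discrete simplex (trivially M-convex), and the Hessian condition is exactly what you have established. Hence $h$ is Lorentzian. The nef case then follows by the limiting argument you give, since Lorentzian polynomials of a fixed degree in a fixed number of variables form a closed set. (Alternatively, one can first perturb to a generic ample tuple where the restricted form is nondegenerate, obtain strict Lorentzianness there, and take a double limit.) With this adjustment your argument is complete, and it is, as you say, essentially the transcription the paper has in mind.
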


Second, we note that one can define multi-projectivization analogues of the definition of the biprojectivization class $[\PP(\cQ_{M_1}^\vee) \times_{X_E} \PP(\cS_{M_2})]$ for any number of $\cQ_{M_i}^\vee$'s and $\cS_{M_j}$'s.  Proofs of the multi-projectivization analogues of \Cref{prop:Segre}, \Cref{prop:noloopcoloop}, and \Cref{thm:refinedfanbiproj} are essentially identical to the proofs given for the biprojectivization case here.  As a result, we obtain the following.

\begin{thm}\label{thm:logconcMmany}
Let $M_1, \ldots, M_m$ and $M'_1, \ldots, M'_{m'}$ be matroids with the common ground set $E$.  Then, the polynomial defined by
\[
\sum \left(\int \alpha^i \beta^j c_{k_1}(\cS_{M_1}^\vee)\cdots c_{k_m}(\cS_{M_m}^\vee) c_{\ell_1}(\cQ_{M'_1})\cdots c_{\ell_{m'}}(\cQ_{M'_{m'}}) \right) x^iy^jz_1^{k_1}\cdots z_m^{k_m} w_1^{\ell_1}\cdots w_{m'}^{\ell_{m'}}
\]
is a denormalization of a Lorentzian polynomial, where $\int = \int_{X_E \times ((\PPinv^n)^\vee)^m \times (\PPinv^n)^{m'}}$ and the sum is over all $i+j + k_1 +\cdots +k_m + \ell_1 +\cdots +\ell_{m'} = n$.
\end{thm}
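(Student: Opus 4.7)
The plan is to generalize the argument of \Cref{thm:logconcM1M2}, which is essentially this theorem when $m = m' = 1$. First I would introduce a multi-projectivization class in $A^\bullet(Y)$, where $Y := X_E \times ((\PPinv^n)^\vee)^m \times (\PPinv^n)^{m'}$ carries hyperplane pullbacks $\delta_1,\ldots,\delta_m$ (from the $(\PPinv^n)^\vee$ factors) and $\epsilon_1,\ldots,\epsilon_{m'}$ (from the $\PPinv^n$ factors). Imitating \Cref{defn:projClass}, the natural definition is
\[
[\Xi] := \prod_{a=1}^m \Bigl(\sum_{k\ge 0} c_k(\cS_{M_a}^\vee)\,\delta_a^{\rk(M_a)-k}\Bigr)\cdot \prod_{b=1}^{m'} \Bigl(\sum_{\ell\ge 0} c_\ell(\cQ_{M'_b})\,\epsilon_b^{n+1-\rk(M'_b)-\ell}\Bigr).
\]
In the realizable case, the projective bundle formula identifies $[\Xi]$ with the Chow class of the iterated fiber product $\PP(\cQ_{L_1}^\vee)\times_{X_E}\cdots\times_{X_E}\PP(\cQ_{L_m}^\vee)\times_{X_E}\PP(\cS_{L'_1})\times_{X_E}\cdots\times_{X_E}\PP(\cS_{L'_{m'}})$; separately in each matroid input the assignment is valuative by \Cref{prop:valpolys}, so by \Cref{lem:looplessval} these two properties characterize $[\Xi]$. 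The pushforward computation of \Cref{prop:Segre} extends with no change to give
\[
\int_Y \alpha^i\beta^j\Bigl(\prod_a \delta_a^{n-\rk(M_a)+k_a}\Bigr)\Bigl(\prod_b \epsilon_b^{\rk(M'_b)-1+\ell_b}\Bigr)[\Xi] = \int_{X_E} \alpha^i\beta^j\prod_a c_{k_a}(\cS_{M_a}^\vee)\prod_b c_{\ell_b}(\cQ_{M'_b}),
\]
so the polynomial in the theorem is realized as a mixed-intersection polynomial on $Y$ against $[\Xi]$ and the nef divisors $\alpha,\beta,\delta_a,\epsilon_b$. The analogue of \Cref{prop:noloopcoloop} further reduces to the case where every $M_a$ is coloopless and every $M'_b$ is loopless.

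The main step, generalizing \Cref{thm:refinedfanbiproj}, is to exhibit $[\Xi]$ as the pushforward of a constant-$1$ Minkowski weight on a Lefschetz subfan. I would build a sufficiently refined unimodular projective fan $\Sigma$ in $(\RR^E/\RR\mathbf 1)^{1+m+m'}$ together with a birational toric morphism $\widetilde\phi: X_\Sigma \to Y$ induced by the lattice automorphism
\[
\phi_{\operatorname{trop}}(u_0,s_1,\ldots,s_m,v_1,\ldots,v_{m'}) = (u_0,\,u_0+s_1,\ldots,u_0+s_m,\,-u_0+v_1,\ldots,-u_0+v_{m'}),
\]
which on tori is $(t,s_*,u_*)\mapsto(t,ts_*,t^{-1}u_*)$. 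A fiberwise computation as in \Cref{thm:refinedfanbiproj}, done factor-by-factor, shows that $\widetilde\phi$ sends the closure of the very-affine variety $Y_{L_*,L'_*} := (T/\CC^*)\times\prod_a(\PP L_a^\perp\cap T/\CC^*)\times\prod_b(\PP L'_b\cap T/\CC^*)$ birationally onto the iterated fiber product; then \Cref{lem:tropicalfacts} identifies the class of this closure in $X_\Sigma$ with the constant-$1$ Minkowski weight on the subfan $\Sigma_{E,M_a^\perp,M'_b}\subset\Sigma$ whose support equals $\Sigma_E\times\prod_a\Sigma_{M_a^\perp}\times\prod_b\Sigma_{M'_b}$. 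The valuativity of both sides in each coloopless (resp.\ loopless) matroid input, together with \Cref{lem:looplessval}, promotes the identity $\widetilde\phi_*[\Sigma_{E,M_a^\perp,M'_b}]=[\Xi]$ from the realizable case to all inputs.

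To conclude, I would invoke \Cref{lem:Lorentzian}: the product $\Sigma_E\times\prod_a\Sigma_{M_a^\perp}\times\prod_b\Sigma_{M'_b}$ is Lefschetz by \Cref{lem:lefschetzfan}\ref{lefschetzfan:product}--\ref{lefschetzfan:Bergman}, so its unimodular refinement inside $\Sigma$ remains Lefschetz by \ref{lefschetzfan:support}; since $\widetilde\phi^*\alpha,\widetilde\phi^*\beta,\widetilde\phi^*\delta_a,\widetilde\phi^*\epsilon_b$ are nef on $X_\Sigma$ as pullbacks of nef classes on $Y$, and since the push-pull formula equates the polynomial in the theorem with the mixed-degree polynomial of these nef classes paired against $[\Sigma_{E,M_a^\perp,M'_b}]$, \Cref{lem:Lorentzian} delivers the denormalized Lorentzian conclusion. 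The main obstacle I anticipate is the bookkeeping in the untwisting step: while the key inputs (tropicalization of a product, fiberwise identification of the iterated fiber product, equality of supports, constantness of weights) are direct extensions of the biprojective case, one must simultaneously track $m+m'$ factors and verify that the weight function of $\operatorname{trop}(Y_{L_*,L'_*})$ is identically $1$, which follows from applying \Cref{lem:tropicalfacts}\ref{tropicalfacts:linear} in each Bergman factor together with the fact that tropicalization commutes with products.
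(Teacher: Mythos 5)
Your proposal is correct and follows essentially the same route as the paper, which states (but does not write out) that the multi-projectivization analogues of \Cref{prop:Segre}, \Cref{prop:noloopcoloop}, and \Cref{thm:refinedfanbiproj} have ``essentially identical'' proofs, identifies the resulting fan $\Sigma_{E,M_1^\perp,\dots,M_m^\perp,M'_1,\dots,M'_{m'}}$ as Lefschetz, and invokes \Cref{lem:Lorentzian}. You have supplied precisely those details — the class $[\Xi]$, its characterization via valuativity and the realizable case, the pushforward identity, the untwisting $\phi_{\operatorname{trop}}$, and the tropicalization argument — so this is the paper's argument written out in full rather than a different one.
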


\begin{proof}
Similarly as in the proof of \Cref{thm:logconcM1M2}, the multi-projectivization analogues of \Cref{prop:Segre}, \Cref{prop:noloopcoloop}, and \Cref{thm:refinedfanbiproj} expresses the polynomial as a sum over intersection numbers of nef divisors multiplied to the Chow class of a Lefschetz fan $\Sigma_{E, M_1^\perp, \ldots, M_m^\perp, M_1', \ldots, M'_{m'}}$.  Then, one concludes by applying \Cref{lem:Lorentzian}.
\end{proof}

\section{A K-theory to Chow theory bridge}
\label{sec:KtoChow}

In this section, we develop a method special to permutohedral varieties that allows us to translate $K$-theoretic computations to Chow-theoretic computations, as stated in \Cref{thm:fakeHRRintro}.  We apply this method to the tautological $K$-classes of matroids to bridge the $K$-theoretic and the Chow-theoretic approach to studying matroid invariants.
Our method is a replacement, not a derivative, of the classical Hirzebruch-Riemann-Roch theorem, although the statement looks similar.  As before, let $E = \{0,1,\ldots, n\}$.

\subsection{A Hirzebruch-Riemann-Roch-type formula}

Recall that $\widetilde\Sigma_E$ denotes the normal fan in $\RR^E$ of the permutohedron, whose quotient fan in $\RR^E/\RR\mathbf1$ is the fan $\Sigma_E$ of the permutohedral variety $X_E$.  To state our GHRR-type formula for the permutohedral variety $X_E$, we recall from \Cref{thm:localization} that we had the identifications $K_0^T(X_E) \simeq PLaur(\widetilde\Sigma_E)$ and $A^\bullet_T(X_E) \simeq PPoly(\widetilde\Sigma_E)$ where
\[
\begin{split}
PLaur(\widetilde\Sigma_E) &= \text{the ring of piecewise Laurent polynomials in variables $T_0, \ldots, T_n$ on the fan $\widetilde\Sigma_E$,}\\
PPoly(\widetilde\Sigma_E) &= \text{the ring of piecewise polynomials in variables $t_0, \ldots, t_n$ on the fan $\widetilde\Sigma_E$.}
\end{split}
\]
Let $\chi^T\colon  K_T^0(X_E) \to K_T^0(\pt)$ be the $T$-equivariant pushforward map of $K$-rings (i.e.\ the $T$-equivariant sheaf Euler characteristic), and let $\int^T\colon  A^\bullet_T(X_E) \to A^{\bullet-n}_T(\pt)$ be the $T$-equivariant pushfoward map of Chow rings (i.e.\ the $T$-equivariant degree map).  We relate these two pushforward maps as follows.

\begin{thm}
\label{thm:fakeHRR}
Denote by $A^\bullet(X_E)[\prod(1+t_i)^{-1}]$ the ring obtained from $A_T^\bullet(X_E)$ by adjoining the inverse of the polynomial $(1+t_0)(1+t_1)\cdots(1+t_n)$.  Then,
the map $\zeta_{X_E}^T\colon K_0^T(X_{E})\to  {A_T^\bullet}(X_{E})[\prod(1+t_i)^{-1}]$ defined by sending
\[
f(T_0,\ldots,T_n)\mapsto f(t_0+1,\ldots,t_n+1) \quad\text{for $f$ a piecewise Laurent polynomial on $\widetilde\Sigma_E$}
\]
is a ring isomorphism, which descends to a ring isomorphism $\zeta_{X_E}\colon K^0(X_{E})\to A^\bullet(X_{E})$.  Moreover, the following diagrams commute
\begin{center}
\begin{tikzcd}
K_0^T(X_{E})\ar[r,"\zeta_{X_{E}}^T"]\ar[d,"\chi^T"'] & A_T^\bullet(X_{E})[\prod (1+t_i)^{-1}]\ar[d,"\int^T( c^T(S^{\vee}_{U_{n,E}}) \cdot - )"]
&&
K^0(X_{E})\ar[r,"\zeta_{X_{E}}"]\ar[d,"\chi"'] &A^\bullet(X_{E})\ar[d,"\int ((1+\alpha + \cdots + \alpha^n)\cdot -)"]\\
K_0^T(\pt)\ar[r,"\zeta_{\pt}^T"] & A_T^\bullet(\pt)[\prod (1+t_i)^{-1}]&&\mathbb{Z}\ar[r,equal]&\mathbb{Z}.
\end{tikzcd}
\end{center}
In particular, denoting by $\deg_\alpha\colon  A^\bullet(X_E)\to \ZZ$ the map $\xi \mapsto \int_{X_E} (1+\alpha+ \cdots + \alpha^n) \cdot \xi$, we have for a $K$-class $[\mathcal E]\in K^0(X_E)$ that
\[
\chi_{X_E}([\mathcal E]) = \deg_\alpha (\zeta_{X_{E}}[\mathcal E]).
\]
\end{thm}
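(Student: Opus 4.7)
The plan is to read both sides of the claimed isomorphism through the piecewise-polynomial/Laurent descriptions of Theorem~\ref{thm:localization}, and then to prove commutativity of the diagrams via the Atiyah--Bott--Berline--Vergne localization formulas. The substitution $T_i\mapsto 1+t_i$ is manifestly a ring map on each component of $\prod_{\sigma\in\mathfrak S_E}\ZZ[T_0^{\pm},\ldots,T_n^{\pm}]$, so the first task is to verify it respects the gluing ideals. Under $T_i\mapsto 1+t_i$, the $K$-theoretic gluing relation $1 - T_{\sigma(i+1)}/T_{\sigma(i)}$ becomes $(t_{\sigma(i)}-t_{\sigma(i+1)})/(1+t_{\sigma(i)})$, and after inverting $\prod(1+t_i)$ this is a unit multiple of the Chow gluing relation $t_{\sigma(i)}-t_{\sigma(i+1)}$. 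The inverse substitution $t_i\mapsto T_i-1$ is visibly a two-sided inverse on the localized rings, giving the equivariant ring isomorphism $\zeta^T_{X_E}$.

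The key computation is commutativity of the equivariant diagram via localization. Section~\ref{subsec:permutohedralcone} gives tangent characters $T_{\sigma(i-1)}T_{\sigma(i)}^{-1}$ for $i=1,\ldots,n$ at $p_\sigma$, yielding $K$-theoretic Euler class $\prod_{i=1}^n(1-T_{\sigma(i)}/T_{\sigma(i-1)})$ and Chow Euler class $\prod_{i=1}^n(t_{\sigma(i-1)}-t_{\sigma(i)})$. Applying $K$-theoretic localization to $\chi^T$ and then $\zeta^T_{\pt}$ gives
\[
\zeta^T_{\pt}\chi^T[\mathcal E] \;=\; \sum_\sigma \frac{[\mathcal E]_\sigma(1+t_0,\ldots,1+t_n)}{\prod_{i=1}^n\bigl(1-(1+t_{\sigma(i)})/(1+t_{\sigma(i-1)})\bigr)} \;=\; \sum_\sigma \frac{\prod_{i=0}^{n-1}(1+t_{\sigma(i)})\cdot[\mathcal E]_\sigma(1+t_0,\ldots,1+t_n)}{\prod_{i=1}^n(t_{\sigma(i-1)}-t_{\sigma(i)})}.
\]
On the other hand, $B_\sigma(U_{n,E})=\{\sigma(0),\ldots,\sigma(n-1)\}$ combined with Definition~\ref{defn:tautclasses} gives $c^T(\cS^\vee_{U_{n,E}})_\sigma = \prod_{i=0}^{n-1}(1+t_{\sigma(i)})$, so Chow-theoretic localization of $\int^T\bigl(c^T(\cS^\vee_{U_{n,E}})\cdot\zeta^T_{X_E}[\mathcal E]\bigr)$ yields the identical sum. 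The entire role of $c^T(\cS^\vee_{U_{n,E}})$ is thus to absorb at each fixed point the unit discrepancy $\prod(1+t_{\sigma(i-1)})$ between the $K$-theoretic and Chow-theoretic Euler classes introduced by $\zeta^T_{X_E}$.

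For the non-equivariant statement, I would verify descent by observing that under $T_i\mapsto 1+t_i$ a $K$-theoretic relation $f(T)-f(1)$ maps to $f(1+t)-f(1)$, which after clearing $\prod(1+t_i)^N$ is a global polynomial with zero constant term, hence lies in the Chow relation ideal; the reverse containment follows from applying $t_i\mapsto T_i-1$ to $\varphi(t)-\varphi(0)$. Since $t_i=0$ in $A^\bullet(X_E)$, every $1+t_i$ is already a unit there and the adjoined inverses are automatic, so $\zeta^T_{X_E}$ descends to $\zeta_{X_E}\colon K^0(X_E)\to A^\bullet(X_E)$; the non-equivariant diagram then follows by specializing $t_i\mapsto 0$ together with the identity $c(\cS^\vee_{U_{n,E}})=1+\alpha+\cdots+\alpha^n$ from Example~\ref{eg:alphabeta}. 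The main obstacle I expect is bookkeeping around the localization: one must justify that the $K$-theoretic Atiyah--Bott formula, which a priori requires inverting $K$-theoretic tangent Euler classes, lands precisely in $A^\bullet_T(X_E)[\prod(1+t_i)^{-1}]$ after substitution --- the fact that this single localization suffices (rather than needing to further invert the Chow-theoretic Euler classes) is the numerical content behind why $\zeta_{X_E}$ exists as a ring map at all, and it forces the specific choice of localization in the statement.
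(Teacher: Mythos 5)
Your proposal is correct and follows essentially the same route as the paper: well-definedness via the gluing relations under $T_i\mapsto 1+t_i$, descent by matching the two relation ideals under the substitution and its inverse, and commutativity of the equivariant diagram by comparing the two localization formulas, with the key identity being that the ratio of the Chow and $K$-theoretic Euler classes at $p_\sigma$ equals $\prod_{i=0}^{n-1}(1+t_{\sigma(i)})=c^T(\cS^\vee_{U_{n,E}})_\sigma$. The paper's argument is the same in all essentials.
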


For the proof of the commutativity of the diagrams, we will need the  Atiyah-Bott localization formulas for $K$-theory and Chow, which we specialize to permutohedral varieties using the identification of the torus action on the tangent spaces to the torus-fixed points of $X_E$  at the end of  \S\ref{subsec:permutohedralcone}.

\begin{thm}\label{thm:pushforward}
Let $X_E$ be the permutohedral variety.
\begin{enumerate}[label = (\alph*)]
\item \label{pushforward:K} \cite[4.7]{Nie74} The $T$-equivariant Euler characteristic $\chi^T$ satisfies
\[
\chi^T([\mathcal E]) = \sum_{\sigma\in \mathfrak S_{E}}\frac{[\mathcal E]_{\sigma}}{\left(1 - \frac{T_{\sigma(1)}}{T_{\sigma(0)}}\right) \cdots \left(1- \frac{T_{\sigma(n)}}{T_{\sigma(n-1)}}\right)}\in K_T^0(\pt)=\mathbb{Z}[T_0^{\pm},\ldots,T_n^{\pm}].
\]

\item \label{pushforward:Chow} \cite[Corollary~1]{edidingraham} The $T$-equivariant degree map $\int^T$ satisfies
\[
\int^T(\xi) =\sum_{\sigma\in \mathfrak S_{E}}\frac{\xi_{\sigma}}{(t_{\sigma(0)}-t_{\sigma(1)})\dots (t_{\sigma(n-1)}-t_{\sigma(n)})}\in A^\bullet_T(\pt)=\mathbb{Z}[t_0,\ldots,t_n].
\]
\end{enumerate}
\end{thm}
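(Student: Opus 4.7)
Both formulas are specializations of classical Atiyah--Bott-style localization, so the plan is to invoke the general equivariant localization formula on smooth complete toric varieties and then simply plug in the characters that were already identified at the end of \S\ref{subsec:permutohedralcone}. I will treat \ref{pushforward:K} and \ref{pushforward:Chow} in parallel, since the only differences are multiplicative versus additive combinatorics on characters.

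For part \ref{pushforward:K}, I invoke the $T$-equivariant $K$-theoretic localization formula of \cite{Nie74} (or equivalently of Thomason): for a smooth projective $T$-variety $X$ with finite fixed-point set $X^T$, one has
\[
\chi^T([\mathcal E]) \;=\; \sum_{p\in X^T} \frac{[\mathcal E]_p}{\sum_{j=0}^{\dim X}(-1)^j[\bigwedge^j\Omega_{X,p}^\vee]} \;=\; \sum_{p\in X^T} \frac{[\mathcal E]_p}{\prod_{i}(1-T^{-\chi_i(p)})},
\]
where the $\chi_i(p)$ run over the characters of $T$ acting on the tangent space $\mathcal T_{X,p}$. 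For $X = X_E$, \S\ref{subsec:permutohedralcone} identifies $X_E^T$ with $\mathfrak S_E$ via $\sigma\mapsto p_\sigma$, and shows that $T$ acts on $\mathcal T_{X_E,p_\sigma}$ with characters $T_{\sigma(i)}T_{\sigma(i+1)}^{-1}$ for $i=0,\dots,n-1$. Substituting these characters immediately turns the abstract sum into the product $\prod_{i=0}^{n-1}(1-T_{\sigma(i+1)}/T_{\sigma(i)})$ appearing in the denominator, which is exactly the stated formula.

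For part \ref{pushforward:Chow}, I invoke the analogous $T$-equivariant Chow-theoretic localization theorem of Edidin--Graham \cite{edidingraham}: for $X$ as above,
\[
\int^T(\xi) \;=\; \sum_{p\in X^T} \frac{\xi_p}{c^T_{\operatorname{top}}(\mathcal T_{X,p})} \;=\; \sum_{p\in X^T} \frac{\xi_p}{\prod_i \chi_i(p)\cdot \mathbf t},
\]
where this time the characters are viewed as linear forms in $A^1_T(\pt)=\ZZ[t_0,\dots,t_n]$. Under the identifications of \S\ref{subsec:permutohedralcone}, the same tangent-character list becomes $t_{\sigma(i)}-t_{\sigma(i+1)}$ for $i=0,\dots,n-1$, and substituting produces the denominator $\prod_{i=0}^{n-1}(t_{\sigma(i)}-t_{\sigma(i+1)})$ stated in \ref{pushforward:Chow}.

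There is no genuine obstacle: the two localization theorems themselves do all the work, and the only content specific to permutohedral varieties is the identification of the tangent characters at each $T$-fixed point, which is already in hand from the description of the dual cone $\operatorname{Cone}(\be_{\sigma(0)}-\be_{\sigma(1)},\dots,\be_{\sigma(n-1)}-\be_{\sigma(n)})$ to the maximal cone indexed by $\sigma$. If anything merits care, it is tracking signs and the direction of the pairing between characters and cone rays (primitive generators of the dual cone give the characters acting on the tangent space, not on the cotangent space), but this was already fixed in \S\ref{subsec:permutohedralcone} and matches the formulas as written.
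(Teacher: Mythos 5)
Your proposal is correct and takes essentially the same route as the paper: the theorem is simply the general equivariant localization formulas of Nielsen and Edidin--Graham specialized to $X_E$ by substituting the tangent characters $T^{\ }_{\sigma(i)}T_{\sigma(i+1)}^{-1}$ (resp.\ the linear forms $t_{\sigma(i)}-t_{\sigma(i+1)}$) at the fixed points $p_\sigma$, exactly as identified at the end of \S\ref{subsec:permutohedralcone}. The only nitpick is your intermediate $K$-theoretic denominator, which should be $\lambda_{-1}$ of the cotangent space $\Omega_{X,p}$ rather than of $\Omega_{X,p}^\vee$; the expression $\prod_i\bigl(1-T^{-\chi_i(p)}\bigr)$ that you actually substitute is the correct one and yields the stated formula.
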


To obtain the non-equivariant Euler characteristic $\chi([\mathcal E])$ (resp.\ the non-equivariant degree $\int\xi$), one evaluates the respective sum in \Cref{thm:pushforward} at $T_0 = \cdots T_n = 1$ (resp.\ $t_0 = \cdots = t_n = 0$) after simplifying the expression to a Laurent polynomial (resp.\ a polynomial).  Implicit in the theorem is that such simplification always occurs, and that in the case of $\int^T$, if the sum is a rational function of degree less than 0, then the sum simplifies to zero.

\begin{proof}[Proof of \Cref{thm:fakeHRR}]
That $\zeta_{X_E}^T$ is a ring isomorphism is clear once we show that the map is well-defined.
Let $\sigma$ and $\sigma'$ be any maximal cones in $\widetilde \Sigma_E$ sharing a codimension 1 face.  The linear span of the face is $\{x\in \RR^E \mid x_i= x_j\}$ for some $i\neq j \in E$.
By definition, an element $f\in PLaur(\widetilde{\Sigma}_E) \simeq K_0^T(X_{E})$ satisfies $f_\sigma \equiv f_{\sigma'} \mod (T_i - T_j)$.  Since $(t_i +1) - (t_j + 1) = t_i - t_j$, its image $\zeta_{X_E}^T(f)$ also satisfies $\zeta_{X_E}^T(f)_\sigma \equiv \zeta_{X_E}^T(f)_{\sigma'} \mod (t_i - t_j)$, and hence is a well-defined element in $PPoly(X_E)[\prod(1+t_i)^{-1}] \simeq A^\bullet_T(X_E)[\prod(1+t_i)^{-1}]$.

We now show that the isomorphism $\zeta_{X_E}^T$ descends to an isomorphism of the non-equivariant rings.
We first recall from \Cref{thm:localization} that the kernels of the surjections to the non-equivariant rings are
\begin{equation}\label{eq:ideals}
\begin{split}
I_K = \ker(K^0_T(X_E)\twoheadrightarrow K^0(X_E)) &= \begin{matrix} \text{the ideal generated by $f(T_0, \ldots, T_n) - f(1,\ldots, 1)$}\\ \text{for $f$ a global Laurent polynomial,}\end{matrix}\\
I_A = \ker(A^\bullet_T(X_E)\twoheadrightarrow A^\bullet(X_E)) &= \begin{matrix} \text{the ideal generated by $f(t_0, \ldots, t_n) - f(0,\ldots, 0)$}\\ \text{for $f$ a global polynomial.}\end{matrix}
\end{split}
\end{equation}
Since the image of the global polynomial $\prod_{i\in E}(1+t_i) \in PPoly(\Sigma_E)$ under $A^\bullet_T(X_E) \twoheadrightarrow A^\bullet_T(X_E)/I_A \simeq A^\bullet(X_E)$ is 1, and in particular invertible,
the universal property of localization naturally induces a map $A_T^\bullet(X_{E})[\prod (1+t_i)^{-1}]\to A^\bullet(X_{E})$. As localization commutes with quotient, we have $A^\bullet_T(X_{E})[\prod(1+t_i)^{-1}]/I_A'=A^\bullet(X_{E})$, where $I_A'$ is the ideal $I_A[\prod (1+t_i)^{-1}]$.
We need to show that $\zeta_{X_{E}}^T(I_K)=I_A'$. Given a generator of $I_A'$, say $f(t_0,\ldots,t_n)-f(0,\ldots0)$ where $f \in \ZZ[t_0, \ldots, t_n]$ is a polynomial, we have $$(\zeta^T_{X_{E}})^{-1}(f(t_0,\ldots,t_n)-f(0,\ldots0))=f(T_0-1,\ldots,T_n-1)-f(0,\ldots, 0)\in I_K$$
since $f(T_0-1, \ldots, T_n-1)$ is a Laurent polynomial evaluating to $f(0, \ldots, 0)$ when $T_0 = \cdots T_n = 1$.
Thus, we have $(\zeta^T_{X_{E}})^{-1}(I_A')\subset I_K.$ Conversely, given a generator of $I_K$, say $f(T_0,\ldots,T_n)-f(1,\ldots,1)$ where $f\in \ZZ[T_0^\pm, \ldots, T_n^\pm]$ is a Laurent polynomial, we can write $f(T_0,\ldots,T_n)=(\prod_{i\in E} T_i^{-1})^m g(T_0,\ldots,T_n)$ for a polynomial $g(T_0,\ldots,T_n)$, so that
$$\zeta^T_{X_{E}}(f(t_0,\ldots,t_n)-f(1,\ldots,1))=( \textstyle\prod_{i\in E}(1+t_i)^{-1})^m(g(t_0+1,\ldots,t_n+1)-g(1,\ldots,1))\in I_A'.$$
Since $\prod_{i\in E} (1+t_i)^{-1}$ is a unit and $g(t_0+1,\ldots,t_n+1)-g(1,\ldots,1)$ is a generator of $I_A'$, we have $\zeta^T_{X_{E}}(I_K)=I_A'$.  We conclude that $\zeta_{X_{E}}^T(I_K)=I_A'$, and hence that the isomorphism $\zeta_{X_E}^T$ descends to an isomorphism of the respective quotients by the ideals, yielding $\zeta_{X_E}\colon  K^0(X_E) \to A^\bullet(X_E)$.

Finally, we now show that the two diagrams commute.  Let $f\in PLaur(\widetilde \Sigma_E) \simeq K_0^T(X_E)$.
Using \Cref{thm:pushforward}.\ref{pushforward:K}, we compute that
\[
(\zeta_{\pt}^T\circ \chi^T)(f) = \zeta_{\pt}^T \left( \sum_{\sigma\in \mathfrak S_E} \frac{f_\sigma(T_0, \ldots, T_n)}{(1-\frac{T_{\sigma(1)}}{T_{\sigma(0)}})\cdots (1-\frac{T_{\sigma(n)}}{T_{\sigma(n-1)}})} \right) = \sum_{\sigma\in \mathfrak S_E} \frac{f_\sigma(t_0+1,\ldots,t_n+1)}{\prod_{i=0}^{n-1} (1-\frac{1+t_{\sigma(i+1)}}{1+t_{\sigma(i)}})}.
\]
Note that for any permutation $\sigma\in \mathfrak S_E$, we have
\[
\frac{\prod_{i=0}^{n-1} (t_{\sigma(i)}-t_{\sigma(i+1)})}{\prod_{i=0}^{n-1} (1-\frac{1+t_{\sigma(i+1)}}{1+t_{\sigma(i)}})}=\prod_{i=0}^{n-1}(1+t_{\sigma(i)}) = c^T({\mathcal{S}^{\vee}_{U_{n,E}}})_\sigma, 
\]
where the last equality follows from $[\cS_{U_{n,E}}^\vee]_\sigma = \sum_{i=0}^{n-1} T_{\sigma(i)}$.
We thus compute further that
\[
\sum_{\sigma\in \mathfrak S_E} \frac{f_\sigma(t_0+1,\ldots,t_n+1)}{\prod_{i=0}^{n-1} (1-\frac{1+t_{\sigma(i+1)}}{1+t_{\sigma(i)}})} = \sum_{\sigma\in \mathfrak S_E} \frac{f_\sigma(t_0+1, \ldots, t_n+1)c^T(\cS_{U_{n,E}}^\vee)_\sigma}{\prod_{i=0}^{n-1} (t_{\sigma(i)}-t_{\sigma(i+1)})} = \sum_{\sigma\in \mathfrak S_E} \frac{(\zeta_{X_E}^T f)_\sigma \cdot c^T(\cS_{U_{n,E}}^\vee)_\sigma}{\prod_{i=0}^{n-1} (t_{\sigma(i)}-t_{\sigma(i+1)})},
\]
and by \Cref{thm:pushforward}.\ref{pushforward:Chow} we have
\[
\sum_{\sigma\in \mathfrak S_E} \frac{(\zeta_{X_E}^T f)_\sigma \cdot c^T(\cS_{U_{n,E}}^\vee)_\sigma}{\prod_{i=0}^{n-1} (t_{\sigma(i)}-t_{\sigma(i+1)})} = \int^T c^T(\cS_{U_{n,E}}^\vee) \cdot \zeta_{X_E}^T(f),
\]
showing that the left diagram of the theorem commutes.  The commutativity of the right diagram follows since non-equivariantly $c(\mathcal{S}^{\vee}_{U_{n,E}})= 1+\alpha+\alpha^2+\cdots + \alpha^n \in A^\bullet(X_E)$, as noted in \Cref{eg:alphabeta}.
\end{proof}

\begin{rem}\label{rem:ToddHard}
For a smooth projective variety $X$, the Hirzebruch-Riemann-Roch (HRR) theorem states that there exists an isomorphism $\ch$ called the Chern character map
\[
\ch\colon K_0(X)\otimes \mathbb{Q}\overset\sim\to A^\bullet(X)\otimes \mathbb{Q},
\]
and that there exists a Chow class $\Td(X) \in A^\bullet(X)_\QQ$ called the Todd class of $X$ such that the diagram
\begin{center}
\begin{tikzcd}
K_0(X)\otimes \mathbb{Q}\ar[r, "\ch", "\sim"']\ar[d, "\chi"']&A^\bullet(X)\otimes \mathbb{Q}\ar[d,"\int_X( \Td(X) \cdot -)"]\\
\mathbb{Z} \ar[r,equal]&\mathbb{Z}
\end{tikzcd}
\end{center}
commutes, where $\chi$ denotes the Euler characteristic.  See \cite[Chapter 14]{EH16} for an account of this.  We note that for the permutohedral variety $X_E$, the map $\zeta_{X_E}$ in \Cref{thm:fakeHRR} is different from $\ch$, and the class $1+\alpha+\cdots+\alpha^n \in A^\bullet(X_E)$ is not equal to the Todd class of $X_E$.
While the HRR theorem is a standard tool for translating between $K$-theory computations and Chow ring computations, sufficiently explicit descriptions of Todd classes, even in the case of permutohedral varieties, are often not available.  See \cite{CL20} for a study of Todd classes of permutohedral varieties in low dimensions.
\end{rem}

The map $\zeta_{X_E}$ of \Cref{thm:fakeHRR} behaves particularly well for a family of $K$-classes defined as follows.

\begin{defn}
\label{defn:simpleChernroots}
We say that a $T$-equivariant $K$-class $[\mathcal E]\in K_T^0(X_{E})$ \textbf{has simple Chern roots} if for each permutation $\sigma$ there is a sequence of non-negative integers $\bm_{\sigma} = (m_{\sigma,0},\ldots,m_{\sigma,n})\in \ZZ^E_{\geq 0}$ such that
\[
[\mathcal E]_\sigma= \bm_\sigma \cdot \bT = \sum_{i=1}^n m_{\sigma,i}T_i \in \ZZ[T_0^\pm, \ldots, T_n^\pm].
\]
\end{defn}

For instance, for any matroid $M$ the $K$-classes $[\mathcal S_M^\vee]$ and $[\mathcal Q_M^\vee]$ have simple Chern roots.
One can verify that the property of having simple Chern roots is closed under virtual extensions, subbundles, and quotients of $T$-equivariant $K$-classes, but we will not need this fact.  We now show how $\zeta_{X_E}$ behaves for $K$-classes with simple Chern roots.

\begin{prop}\label{prop:simpleChern}
Let $[\mathcal E] \in K_T^0(X_E)$ have simple Chern roots. Then, with $u$ a formal variable, we have
\begin{align*}
&\sum_{i\geq 0} \zeta_{X_{E}}([ \textstyle \bigwedge^i\mathcal{E}]) u^i=(u+1)^{\rk(\mathcal{E})}c(\mathcal{E},\frac{u}{u+1}), \quad\text{and}
\\
&\displaystyle \sum_{i\geq 0}\zeta_{X_{E}}([ \textstyle \bigwedge^i \mathcal{E}^{\vee}])u^i=(u+1)^{\rk(\mathcal{E})}c(\mathcal{E})^{-1}c(\mathcal{E},\frac{1}{u+1}).
\end{align*}
Equivalently, we have
\begin{align*}
&\sum_{i\geq 0} \zeta_{X_{E}}([ \textstyle \bigwedge^{\rk(\mathcal{E})-i}\mathcal{E}]) u^i=(u+1)^{\rk(\mathcal{E})}c(\mathcal{E},\frac{1}{u+1}), \quad\text{and}\\
&\displaystyle \sum_{i\geq 0}\zeta_{X_{E}}([ \textstyle \bigwedge^{\rk(\mathcal{E})-i}\mathcal{E}^{\vee}])u^i=(u+1)^{\rk(\mathcal{E})}c(\mathcal{E})^{-1}c(\mathcal{E},\frac{u}{u+1}).\end{align*}
\end{prop}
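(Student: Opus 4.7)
The plan is to work localization-by-localization at each torus-fixed point $p_\sigma$ of $X_E$, exploiting the explicit description of $\zeta_{X_E}^T$ from \Cref{thm:fakeHRR} as the substitution $T_i \mapsto 1+t_i$ on piecewise Laurent polynomials. Since $[\mathcal{E}]$ has simple Chern roots, the formula for exterior powers recalled in \S\ref{subsec:prelimChernRoots} simplifies drastically: for each $\sigma \in \mathfrak S_E$ we may write $[\mathcal{E}]_\sigma = \sum_{i\in E} m_{\sigma,i} T_i$ with all signs equal to $+1$, whence $\sum_{j\geq 0} [\bigwedge^j \mathcal{E}]_\sigma u^j = \prod_{i\in E} (1+T_i u)^{m_{\sigma,i}}$, and $\rk(\mathcal{E}) = \sum_i m_{\sigma,i}$ is independent of $\sigma$. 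In particular one has the vanishing $[\bigwedge^j \mathcal{E}] = [\bigwedge^j \mathcal{E}^\vee] = 0$ for $j > \rk(\mathcal{E})$, which will be crucial at the end.

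For the first identity, I would apply $\zeta_{X_E}^T$ to the above generating series, obtaining at each $\sigma$ the product $\prod_i (1+(1+t_i)u)^{m_{\sigma,i}}$. The key algebraic trick is to rewrite each factor as $(1+u)\bigl(1 + \tfrac{t_i u}{1+u}\bigr)$; collecting the $\rk(\mathcal{E})$ many copies of $(1+u)$ yields exactly $(1+u)^{\rk(\mathcal{E})}\, c^T(\mathcal{E}, \tfrac{u}{1+u})_\sigma$. Since this holds at every torus-fixed point, the equality holds in $A^\bullet_T(X_E)\bigl[\prod_i(1+t_i)^{-1}\bigr]$ by the localization theorem \Cref{thm:localization}, and descends to the non-equivariant statement in $A^\bullet(X_E)$ by the construction of $\zeta_{X_E}$ in \Cref{thm:fakeHRR}.

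For the second identity, the same strategy applies to $[\mathcal{E}^\vee]_\sigma = \sum_i m_{\sigma,i} T_i^{-1}$: applying $\zeta_{X_E}^T$ (which sends $T_i^{-1}$ to $(1+t_i)^{-1}$) produces $\prod_i \bigl(1 + \tfrac{u}{1+t_i}\bigr)^{m_{\sigma,i}}$, and clearing denominators rewrites this as $\prod_i (1+u+t_i)^{m_{\sigma,i}} / \prod_i (1+t_i)^{m_{\sigma,i}}$. The analogous factorization $(1+u+t_i) = (1+u)\bigl(1 + \tfrac{t_i}{1+u}\bigr)$ turns the numerator into $(1+u)^{\rk(\mathcal{E})}\, c^T(\mathcal{E}, \tfrac{1}{1+u})_\sigma$, while the denominator is $c^T(\mathcal{E},1)_\sigma$, yielding the claimed formula. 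Descent to $A^\bullet(X_E)$ is legitimate because $c(\mathcal{E},1) = 1 + c_{\geq 1}(\mathcal{E})$ is a unit in $A^\bullet(X_E)$, its positive-degree part being nilpotent (as $X_E$ has dimension $n$).

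The two ``equivalent'' reformulations follow purely formally from the vanishing of $[\bigwedge^j \mathcal{E}]$ and $[\bigwedge^j \mathcal{E}^\vee]$ for $j > \rk(\mathcal{E})$ noted above: substituting $u \mapsto 1/u$ into the first two identities and multiplying through by $u^{\rk(\mathcal{E})}$ reverses the order of coefficients (producing $\bigwedge^{\rk(\mathcal{E})-i}$ in place of $\bigwedge^i$) and interchanges the arguments $\tfrac{u}{1+u} \leftrightarrow \tfrac{1}{1+u}$. The main obstacle in this entire argument is nothing more than bookkeeping; the conceptual content lies entirely in the factorization $(1+(1+t_i)u) = (1+u)\bigl(1 + \tfrac{t_i u}{1+u}\bigr)$ and its dual, which effect precisely the change of argument in the Chern polynomial.
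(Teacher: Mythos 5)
Your proof is correct and follows essentially the same route as the paper's: localize at each $T$-fixed point $p_\sigma$, apply $\zeta_{X_E}^T$ as the substitution $T_i \mapsto 1+t_i$, and effect the change of argument in the Chern polynomial via the factorizations $1+(1+t_i)u = (1+u)\bigl(1+\tfrac{t_iu}{1+u}\bigr)$ and $1+\tfrac{u}{1+t_i} = \tfrac{(1+u)}{1+t_i}\bigl(1+\tfrac{t_i}{1+u}\bigr)$. The paper writes $[\mathcal E]_\sigma = \sum_{i\in I_\sigma}T_i$ over a multi-subset $I_\sigma$ where you use exponent vectors $m_{\sigma,i}$, but this is only notational; the paper works equivariantly in $A_T^\bullet(X_E)[\prod(1+t_i)^{-1}]$ just as you do, and invokes \Cref{thm:localization} implicitly. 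Two small additions you make that the paper takes for granted: you verify that $[\bigwedge^j\mathcal E]$ and $[\bigwedge^j\mathcal E^\vee]$ vanish for $j>\rk(\mathcal E)$ (which is needed to justify the $u\mapsto 1/u$ reversal producing the second pair of ``equivalent'' identities), and you flag that $c(\mathcal E,1)$ is a unit in $A^\bullet(X_E)$ because its positive-degree part is nilpotent, so that $c(\mathcal E)^{-1}$ makes sense non-equivariantly after descent. Both observations are correct and worth stating.
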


\begin{proof}
We prove more strongly the $T$-equivariant versions of the statements in terms of the isomorphism $\zeta_{X_E}^T\colon  K_T^0(X_E) \overset\sim\to A^\bullet_T(X_E)[\prod(1+t_i)^{-1}]$.
It follows from the definition that, for each permutation $\sigma\in \mathfrak S_E$, there is a multi-subset $I_{\sigma}$ of $E$ with size $|I_\sigma| = \operatorname{rk}(\mathcal E)$ such that $[\mathcal E]_{\sigma} = \sum_{i\in I_{\sigma}} T_i$ and $[\mathcal E^\vee]_{\sigma} = \sum_{i\in I_{\sigma}} T_i^{-1}$.
We now compute the exterior powers and the Chern classes as in \S\ref{subsec:prelimChernRoots}.  We have $\big(\sum_{i\geq 0} [ \textstyle \bigwedge^i\mathcal{E}] u^i \big)_\sigma = \prod_{i\in I_\sigma} (1+uT_i)$ and likewise $\big(\sum_{i\geq 0} [ \textstyle \bigwedge^i\mathcal{E}^\vee] u^i \big)_\sigma = \prod_{i\in I_\sigma} (1+uT_i^{-1})$ for every permutation $\sigma\in \mathfrak S_E$, and thus
\[
\begin{split}
\Big( \zeta_{X_{E}}^T \sum_{i\geq 0} ([ \textstyle \bigwedge^i\mathcal{E}]) u^i \Big)_\sigma =  \displaystyle \prod_{i\in I_{\sigma}}(1+u(1+t_i)) & =(u+1)^{|I_{\sigma}|} \prod_{i\in I_{\sigma}}(1+t_i \textstyle \frac{u}{u+1})\\
& = (u+1)^{\operatorname{rk}(E)} c^T(\mathcal E,  \textstyle \frac{u}{u+1})_\sigma,
\end{split}
\]
and
\[
\begin{split}
\Big( \zeta_{X_{E}}^T \sum_{i\geq 0} ([ \textstyle \bigwedge^i\mathcal{E}^\vee]) u^i \Big)_\sigma = \displaystyle \prod_{i\in I_{\sigma}}(1+u(1+t_i)^{-1}) & = \frac{(u+1)^{|I_{\sigma}|}}{\prod_{i\in I_{\sigma}} (1+t_i)}\prod_{i \in I_{\sigma}}(1+ t_i\textstyle \frac{1}{u+1})\\
& = (u+1)^{\rk(\mathcal{E})}c^T(\mathcal{E})_\sigma^{-1}c^T(\mathcal{E}, \textstyle\frac{1}{u+1})_\sigma,
\end{split}
\]
as desired.
\end{proof}

We note the following characterizing property of the isomorphism $\zeta_E$ of \Cref{thm:fakeHRR}.

\begin{cor}
The ring isomorphism $\zeta_{X_E}\colon  K(X_E) \to A^\bullet(X_E)$ is the unique ring homomorphism such that for any realization $L\subseteq \CC^E$ of a matroid, the $K$-class $[\mathcal O_{W_L}]$ of the structure sheaf of the wonderful compactification is sent to the Chow class $[W_L]$.
\end{cor}

\begin{proof}
That $W_L$ is the vanishing loci of a section $\mathcal O_{X_E} \to \cQ_L$ (\Cref{thm:BergmanGeom}) implies that we have the Koszul resolution
\[
0 \to \det \cQ_L^\vee \to \cdots \to  \textstyle\bigwedge^2 \cQ_L^\vee \to \cQ_L^\vee \to \mathcal O_{X_E} \to \mathcal O_{W_L}\to 0,
\]
and hence $[\mathcal O_{W_L}] = \sum_{i \geq 0} (-1)^i [\bigwedge^i\cQ_L^\vee]$.
Applying \Cref{prop:simpleChern} then yields $\zeta_{X_E}([\mathcal O_{W_L}]) = [W_L]$.  That this property characterizes $\zeta_{X_E}$ follows from \cite[Propositions 2.32 and 5.13]{Ham17}, which showed that Bergman classes of realizable loopless matroids, in fact those of loopless Schubert matroids, span $A^\bullet(X_E)$.
\end{proof}

\begin{rem}
\label{rem:polymatroid}
Let $P \subset \RR^E_{\geq 0}$ be a generalized permutohedron (defined in \S\ref{subsec:Tdiv}) that is contained in the nonnegative orthant.   Such $P$ defines a $T$-equivariant $K$-class $[\mathcal E_P]$ with simple Chern roots by
\[
[\mathcal E_P]_\sigma = \bm_\sigma \cdot \bT \quad\text{for $\sigma\in \mathfrak S_E$}
\]
where $\bm_\sigma$ is the vertex of $P$ maximizing the pairing $\langle - , v_0\be_{\sigma(0)} + \cdots +v_n\be_{\sigma(n)}\rangle$ for any $v_0 > \cdots > v_n$.
\Cref{thm:localization}.\ref{localization:K} implies that $[\mathcal E_P]$ is a well-defined element of $K_T^0(X_E)$ because each edge of such $P$ is parallel to $\be_i - \be_j$ for some $i\neq j \in E$, since the normal fan of $P$ coarsens $\widetilde\Sigma_{E}$.
For example, if $M$ is a matroid we have $[\cS_M^\vee] = [\mathcal E_{P(M)}]$.
This suggests that many results in this section may generalize from matroids to generalized permutohedra contained in the nonnegative orthant, often called discrete polymatroids.  A particular family of discrete polymatroids known as flag matroids are studied in \S\ref{sec:flagmatroids}.
\end{rem}

\subsection{Fink-Speyer's K-theoretic interpretation of Tutte polynomials}\label{subsec:FSTutte}

The authors of \cite{FS12} expressed the Tutte polynomial of a matroid via the $K$-theory of the Grassmannian.  To state their theorem, we need the following definition.

\begin{defn}\label{defn:yM}
For a matroid $M$ of rank $r$, let $y(M)\in K^0(\Gr(r;E))$ be the $K$-class determined by the following two properties:
\begin{enumerate}[label = (\roman*)]
\item If $L\subseteq \CC^E$ is a realization of $M$, then $y(M) = [\mathcal O_{\overline{T\cdot L}}]$, the $K$-class of the structure sheaf of the torus-orbit-closure in the Grassmannian, and
\item the assignment $M\mapsto y(M)$ is valuative.
\end{enumerate}
\end{defn}

The class $y(M)$ is well-defined because (i) for a realization $L\subseteq \CC^E$ of a matroid $M$ the $K$-class $[\mathcal O_{\overline{T\cdot L}}]$ only depends on the matroid $M$, and (ii) the assignment for realizable matroids $M\mapsto [\mathcal O_{\overline{T\cdot L}}]$ is valuative.  For a proof see \cite[Proposition A.5]{Spe09} and the remark following it.  Our definition of $y(M)$ here agrees with the definition of $y(M)$ given via the $T$-equivariant $K$-theory of the Grassmannian in \cite{FS12} because both satisfy the defining properties (i) and (ii) above.

\medskip
Fink and Speyer established the following $K$-theoretic interpretation of Tutte polynomials in \cite[Theorem 5.2]{FS12}.  Here, we show that applying the HRR-type formula (\Cref{thm:fakeHRR}) to our unifying Tutte formula \Cref{thm:4degintro} recovers this result.
Recall the notation that $\cS$ and $\cQ$ are the tautological subbundle and the quotient bundle, respectively, on the Grassmannian $\Gr(r;E)$.

\begin{thm}\label{thm:FSTutte}
Let $M$ be a matroid with ground set $E$, and $T_M(u,v)$ its Tutte polynomial.  Then,
\[
T_M(u,v) = \sum_{i=0}^r\sum_{j=0}^{|E|-r}\chi_{\Gr(r;E)}\big( y(M)[\det \mathcal{S}^{\vee}][ \textstyle\bigwedge^i\mathcal{S}][\bigwedge^j\mathcal{Q}^{\vee}] \big)(u-1)^i(v-1)^j.
\]
\end{thm}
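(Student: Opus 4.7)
The plan is to convert the $K$-theoretic Euler characteristic on the Grassmannian to a Chow-theoretic integral on the permutohedral variety $X_E$ using the HRR-type isomorphism $\zeta_{X_E}$ of \Cref{thm:fakeHRR}, and then to match the resulting integral with the unifying Tutte formula of \Cref{thm:4degintro}.

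The first step is to establish the identity
\[
\chi_{\Gr(r;E)}\bigl(y(M)\cdot [\mathcal E]\bigr)=\chi_{X_E}\bigl([\mathcal E_M]\bigr)
\]
for every $[\mathcal E]\in K_T^0(\Gr(r;E))$, where $[\mathcal E_M]$ is the class defined in \Cref{prop:welldefinedgeneral}. Both sides are valuative in $M$ (the left-hand side by the valuativity of $y(M)$, and the right-hand side by \Cref{prop:valpolysgeneral}), so by \Cref{lem:looplessval} it suffices to prove the identity when $M$ admits a realization $L\subseteq\CC^E$. In that case $y(M)=[\mathcal O_{\overline{T\cdot L}}]$ and $[\mathcal E_M]=\crem\varphi_L^*[\mathcal E]$, so since $\crem$ is an automorphism of $X_E$, the identity reduces via the projection formula to the vanishing of higher direct images $R^{>0}\varphi_{L,*}\mathcal O_{X_E}$ for the proper toric morphism $\varphi_L:X_E\to X_{P(M)}\cong\overline{T\cdot L}$; this holds because $X_E$ is a smooth complete toric variety, $X_{P(M)}$ is normal, and the fibers of $\varphi_L$ are rational toric varieties.

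Specializing to $[\mathcal E]=[\det \cS^\vee][\textstyle\bigwedge^i\cS][\bigwedge^j\cQ^\vee]$, I would then apply the HRR-type formula $\chi_{X_E}([\mathcal E_M])=\int_{X_E}(1+\alpha+\cdots+\alpha^n)\,\zeta_{X_E}([\mathcal E_M])$ of \Cref{thm:fakeHRR} together with \Cref{prop:simpleChern} applied to $\cS_M^\vee$ and $\cQ_M^\vee$, both of which have simple Chern roots. This computes
\[
\zeta_{X_E}([\det\cS_M^\vee])=c(\cS_M^\vee),\quad \sum_i\zeta_{X_E}([\textstyle\bigwedge^i\cS_M])(u-1)^i=u^r c(\cS_M^\vee)^{-1}c(\cS_M^\vee,u^{-1}),
\]
\[
\sum_j\zeta_{X_E}([\textstyle\bigwedge^j\cQ_M^\vee])(v-1)^j=v^{|E|-r}c(\cQ_M^\vee,\tfrac{v-1}{v}),
\]
where the first identity comes from extracting the $u^r$ coefficient of $(u+1)^r c(\cS_M^\vee,\tfrac{u}{u+1})$, and the other two follow by substituting $u\mapsto u-1$ and $v\mapsto v-1$ in the generating functions of \Cref{prop:simpleChern}. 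Multiplying the three expressions together and using $1+\alpha+\cdots+\alpha^n=c(\cS_{U_{n,E}}^\vee)$ from \Cref{eg:alphabeta} to cancel the factor $c(\cS_M^\vee)\cdot c(\cS_M^\vee)^{-1}$ yields that the generating function in the theorem equals
\[
\int_{X_E} c(\cS_{U_{n,E}}^\vee)\cdot u^r c(\cS_M^\vee,u^{-1})\cdot v^{|E|-r}c(\cQ_M^\vee,\tfrac{v-1}{v}).
\]

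The final step is to match this integral with \Cref{thm:4degintro}. Substituting $c_\ell(\cQ_M^\vee)=(-1)^\ell c_\ell(\cQ_M)$ and factoring out $u^r v^{|E|-r}$ rewrites the integral as $u^r v^{|E|-r}$ times the left-hand side of \Cref{thm:4degintro} evaluated at $x=1$, $y=0$, $z=u^{-1}$, $w=(1-v)/v$, namely $u^r v^{|E|-r}\cdot t_M(1,0,u^{-1},(1-v)/v)$. A direct computation using $1+(1-v)/v=1/v$ in the formula defining $t_M$ gives $t_M(1,0,u^{-1},(1-v)/v)=u^{-r}v^{-(|E|-r)}T_M(u,v)$, so the generating function equals $T_M(u,v)$, as required. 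I expect the main obstacle to be verifying $R\varphi_{L,*}\mathcal O_{X_E}=\mathcal O_{\overline{T\cdot L}}$ when $M$ is disconnected so that $\varphi_L$ has positive-dimensional fibers; the cleanest approach is either a direct analysis of these toric and rational fibers or an additional valuativity argument reducing to the connected realizable case.
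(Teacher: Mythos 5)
Your proposal is correct and follows essentially the same route as the paper: reduce $\chi_{\Gr(r;E)}(y(M)\cdot[\mathcal E])$ to $\chi_{X_E}([\mathcal E_M])$ via valuativity and the projection formula (the paper's Lemma~\ref{lem:yM}, which cites the standard fact that a toric morphism induced by a coarsening of fans satisfies $\psi_*\psi^*[\mathcal O]=[\mathcal O]$, so the disconnected case you flag is not actually an obstacle), then apply Theorem~\ref{thm:fakeHRR} and Proposition~\ref{prop:simpleChern}, and finally specialize Theorem~\ref{thm:4degintro} at $x=1$, $y=0$ with $z=u^{-1}$, $w=(1-v)/v$. The only cosmetic difference is that the paper first rewrites $[\det\cS_M^\vee][\bigwedge^i\cS_M]=[\bigwedge^{r-i}\cS_M^\vee]$ rather than cancelling $c(\cS_M^\vee)c(\cS_M^\vee)^{-1}$ as you do.
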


We prepare the proof with a lemma, which allows one to translate certain Euler characteristic computations on Grassmannians to those on permutohedral varieties.

\begin{lem}\label{lem:yM}
Let $M$ be a matroid of rank $r$ with ground set $E$, and let $[\mathcal E]\in K_T^0(\Gr(r;E))$.  Then, for the class $[\mathcal E_M] \in K_T^0(X_E)$ defined in \Cref{prop:welldefinedgeneral}, we have an equality of non-equivariant Euler characteristics
\[
\chi_{\Gr(r;E)}\big( y(M)[\mathcal E] \big) = \chi_{X_E}\big( [\mathcal E_M] \big).
\]
\end{lem}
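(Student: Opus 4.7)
The plan is to first reduce to the case of realizable matroids via valuativity, and then apply the projection formula together with a standard toric pushforward computation.

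For the valuativity reduction, the assignment $M \mapsto \chi_{\Gr(r;E)}(y(M)[\mathcal E])$ is valuative because $M\mapsto y(M)$ is valuative by \Cref{defn:yM}, while the assignment $M \mapsto \chi_{X_E}([\mathcal E_M])$ is valuative by \Cref{prop:valpolysgeneral}. Since \Cref{lem:looplessval} expresses any matroid as an integer combination of realizable matroids at the level of base polytope indicator functions, it suffices to establish the desired equality when $M$ admits a realization $L\subseteq \CC^E$.

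In the realizable case, \Cref{prop:welldefinedgeneral} gives $[\mathcal E_M] = \crem\, \varphi_L^*[\mathcal E]$ in $K_T^0(X_E)$, where $\varphi_L\colon X_E \to \overline{T\cdot L} \subset \Gr(r;E)$ is the map from \Cref{lem:keymap}. The Cremona involution is an automorphism of the underlying variety $X_E$, so it preserves non-equivariant Euler characteristics, and the projection formula then yields
\[
\chi_{X_E}([\mathcal E_M]) \;=\; \chi_{X_E}(\varphi_L^*[\mathcal E]) \;=\; \chi_{\Gr(r;E)}\big([\mathcal E]\cdot (\varphi_L)_*[\mathcal O_{X_E}]\big).
\]
It remains to identify $(\varphi_L)_*[\mathcal O_{X_E}]$ with $y(M) = [\mathcal O_{\overline{T\cdot L}}]$ in $K_0(\Gr(r;E))$. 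The map $\varphi_L$ factors through the isomorphism $X_{P(M)} \overset\sim\to \overline{T\cdot L}$ of \Cref{prop:basepolytope}.\ref{basepolytope:orbit} via the toric morphism $\pi\colon X_E \to X_{P(M)}$ corresponding to the refinement of the normal fan of $P(M)$ by $\Sigma_E$. Because $\pi$ is a proper toric morphism induced by a fan refinement and $X_E$ is smooth, standard toric techniques give $R\pi_*\mathcal O_{X_E} = \mathcal O_{X_{P(M)}}$, so $(\varphi_L)_*[\mathcal O_{X_E}] = [\mathcal O_{\overline{T\cdot L}}]$, completing the proof.

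The only nontrivial input beyond formal manipulations is the toric pushforward $R\pi_*\mathcal O_{X_E} = \mathcal O_{X_{P(M)}}$, whose higher-degree vanishing reflects the fact that toric varieties have rational singularities; everything else is bookkeeping combining valuativity, the description of $[\mathcal E_M]$ as $\crem\, \varphi_L^*[\mathcal E]$, and the projection formula.
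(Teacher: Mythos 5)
Your proof is correct and follows essentially the same route as the paper's: reduce to the realizable case by valuativity (via \Cref{prop:valpolysgeneral}, the valuativity of $y(M)$, and \Cref{lem:looplessval}), then combine the projection formula with the fact that the fan-refinement morphism $X_E \to X_{P(M)}\simeq\overline{T\cdot L}$ pushes the structure sheaf forward to the structure sheaf (the paper cites \cite[Theorem 9.2.5]{CLS11} for exactly this). The only difference is cosmetic — you run the chain of equalities starting from $\chi_{X_E}([\mathcal E_M])$ rather than from $\chi_{\Gr(r;E)}(y(M)[\mathcal E])$.
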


\begin{proof}
For a fixed $[\mathcal E]\in K_T^0(\Gr(r;E))$, we note that the assignment $[\mathcal E]\mapsto [\mathcal E_M]$ is valuative by \Cref{prop:valpolysgeneral}.
Since $M\mapsto y(M)$ is also valuative, by \Cref{lem:looplessval} it suffices to verify the desired equality for $M$ with a realization $L \subseteq \CC^E$.  The projection formula yields
\[
\chi_{\Gr(r;E)}\big( y(M)[\mathcal E] \big) = \chi_{\Gr(r;E)}\big( \mathcal O_{\overline{T\cdot L}} \cdot [\mathcal E] \big) = \chi_{\overline{T\cdot L}}\big([\mathcal E]|_{\overline{T\cdot L}} \big),
\]
As the normal fan of the base polytope $P(M)$ coarsens the fan $\Sigma_E$, the induced map of toric varieties $\psi\colon  X_E \to X_{P(M)} \simeq\overline{T\cdot L}$ satisfies $\psi_*\psi^* [\mathcal O_{\overline{T\cdot L}}] = [\mathcal O_{\overline{T\cdot L}}]$ by \cite[Theorem 9.2.5]{CLS11}, so using the projection formula and \Cref{prop:welldefinedgeneral} yields
$$\chi_{\overline{T\cdot L}}\big([\mathcal E]|_{\overline{T\cdot L}} \big)=\chi_{X_E} \big(\varphi_L^*[\mathcal E]\big)=\chi_{X_E} \big(\crem \varphi_L^*[\mathcal E]\big)=\chi_{X_E} \big([\mathcal E_M]\big)$$
as desired.
\end{proof}

\begin{proof}[Proof of \Cref{thm:FSTutte}]
By \Cref{lem:yM}, we have
\begin{multline*}
\sum_{i=0}^r\sum_{j=0}^{|E|-r}\chi_{\Gr(r;E)}\Big( y(M)[\det \mathcal{S}^{\vee}][ \textstyle\bigwedge^i\mathcal{S}][\bigwedge^j\mathcal{Q}^{\vee}] \Big)(u-1)^i(v-1)^j = \\
\displaystyle \sum_{i=0}^{r}  \sum_{j = 0}^{|E|-r}  \chi_{X_E} \Big([\det \mathcal S_M^\vee] [\textstyle{\bigwedge^{i}}\mathcal S_M] [\textstyle{\bigwedge^{j}}\mathcal Q_M^\vee]\Big) (u-1)^i(v-1)^j,
\end{multline*}
which by \Cref{thm:fakeHRR} equals
\[
\deg_\alpha \Big(\sum_{i,j}\zeta_{X_{E}} \big( [\det \mathcal S_M^\vee] [\textstyle{\bigwedge^{i}}\mathcal S_M] [\textstyle{\bigwedge^{j}}\mathcal Q_M^\vee] \big) (u-1)^i(v-1)^j \Big),
\]
where we recall the notation that $\deg_\alpha(-) = \int_{X_E} (1+\alpha+ \cdots + \alpha^n) \cdot (-)$.
We now claim that
$$\sum_{i,j}\zeta_{X_{E}}([\det \mathcal S_M^\vee] [\textstyle{\bigwedge^{i}}\mathcal S_M] [\textstyle{\bigwedge^{j}}\mathcal Q_M^\vee]) (u-1)^i(v-1)^j = u^{r}v^{|E|-r} c(\mathcal S^\vee_M, u^{-1}) c(\mathcal Q^\vee_M, 1-v^{-1}).$$
Noting $[\det \mathcal{S}_M^{\vee}][\bigwedge^i\mathcal{S}_M]=[\bigwedge^{r-i}\mathcal{S}_M^{\vee}]$, we write
$$\sum_{i,j} [\det\mathcal{S}_M^{\vee}][ \textstyle\bigwedge^i\mathcal{S}_M][\bigwedge^j\mathcal{Q}_M](u-1)^i(v-1)^j = \Big(\displaystyle\sum_i[ \textstyle\bigwedge^{r-i}\mathcal{S}_M^{\vee}](u-1)^i\Big) \textstyle\Big(\displaystyle\sum_j[ \textstyle \bigwedge^i\mathcal{Q}_M^{\vee}](v-1)^j\Big).$$
Because $\zeta_{X_{E}}$ is a ring homomorphism, and $\mathcal{S}_M^{\vee},\mathcal{Q}_M^{\vee}$ have simple Chern roots, applying \Cref{prop:simpleChern} thus verifies our claim.  Now, specializing the equality for the polynomial $t_M(x,y,z,w)$ in \Cref{thm:4degintro} at $x = 1$ and $y = 0$, we obtain
$$
\deg_{\alpha}(c_k(\mathcal{S}_M^{\vee},z)c_\ell(\mathcal{Q}_M,w))=
\sum_{i+k+\ell=n} \Big( \int_{X_E} \alpha^ic_k(\mathcal{S}_M^{\vee})c_\ell(\mathcal{Q}_M) \Big) z^kw^\ell=z^{r}(1+w)^{|E|-r}T_M( \textstyle \frac{1}{z},\frac{1}{1+w}).$$ Setting $u=z^{-1}$ and $v=(1+w)^{-1}$ in this final formula then yields
$$u^rv^{|E|-r}\deg_{\alpha}(c(\mathcal{S}_M^{\vee},u^{-1})c(\mathcal{Q}_M^{\vee},1-v^{-1}))=T_M(u,v),$$
and we thus conclude the desired formula for $T_M(u,v)$.
\end{proof}

\subsection{Cameron-Fink's lattice-point-counting interpretation of Tutte polynomials}
Let $M$ be a matroid with ground set $E$.
By using the relationship between toric geometry and Ehrhart-style lattice point counting \cite[Ch.~9]{CLS11}, and by recalling the fact that $[\mathcal O(D_{-P(M)})] = [\det \cS_M^\vee]$ from \Cref{eg:minusPM}, we have an equality of polynomials in $\QQ[t,u]$
\begin{align*}
\chi_{X_E}(\mathcal{O}(t\alpha+u\beta)[\det \mathcal{S}_M^{\vee}]) &= \text{the number of lattice points inside $-P(M)+t\Delta+u\nabla$}\\
&= \text{the number of lattice points inside $P(M)+t\nabla+u\Delta$},
\end{align*}
where $\Delta = \operatorname{Conv}(\be_i \mid i\in E)$ and $\nabla = -\Delta$ are the standard simplex and the negative standard simplex in $\RR^E$, respectively.
The authors of \cite{CF22} denoted this polynomial $Q_M(t,u)$.  With $\Psi\colon  \QQ[t,u] \to \QQ[x,y]$ defined as the invertible linear map sending $\binom{t}{i}\binom{u}{j} \mapsto x^iy^j$ for all $i,j\geq 0$, \cite[Theorem 3.2]{CF22} expressed the Tutte polynomial of $M$ in terms of the polynomial $Q'_M(x,y)\in \QQ[x,y]$ defined by
\[
Q'_M(x+1,y+1) = \Psi \big( Q_M(t,u) \big).
\]
We show that applying our HRR-type formula (\Cref{thm:fakeHRR}) to \Cref{thm:4degintro} recovers \cite[Theorem 3.2]{CF22}.  Combined with \Cref{thm:FSTutte}, which was also obtained by applying \Cref{thm:fakeHRR} to \Cref{thm:4degintro}, this answers a conjecture of Cameron and Fink (see the discussion after \cite[Theorem 3.4]{CF22}) on the relationship between their expression for the Tutte polynomial of $M$ and the $K$-theoretic computations in \cite{FS12}.

\begin{thm}\label{thm:CFTutte}
Let $M$ be a matroid of rank $r$ with ground set $E$, and $Q'_M(x,y)\in \QQ[x,y]$ be the polynomial as defined above.  Then we have
\[
Q'_M(x+1, y+1) = (x+y+1)^{-1} (y+1)^r (x+1)^{|E|-r} T_M \big(\frac{x+y+1}{y+1},\frac{x+y+1}{x+1}\big).
\]
Equivalently, letting $t_M$ be the polynomial in \Cref{thm:4degintro}, we have $Q'_M(x+1, y+1) = t_M(x+1, y, 1, 0)$.
\end{thm}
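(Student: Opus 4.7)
The plan is to apply the HRR-type formula of \Cref{thm:fakeHRR} to rewrite the Euler characteristic $Q_M(t,u)=\chi_{X_E}(\mathcal{O}(t\alpha+u\beta)\otimes [\det\mathcal{S}_M^{\vee}])$ as a Chow-theoretic intersection on $X_E$, then to expand binomially, apply $\Psi$, and match the result with a specialization of the polynomial $t_M$ from \Cref{thm:4degintro}. The key tool will be \Cref{prop:simpleChern}, applied to the three $K$-classes appearing in $Q_M(t,u)$, each of which is built from bundles with simple Chern roots.

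First I would compute $\zeta_{X_E}$ on each factor. The rank-one bundle $\cQ_{U_{n,E}}^{\vee}=\mathcal{O}(-\alpha)$ has simple Chern roots, so the first formula in \Cref{prop:simpleChern} gives $\zeta_{X_E}([\mathcal{O}(-\alpha)])=1-\alpha$, and since $\zeta_{X_E}$ is a ring homomorphism, $\zeta_{X_E}([\mathcal{O}(t\alpha)])=(1-\alpha)^{-t}$. Similarly, $\cS_{U_{1,E}}^{\vee}=\mathcal{O}(\beta)$ yields $\zeta_{X_E}([\mathcal{O}(u\beta)])=(1+\beta)^u$. Applying the first formula of \Cref{prop:simpleChern} to $\mathcal{E}=\mathcal{S}_M^{\vee}$ and extracting the coefficient of $v^r$ gives $\zeta_{X_E}([\det\mathcal{S}_M^{\vee}])=c(\mathcal{S}_M^{\vee},1)$. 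Since $\alpha^{n+1}=0$ in $A^{\bullet}(X_E)$, the identity $1+\alpha+\cdots+\alpha^n=(1-\alpha)^{-1}$ holds there, so combining everything produces
\[
Q_M(t,u)=\int_{X_E}(1-\alpha)^{-t-1}(1+\beta)^{u}\,c(\mathcal{S}_M^{\vee},1)=\sum_{i+j+k=n}\binom{t+i}{i}\binom{u}{j}\int_{X_E}\alpha^i\beta^j c_k(\mathcal{S}_M^{\vee}),
\]
after expanding $(1-\alpha)^{-t-1}=\sum_i\binom{t+i}{i}\alpha^i$ and $(1+\beta)^u=\sum_j\binom{u}{j}\beta^j$.

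For the final step, I would use the Chu--Vandermonde identity $\binom{t+i}{i}=\sum_{k}\binom{i}{k}\binom{t}{k}$, which implies $\Psi\bigl(\binom{t+i}{i}\binom{u}{j}\bigr)=(1+x)^i y^j$. Therefore
\[
Q'_M(x+1,y+1)=\sum_{i+j+k=n}(1+x)^i y^j\int_{X_E}\alpha^i\beta^j c_k(\mathcal{S}_M^{\vee}).
\]
Specializing \Cref{thm:4degintro} at $z=1$ and $w=0$ kills all terms with $\ell\geq 1$ (since $c_0(\cQ_M)=1$), and then substituting $x\mapsto x+1$ reproduces exactly the displayed sum, establishing $Q'_M(x+1,y+1)=t_M(x+1,y,1,0)$; the closed-form expression of the theorem is then immediate from the definition of $t_M$. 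The main obstacle is purely bookkeeping: $\mathcal{O}(\alpha)=[\cQ_{U_{n,E}}]$ does not itself have simple Chern roots, so \Cref{prop:simpleChern} must be applied to the dual $\mathcal{O}(-\alpha)$ and then inverted, and one must keep careful track of the powers of $(1-\alpha)$ coming from both $\zeta_{X_E}([\mathcal{O}(t\alpha)])$ and the $\deg_\alpha$ operator.
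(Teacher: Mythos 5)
Your proposal is correct and follows essentially the same route as the paper: apply $\zeta_{X_E}$ via \Cref{prop:simpleChern} to get $Q_M(t,u)=\int_{X_E}(1-\alpha)^{-t-1}(1+\beta)^u c(\mathcal{S}_M^{\vee})$, expand, use Chu--Vandermonde to compute $\Psi$, and compare with \Cref{thm:4degintro} at $z=1$, $w=0$. The only (immaterial) difference is that you extract $\zeta_{X_E}([\det\mathcal{S}_M^{\vee}])=c(\mathcal{S}_M^{\vee},1)$ from the top coefficient of the first formula in \Cref{prop:simpleChern}, whereas the paper sets $u=0$ in the third formula.
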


\begin{proof}
Recall from \Cref{eg:alphabeta} that $[\mathcal{O}(\alpha)]=[\mathcal{Q}_{U_{n,E}}]$ and $[\mathcal O(\beta)] = [\cS_{U_{1,E}}^\vee]$.
Setting $u=0$ in the last two lines of \Cref{prop:simpleChern}, one has that if $[\mathcal E]$ has simple Chern roots, then $\zeta_{X_E}([\det \mathcal E])= c(\mathcal E)$ and $\zeta_{X_E} ([\det \mathcal E^\vee]) = c(\mathcal E)^{-1}$.  In particular, since $\cS_M^\vee$ has simple Chern roots, we have
that $\zeta_{X_{E}}([\det \mathcal{S}_M^\vee])=c(\mathcal{S}_M^{\vee})$, and since $\cQ_{U_{n,E}}^\vee$ and $\cS_{U_{1,E}}^\vee$ have rank 1 (so taking $\det$ makes no change) and have simple Chern roots, we have that
\begin{align*}
&\zeta_{X_E}([\mathcal O(\alpha)]) = c(\cQ_{U_{n,E}}^\vee)^{-1}= (1-\alpha)^{-1}\quad\text{and} \\
&\zeta_{X_E}([\mathcal O(\beta)]) = c(\cS_{U_{1,E}}^\vee) = (1+\beta).
\end{align*}
\Cref{thm:fakeHRR} thus implies that
\begin{multline*}
Q_M(t,u) = \chi(\mathcal{O}(t\alpha+u\beta)[\det \mathcal{S}_M^{\vee}])\\
=\deg_{\alpha}((1-\alpha)^{-t}(\beta+1)^u c(\mathcal{S}_M^{\vee}))=\int_{X_E} (1-\alpha)^{-t-1}(1+\beta)^{u}c(\mathcal{S}_M^{\vee}).
\end{multline*}
Before applying the linear map $\Psi\colon  \QQ[t,u] \to \QQ[x,y]$, we note the following observations.  First, we have $(1-\alpha)^{-t-1} =  (1+\alpha+\alpha^2+\cdots)^{t+1} = \sum_{i\geq 0} \alpha^i \binom{t+i}{i}$.  Moreover, for each $i\geq 0$, the Vandermonde identity $\sum_{k = 0}^i \binom{i}{i-k}\binom{t}{k} = \binom{t+i}{i}$ implies that $\Psi\big(\binom{t+i}{i} \big) = (x+1)^i$.
Lastly, we have $(1+\beta)^u = \sum_{j\geq 0} \beta^j \binom{u}{j}$.  Thus, we conclude that
\begin{align*}
Q'_M(x+1,y+1) = \Psi(Q_M(t,u)) &= \Psi\left(\int_{X_E} (1-\alpha)^{-t-1}(1+\beta)^u c(\cS_M^\vee)\right)\\
&= \int_{X_E} \Big(\sum_{i\geq 0}\alpha^i(x+1)^i \Big) \Big(\sum_{j\geq 0} \beta^j y^j\Big) c(\cS_M^\vee)\\
&= t_M(x+1, y, 1, 0)
\end{align*}
where the last equality follows from \Cref{thm:4degintro}.
\end{proof}

\subsection{Ehrhart and volume polynomials of generalized permutohedra}
\label{subsec:EhrVol}
It was noted in \cite[Theorem 11.3]{P09} that the formula for the number of lattice points of a generalized permutohedra $P$ is obtained from the formula \cite[Theorem 9.3]{P09} for the volume of the Minkowski sum $P + \Delta$  by ``replacing powers with raised powers.''
Let us explain here how this phenomenon is another consequence of our \Cref{thm:fakeHRRintro}.

\medskip
For a nonempty subset $S\subseteq E$, let $\Delta_S = \operatorname{Conv}(\be_i \mid i\in S)\subset \RR^E$ be the $S$-standard simplex, which defines a divisor class $[D_{\Delta_S}] \in A^1(X_E)$ as denoted in \S\ref{subsec:Tdiv}.  It is known that the set of divisor classes $\{[D_{\Delta_S}] \mid \emptyset\subsetneq S\subseteq E\}$ form a basis of $A^1(X_E)$; see for instance \cite[\S3.3]{BES}.  In other words, for any generalized permutohedron $P\subset \RR^E$, there exist integers $y_S$ for $\emptyset\subsetneq S \subseteq E$ such that
\[
[D_P] = (y_E - 1)[D_{\Delta_E}] + \sum_{\emptyset\subsetneq S\subsetneq E} y_S [D_{\Delta_S}].
\]
We will soon see the convenience of using $(y_E-1)$ instead of $y_E$ for the coefficient of $[D_{\Delta_E}]$.
Since $\chi(\mathcal O(D_P)) = $ (the number of lattice points inside $P$), we wish to compute
\[
\chi\Big(\mathcal O(D_{\Delta_E})^{\otimes (y_E-1)} \otimes \bigotimes_{\emptyset\subsetneq S\subsetneq E} \mathcal O(D_{\Delta_S})^{\otimes y_S}\Big).
\]

To do so, let us define a matroid $H_S$ for each nonempty subset $S\subseteq E$ by $H_S = U_{|S|-1, S} \oplus U_{|E\setminus S|, E\setminus S}$.  Since $H_S$ has corank 1, similar computations as in \Cref{eg:alphabeta} shows that $[\cQ_{H_S}] = [\mathcal O(D_{\Delta_S})]$.  Then, by \Cref{prop:simpleChern} we have $\zeta_{X_E}([\mathcal O(D_{\Delta_S})]) = c(\cQ_{H_S}^\vee)^{-1} = (1 - [D_{\Delta_S}])^{-1}$.  Thus, applying \Cref{thm:fakeHRR} yields
\begin{align*}
\chi\big(\mathcal O(D_P) \big) &= \deg_\alpha\Big((1-[D_{\Delta_E}])^{-y_E +1} \prod_{\emptyset\subsetneq S\subsetneq E} (1-[D_{\Delta_S}])^{-y_S}\Big) \\
&= \int_{X_E} (1-[D_{\Delta_E}])^{-y_E} \prod_{\emptyset\subsetneq S\subsetneq E} (1-[D_{\Delta_S}])^{-y_S}\\
&= \int_{X_E}\prod_{\emptyset\subsetneq S\subseteq E} (1-[D_{\Delta_S}])^{-y_S}
\end{align*}
where  for the second to last equality we noted $\alpha = [D_{\Delta_E}]$ so that $(1+\alpha+ \cdots + \alpha^n) = (1-[D_{\Delta_E}])^{-1}$.  Let us define a map $\widehat\Psi$, related to but different from the previous map $\Psi$, by
\[
\widehat\Psi\colon  \QQ[y_S \mid \emptyset\subsetneq S\subseteq E] \to \QQ[\hat y_S \mid  \emptyset\subsetneq S\subseteq E]
\quad\text{where}\quad \prod_S \binom{y_S+i_S-1}{i_S} \mapsto \prod_S \hat y_S^{i_S}.
\]
Since $\binom{-y}{i} = (-1)^i\binom{y+i-1}{i}$, the map $\widehat \Psi$ is related to $\Psi$ by $\widehat\Psi = \Psi \circ \operatorname{neg}$ where $\operatorname{neg}\colon  \QQ[y_S \mid \emptyset\subsetneq S\subseteq E] \to \QQ[y_S \mid \emptyset\subsetneq S\subseteq E]$ is the involution defined by $\prod_S \binom{y_S}{i_S} \mapsto \prod_S (-1)^{i_S}\binom{-y_S}{i_S}$.
 Now, for a nilpotent ring element $D$, one has an identity $(1-D)^{-y} = \sum_{i\geq 0} (-1)^i\binom{-y}{i} D^i$ as polynomials in $y$.  Thus, we have that
\begin{align*}
\widehat\Psi\left( \int_{X_E} \prod_{\emptyset\subsetneq S\subseteq E} (1-[D_{\Delta_S}])^{-y_S} \right) 
= \int_{X_E} \prod_{\emptyset\subsetneq S\subseteq E} \Big(\sum_{i\geq 0}[D_{\Delta_S}]^i \hat y_S^i\Big).
\end{align*}
Let $V = \int_{X_E} \prod_{\emptyset\subsetneq S\subseteq E} \Big(\sum_{i\geq 0}[D_{\Delta_S}]^i \hat y_S^i\Big) \in \QQ[\hat y_S \mid \emptyset\subsetneq S\subseteq E]$ be the polynomial above.  The normalization (as defined in \S\ref{subsec:lorentzian}) of the polynomial $V$ is
\[
N(V) = \int_{X_E} \prod_{\emptyset\subsetneq S\subseteq E} \Big(\sum_{i\geq 0}[D_{\Delta_S}]^i  \frac{\hat y_S^i}{i!}\Big) = \frac{1}{n!}\int_{X_E}\Big( \sum_{\emptyset\subsetneq S\subseteq E} \hat y_S [D_{\Delta_S}]\Big)^n,
\]
which is the Euclidean volume of the polytope $P + \Delta_E$ when one sets $\hat y_S = y_S$ for all $S\subseteq E$.  A formula for $N(V)$ is given in \cite[Theorem 9.3]{P09}.  See \cite[Theorem 5.2.4]{BES} for a generalization and a proof via matroid theory.  The operation of ``replacing powers with raising powers'' defined in \cite{P09} sends $\hat y^a/a!$ to $\binom{y+a-1}{a}$, which is the composition $\widehat\Psi^{-1} \circ N^{-1}$.  Thus,
we have recovered \cite[Theorem 11.3]{P09}, which stated that applying the operation of ``replacing powers with raising powers'' to the Euclidean volume polynomial $N(V)$ gives the polynomial that measures the number of lattice points of $P$.

\subsection{Speyer's g-polynomial of a matroid}
Speyer defined the $g$-polynomial of a (loopless and coloopless) $\CC$-realizable matroid in \cite[\S3]{Spe09} via the $K$-theory of Grassmannians, and used the invariant to give bounds on the complexity of matroid polytope subdivisions.  The $g$-polynomial of an arbitrary (loopless and coloopless) matroid was later defined in \cite[Remark 6.4]{FS12}, and it remains open whether the positivity property of the $g$-polynomial for $\CC$-realizable matroids persists for arbitrary matroids.

\medskip
We give a Chow-theoretic formula for the $g$-polynomial.
Our formula below proves \cite[Conjecture 1]{LdMRS20} (and corrects for the missing global sign depending on the number of connected components).

\begin{thm}\label{thm:gPoly}
Let $M$ be a loopless and coloopless matroid of rank $r$ on ground set $E$.  Let $\operatorname{comp}(M)$ be the number of connected components of $M$. Then we have
\[
g_M(s) = (-1)^{\operatorname{comp}(M)}\sum_{i \geq 0} \deg_\alpha \Big( c(\mathcal Q_M^\vee) c_{r-i}(\mathcal S_M^\vee) c_{|E|-r}(\mathcal Q_M)\Big) (-s)^i.
\]
\end{thm}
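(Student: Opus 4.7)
The approach mirrors the proofs of Theorems~\ref{thm:FSTutte} and~\ref{thm:CFTutte}: start from a $K$-theoretic definition of $g_M(s)$ on the Grassmannian, transport it to the permutohedral variety using Lemma~\ref{lem:yM}, apply the HRR-type isomorphism $\zeta_{X_E}$ of Theorem~\ref{thm:fakeHRR} to pass to Chow theory, and simplify the Chern-class expression using Proposition~\ref{prop:simpleChern}. The resulting proof simultaneously establishes the formula and confirms \cite[Conjecture 1]{LdMRS20}.

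First, I would recall Speyer's defining recipe for $g_M(s)$ from \cite{Spe09} together with the extension to all matroids via valuativity in \cite[Remark 6.4]{FS12}. This writes $g_M(s)$ as a generating series of Euler characteristics on $\Gr(r;E)$,
\[
g_M(s)=\sum_{i\ge 0}\chi_{\Gr(r;E)}\bigl(y(M)\cdot \mathcal G_i\bigr)\,s^i,
\]
for specific $K$-classes $\mathcal G_i\in K^0(\Gr(r;E))$ built from exterior powers of $\mathcal S$ and $\mathcal Q^\vee$ (together with a twist by $\det\mathcal S^\vee$, as in the proof of Theorem~\ref{thm:FSTutte}). Since $M\mapsto y(M)$ and $[\mathcal E]\mapsto[\mathcal E_M]$ are both valuative (see the defining properties of $y(M)$ in Definition~\ref{defn:yM} and Proposition~\ref{prop:valpolysgeneral}), the equality for realizable matroids extends to all matroids once both sides agree there.

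Second, I would apply Lemma~\ref{lem:yM} term-by-term to rewrite the above as
\[
g_M(s)=\sum_{i\ge 0}\chi_{X_E}\bigl([\mathcal G_{i,M}]\bigr)\,s^i,
\]
where $[\mathcal G_{i,M}]\in K^0(X_E)$ is the combinatorial pullback provided by Proposition~\ref{prop:welldefinedgeneral}, and therefore is a polynomial expression in the exterior powers of $[\mathcal S_M^\vee]$, $[\mathcal Q_M^\vee]$, and $[\det\mathcal S_M^\vee]$. Then Theorem~\ref{thm:fakeHRR} yields
\[
g_M(s)=\sum_{i\ge 0}\deg_\alpha\!\Bigl(\zeta_{X_E}[\mathcal G_{i,M}]\Bigr)\,s^i.
\]
Since $[\mathcal S_M^\vee]$ and $[\mathcal Q_M^\vee]$ have simple Chern roots, I would use both lines of Proposition~\ref{prop:simpleChern} to compute the generating series $\sum_i\zeta_{X_E}[\mathcal G_{i,M}]\,s^i$ explicitly as a closed-form expression in $c(\mathcal S_M^\vee,-)$ and $c(\mathcal Q_M^\vee,-)$ evaluated at appropriate arguments. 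The identity $c(\mathcal Q_M^\vee)=c(\mathcal Q_M,-1)$ should allow one to package the outcome as $c(\mathcal Q_M^\vee)\cdot c_{r-i}(\mathcal S_M^\vee)\,c_{|E|-r}(\mathcal Q_M)$, with a $(-s)^i$ weighting absorbing the sign rearrangements, exactly as in the target formula.

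Finally, the sign $(-1)^{\operatorname{comp}(M)}$ would be handled by reduction to connected matroids: $g_M$ is known to be multiplicative under direct sums and to vanish on loops and coloops (so $g_M=0$ whenever $M$ has either), and Proposition~\ref{prop:directsum2} implies that $c_\bullet(\mathcal S_M^\vee)$, $c_\bullet(\mathcal Q_M)$, and $c_\bullet(\mathcal Q_M^\vee)$ distribute across connected components in the expected way, while $\alpha_E$ also restricts correctly via Corollary~\ref{cor:alphabetarest}. Combined with Theorem~\ref{thm:4degintro} (or its $\beta$-invariant specialization Theorem~\ref{thm:betainvar}), this reduces the identity to the connected case, contributing one factor of $-1$ per connected component. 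The main obstacle will be Step~2/3: pinning down Speyer's $\mathcal G_i$ so that the generating-function identity produced by Proposition~\ref{prop:simpleChern} yields the specific combination $c(\mathcal Q_M^\vee)\,c_{r-i}(\mathcal S_M^\vee)\,c_{|E|-r}(\mathcal Q_M)$ without spurious cross terms; the top Chern class $c_{|E|-r}(\mathcal Q_M)$ in particular will need to be isolated, likely by a judicious evaluation (for instance $u=-1$ in the first line of Proposition~\ref{prop:simpleChern}, paralleling how the $y$-variable was eliminated in the proof of Theorem~\ref{thm:FSTutte}).
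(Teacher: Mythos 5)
Your high-level pipeline is exactly the paper's: take a $K$-theoretic characterization of $g_M$ on $\Gr(r;E)$, transport it to $X_E$ via \Cref{lem:yM}, apply $\zeta_{X_E}$ from \Cref{thm:fakeHRR}, and simplify with \Cref{prop:simpleChern} using $c(\mathcal Q_M^\vee)=c(\mathcal S_M^\vee)^{-1}$. But the step you flag as "the main obstacle" is in fact the substance of the proof, and your guess for the input is not correct. The $g$-polynomial is not given by a generating series $\sum_i\chi_{\Gr(r;E)}(y(M)\cdot\mathcal G_i)s^i$ with a $\det\mathcal S^\vee$ twist; that twist belongs to the Tutte formula (\cite[Theorem 5.2]{FS12}, our \Cref{thm:FSTutte}). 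The correct input is \cite[Theorem 6.5]{FS12}: $(-1)^{\operatorname{comp}(M)}g_M(-s)=H_M(s)$, where $H_M$ is characterized only implicitly through the substitution
\[
H_M(x+y-xy)=\sum_{i=0}^{r}\sum_{j=0}^{|E|-r}\chi_{\Gr(r;E)}\Bigl(y(M)\,[\textstyle\bigwedge^i\mathcal S]\,[\textstyle\bigwedge^j\mathcal Q^\vee]\Bigr)(x-1)^i(y-1)^j,
\]
with no determinant twist. After applying \Cref{lem:yM}, \Cref{thm:fakeHRR}, and \Cref{prop:simpleChern}, the right-hand side becomes $\sum_{i,j}\deg_\alpha\bigl(c(\mathcal Q_M^\vee)c_i(\mathcal S_M^\vee)c_j(\mathcal Q_M^\vee)\bigr)x^{r-i}y^{|E|-r-j}(y-1)^j$, and the decisive move is that $x+y-xy$ restricts to $x$ at $y=0$, so the coefficient of $s^i$ in $H_M(s)$ is read off by setting $y=0$; this kills every term except $j=|E|-r$ and is precisely what isolates $c_{|E|-r}(\mathcal Q_M^\vee)=(-1)^{|E|-r}c_{|E|-r}(\mathcal Q_M)$. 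Without identifying this substitution you have no mechanism to produce the specific combination $c(\mathcal Q_M^\vee)c_{r-i}(\mathcal S_M^\vee)c_{|E|-r}(\mathcal Q_M)$; your proposed evaluation at $u=-1$ inside \Cref{prop:simpleChern} does not accomplish it.

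A second, smaller point: the sign $(-1)^{\operatorname{comp}(M)}$ and the $(-s)^i$ both come for free from the relation $(-1)^{\operatorname{comp}(M)}g_M(-s)=H_M(s)$, so your final step reducing to connected matroids via \Cref{prop:directsum2} is an unnecessary detour -- and as you set it up it is circular, since establishing the formula for connected matroids is the whole problem. The valuativity remarks in your first step are likewise not needed once one works with the Fink--Speyer identity, which already holds for arbitrary loopless, coloopless matroids.
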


\begin{proof}
\cite[Theorem 6.5]{FS12} states that $(-1)^{\operatorname{comp}(M)}g_M(-s) = H_M(s)$ where
\[
H_M(x+y-xy) = \sum_{i=0}^{r} \sum_{j=0}^{|E|-r} \chi_{Gr(r;E)} \Big( y(M) [ \textstyle{\bigwedge^i}\mathcal S ] [ \textstyle{\bigwedge^j}\mathcal Q^\vee] \Big) (x-1)^i(y-1)^j.
\]
Combining \Cref{lem:yM}, \Cref{thm:fakeHRR}, and \Cref{prop:simpleChern} yields
\begin{multline}\label{eq:gPoly}
\sum_{i=0}^{r} \sum_{j=0}^{|E|-r} \chi_{Gr(r;E)} \Big( y(M) [ \textstyle{\bigwedge^i}\mathcal S ] [ \textstyle{\bigwedge^j}\mathcal Q^\vee] \Big) (x-1)^i(y-1)^j = \\
\sum_{i=0}^{r} \sum_{j=0}^{|E|-r} \deg_\alpha\Big( c(\mathcal Q_M^\vee) c_i(\mathcal S_M^\vee) c_j(\mathcal Q_M^\vee) \Big) x^{r-i}y^{|E|-r-j}(y-1)^j.
\end{multline}
Here we have used that $c(\mathcal Q_M^\vee) = c(\mathcal S_M^\vee)^{-1}$, which follows from $[\mathcal S_M^\vee]+[ \mathcal Q_M^\vee] = [\underline\CC^E]$.

Now, note that the coefficient of $s^i$ in the polynomial $H_M(s)$ is the same as the coefficient of $x^i$ in $H_M(x+y-xy)$.  To get the the coefficient of $x^i$ in the right-hand-side of \eqref{eq:gPoly}, we set $y=0$ and obtain the coefficient of $x^i$ to be
\[
(-1)^{|E|-r} \deg_\alpha \Big( c(\mathcal Q_M^\vee) c_{r-i}(\mathcal S_M^\vee) c_{|E|-r}(\mathcal Q_M^\vee) \Big) = \deg_\alpha \Big( c(\mathcal Q_M^\vee) c_{r-i}(\mathcal S_M^\vee) c_{|E|-r}(\mathcal Q_M) \Big).
\]
The desired formula for $g_M(s)$ follows.
\end{proof}

A similar formula holds for the generalization of the $g$-polynomial of a matroid to that of a matroid morphism (see \cite[\S7.1]{DES21}), but we don't include the details here.

\section{Flag matroids}
\label{sec:flagmatroids}

Flag matroids generalize matroids, just as partial flag varieties generalize Grassmannians.  Tutte polynomials of matroids were generalized to those of flag matroids via the $K$-theory of partial flag varieties in \cite{CDMS18} and \cite{DES21}.  In this section, use the tools developed from our framework of tautological $K$-classes of matroids to give a Chow-theoretic formula for such generalizations of Tutte polynomials.  As a result, we answer \cite[Conjecture 7.7]{DES21}, and establish a log-concavity property for characteristic polynomials of flag matroids, positively answering \cite[Conjecture 9.4]{CDMS18}.

\subsection{Flag matroids}
We review flag matroids, and extend few results in \Cref{subsec:FSTutte} about the $K$-theory of Grassmannians to that of partial flag varieties.  We omit the proofs as they only require minor changes from proofs of analogous statements for matroids.

\begin{defn}
A \textbf{flag matroid} of rank $\boldsymbol r = (r_1, \ldots, r_k)$ is a sequence $\MM = (M_1, \ldots, M_k)$ of matroids of ranks $\boldsymbol r$ on a common ground set $E$ such that for all $i = 1, \ldots, k-1$, any flat of $M_i$ is a flat of $M_{i+1}$.  A \textbf{realization} of a flag matroid $\MM$ is a flag $\boldsymbol L\colon  L_1\subseteq \cdots \subseteq L_k \subseteq \CC^E$ of linear subspaces such that $L_i$ is a realization of $M_i$ for all $i = 1, \ldots, k$.
\end{defn}

More generally, replacing partial flag varieties by generalized flag varieties of arbitrary finite Coxeter type gives rise to Coxeter matroids, introduced in \cite{GS87a, GS87b}.  See \cite{BGW03} for a treatment.  Flag matroids are the Coxeter matroids of type $A$ in this framework.  When the flag matroid has only two constituents $(M_1, M_2)$, it is often called a \textbf{morphism of matroids}, denoted $M_1 \twoheadleftarrow M_2$.  See \cite{EH20} and references therein for a slightly more general definition of morphism of matroids.

\medskip
The relation between matroids and the $K$-theory of Grassmannians generalize in the following way.  See \cite{CDMS18} for a survey with proofs.  For a sequence of nonnegative integers $\boldsymbol r\colon  0\leq r_1 \leq \cdots \leq r_k \leq |E|$, let $\Fl(\boldsymbol r;E)$ be the partial flag variety consisting of flags of linear subspaces of the respective dimensions in $\CC^E$.  The torus $T$ acts on $\Fl(\boldsymbol r;E)$ via its standard action on $\CC^E$.
For a realization $\boldsymbol L\in \Fl(\boldsymbol r;E)$ of a flag matroid $\MM$, we have that the torus-orbit-closure $\overline{T\cdot \boldsymbol L}$ is isomorphic to $X_{P(\MM)}$, where $P(\MM)$ is the  \textbf{base polytope} of the flag matroid $\MM = (M_1, \ldots, M_k)$ defined as the Minkowski sum $\sum_{i=1}^k P(M_i)$ of the base polytopes of its constituent matroids.  One has a commuting diagram
\[
\begin{tikzcd}
&X_E \ar[r] \ar[rd, "\varphi_{\boldsymbol L}"'] &X_{P(\MM)}\simeq \overline{T\cdot \boldsymbol L} \ar[d, hook] \ar[r] &\prod_{i=1}^k X_{P(M_i)} \simeq \prod_{i=1}^k \overline{T\cdot L_i}\ar[d,hook]\\
& & \Fl(\boldsymbol r;E) \ar[r, "\prod_i \pi_i"] &\prod_{i=1}^k \Gr(r_i;E).
\end{tikzcd}
\]
Thus, for a class $[\mathcal E^{(i)}] \in K_T^0(\Gr(r_i;E))$, the class $[\mathcal E^{(i)}_{M_i}]\in K_T^0(X_E)$ defined via \Cref{prop:welldefinedgeneral} coincides with the class $\crem (\varphi_{\boldsymbol L} \circ \pi_i)^* [\mathcal E^{(i)}]$.
The notion of valuativity generalizes to flag matroids.  See \cite{ESS21} for an in-depth study of valuativity for Coxeter matroids in general, and see \cite{BEZ21} for a study of subdivisions of base polytopes of flag matroids.

\medskip
Similar to the class $y(M)$ of a matroid $M$ in \Cref{defn:yM}, one can also define a $K$-class $y(\MM)$ in $K^0(\Fl(\boldsymbol r;E))$ of a flag matroid $\MM$ by the following two determining properties: (i) If $\boldsymbol L$ is a realization of $\MM$ then $y(\MM) = [\mathcal O_{\overline{T\cdot \boldsymbol L}}]$, and (ii) the assignment $\MM\mapsto y(\MM)$ is valuative.
Its well-definedness follows from \cite[Equation (8.7)]{CDMS18} and \cite[Remark 2.11]{DES21}.  See \cite[Definition 8.19]{CDMS18} for a definition via the $T$-equivariant $K$-theory of $\Fl(\boldsymbol r;E)$.
The class $y(\MM)$ satisfies the following analogue of \Cref{lem:yM}, whose proof is similar\footnote{One subtlety is that \Cref{lem:looplessval} does not generalize easily to flag matroids, but this is remedied by \cite[Corollary 3.16]{ESS21}.  Alternatively, one can prove both this lemma and the original one (\Cref{lem:yM}) by using the Atiyah-Bott localization formula (\Cref{thm:pushforward}.\ref{pushforward:K}) combined with a generalized form \cite[Theorem 2.3]{Ish90} of Brion's formula \cite{Bri88}.}.

\begin{lem}\label{lem:yMM}
Let $[\mathcal E^{(i)}]$ be a class in $K_T^0(\Gr(r_i;E))$ for each $i = 1, \ldots, k$, and let $[\mathcal E^{(i)}_{M_i}] \in K_T^0(X_E)$ be as defined in \Cref{prop:welldefinedgeneral}.  For a flag matroid $\MM = (M_1, \ldots, M_k)$ on $E$ with ranks $\boldsymbol r$, we have
\[
\chi_{\Fl(\boldsymbol r;E)}\Big( y(\MM) \cdot \prod_{i=1}^k \pi_i^*[\mathcal E^{(i)}]\Big) = \chi_{X_E} \Big(\prod_{i=1}^k [\mathcal E^{(i)}_{M_i}]\Big).
\]
\end{lem}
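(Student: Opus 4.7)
My plan is to follow the two-step template used in the proof of \Cref{lem:yM}: first establish that both sides are valuative in $\MM$, thereby reducing to the case when $\MM$ is realizable; then in the realizable case, apply the projection formula twice, once along the orbit-closure embedding in $\Fl(\boldsymbol r;E)$ and once along a toric coarsening map $X_E \to X_{P(\MM)}$, to transport the computation back to $X_E$.

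For the valuativity step, $y(\MM)$ is valuative in $\MM$ by its defining property. The assignment $\MM \mapsto \prod_i [\mathcal{E}^{(i)}_{M_i}]$ factors through the forgetful map $\MM \mapsto (M_1,\ldots,M_k)$; each factor $M_i \mapsto [\mathcal{E}^{(i)}_{M_i}]$ is valuative by \Cref{prop:valpolysgeneral}, and the pointwise product remains valuative. The main obstacle is the replacement for \Cref{lem:looplessval}: as flagged in the footnote, \Cref{lem:looplessval} does not generalize straightforwardly to flag matroids, because the hyperplane-separation argument used to strip away loops and coloops breaks down. To overcome this I will invoke \cite[Corollary~3.16]{ESS21}, which guarantees that a valuative function on flag matroids with given rank sequence is determined by its restriction to the realizable locus.

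In the realizable case, pick $\boldsymbol L: L_1 \subseteq \cdots \subseteq L_k \subseteq \CC^E$ realizing $\MM$, so that $y(\MM) = [\mathcal{O}_{\overline{T\cdot \boldsymbol L}}]$. The projection formula along $\overline{T \cdot \boldsymbol L} \hookrightarrow \Fl(\boldsymbol r;E)$ yields
\[
\chi_{\Fl(\boldsymbol r;E)}\Big(y(\MM) \cdot \textstyle\prod_i \pi_i^*[\mathcal{E}^{(i)}]\Big) = \chi_{\overline{T\cdot \boldsymbol L}}\Big( \textstyle\prod_i \pi_i^*[\mathcal{E}^{(i)}]\big|_{\overline{T\cdot \boldsymbol L}}\Big).
\]
Because $P(\MM) = \sum_i P(M_i)$ is a generalized permutohedron, the diagram preceding the lemma statement supplies a toric morphism $\psi\colon X_E \to X_{P(\MM)} \simeq \overline{T\cdot \boldsymbol L}$ coming from a coarsening of fans, hence $\psi_*\psi^*[\mathcal{O}_{\overline{T\cdot \boldsymbol L}}] = [\mathcal{O}_{\overline{T\cdot \boldsymbol L}}]$ by \cite[Theorem~9.2.5]{CLS11} exactly as in \Cref{lem:yM}. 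Applying the projection formula a second time, and using that the Cremona involution $\crem$ is an automorphism of $X_E$ (so preserves $\chi_{X_E}$) and a ring homomorphism, identifies the right-hand side with
\[
\chi_{X_E}\Big(\crem \varphi_{\boldsymbol L}^* \textstyle\prod_i \pi_i^*[\mathcal{E}^{(i)}]\Big) = \chi_{X_E}\Big(\textstyle\prod_i [\mathcal{E}^{(i)}_{M_i}]\Big),
\]
where the last equality uses $\pi_i \circ \varphi_{\boldsymbol L} = \varphi_{L_i}$ combined with the defining formula $[\mathcal{E}^{(i)}_{M_i}] = \crem\,\varphi_{L_i}^*[\mathcal{E}^{(i)}]$ from \Cref{prop:welldefinedgeneral}.

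If invoking \cite[Corollary~3.16]{ESS21} turns out to be delicate, the alternative sketched in the footnote bypasses valuativity entirely: expand both Euler characteristics via Atiyah-Bott localization (\Cref{thm:pushforward}.\ref{pushforward:K}) together with the Brion-type formula \cite[Theorem~2.3]{Ish90}, and match the two resulting sums over $T$-fixed points term-by-term, grouping permutations $\sigma \in \mathfrak{S}_E$ according to the lex-first flag $(B_\sigma(M_1),\ldots,B_\sigma(M_k))$ they determine.
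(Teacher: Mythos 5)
Your proposal matches the paper's intended proof, which the paper declines to spell out but describes as ``similar'' to that of \Cref{lem:yM} with exactly the two adjustments you flag (replacing \Cref{lem:looplessval} by \cite[Corollary~3.16]{ESS21}, or else going through Atiyah--Bott together with \cite[Theorem~2.3]{Ish90}). The realizable-case computation is correct, including the identity $\pi_i \circ \varphi_{\boldsymbol L} = \varphi_{L_i}$ read off from the commuting diagram, the application of \cite[Theorem~9.2.5]{CLS11} to the toric coarsening $X_E \to X_{P(\MM)}$, and the use that $\crem$ is a $\chi$-preserving ring automorphism.

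One step in the valuativity reduction is imprecise as you have written it. The assertion that ``each factor $M_i \mapsto [\mathcal E^{(i)}_{M_i}]$ is valuative by \Cref{prop:valpolysgeneral}, and the pointwise product remains valuative'' does not hold up: the pointwise product of valuative functions is not valuative in general, and, more to the point, valuativity of each factor with respect to subdivisions of $P(M_i)$ is a different notion from valuativity of the tuple with respect to subdivisions of the flag-matroid polytope $P(\MM)=\sum_i P(M_i)$; a subdivision of the Minkowski sum need not arise from subdividing each summand. The correct justification is the flag-matroid analogue of \Cref{lem:Bsigmaval}, supplied by \cite[Theorem~A]{ESS21}: for each permutation $\sigma \in \mathfrak S_E$, the localization $\big(\prod_i [\mathcal E^{(i)}_{M_i}]\big)_{\sigma}$ is determined by the lex-first flag $\big(B_\sigma(M_1),\ldots,B_\sigma(M_k)\big)$, and the assignment sending $\MM$ to this lex-first flag is valuative in $\MM$. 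With that substitution your argument is complete and agrees with the paper's.
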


\subsection{Flag-geometric Tutte polynomials of flag matroids}
Generalizing the $K$-theoretic interpretation of Tutte polynomials of matroids in \cite{FS12}, 
the authors of \cite{CDMS18} and \cite{DES21} defined the \textbf{flag-geometric Tutte polynomials} of flag matroids.
We give a Chow-theoretic formula for the flag-geometric Tutte polynomial of a matroid.
Recall the shorthand that $\deg_\alpha(\xi) = \int_{X_E} (1 + \alpha + \cdots +\alpha^n) \cdot \xi$.

\begin{thm}\label{thm:FGT}
Let $\MM = (M_1, \ldots, M_k)$ be a flag matroid on ground set $E$ with rank sequence $(r_1, \ldots, r_k)$.  Then, the flag-geometric Tutte polynomial $KT_{\MM}(x,y)$ of $ \MM$, as defined in \cite[Definition 8.23]{CDMS18}, satisfies
\[
KT_{\MM}(x,y) = \sum_{i=0}^{r_k}\sum_{j=0}^{|E|-r_1} \deg_\alpha\Big( c(\cS_{M_1}^\vee) \cdots c(\cS_{M_{k-1}}^\vee) c_i(\cS_{M_k}^\vee)c_j(\cQ_{M_1})\Big) x^{r_k-i}y^{|E|-r_1-j}(1-y)^j
\]
In particular, the \textbf{flag-geometric characteristic polynomial} of $\MM$, defined in \cite[\S9]{CDMS18} as
\[
K\chi_\MM(q) \coloneqq (-1)^{r_1+\cdots + r_k}KT_\MM(1-q,0),
\]
satisfies
\[
(-1)^{r_1+\cdots+r_k}K\chi_\MM(q) = \sum_{i=0}^{r_k} \deg_\alpha\Big( c(\cS_{M_1}^\vee) \cdots c(\cS_{M_{k-1}}^\vee) c_i(\cS_{M_k}^\vee)c_{|E|-r_1}(\cQ_{M_1})\Big) (1-q)^{r_k-i},
\]
and its coefficients have alternating signs.
\end{thm}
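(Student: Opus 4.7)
The plan is to follow the template of the proof of \Cref{thm:FSTutte}, adapted to the flag matroid setting using the flag-matroid tools developed earlier in this section. The starting point is the $K$-theoretic definition of $KT_\MM(x,y)$ from \cite[Definition 8.23]{CDMS18}, which expresses it as a linear combination of Euler characteristics of the form $\chi_{\Fl(\boldsymbol r;E)}\bigl(y(\MM)\cdot \prod_{i} \pi_i^*[\mathcal{E}^{(i)}]\bigr)$, where the $[\mathcal{E}^{(i)}]$ are products of exterior powers of the tautological bundles on the Grassmannian factors, weighted by powers of $(x-1)$ and $(y-1)$.

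First I would apply \Cref{lem:yMM} to each term to rewrite it as $\chi_{X_E}$ of the corresponding product of flag-matroid tautological $K$-classes $[\bigwedge^{\bullet} \cS_{M_j}]$ and $[\bigwedge^{\bullet} \cQ_{M_j}^\vee]$ on the permutohedral variety. Next, I would apply the Hirzebruch--Riemann--Roch-type isomorphism of \Cref{thm:fakeHRR} to convert each $\chi_{X_E}$ to $\deg_\alpha \circ \zeta_{X_E}$. Since every $K$-class $\cS_{M_j}^\vee$ and $\cQ_{M_j}^\vee$ has simple Chern roots, \Cref{prop:simpleChern} computes the image of each exterior power under $\zeta_{X_E}$ as a specialization of a Chern polynomial. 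Summing with the $(x-1)^{\bullet}(y-1)^{\bullet}$ weights and applying the same combinatorial manipulations used in the proofs of \Cref{thm:FSTutte} and \Cref{thm:CFTutte} should collapse the expression into the claimed formula. The most delicate step will be the bookkeeping: the constituents $M_1, \ldots, M_{k-1}$ enter the final answer through their \emph{full} Chern classes $c(\cS_{M_j}^\vee)$, while $M_k$ enters only through a single graded piece $c_i(\cS_{M_k}^\vee)$, and $\cQ_{M_1}$ enters only through $c_j(\cQ_{M_1})$; tracking this asymmetry through the substitutions prescribed by \Cref{prop:simpleChern} will require care.

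For the characteristic polynomial statement, setting $y=0$ in the formula for $KT_\MM(x,y)$ kills all terms except those with $j = |E|-r_1$, and substituting $x = 1-q$ yields the displayed identity $(-1)^{r_1+\cdots+r_k}K\chi_\MM(q) = \sum_i a_i(1-q)^{r_k-i}$, where
\[
a_i := \deg_\alpha\bigl(c(\cS_{M_1}^\vee) \cdots c(\cS_{M_{k-1}}^\vee) \cdot c_i(\cS_{M_k}^\vee) \cdot c_{|E|-r_1}(\cQ_{M_1})\bigr).
\]
The alternating-sign claim then reduces to verifying $a_i \geq 0$ for all $i$, since the coefficient of $q^m$ in $\sum_i a_i(1-q)^{r_k-i}$ is $(-1)^m \sum_i a_i \binom{r_k-i}{m}$, which has sign $(-1)^m$. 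Expanding each full Chern class as $c(\cS_{M_j}^\vee) = \sum_d c_d(\cS_{M_j}^\vee)$ and $\deg_\alpha = \int_{X_E}(1+\alpha+\cdots+\alpha^n)\cdot(-)$ decomposes $a_i$ into a sum of individual intersection numbers of the form $\int_{X_E} \alpha^p c_{d_1}(\cS_{M_1}^\vee) \cdots c_{d_k}(\cS_{M_k}^\vee) c_{|E|-r_1}(\cQ_{M_1})$; each such number is a coefficient of the denormalized Lorentzian polynomial produced by \Cref{thm:logconcMmany} (applied with $M_1, \ldots, M_k$ as the chain of $\cS^\vee$-matroids and $M_1$ as the single $\cQ$-matroid, with the $\beta$ variable unused), and denormalized Lorentzian polynomials have nonnegative coefficients.
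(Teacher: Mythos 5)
Your proposal is correct and follows essentially the same route as the paper's proof: start from the $K$-theoretic expression for $KT_\MM$ as Euler characteristics of $y(\MM)$ against products of exterior powers, convert to $X_E$ via \Cref{lem:yMM}, translate to Chow theory via \Cref{thm:fakeHRR} and \Cref{prop:simpleChern}, and deduce the alternating signs from nonnegativity of the coefficients $a_i$, which the paper also obtains from \Cref{thm:logconcMmany}. The "bookkeeping" you flag amounts to the identity $[\det\cS_{M_k}^\vee][\bigwedge^i\cS_{M_k}]=[\bigwedge^{r_k-i}\cS_{M_k}^\vee]$ together with $\zeta_{X_E}([\det\mathcal E])=c(\mathcal E)$ for the remaining determinant factors, exactly as in the paper.
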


\begin{proof}
By \cite[\S6.1]{DES21}, the flag-geometric Tutte polynomial $KT_{\MM}(x,y)$ is given by
\[
KT_{\MM}(u+1,v+1) = \sum_{i,j} \chi_{Fl(\boldsymbol r;E)} \Big(y(\MM) [\det \mathcal S_1^\vee]\cdots [\det \mathcal S_k^\vee][ \textstyle{\bigwedge^i} \mathcal S_k][ \textstyle{\bigwedge^j}\mathcal Q_1^\vee]\Big) u^iv^j,
\]
where $\cS_\ell$ and $\cQ_\ell$ denotes the tautological bundles on $\Gr(r_\ell; E)$ pulled back to $Fl(\boldsymbol r;E)$ for $\ell = 1, \ldots, k$.  By \Cref{lem:yMM}, this equals
\[
\sum_{i,j} \chi_{X_E}\Big( [\det \cS_{M_1}^\vee] \cdots [\det\cS_{M_k}^\vee][\textstyle \bigwedge^i\cS_{M_k}][\bigwedge^j \cQ_{M_1}^\vee] \Big) u^iv^j.
\]
Noting that $[\det \cS_{M_k}^\vee][\bigwedge^i \cS_{M_k}] = [\bigwedge^{\rk(\cS_{M_k}^\vee) - i} \cS_{M_k}^\vee]$, combining \Cref{thm:fakeHRR} and \Cref{prop:simpleChern} yields the desired equalities for $KT_{\MM}$ and $K\chi_{\MM}$.  Because $KT_{\MM}(1+q,0)$ has all nonnegative coefficients by \Cref{thm:logconcMmany}, that the coefficients of $K\chi_{\MM}(q)$ have alternating signs follows.
\end{proof}

We now resolve \cite[Conjecture 9.4]{CDMS18}, which stated that the flag-geometric characteristic polynomial of $\MM$ form a log-concave sequence.

\begin{cor}
For a flag matroid $\MM = (M_1, \ldots, M_k)$, the (unsigned) coefficients of $K\chi_\MM(q)$ form a log-concave sequence with no internal zeros.
\end{cor}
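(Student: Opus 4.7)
The plan is to combine \Cref{thm:FGT} and \Cref{thm:logconcMmany} with the classical fact that the convolution of PF$_2$ sequences (nonnegative, log-concave, no internal zeros) is again PF$_2$.

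I would first unpack \Cref{thm:FGT}. Writing
\[
a_i=\deg_\alpha\!\big(c(\cS_{M_1}^\vee)\cdots c(\cS_{M_{k-1}}^\vee)\,c_i(\cS_{M_k}^\vee)\,c_{|E|-r_1}(\cQ_{M_1})\big)
\]
and expanding $(1-q)^{r_k-i}$ by the binomial theorem, the unsigned coefficient of $q^j$ in $K\chi_\MM(q)$ is $b_j=\sum_i a_i\binom{r_k-i}{j}$. A degree count on $X_E$ (of dimension $n=|E|-1$) forces $i'+k_1+\cdots+k_{k-1}+i+(|E|-r_1)=n$ in the expansion of $\deg_\alpha$, so $a_i=0$ whenever $i>r_1-1$. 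Consequently
\[
B(q):=\sum_j b_j\,q^j=\sum_{i=0}^{r_1-1}a_i(1+q)^{r_k-i}=(1+q)^{\Delta}\,\tilde B(q),\qquad \Delta:=r_k-r_1+1,
\]
where $\tilde B(q):=\sum_{i=0}^{r_1-1}a_i(1+q)^{r_1-1-i}=\sum_j \tilde b_j\,q^j$ with $\tilde b_j=\sum_i a_i\binom{r_1-1-i}{j}$. It therefore suffices to prove that $(\tilde b_j)_{j=0}^{r_1-1}$ is log-concave with no internal zeros.

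For this core step I would apply \Cref{thm:logconcMmany} with $m=k$ matroids $M_1,\ldots,M_k$ in the $\cS$-slots and $m'=1$ matroid $M_1$ in the $\cQ$-slot, producing a denormalized Lorentzian polynomial $P(x,y,z_1,\ldots,z_k,w)$. Setting $y=0$ and making the nonnegative linear substitution $x=z_1=\cdots=z_{k-1}=t$, $z_k=s$ yields the denormalized Lorentzian polynomial $\tilde P(t,s,w):=P(t,0,t,\ldots,t,s,w)$, since both operations preserve the class (\cite{BH20}). Extracting the coefficient of $w^{|E|-r_1}$ corresponds on the normalization $N(\tilde P)$ to the operation $\tfrac{1}{(|E|-r_1)!}\,\partial_w^{|E|-r_1}|_{w=0}$; because Lorentzian polynomials are closed under directional derivatives and under setting variables to zero, this extraction preserves denormalized Lorentzianness. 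Matching monomials via the identity $i'+k_1+\cdots+k_{k-1}=r_1-1-i$ identifies the extracted polynomial with $C(t,s):=\sum_{i=0}^{r_1-1}a_i\,t^{r_1-1-i}s^i$. The further nonnegative linear substitution $t\mapsto t+s$ then produces the denormalized Lorentzian bivariate polynomial $C(t+s,s)=\sum_j \tilde b_j\,t^j s^{r_1-1-j}$, and since bivariate denormalized Lorentzian polynomials have log-concave coefficient sequences with no internal zeros, so does $(\tilde b_j)_{j=0}^{r_1-1}$.

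Finally, $\big(\binom{\Delta}{k}\big)_{k=0}^\Delta$ is log-concave with no internal zeros, and the convolution of two nonnegative log-concave sequences with no internal zeros is again such (closure of PF$_2$ sequences under convolution, via the Cauchy--Binet formula applied to Toeplitz matrices). Since $b_j=\sum_k\binom{\Delta}{k}\tilde b_{j-k}$, the sequence $(b_j)_{j=0}^{r_k}$ is log-concave with no internal zeros, completing the proof. The main obstacle is verifying that the coefficient-extraction step ``$[w^{|E|-r_1}]$'' preserves denormalized Lorentzianness, since this is not a nonnegative linear substitution but must be rephrased as differentiation followed by setting a variable to zero on the normalization; once this is set up, the degree-count identification of the extracted polynomial with the explicit $C(t,s)$ and the final convolution step are routine.
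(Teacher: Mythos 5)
Your argument is essentially correct, and its core is the same as the paper's: both proofs apply \Cref{thm:logconcMmany} with $M_1,\ldots,M_k$ in the $\cS$-slots and $M_1$ in the $\cQ$-slot, extract the coefficient of $w^{|E|-r_1}$ by differentiating the normalization and setting $w=0$, and then identify variables to land on a bivariate denormalized Lorentzian polynomial whose coefficients are the $a_i$. (Your worry that the coefficient-extraction step is the "main obstacle" is unfounded; it is exactly the operation $\partial_u^{|E|-r_1}(\cdot)|_{u=0}$ that the paper performs, justified by \cite[Theorems 2.10 and 2.25]{BH20}.) Where you diverge is in handling the substitution $q\mapsto 1-q$: the paper reduces at the outset to log-concavity of the coefficients of $KT_\MM(q,0)$ and cites Dawson's lemma for the binomial transform, whereas you factor $B(q)=(1+q)^{\Delta}\tilde B(q)$ using the (correct and nice) vanishing $a_i=0$ for $i\geq r_1$, treat $\tilde B$ via the substitution $t\mapsto t+s$, and finish by convolving with $\binom{\Delta}{\cdot}$.

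The one step you should justify more carefully is precisely that substitution. Closure of \emph{Lorentzian} polynomials under nonnegative linear changes of variables does not formally transfer to the \emph{denormalized} class, because normalization does not commute with substitution: $N\bigl(C(t+s,s)\bigr)\neq N(C)(t+s,s)$ in general, so knowing that $N(C)(t+s,s)$ is Lorentzian does not tell you that $C(t+s,s)$ is denormalized Lorentzian. In the bivariate case the claim "denormalized Lorentzian is preserved by $t\mapsto t+s$" is \emph{equivalent} to the statement that the transform $a_i\mapsto \sum_i a_i\binom{d-i}{j}$ preserves nonnegative log-concave sequences with no internal zeros --- which is exactly the Brylawski--Dawson lemma the paper invokes. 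So the fact you need is true, but you have not avoided that input; you have only relocated it, and as written the justification is circular unless you cite it (or prove it) explicitly. With that reference supplied, and together with the standard closure of such sequences under convolution for the $(1+q)^{\Delta}$ factor, your proof is complete.
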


\begin{proof}
As coefficients of $KT_\MM(1-q,0)$ have alternating signs, we show the stronger statement that the coefficients of $KT_\MM(q,0)$ are log-concave with no internal zeros (see \cite{Dawson} where this reduction is proved in the context of showing that $h$-vector log concavity implies $f$-vector log concavity).  By \Cref{thm:FGT}, the homogenization of $KT_\MM(q,0)$ by an additional variable $p$ is written as
\begin{equation}\label{eq:flagLorent}
\sum_{i=0}^{r_k} \Big(\int_{X_E} c(\cS_{U_{n,E}}^\vee)c( \cS_{M_1}^\vee) \cdots c(\cS_{M_{k-1}}^\vee) c_i(\cS_{M_k}^\vee)c_{|E|-r_1}(\cQ_{M_1})\Big) p^iq^{r_k-i},
\end{equation}
since $c(\cS_{U_{n,E}}^\vee) = 1 + \alpha + \cdots + \alpha^n$ from \Cref{eg:alphabeta}.
This homogeneous polynomial is obtained by setting $q_0 = \cdots = q_{k-1} = q$ in $(\frac{\partial}{\partial u}^{|E|-r_1}f)|_{u = 0}$ where $f$ is the polynomial
\[
\sum_{i=0}^{r_k} \Big(\int_{X_E} c(\cS_{U_{n,E}}^\vee,q_0)c( \cS_{M_1}^\vee,q_1) \cdots c(\cS_{M_{k-1}}^\vee,q_{k-1}) c(\cS_{M_k}^\vee,p)c(\cQ_{M_1},u)\Big).
\]
\Cref{thm:logconcMmany} implies that $f$ above is a denormalized Lorentzian polynomial. As taking partials and evaluating at zero preserves Lorentzian polynomials (\cite[Theorem 2.25]{BH20} and \cite[Theorem 2.10]{BH20}), and since setting variables equal to each other preserves denormalized Lorentzian polynomials \cite[Lemma 4.8]{BLP20}, we conclude that the polynomial in \eqref{eq:flagLorent} is a denormalized Lorentzian polynomial.  We thereby conclude that its coefficients form a log-concave nonnegative sequence with no internal zeros.
\end{proof}

We also resolve \cite[Conjecture 7.7]{DES21}.

\begin{cor}
Let $M$ be a loopless matroid of rank $r$ on $E = \{0,1,\ldots, n\}$, so that one has $U_{1,E} \twoheadleftarrow M$.  Then we have
\[
KT_{U_{1,E},M}(x,0) = x^r.
\]
\end{cor}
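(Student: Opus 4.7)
The plan is to apply Theorem 10.2 (the Chow-theoretic formula for $KT_\MM(x,y)$) to the flag matroid $\MM = (U_{1,E}, M)$ with rank sequence $(1, r)$, set $y=0$, and then collapse the resulting sum to a single term using dimension considerations.

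First, I would substitute $M_1 = U_{1,E}$ and $M_2 = M$ into Theorem 10.2 to obtain
\[
KT_{\MM}(x,0) = \sum_{i=0}^{r}\sum_{j=0}^{n} \deg_\alpha\!\Big( c(\cS_{U_{1,E}}^\vee)\, c_i(\cS_{M}^\vee)\, c_j(\cQ_{U_{1,E}})\Big)\, x^{r-i}\, 0^{n-j}\,1^{j},
\]
so only $j=n$ survives. From Example 3.6 we have $c(\cS_{U_{1,E}}^\vee)=1+\beta$ (since $\cS_{U_{1,E}}^\vee$ has rank one and first Chern class $\beta$) and $c(\cQ_{U_{1,E}})=1+\beta+\cdots+\beta^n$, so $c_n(\cQ_{U_{1,E}})=\beta^n$.

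Next, the dimension of $X_E$ is $n$, hence $\beta^{n+1}=0$ in $A^\bullet(X_E)$; in particular $(1+\beta)\beta^n=\beta^n$. Thus
\[
KT_\MM(x,0) = \sum_{i=0}^{r}\deg_\alpha\!\big(c_i(\cS_M^\vee)\,\beta^n\big)\,x^{r-i}.
\]
Now $c_i(\cS_M^\vee)\beta^n$ lies in $A^{n+i}(X_E)$, and $\deg_\alpha(\xi)=\int_{X_E}(1+\alpha+\cdots+\alpha^n)\cdot \xi$ extracts the top-degree part. For a nonzero contribution one needs some power $\alpha^a$ with $a+n+i=n$, forcing $a=0$ and $i=0$. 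Therefore every term with $i\ge 1$ vanishes, leaving only the $i=0$ summand.

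Finally, I would verify that $\int_{X_E}\beta^n = 1$: the class $\beta$ is by definition $(\pi_E\circ\crem)^*h$ where $h$ is the hyperplane class on $\PP^n$, and $\pi_E\circ\crem:X_E\to\PP^n$ is a birational morphism, so the projection formula gives $\int_{X_E}\beta^n=\int_{\PP^n}h^n=1$. Combining everything, $KT_\MM(x,0)=x^r$, as required. There is essentially no obstacle here; the only subtlety worth checking carefully is the dimension count that kills all $i\ge 1$ terms, which relies on the fact that $\beta^n$ already exhausts the available cohomological degree on $X_E$.
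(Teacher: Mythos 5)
Your proof is correct and follows essentially the same route as the paper: apply the Chow-theoretic formula for $KT_{\MM}$, observe that setting $y=0$ isolates $j=n$, and then use $\dim X_E = n$ to kill every term with $i\ge 1$, leaving $\deg_\alpha(\beta^n)x^r = x^r$. Your version is slightly more careful (explicitly checking $(1+\beta)\beta^n=\beta^n$ and $\int_{X_E}\beta^n=1$, and working with $c_n(\cQ_{U_{1,E}})=\beta^n$ directly), but there is no substantive difference in approach.
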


\begin{proof}
By \Cref{thm:FGT}, we have
\[
KT_{U_{1,E}, M}(x,0)  =
\sum_{i=0}^{r} \deg_\alpha \Big( c(\mathcal S_{U_{1,E}}^\vee) c_i(\mathcal S_{M}^\vee) c_{n} (\mathcal Q_{U_{1,E}}^\vee) \Big) x^{r - i}(-1)^n
\]
since all other terms from $j \neq |E| - 1 = n$ vanish as we set $y = 0$ in $KT_{U_{1,E},M}(x,y)$.  Moreover, since the dimension of $X_{E}$ is $n$, the only term that survives is from $i = 0$.  Noting that $c_n(\mathcal Q_{U_{1,E}}^\vee) = (-\alpha)^n$ from \Cref{eg:alphabeta}, we have $KT_{U_{1,E},M}(x,0) = x^r$ as desired.
\end{proof}

\subsection{Las Vergnas Tutte polynomials of morphisms of matroids}
We now turn to Las Vergnas' Tutte polynomials of morphisms of matroids, which is different from the flag-geometric Tutte polynomials.  See \cite{DES21} for a geometric origin of the difference between the two generalizations.  Las Vergnas introduced the following generalization of the Tutte polynomial to morphisms of matroids in \cite{LV75}.  See \cite{LV80} for a survey of its properties.

\begin{defn}
Let $M_1$ and $M_2$ be matroids of rank $r_1$ and $r_2$ (respectively) on ground set $E$ such that $M_1 \twoheadleftarrow M_2$.  The \textbf{Las Vergnas Tutte polynomial} of $(M_1, M_2)$ is a polynomial in three variables $x,y,z$ defined by
\[
LVT_{M_1, M_2}(x,y,z) \coloneqq \sum_{A \subseteq E} (x-1)^{r_1 - \operatorname{rk}_{M_1}(A)}(y-1)^{|A| - \operatorname{rk}_{M_2}(A)} z^{r_2 - r_1 - \operatorname{rk}_{M_2}(A) + \operatorname{rk}_{M_1}(A))}.
\]
\end{defn}

To express $LVT_{M_1,M_2}$ Chow-theoretically, it is convenient to have the following notation.
Let $\mathcal S_{M_1,M_2}^\vee$ be a $K$-class on $X_{E}$ whose equivariant $K$-class $[\mathcal S_{M_1,M_2}^\vee]^T$ is defined by
\[
[\mathcal S_{M_1,M_2}^\vee]^T_\sigma = \sum_{i \in B_\sigma(M_2) \setminus B_\sigma(M_1)} T_i = [\cS_{M_2}^\vee]_\sigma - [\cS_{M_1}^\vee]_\sigma.
\]
When $(M_1, M_2)$ has a realization $(L_1, L_2)$, it is equal to the pullback $\varphi_{(L_1, L_2)}^* (\mathcal S_2/\mathcal S_1)^\vee$, where $\mathcal S_2/\mathcal S_1$ is the quotient of the two tautological subbundles on $Fl(r_1,r_2;E)$.

\begin{thm}\label{thm:LVT}
Let $M_1$ and $M_2$ be matroids of rank $r_1$ and $r_2$ (respectively) on ground set $E$ such that $M_1 \twoheadleftarrow M_2$.
The Las Vergnas Tutte polynomial of the matroid morphism $M_1 \twoheadleftarrow M_2$ satisfies
\begin{multline*}
LVT_{M_1, M_2}(x,y,z)  =\\
\sum_{i=0}^{r_1}\sum_{j=0}^{|E|-r_2}\sum_{k=0}^{r_2 - r_1} \deg_\alpha \Big( c_i(\mathcal S_{M_1}^\vee) c_j(\mathcal Q_{M_2}^\vee) c_k(\mathcal S_{M_1,M_2}^\vee)\Big) x^{r_1 - i}y^{|E|-r_2-j}(y-1)^j (z+1)^{r_2 - r_1 - k}.
\end{multline*}
\end{thm}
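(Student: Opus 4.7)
The plan is to mirror the proof of \Cref{thm:FGT}: convert the Chow-theoretic RHS into a $K$-theoretic Euler characteristic on the partial flag variety $\Fl(r_1,r_2;E)$, and then identify it with Las Vergnas' defining subset sum. First, observe that the three classes $\mathcal{S}_{M_1}^\vee$, $\mathcal{Q}_{M_2}^\vee$, and $\mathcal{S}_{M_1,M_2}^\vee$ all have simple Chern roots (the first two by construction, and the third because $B_\sigma(M_1)\subseteq B_\sigma(M_2)$ for flag matroids, so that $[\mathcal{S}_{M_1,M_2}^\vee]^T_\sigma$ is a sum of distinct $T_i$'s). Applying \Cref{prop:simpleChern} and the identity $[\det\mathcal{E}^\vee]\cdot[\bigwedge^i\mathcal{E}] = [\bigwedge^{\rk(\mathcal{E})-i}\mathcal{E}^\vee]$ to each of the three factors converts
\begin{align*}
\textstyle\sum_i c_i(\mathcal{S}_{M_1}^\vee)\,x^{r_1-i} &= \textstyle\sum_i \zeta_{X_E}\bigl([\det\mathcal{S}_{M_1}^\vee][\bigwedge^i\mathcal{S}_{M_1}]\bigr)(x-1)^i,\\
\textstyle\sum_j c_j(\mathcal{Q}_{M_2}^\vee)\,y^{|E|-r_2-j}(y-1)^j &= \textstyle\sum_j \zeta_{X_E}\bigl([\bigwedge^j\mathcal{Q}_{M_2}^\vee]\bigr)(y-1)^j,\\
\textstyle\sum_k c_k(\mathcal{S}_{M_1,M_2}^\vee)\,(z+1)^{r_2-r_1-k} &= \textstyle\sum_k \zeta_{X_E}\bigl([\det\mathcal{S}_{M_1,M_2}^\vee][\bigwedge^k\mathcal{S}_{M_1,M_2}]\bigr)\,z^k.
\end{align*}
Combining the three and invoking $\deg_\alpha\circ\zeta_{X_E}=\chi_{X_E}$ from \Cref{thm:fakeHRR}, the RHS of the theorem equals $\sum_{i,j,k}\chi_{X_E}\bigl([\det\mathcal{S}_{M_1}^\vee][\bigwedge^i\mathcal{S}_{M_1}][\bigwedge^j\mathcal{Q}_{M_2}^\vee][\det\mathcal{S}_{M_1,M_2}^\vee][\bigwedge^k\mathcal{S}_{M_1,M_2}]\bigr)(x-1)^i(y-1)^jz^k$.

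Next, \Cref{lem:yMM} transfers this Euler characteristic from $X_E$ to $\Fl(r_1,r_2;E)$. Writing $\mathcal{S}_1\subset\mathcal{S}_2\subset\underline{\CC^E}$ for the tautological flag on $\Fl(r_1,r_2;E)$ with quotient $\mathcal{Q}_2$, and noting that $\mathcal{S}_{M_1,M_2}$ is the combinatorial class associated to $\mathcal{S}_2/\mathcal{S}_1$ via \Cref{prop:welldefinedgeneral}, the expression becomes
\[
\sum_{i,j,k} \chi_{\Fl}\Bigl(y(\MM)\cdot[\det\mathcal{S}_1^\vee][\textstyle\bigwedge^i\mathcal{S}_1][\bigwedge^j\mathcal{Q}_2^\vee][\det(\mathcal{S}_2/\mathcal{S}_1)^\vee][\bigwedge^k(\mathcal{S}_2/\mathcal{S}_1)]\Bigr)(x-1)^i(y-1)^jz^k.
\]
It then remains to identify this $K$-theoretic expression with $LVT_{M_1,M_2}(x,y,z)$. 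Both sides are valuative assignments of $\MM$: $LVT$ by its subset-sum definition, and the $K$-theoretic side via the valuativity of $y(\MM)$. Hence, using the flag-matroid analogue of \Cref{lem:looplessval} (cf.\ \cite[Corollary 3.16]{ESS21}), it suffices to verify the identity for realizable $\MM$, where $y(\MM)=[\mathcal{O}_{\overline{T\cdot\boldsymbol L}}]$. For such $\MM$, the equality is a two-step generalization of Fink-Speyer's Tutte polynomial formula \cite[Theorem 5.1]{FS12}, provable by Atiyah-Bott localization on $\Fl(r_1,r_2;E)$: summing over $T$-fixed flags $(B_1\subset B_2)$ and regrouping according to the underlying subset $A\subseteq E$ recovers Las Vergnas' sum $\sum_A(x-1)^{r_1-\rk_{M_1}(A)}(y-1)^{|A|-\rk_{M_2}(A)}z^{r_2-r_1-\rk_{M_2}(A)+\rk_{M_1}(A)}$.

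The main obstacle is this last step: establishing the $K$-theoretic interpretation of $LVT$ on $\Fl(r_1,r_2;E)$ and pushing through the localization bookkeeping for the product of five exterior-power classes. An alternative route, which bypasses the explicit localization, is to prove the equality by induction on $|E|$: both sides should satisfy matching deletion-contraction recurrences under the projection $\Fl(r_1,r_2;E)\to\Fl(r_1,r_2;E\setminus e)$, paralleling Las Vergnas' known recurrence for $LVT_{M_1,M_2}$, much as \Cref{thm:delcont} underlies the proof of \Cref{thm:4degintro}.
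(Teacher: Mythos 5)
Your conversion between the Chow-theoretic right-hand side and the $K$-theoretic Euler characteristic is correct and runs in exactly the same circle of ideas as the paper's proof (the paper goes from $K$-theory to Chow, you go from Chow to $K$-theory, but the three applications of \Cref{prop:simpleChern} together with the identity $[\det\mathcal{E}^\vee][\bigwedge^i\mathcal{E}]=[\bigwedge^{\rk\mathcal{E}-i}\mathcal{E}^\vee]$, the observation that $\mathcal{S}_{M_1,M_2}^\vee$ has simple Chern roots, and the use of \Cref{lem:yMM} to pass to $\Fl(r_1,r_2;E)$ all check out). The place where you diverge is the step you flag as ``the main obstacle'': identifying $\sum_{i,j,k}\chi_{\Fl}\bigl(y(\MM)[\det\mathcal{S}_2^\vee][\bigwedge^i\mathcal{S}_1][\bigwedge^j\mathcal{Q}_2^\vee][\bigwedge^k(\mathcal{S}_2/\mathcal{S}_1)]\bigr)(x-1)^i(y-1)^jz^k$ with $LVT_{M_1,M_2}(x,y,z)$. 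You propose (but do not carry out) reproving this from scratch via Atiyah--Bott localization or deletion-contraction. In fact this identity is not something to be proved here: it is precisely \cite[Theorem~5.3]{DES21}, and the paper's proof simply cites it. Without that citation, your argument remains a sketch rather than a proof. The two routes you indicate for filling the gap are plausible in outline---the localization-and-regrouping strategy is essentially how \cite{DES21} proceeds---but the bookkeeping for a product of five exterior-power classes summed over $T$-fixed flags is nontrivial and is not carried out in your proposal, so as written there is a genuine gap that the citation would close.

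One small further remark: you write $[\det\mathcal{S}_1^\vee][\det(\mathcal{S}_2/\mathcal{S}_1)^\vee]$ rather than $[\det\mathcal{S}_2^\vee]$, which is fine (the paper makes the same identification explicitly via the short exact sequence $0\to\mathcal{S}_1\to\mathcal{S}_2\to\mathcal{S}_2/\mathcal{S}_1\to 0$), but you should state that identity if you want the expression to visibly match the cited formula.
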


\begin{proof}
The partial flag variety $Fl(r_1, r_2; E)$ has tautological subbundles $\mathcal S_1$ and $\mathcal S_2$ with ranks $r_1$ and $r_2$ (respectively), and corresponding quotient bundles $\mathcal Q_1$ and $\mathcal Q_2$.  We also have the short exact sequence $0 \to \mathcal S_1 \to \mathcal S_2 \to \mathcal S_2/\mathcal S_1 \to 0$.
It was shown in \cite[Theorem 5.3]{DES21} that
\[
LVT_{M_1,M_2}(u+1, v+1, w) = \sum_{i,j,k} \chi_{Fl(r_1,r_2;E)} \Big( y(\MM)  [\det \mathcal S_2^\vee][\textstyle{\bigwedge^i} \mathcal S_1][\textstyle{\bigwedge^j} \mathcal Q_2^\vee][\textstyle{\bigwedge^k} (\mathcal S_2/\mathcal S_1)]\Big) u^iv^jw^k,
\]
where $\MM = (M_1, M_2)$ denotes the two-step flag matroid.
Now, applying \Cref{lem:yMM} while noting that $\det \mathcal S_2^\vee \simeq \det \mathcal S_1^\vee \otimes \det(\mathcal S_2/\mathcal S_1)^\vee$, gives
\[
LVT_{M_1,M_2}(u+1, v+1, w) = \sum_{i,j,k} \chi_{X_{A_E}}\Big([\textstyle{\bigwedge^{r_1 - i}}\mathcal S_{M_1}^\vee] [\textstyle{\bigwedge^{j}}\mathcal Q_{M_2}^\vee][\textstyle{\bigwedge^{r_2 - r_1 - k}} \mathcal S_{M_1, M_2}^\vee] \Big)u^iv^jw^k.
\]
Applying \Cref{prop:simpleChern} then yields
\begin{multline*}
LVT_{M_1,M_2}(u+1, v+1, w)  = \\
(u+1)^{r_1} (v+1)^{|E|-r_2}(w+1)^{r_2 - r_1} \deg_\alpha\Big( c(\mathcal S_{M_1}^\vee, \textstyle{\frac{1}{u+1}}) c(\mathcal Q_{M_2}^\vee,  \textstyle{\frac{v}{v+1}}) c(\mathcal S_{M_1,M_2}^\vee,  \textstyle{\frac{1}{w+1}}) \Big).
\end{multline*}
Substituting $u = x-1$, $v = y-1$, and $w = z$ then yields the desired equality.
\end{proof}

We remark that, despite this Chow-theoretic formula, \cite[Conjecture 7.10]{DES21} concerning the log-concavity of a specialization of the Las Vergnas Tutte polynomial remains open, since $\cS_{M_1,M_2}^\vee$ is in general not the $K$-class of a nef vector bundle 
even when $(M_1, M_2)$ has a realization.

\appendix
\renewcommand{\thesection}{\Roman{section}}

\section{Alternate proof of \Cref{thm:4degintro} via convolution formulas}
\label{sec:convolution}
We give another proof for \Cref{thm:4degintro}, different from the proof in \S\ref{sec:unifyingTutte}, by using the base polytope properties of the tautological classes established in \S\ref{sec:basepolytopeproperties}.
Instead of establishing a deletion-contraction relation, we establish a recursive convolution formula for $\alpha^i\beta^jc_k(\mathcal{S}_M^{\vee})c_\ell(\mathcal{Q}_M)$, and show that it agrees with a new Tutte polynomial convolution formula whose proof was communicated to us by Alex Fink.
As before, let $E = \{0,1,\ldots, n\}$, and $X_E$ the $n$-dimensional permutohedral variety. Important for us will be the following well-known formula, called the corank-nullity formula, for the Tutte polynomial of a matroid $M$ of rank $r$
\[
T_M(x,y) = \sum_{S \subseteq E} (x-1)^{r-\operatorname{rk}_M(S)}(y-1)^{|S| - \operatorname{rk}_M(S)}.
\]

\begin{thm:4degintro}
For a matroid $M$ of rank $r$ with ground set $E$, denote
\[
t_M(x,y,z,w)=(x+y)^{-1}(y+z)^r(x+w)^{|E|-r}T_M(\frac{x+y}{y+z},\frac{x+y}{x+w}),
\]
where $T_M$ is the Tutte polynomial of $M$.
Then, we have
\[
\sum_{i+j+k+\ell=n} \Big( \int_{X_E} \alpha^i\beta^jc_k(\mathcal{S}_M^{\vee})c_\ell(\mathcal{Q}_M) \Big) x^iy^jz^kw^\ell=t_M(x,y,z,w).
\]
\end{thm:4degintro}

For a matroid $M$, note that $t_M(x,y,z,w)$ is a polynomial since the Tutte polynomial $T_M$ always has no constant term.  
Let us denote 
\[
\widetilde t_M(x,y,z,w) = \sum_{i+j+k+\ell = n}\Big( \int_{X_E} \alpha^i\beta^jc_k(\mathcal{S}_M^{\vee})c_\ell(\mathcal{Q}_M) \Big) x^iy^jz^kw^\ell.
\]
We prove $\widetilde t_M(x,y,z,w) = t_M(x,y,z,w)$ in two steps.  First, by using the matroid minor decomposition properties, we show that $\widetilde t_M(x,y,z,w)$ and $t_M(x,y,z,w)$ satisfy an identical recursive relation, which reduces the proof of \Cref{thm:4degintro} to the case where $x=y=0$. This case is precisely the content of \Cref{thm:betainvar}, which we will give an alternate proof for using a computation in \cite[Theorem 5.1]{Spe09}, together with the valuativity and duality properties of tautological Chern classes of matroids.

\medskip
We start with a recursive relation for $\widetilde t_M(x,y,z,w)$.

\begin{lem}\label{lem:geomcoalgebra}
Let $M$ be a matroid with ground set $E$, and fix any element $e\in E$. Then, one has
\begin{align*}
&\widetilde t_M(x,y,z,w) = \widetilde t_M(0,y,z,w) + x \sum_{e\in S\subsetneq E} \widetilde t_{M|S}(0,y,z,w) \ \widetilde t_{M/S}(x,0,z,w), \quad\text{and}\\
&\widetilde t_M(x,y,z,w) = \widetilde t_M(x,0,z,w) + y \sum_{\substack{S \not\ni e\\ \emptyset \subsetneq S \subsetneq E}} \widetilde t_{M|S}(0,y,z,w) \ \widetilde t_{M/S}(x,0,z,w).
\end{align*}
\end{lem}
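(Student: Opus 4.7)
The plan is to extract the recursion directly from the defining integral expression for $\widetilde t_M$, leveraging the matroid-minor decomposition of tautological Chern classes under torus-orbit restriction. I would first rewrite
\[
\widetilde t_M(x,y,z,w) = \int_{X_E}\Bigl(\sum_{i\geq 0}\alpha^i x^i\Bigr)\Bigl(\sum_{j\geq 0}\beta^j y^j\Bigr) c(\mathcal S_M^\vee,z)\, c(\mathcal Q_M,w),
\]
so that $\widetilde t_M(x,y,z,w) - \widetilde t_M(0,y,z,w)$ collects exactly the terms involving at least one power of $\alpha x$, and is therefore visibly equal to $x$ times an integral on $X_E$ with an extra factor of $\alpha$ inserted.

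The main step is to expand the extracted factor of $\alpha$ as a sum of torus-invariant divisor classes: using the fixed element $e \in E$, \Cref{rem:alphabeta} gives $\alpha = \sum_{e\in S\subsetneq E}[Z_S]$. Applying the projection formula reduces the integral for each $S$ to an integral over the torus-orbit closure $Z_S$. To evaluate this, I would invoke \Cref{prop:tautrest} and \Cref{cor:alphabetarest}: under the isomorphism $Z_S \simeq X_S \times X_{E\setminus S}$, the Chern classes $c(\mathcal S_M^\vee,z)$ and $c(\mathcal Q_M,w)$ split as tensor products over the minors $M|S$ and $M/S$, while $\alpha$ restricts to $1\otimes \alpha_{E\setminus S}$ and $\beta$ restricts to $\beta_S\otimes 1$. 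Consequently the integral over $Z_S$ factors as a product of an integral over $X_S$ involving only $\beta_S$, $\mathcal S_{M|S}^\vee$, $\mathcal Q_{M|S}$ and an integral over $X_{E\setminus S}$ involving only $\alpha_{E\setminus S}$, $\mathcal S_{M/S}^\vee$, $\mathcal Q_{M/S}$; these are precisely $\widetilde t_{M|S}(0,y,z,w)$ and $\widetilde t_{M/S}(x,0,z,w)$. Summing over $S$ yields the first identity.

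The second identity will follow by the same argument with the roles of $(\alpha,x)$ and $(\beta,y)$ interchanged, using instead the expansion $\beta = \sum_{\emptyset\subsetneq S\subseteq E\setminus e}[Z_S]$ from \Cref{rem:alphabeta}. I do not anticipate a serious obstacle: once the tensor decompositions of $\mathcal S_M^\vee$, $\mathcal Q_M$, $\alpha$, and $\beta$ are in place, the entire proof is a bookkeeping exercise in the projection formula. The only mildly delicate point to verify is that the asymmetric summation ranges $e\in S\subsetneq E$ (for $\alpha$) and $\emptyset\subsetneq S\subseteq E\setminus e$ (for $\beta$) match exactly the index sets in the lemma, which is built into the asymmetric expansions of $\alpha$ and $\beta$ around the fixed anchor element $e$.
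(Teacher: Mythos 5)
Your proposal is correct and follows essentially the same route as the paper's proof: expand one factor of $\alpha$ (resp.\ $\beta$) as $\sum_{e\in S\subsetneq E}[Z_S]$ (resp.\ $\sum_{\emptyset\subsetneq S\subseteq E\setminus e}[Z_S]$), restrict to $Z_S\simeq X_S\times X_{E\setminus S}$, and factor the integral using \Cref{prop:tautrest} and \Cref{cor:alphabetarest}. No gaps.
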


\begin{proof}
Let us show the first statement (the second statement is proved similarly).
Recall from \Cref{rem:alphabeta} that $\alpha = \sum_{e\in S\subsetneq E} [Z_S]$, where $Z_S$ is the torus-invariant divisor of $X_E$ corresponding to the ray $\operatorname{Cone}(\overline\be_S)$ of the fan $\Sigma_E$, and recall the notation that $c(\mathcal E,u) = \sum_{i\geq 0} c_i(\mathcal E)u^i$ denotes the Chern polynomial of a $K$-class $[\mathcal E]$ with formal variable $u$.  For any integers $i\geq 1$ and $j\geq 0$, we first compute that
\[
\begin{split}
\int_{X_E} \alpha^i \beta^j c(\cS_M^\vee,z) c(\cQ_M,w) &= \int_{X_E} \sum_{e\in S\subsetneq E} [Z_S] \alpha^{i-1} \beta^j c(\cS_M^\vee,z) c(\cQ_M,w) \\
&= \sum_{e\in S\subsetneq E} \int_{Z_S} \big(\alpha^{i-1} \beta^j c(\cS_M^\vee,z) c(\cQ_M,w) \big) |_{Z_S}.
\end{split}
\]
Moreover, since $Z_S \simeq X_S \times X_{E\setminus S}$ and $A^\bullet(Z_S) \simeq A^\bullet(X_S) \otimes A^\bullet(X_{E\setminus S})$ by \Cref{prop:torusrest}, applying the matroid minors decomposition formula (\Cref{prop:tautrest} and \Cref{cor:alphabetarest}) yields that
\[
\begin{split} 
&\sum_{e\in S\subsetneq E} \int_{Z_S} \big(\alpha^{i-1} \beta^j c(\cS_M^\vee,z) c(\cQ_M,w) \big) |_{Z_S}  \\
&=\sum_{e\in S\subsetneq E} \int_{X_S\times X_{E\setminus S}}\Big( \big(1\otimes \alpha_{E\setminus S}^{i-1}\big) \big(\beta_S^j \otimes 1\big) \big(c(\cS_{M|S}^\vee,z) \otimes c(\cS_{M/S}^\vee,z)\big)\big(c(\cQ_{M|S},w) \otimes c(\cQ_{M/S},w)\big)\Big)\\
&= \sum_{e\in S\subsetneq E} \int_{X_S}\big( \beta_S^j c(\cS_{M|S}^\vee,z) c(\cQ_{M|S},w)\big)\cdot \int_{X_{E\setminus S}}\big( \alpha_{E\setminus S}^{i-1} c(\cS_{M/S}^\vee,z) c(\cQ_{M/S},w)\big).
\end{split}
\]
Thus, by rewriting $\widetilde t_M(x,y,z,w)$ as
\[
\widetilde t_M(x,y,z,w) = \int_{X_E} \Big( (1+\alpha x + \cdots + \alpha^nx^n) \cdot (1+\beta y + \cdots + \beta^ny^n) \cdot c(\cS_M^\vee,z) \cdot c(\cQ_M,w) \Big),
\]
we conclude that
\[
\widetilde t_M(x,y,z,w) = \widetilde t_M(0,y,z,w) + x \sum_{e\in S\subsetneq E} \widetilde t_{M|S}(0,y,z,w) \ \widetilde t_{M/S}(x,0,z,w),
\]
as desired.
\end{proof}

We now show that the polynomial $t_M(x,y,z,w)$ obeys the same recursive relation.

\begin{lem}
\label{lem:tuttecoalgebra}
Let $M$ be a matroid with ground set $E$, and fix an element $e\in E$. Then one has
\begin{align*}t_M(x,y,z,w)&= t_M(0,y,z,w)+x\sum_{e \in S\subsetneq E} t_{M|S}(0,y,z,w)t_{M/S}(x,0,z,w),\quad\text{and}\\
t_M(x,y,z,w)&=t_M(x,0,z,w)+y\sum_{\substack{e\not \in S\\\emptyset \subsetneq S \subsetneq E}} t_{M|S}(0,y,z,w)t_{M/S}(x,0,z,w).\end{align*}
\end{lem}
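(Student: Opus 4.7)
The plan is to deduce both identities from a single refined convolution formula for the polynomial
\[
g_M(x,y,z,w) := (x+y)\,t_M(x,y,z,w) = \sum_{S\subseteq E}(y+z)^{\rk_M S}(x+w)^{(|E|-r)-(|S|-\rk_M S)}(x-z)^{r-\rk_M S}(y-w)^{|S|-\rk_M S},
\]
the subset expansion being obtained by substituting the corank-nullity formula for $T_M$ into the definition of $t_M$. Adopting the convention $g_\emptyset = 1$ for the empty matroid, we have $g_{M|\emptyset}(0,y,z,w) = g_{M/E}(x,0,z,w) = 1$ and, for every $\emptyset\subsetneq S\subsetneq E$, the identities $g_{M|S}(0,y,z,w) = y\,t_{M|S}(0,y,z,w)$ and $g_{M/S}(x,0,z,w) = x\,t_{M/S}(x,0,z,w)$.

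The key step, for any fixed $e\in E$, is the refined convolution identity
\[
y\,t_M(x,y,z,w) = \sum_{e\in S\subseteq E} g_{M|S}(0,y,z,w)\cdot g_{M/S}(x,0,z,w).
\]
To establish it, expand each factor via the above subset expansion, producing a sum over triples $(A,B,S)$ with $A\subseteq S$, $B\subseteq E\setminus S$, and $e\in S$. Reindex by $C' := B\cup S$ (so $A\subseteq S\subseteq C'$ and $B = C'\setminus S$); the dependence of the summand on $S$ collapses to the sign $(-1)^{|C'|-|S|}$. An inclusion-exclusion argument on the inner sum, restricted by $e\in S$, shows that contributions vanish unless $e\in C'$ and $A\in\{C',\,C'\setminus e\}$. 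Combining the two surviving values of $A$ and distinguishing the cases $\rho := \rk_M C' - \rk_M(C'\setminus e)\in\{0,1\}$, the sum simplifies to
\[
y\sum_{e\in C\subseteq E}(y+z)^{\rk_M(C\setminus e)}(y-w)^{(|C|-1)-\rk_M(C\setminus e)}(x+w)^{(|E|-r)-(|C|-\rk_M C)}(x-z)^{r-\rk_M C}.
\]
This expression equals $y\,t_M$ via an alternative subset expansion: pairing each $D\subseteq E\setminus e$ with $D\cup\{e\}$ in the corank-nullity expansion of $g_M$, a direct computation, again splitting on $\rho\in\{0,1\}$, shows that the sum of the two $g_M$-summands indexed by $D$ and $D\cup e$ equals $(x+y)$ times the $C=D\cup e$ summand above. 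Summing over $D\subseteq E\setminus e$ then gives $g_M/(x+y) = t_M$.

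With the refined convolution in hand, the first identity of the Lemma follows by isolating the $S=E$ term, which contributes $y\,t_M(0,y,z,w)$, and rewriting the remaining $e\in S\subsetneq E$ summands via the identities for $g_{M|S}(0,y,z,w)$ and $g_{M/S}(x,0,z,w)$ recorded above, then dividing by $y$. The second identity follows by applying the first to $M^\perp$ and invoking the duality symmetry $t_M(x,y,z,w) = t_{M^\perp}(y,x,w,z)$ (which follows from $T_M(x,y) = T_{M^\perp}(y,x)$) together with the standard identifications $(M^\perp)|S = (M/(E\setminus S))^\perp$ and $(M^\perp)/S = (M|(E\setminus S))^\perp$; under the substitution $S\leftrightarrow E\setminus S$, the condition $e\in S$ becomes $e\notin S'$ and $S\subsetneq E$ becomes $\emptyset\subsetneq S'$.

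The main obstacle is the inclusion-exclusion bookkeeping in the key step: one must carefully isolate the two surviving values of $A$ and combine their contributions, tracking whether $e$ lies in the closure of $C'\setminus e$ in $M$ (i.e., the parity of $\rho$). Once that calculation is settled, the pairing argument in the alternative subset expansion is elementary, and the reduction to the stated identities via boundary isolation and matroid duality is routine.
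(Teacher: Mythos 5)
Your proof is correct. It is essentially a direct, coordinate-level reformulation of the paper's argument: both proofs prove the same key ``refined convolution'' identity, which is the paper's equation \eqref{eqn:yoverxy}, namely
\[
\tfrac{y}{x+y}\,g_M(x,y,z,w)=\sum_{e\in S\subseteq E}g_{M|S}(0,y,z,w)\,g_{M/S}(x,0,z,w),
\]
and then reduce. Where the paper packages the computation in the convolution algebra of functions $N_{(a,b)}$ --- collapsing a triple sum via the inverse property $N_1*\overline{N_1}=\nu$ and then using the linear relation $N_1=-\tfrac{x}{x+y}\overline{N_0}+\tfrac{y}{x+y}N_2$ on one-element matroids --- you instead expand everything with the corank-nullity formula and perform the cancellation by explicit inclusion-exclusion over the intermediate set $S$. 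The surviving pairs $(A,C')$ with $e\in C'$ and $A\in\{C',C'\setminus e\}$ are exactly the paper's surviving sum $\sum_A\overline{N_0}(A)N_1((A\cup e)/A)N_2(M/(A\cup e))$ under the substitution $C'=A\cup e$, and your case analysis on $\rho=\rk_M C'-\rk_M(C'\setminus e)$ mirrors the paper's one-element-matroid relation for $N_1$. The genuinely different step is the derivation of the second identity: the paper obtains it by subtracting the first identity from the full associativity relation $g_M=(\overline{N_0}*N_1)*(\overline{N_1}*N_2)=\sum_{B\subseteq E}g_{M|B}(0,y,z,w)\,g_{M/B}(x,0,z,w)$, whereas you invoke matroid duality $t_M(x,y,z,w)=t_{M^\perp}(y,x,w,z)$ together with the standard identities $(M^\perp)|S=(M/(E\setminus S))^\perp$ and $(M^\perp)/S=(M|(E\setminus S))^\perp$. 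Both routes are valid; the duality argument is perhaps slightly more transparent to a reader who has just seen \Cref{prop:duality}, but requires separately verifying the symmetry of $t_M$, which the paper's subtraction step avoids.
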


From here to the end of this subsection, we include $\emptyset$ and $E$ in summations unless otherwise stated, and allow a matroid $M$ to have an empty ground set, in which case we write $M = \emptyset$ for the unique matroid on the ground set $\emptyset$ whose set of bases is $\{\emptyset\}$.  By convention, we set $T_{\emptyset}(x,y) = 1$.

\medskip
To prove the lemma, we first borrow some notation from \cite{Coalgebra}. 
For two functions $f$ and $g$ from the set of matroids with ground sets contained in $E$ to a common ring, we define $f \ast g$ by
\[
(f\ast g)(M)=\sum_{\emptyset\subseteq A\subseteq E} f(M|A)g(M/A).
\] 
Then, one can verify that $\ast$ is associative by computing that 
$$(f_0\ast \ldots \ast f_k)(M)=\sum_{\emptyset\subseteq A_1\subseteq \ldots \subseteq A_{k} \subseteq E}f_1(M|A_1)f_2(M|A_2/A_1)\ldots f_k(M/A_{k}).$$
The function $\nu$ such that $\nu(\emptyset)=1$ and $\nu(M)=0$ for $M\ne \emptyset$ acts as the identity for $\ast$, as one easily checks
$$\nu \ast f=f\ast \nu=f \quad\text{for any $f$.}$$

We define $N_{(a,b)}(M)=a^{\rk_M}b^{\crk_M}$, where $\rk_M$ and $\crk_M$ denotes the rank and corank of $M$, respectively. This function satisfies
$$N_{(a,b)}(\emptyset)=1\text{ and } N_{(a,b)}(M)=N_{(a,b)}(M|{A})N_{(a,b)}(M/A)$$
for all $\emptyset \subseteq A\subseteq E$.
We note the following convolution formula.  (The first part appears in \cite[Section 5 Equation (3)]{Coalgebra} and the second part appears in \cite[Proposition 3.6, proof of Theorem 5.10]{Coalgebra}).

\begin{lem}We have
$$(N_{(a,b)}\ast N_{(c,d)})(M)=a^{\rk_M}d^{\crk_M}T_M(1+\frac{c}{a},1+\frac{b}{d}),$$
and in particular, denoting $\overline{N_{(a,b)}} = N_{(-a,-b)}$, we have
$$N_{(a,b)}\ast \overline{N_{(a,b)}}= \overline{N_{(a,b)}}\ast N_{(a,b)} =\nu.$$
\end{lem}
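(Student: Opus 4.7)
The plan is to unfold both sides of the convolution identity using the definition of $\ast$ and recognize the resulting sum as the corank-nullity expansion of the Tutte polynomial. The second assertion (that $\overline{N_{(a,b)}}$ is a two-sided $\ast$-inverse of $N_{(a,b)}$) will then drop out as an immediate specialization.

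First I would substitute the minor rank identities $\rk_{M|A} = \rk_M(A)$, $\crk_{M|A} = |A| - \rk_M(A)$, $\rk_{M/A} = \rk_M - \rk_M(A)$, and $\crk_{M/A} = \crk_M - |A| + \rk_M(A)$ into the definition of the convolution, yielding
\[
(N_{(a,b)}\ast N_{(c,d)})(M) = \sum_{A\subseteq E} a^{\rk_M(A)}\, b^{|A|-\rk_M(A)}\, c^{\rk_M-\rk_M(A)}\, d^{\crk_M-|A|+\rk_M(A)}.
\]
Pairing the $a$'s with the $c$'s and the $b$'s with the $d$'s, I would factor out $a^{\rk_M}d^{\crk_M}$, so that what remains is $\sum_{A\subseteq E}(c/a)^{\rk_M-\rk_M(A)}(b/d)^{|A|-\rk_M(A)}$. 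By the corank-nullity formula $T_M(x,y)=\sum_A(x-1)^{\rk_M-\rk_M(A)}(y-1)^{|A|-\rk_M(A)}$ recalled at the start of this appendix, this last sum is exactly $T_M(1+c/a,\,1+b/d)$, completing the first identity.

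For the inverse identity, I would set $(c,d)=(-a,-b)$ in the first formula to obtain $a^{\rk_M}(-b)^{\crk_M}T_M(0,0)$. The key observation is that $T_M(0,0)$ vanishes whenever $E$ is nonempty: specializing corank-nullity gives
\[
T_M(0,0)=\sum_{A\subseteq E}(-1)^{\rk_M-\rk_M(A)+|A|-\rk_M(A)}=(-1)^{\rk_M}\sum_{A\subseteq E}(-1)^{|A|}=(-1)^{\rk_M}(1-1)^{|E|}=0
\]
for $|E|\geq 1$. For $M=\emptyset$ one has $\rk_M=\crk_M=0$ and $T_\emptyset=1$, and the convolution evaluates directly to $1=\nu(\emptyset)$. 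This verifies $N_{(a,b)}\ast\overline{N_{(a,b)}}=\nu$, and the reverse identity $\overline{N_{(a,b)}}\ast N_{(a,b)}=\nu$ follows by the same argument after swapping the roles of $(a,b)$ and $(-a,-b)$.

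There is no genuine obstacle here: the whole argument is bookkeeping with rank and corank identities followed by one appeal to corank-nullity and a one-line sign-sum cancellation. The only point requiring a moment's care is treating the empty matroid separately in the inverse identity, since the cancellation $(1-1)^{|E|}=0$ fails precisely when $|E|=0$.
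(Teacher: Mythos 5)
Your proof is correct and follows essentially the same route as the paper: unwind the convolution via the rank and corank identities for restriction and contraction, then recognize the sum as the corank--nullity expansion of the Tutte polynomial. The only cosmetic difference is that the paper first reduces to $a=d=1$ by a bi-homogeneity observation before computing, whereas you carry all four parameters through and factor out $a^{\rk_M}d^{\crk_M}$ at the end; likewise you derive $T_M(0,0)=0$ inline rather than simply citing the fact that Tutte polynomials of nonempty matroids have no constant term.
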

\begin{proof}
For the first part, both sides are simultaneously homogenous in $a,c$ of degree $\rk_M$ and in $b,d$ of degree $\crk_M$, so it suffices to show the equality when $a=d=1$. We have $N_{(1,b)}(M|A)=b^{|A|-\rk_M(A)}$ and $N_{(c,1)}(M/A)=c^{\rk_M-\rk_M(A)}$, so by the corank-nullity formula for the Tutte polynomial and then the definition of the convolution $\ast$, we have
$$T_M(1+c,1+b)=\sum_{\emptyset  \subseteq A \subseteq E}c^{\rk_M-\rk_M(A)}b^{|A|-\rk_M(A)}=(N_{(1,b)}\ast N_{(c,1)})(M)$$
as desired.
The second part follows since $T_M(0,0)=0$ if $M\ne \emptyset$ and $T_{\emptyset}(0,0)=1$.
\end{proof}

\begin{proof}[Proof of \Cref{lem:tuttecoalgebra}]
Write $$g_M(x,y,z,w)=(x+y)t_M(x,y,z,w)=(y+z)^r(x+w)^{|E|-r}T_M(\frac{x+y}{y+z},\frac{x+y}{x+w}),$$ so that we have to show
\begin{align}\label{eqn:yoverxy}\frac{y}{x+y}g_M(x,y,z,w)&=\sum_{e\in B}g_{M|B}(0,y,z,w)g_{M/B}(x,0,z,w),\quad\text{and}\\ \label{eqn:xoverxy}\frac{x}{x+y}g_M(x,y,z,w)&=\sum_{e\not\in B}g_{M|B}(0,y,z,w)g_{M/B}(x,0,z,w).\end{align}
Here, we used our convention for this subsection that summations include the $\emptyset$ and $E$ cases unless stated otherwise.  Now, define the functions
$$N_0 = N_{(-y-z, -y+w)}, \qquad N_1 = N_{(-z, w)}, \qquad N_2 = N_{(x-z, x+w)}.$$ Then we can directly check from the $N_{(a,b)}\ast N_{(c,d)}$ formula that $$g_M(x,y,z,w)=(\overline{N_0}\ast N_2)(M),\quad g_{M}(0,y,z,w)=(\overline{N_0}\ast N_1)(M), \quad g_M(x,0,z,w)=(\overline{N_1}\ast N_2)(M).$$
Therefore,
$$g_M(x,y,z,w)=(\overline{N_0}\ast N_2)(M)=((\overline{N_0}\ast N_1) \ast (\overline{N_1} \ast N_2))(M)=\sum_B g_{M|B}(0,y,z,w)g_{M/B}(x,0,z,w),$$
which is the sum of \eqref{eqn:yoverxy} and \eqref{eqn:xoverxy}. Hence to conclude, we only need to verify \eqref{eqn:yoverxy}. To simplify notation, for subsets $X\subseteq Y \subseteq E$ we will write $X/Y$ for $M|X/Y$, which also equals $(M/Y)|X$. We compute
\begin{align*}
&\sum_{e\in B}g_{B}(0,y,z,w)g_{M/B}(x,0,z,w)\\
&=\sum_{e\in B}\sum_{A\subseteq B \subseteq C} \overline{N_0}(A)
N_1(B/A) \overline{N_1}(C/B) N_2(M/C)\\
&=\sum_{A\subseteq A\cup e \subseteq B \subseteq C }\overline{N_0}(A)N_1((A\cup e)/A)N_1(B/(A\cup e))\overline{N_1}(C/B)N_2(M/C)\\
&=\sum_{A\subseteq A\cup e \subseteq C}
\overline{N_0}(A)N_1((A\cup e)/A)(N_1\ast \overline{N_1})(C/(A\cup e))N_2(M/C)\\
&=\sum_{A}\overline{N_0}(A)N_1((A\cup e)/A)N_2(M/(A\cup e)).
\end{align*}
When $i\in A$ we have $N_1((A\cup i)/A)=1$, and when $i\not \in A$ then $(A\cup i)/A$ is a one element rank $1$ matroid. For a $1$ element matroid $L$ we have $N_1(L) = -\frac{x}{x+y}\overline{N_0}(L) + \frac{y}{x+y} N_2(L)$ since we can check
\begin{align*}N_1(U_{0,1}) &=w=-\frac{x}{x+y}(y-w)+\frac{y}{x+y}(x+w)= -\frac{x}{x+y} \overline{N_0}(U_{0,1}) + \frac{y}{x+y} N_2(U_{0,1})\\
N_1(U_{1,1}) &=-z=-\frac{x}{x+y}(y+z)+\frac{y}{x+y}(x-z)= -\frac{x}{x+y} \overline{N_0}(U_{1,1}) + \frac{y}{x+y} N_2(U_{1,1}).\end{align*}
Therefore, we continue our computation as
\begin{align*}
&\sum_{A}\overline{N_0}(A)N_1((A\cup e)/A)N_2(M/(A\cup e))\\
&=\sum_{e\in A}\overline{N_0}(A)N_2(M/A)
-\frac{x}{x+y}\sum_{e\not \in A}\overline{N_0}(A\cup e)N_2(M/(A\cup i))+\frac{y}{x+y}\sum_{e\not\in A}\overline{N_0}(A)N_2(M/A)\\
&=\frac{y}{x+y}\sum_{e\in A}\overline{N_0}(A)N_2(M/A)+\frac{y}{x+y}\sum_{e\not \in A}\overline{N_0}(A)N_2(M/A)\\
&=\frac{y}{x+y}(\overline{N_0}\ast N_2)(M)=\frac{y}{x+y}g_M(x,y,z,w).
\end{align*}
We have thus verified \eqref{eqn:yoverxy}.
\end{proof}

\begin{proof}[Proof of \Cref{thm:4degintro}]
When the ground set $E$ has cardinality 1, the left-hand-side $\widetilde t_M(x,y,z,w)$ equals 1, and the right-hand-side $t_M(x,y,z,w)$ is also 1 because $T_{U_{1,\{0\}}}(u,v) = u$ and $T_{U_{0,\{0\}}}(u,v) = v$.
Let us now induct on the cardinality of $E$.  Let $M$ be a matroid on $E$, and assume that the desired equality holds for all matroids on ground sets with cardinality less than $|E|$.

Since $\widetilde t_M(x,y,z,w)$ and $t_M(x,y,z,w)$ satisfy the same recursive relation given in \Cref{lem:geomcoalgebra} and \Cref{lem:tuttecoalgebra}, the induction hypothesis implies that it suffices to show $\widetilde t_M(0,y,z,w) = t_M(0,y,z,w)$ and $\widetilde t_M(x,0,z,w) = t_M(x,0,z,w)$.
Applying the recursive relation and the induction hypothesis again, we find that it suffices to show $\widetilde t_M(0,0,z,w) = t_M(0,0,z,w)$.
Noting that Tutte polynomials have no constant terms, we compute that $t_M(0,0,z,w) = z^{r}w^{|E|-r}(\beta(M) \frac{1}{z}+{\beta}(M^\perp)\frac{1}{w})$.  We have thus reduced the proof to showing \Cref{thm:betainvar}, reproduced below, for which we give an alternate proof.
\end{proof}

\newtheorem*{thm:betainvar}{\Cref{thm:betainvar}}
\begin{thm:betainvar}
Let $M$ be a matroid of rank $r$ on ground set $E$.  Then,
\[
\int_{X_E} c_{r-1}(\cS_M^\vee)c_{|E|-r}(\cQ_M) = \beta(M) \quad\text{and}\quad \int_{X_E} c_{r}(\cS_M^\vee)c_{|E|-r-1}(\cQ_M) = \beta(M^\perp),
\]
where we set by convention $c_{-1}(\mathcal E) = 0$ for a $K$-class $[\mathcal E]$.
\end{thm:betainvar}

In \S\ref{sec:betavia}, we had derived \Cref{thm:betainvar} as an immediate consequence of \Cref{thm:4degintro}.  Here, we give another proof that does not rely on \Cref{thm:4degintro}, but uses a geometric computation in \cite[Theorem 5.1]{Spe09} and valuativity.

\begin{proof}[Alternate proof of \Cref{thm:betainvar} via geometry and valuativity]\label{proof:betainvarAlt}
Noting that Cremona involution is an isomorphism, one has from the matroid duality property (\Cref{prop:duality}) that
\[
\int_{X_E}c_{r}(\cS_M^\vee)c_{|E|-r-1}(\cQ_M) = \int_{X_E}\crem \Big( c_{r}(\cS_M^\vee)c_{|E|-r-1}(\cQ_M)  \Big) = \int_{X_E} c_{r}(\cQ_{M^\perp})c_{|E|-r-1}(\cS_{M^\perp}^\vee).
\]
Hence, the second equality in the theorem follows from the first, so we prove the first equality only.

When $M$ has rank 0, the Tutte polyomial $T_M(x,y)$ has no $x$ terms, so the claimed equality is satisfied.  Suppose now $r\geq 1$.  If $|E| = 1$, so that $M = U_{1,\{0\}}$, then $\int_{X_E} c_0(\cS_M^\vee) c_0(\cQ) = 1$, whereas $ \beta(M) = 1$ since $T_{U_{1,\{0\}}}(x,y) = x$.  Hence, we now suppose $|E|\geq 2$.

Because the assignment $M\mapsto c_{r-1}(\cS_M^\vee)c_{|E|-r}(\cQ_M)$ is valuative by \Cref{prop:valpolys}, and the assignment $M \mapsto \beta(M)$ is also valuative \cite[Corollary~5.7]{ardila_fink_rincon_2010},
\Cref{lem:looplessval} implies that it suffices to show the equality $\int_{X_E} c_{r-1}(\cS_M^\vee)c_{|E|-r}(\cQ_M) = \beta(M)$ for all matroids $M$ that are realizable over $\CC$.
So, let $L\subseteq \CC^E$ be a realization of a matroid $M$ of rank $r\geq 1$ with $|E|\geq 2$.  For $H\subset \CC^E$ a generic hyperplane and $\ell \subset H$ a generic line in $H$, denote by $\Omega(\ell,H)$ the Schubert variety in $\Gr(r;E)$ consisting of $L\in \Gr(r;E)$ such that $\ell\subseteq L \subseteq H$.
In \cite[Theorem 5.1]{Spe09} it is shown that
\[
\int_{\Gr(r;E)}[\overline{T\cdot L}] \cdot [\Omega(\ell,H)] = \beta(M).
\]
Note that the Chow class $[\Omega(\ell,H)]$ is equal to $c_{r-1}(\cS^\vee)c_{|E|-r}(\cQ)$, where $\cS$ and $\cQ$ are the tautological sub and quotient bundles of $\Gr(r;E)$, respectively (see for instance \cite[\S5.6.2]{EH16}).  Writing $\varphi_L\colon  X_E \to \overline{T\cdot L} \subset \Gr(r;E)$ for the map as defined in \S\ref{subsec:welldefined}, we have by the functoriality of Chern classes that
\[
\int_{X_E} c_{r-1}(\cS_L^\vee)c_{|E|-r}(\cQ_L)  = \int_{X_E} \crem\varphi_L^* [\Omega(\ell,H)].
\]
We now break into two cases.  First, suppose the matroid $M$ is disconnected, say $M = M_1 \oplus M_2$ for matroids $M_1$ and $M_2$ on nonempty ground sets.
Then, \Cref{prop:directsum1} states that $\dim P(M) < n$, so that $\dim \overline{T\cdot L} < n$.  Thus, we have $\varphi_L^*[\Omega(\ell,H)] = 0$, as the pullback of the $n$-codimensional class $[\Omega(\ell,H)]$ to $\overline{T\cdot L}$ is already 0 by dimensional reason.  We also have $\beta(M) = 0$ since $T_M(x,y) = T_{M_1}(x,y) T_{M_2}(x,y)$ and both $T_{M_1}$ and $T_{M_2}$ have no constant terms.  Now, suppose $M$ is connected, in which case \Cref{prop:directsum1} states that $\dim P(M) = \dim \overline{T\cdot L} = n$, so that $\varphi_L$ is birational onto its image.  Then, the push-pull formula implies that
\[
\int_{X_E} \crem\varphi_L^* [\Omega(\ell,H)] = \int_{\Gr(r;E)}({\varphi_L}_*[X_E]) \cdot [\Omega(\ell,H)] =  \int_{\Gr(r;E)}[\overline{T\cdot L}] \cdot [\Omega(\ell,H)] = \beta(M).
\]
Thus, we have the desired equality in both cases.
\end{proof}

\section{The tropical logarithmic Poincar\'e-Hopf theorem: representable case}
\label{sec:troplogPoinc}
A reformulation of the Poincar\'e-Hopf theorem states that the (topological) Euler characteristic $\chi(X)$ of a compact manifold is equal to the self-intersection number of its diagonal $\operatorname{diag}(X)$ in $X\times X$.  In an attempt to create a tropical analogue, Rau computed the self-intersection number of the diagonal of the Bergman class of a matroid \cite{Rau20}.

\begin{thm}[{\cite[Theorem 1.1]{Rau20}}]
\label{thm:poincarehopf}
Let $M$ be a loopless matroid of rank $r$, and let $\operatorname{diag}(\Delta_M)$ be the Minkowski weight of constant weight 1 on the diagonal copy of $\Sigma_M$ inside $\Sigma_M\times \Sigma_M$.  Then, as a tropical subcycle of $\Delta_M\times \Delta_M$, its self-intersection number is given by
\[
\deg (\operatorname{diag}(\Delta_M)^2) = (-1)^{r-1}\beta(M).
\]
\end{thm}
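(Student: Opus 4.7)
The plan is to prove the theorem for realizable $M$ by interpreting the tropical self-intersection $\deg(\operatorname{diag}(\Delta_M)^2)$ as a logarithmic self-intersection of the diagonal of the wonderful compactification $W_L$, and then reducing the problem to $\beta(M)$ via the tautological framework developed in the paper.

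First, fix a realization $L\subseteq \CC^E$ of $M$. By \Cref{thm:BergmanComb} and \Cref{thm:BergmanGeom}, we have $\Delta_M=c_{|E|-r}(\cQ_M)\cap \Delta_{\Sigma_E}$ and $[W_L]=c_{|E|-r}(\cQ_L)$ in $A^\bullet(X_E)$. Under the tropical-to-geometric dictionary provided by \Cref{lem:tropicalfacts} (after passing to a suitable unimodular refinement of $X_E\times X_E$, in the spirit of \Cref{thm:refinedfanbiproj}), the Minkowski weight $\operatorname{diag}(\Delta_M)$ corresponds to the closure of the diagonal of $\mathcal C(L)=\PP L\cap (T/\CC^*)$ inside an appropriate toric compactification; inside $W_L\times W_L$, this closure is precisely the diagonal $\operatorname{diag}(W_L)$. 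Because the ambient $\Delta_M\times \Delta_M$ tropicalizes only the very-affine stratum $\mathcal C(L)\times \mathcal C(L)$ and discards the toric/wonderful boundary, the relevant normal bundle for computing the self-intersection is the logarithmic tangent bundle $\mathcal T_{W_L}(-\log D_{W_L})$ rather than the ordinary tangent bundle. A logarithmic self-intersection formula for the diagonal would then yield
\[
\deg(\operatorname{diag}(\Delta_M)^2)=\int_{W_L}c_{r-1}\bigl(\mathcal T_{W_L}(-\log D_{W_L})\bigr).
\]

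Next, the short exact sequence $0\to \mathcal O_{W_L}\to \cS_L|_{W_L}\to \mathcal T_{W_L}(-\log D_{W_L})\to 0$ from \Cref{thm:CSMGeom} gives $c_{r-1}(\cS_L|_{W_L})=c_{r-1}(\mathcal T_{W_L}(-\log D_{W_L}))$. Pushing forward to $X_E$ using $[W_L]=c_{|E|-r}(\cQ_L)$ and then invoking \Cref{thm:betainvar} gives
\[
\int_{W_L}c_{r-1}(\cS_L|_{W_L})=\int_{X_E}c_{r-1}(\cS_L)\cdot c_{|E|-r}(\cQ_L)=(-1)^{r-1}\int_{X_E}c_{r-1}(\cS_L^\vee)\cdot c_{|E|-r}(\cQ_L)=(-1)^{r-1}\beta(M),
\]
where the middle equality uses $c_{r-1}(\cS_L)=(-1)^{r-1}c_{r-1}(\cS_L^\vee)$ since $\cS_L$ has rank $r$.

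The main obstacle is making rigorous the key identity $\deg(\operatorname{diag}(\Delta_M)^2)=\int_{W_L}c_{r-1}(\mathcal T_{W_L}(-\log D_{W_L}))$, since the naive self-intersection of $\operatorname{diag}(W_L)$ inside the compact variety $W_L\times W_L$ would instead produce $\chi(W_L)$ rather than $\chi(\mathcal C(L))$. One approach is to produce a smooth projective refinement $X_\Sigma\to X_E\times X_E$ (analogous to the construction in \Cref{thm:refinedfanbiproj}) resolving both the closures of $\operatorname{diag}(\mathcal C(L))$ and of $\mathcal C(L)\times\mathcal C(L)$, compute the self-intersection as an honest Chow-theoretic intersection on $X_\Sigma$ via \Cref{lem:tropicalfacts}, and then use an excess intersection argument with a logarithmic normal bundle computation to extract the log tangent bundle. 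An alternative would be to bypass the geometric reinterpretation altogether by working equivariantly: using the localization formulas of \Cref{thm:localization} and \Cref{thm:pushforward}, one could compute $\deg(\operatorname{diag}(\Delta_M)^2)$ directly in terms of the combinatorics of $B_\sigma(M)$ and match the result with the right-hand side of \Cref{thm:betainvar}, which would have the additional benefit of working for arbitrary (not necessarily realizable) matroids.
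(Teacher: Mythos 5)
Your endgame is exactly the paper's: once one knows that the self-intersection equals $\int_{W_L}c_{r-1}\bigl(\mathcal T_{W_L}(-\log D_{W_L})\bigr)$, the short exact sequence of \Cref{thm:CSMGeom} plus \Cref{thm:BergmanGeom} and \Cref{thm:betainvar} finish the argument, and you carry that part out correctly. The problem is that the identity you call ``the main obstacle'' is the entire content of the proof, and you leave it as an assertion (``A logarithmic self-intersection formula for the diagonal would then yield\dots'') with only two undeveloped sketches of how one might establish it. Neither sketch, as stated, closes the gap: an ``excess intersection argument with a logarithmic normal bundle computation'' is not a theorem you can cite, and the diagonal of $W_L$ in $W_L\times W_L$ genuinely has normal bundle $\mathcal T_{W_L}$, not $\mathcal T_{W_L}(-\log D_{W_L})$, so you must explain precisely which modified intersection problem produces the logarithmic bundle. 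The localization alternative is likewise only a suggestion.

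The paper's mechanism, which you are missing, is an \emph{untwisting} change of coordinates rather than an excess intersection computation. After refining $\Sigma_E\times\Sigma_E$ to a fan $\widetilde\Sigma$ on which the relevant piecewise linear functions are linear (so that the tropical product $\operatorname{diag}(\Delta_M)^2$ becomes the Chow-theoretic self-intersection of the proper transform $\widetilde{\operatorname{diag}(W_L)}$ inside $\widetilde{W_L\times W_L}$), one considers the toric morphism $\phi:X_{\widetilde\Sigma}\to X_E\times\PP^n$ induced on tori by $(x,y)\mapsto(x,x^{-1}y)$. This map carries $\widetilde{W_L\times W_L}$ birationally onto $\PP(\cS_L)|_{W_L}$ and $\widetilde{\operatorname{diag}(W_L)}$ birationally onto the constant section $W_L\times\{\mathbf 1\}$; a proper-intersection-with-torus-orbits lemma gives $[\widetilde{\operatorname{diag}(W_L)}]=\phi^*[W_L\times\{\mathbf 1\}]$, so the projection formula reduces everything to $\int_{\PP(\cS_L)|_{W_L}}[W_L\times\{\mathbf 1\}]^2$. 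The logarithmic tangent bundle then appears honestly: $W_L\times\{\mathbf 1\}$ is the zero locus of a regular section of $\pi^*\bigl(\cS_L|_{W_L}/\mathcal O\cdot\mathbf 1\bigr)\otimes\mathcal O_{\PP(\cS_L)}(1)$, and $\cS_L|_{W_L}/\mathcal O\cdot\mathbf 1\cong\mathcal T_{W_L}(-\log D_{W_L})$ is exactly \Cref{thm:CSMGeom}. Without this (or an equally concrete substitute), your proof does not go through; what you have is a correct reduction of the theorem to an unproved geometric identity.
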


In \cite[Remark 1.7]{Rau20}, the author expresses a desire for a classical counterpart to \Cref{thm:poincarehopf}. The goal in this section is to provide such a classical counterpart. We give a geometric proof of \Cref{thm:poincarehopf} in the representable case, using the intuition gained from tautological bundles on matroids and reducing to a logarithmic version of the Poincar\'e Hopf theorem.

\begin{proof}[Proof of \Cref{thm:poincarehopf} when $M$ is representable]
Let $L \subseteq \CC^E$ be a realization of the matroid $M$.
The first step is to translate the tropical self-intersection $\deg (\operatorname{diag}(\Delta_M)^2) $ into a Chow-theoretic intersection. To do this, we recall that the tropical intersection $\operatorname{diag}(\Delta_M)^2$ is computed by expressing the diagonal Minkowski weight $[\operatorname{diag}(\Sigma_M)]$ as the intersection of the Minkowski weight $[\Sigma_M\times \Sigma_M]$ with $r-1$ piecewise linear functions. This is summarized in \cite[Section 2]{Rau20} and uses \cite[Proposition 3.10]{FR13}. 

Next, the tropical intersection of a weighted fan with a piecewise linear function \cite[Definition 3.4]{AR10} mirrors the intersection of the corresponding Minkowski weight with a divisor on a toric variety (\cite[Lemma 2.5]{K12} or \cite[Theorem 27]{huh2014rota}). 
Thus, to compute the intersection $\operatorname{diag}(\Delta_M)^2$, we start with $\operatorname{diag}(W_L)\subset W_L\times W_L\subset X_{E}\times X_{E}$ and perform three steps:
\begin{enumerate}
    \item Refine the fan $\Sigma_{E}^2$ of $X_{E}\times X_{E}$ to $\widetilde{\Sigma}$ so that the piecewise linear functions used in \cite[Proposition 2.6]{Rau20} are linear on each cone of the fan $\widetilde{\Sigma}$. 
    \item Take the proper transform of $\operatorname{diag}(W_M)$ and $W_L\times W_L$ in $X_{\widetilde{\Sigma}}$ to get $\widetilde{\operatorname{diag}(W_L)}$ and $\widetilde{W_L\times W_L}$.
    \item Evaluate $\int_{\widetilde{W_L\times W_L}}{[\widetilde{\operatorname{diag}(W_L)}]^2}$ in Chow theory.
\end{enumerate}
We know the final answer is independent of the choice of sufficiently refined $\widetilde{\Sigma}$ by the equivalence with the tropical intersection number, and this will also be implied by the proof below.

At this point, we will translate our question into the self-intersection of a section within the projectivization of a tautological bundle. Let $\phi$ be the map
\begin{align*}
    \phi\colon  X_{\widetilde{\Sigma}}\dashrightarrow X_{E}\times \mathbb{P}^n
\end{align*}
given on the open torus $T\times T$ by $(x,y)\mapsto (x,x^{-1}y)$. Similarly, let \begin{align*}
    \phi_{\operatorname{trop}}\colon  (\mathbb{Z}^{n+1}/\mathbb{Z}\textbf 1)\times (\mathbb{Z}^{n+1}/\mathbb{Z}\textbf 1)&\to (\mathbb{Z}^{n+1}/\mathbb{Z}\textbf 1)\times (\mathbb{Z}^{n+1}/\mathbb{Z}\textbf 1)\\
    (u,v)&\mapsto (u,-u+v).
\end{align*}
We can and will choose $\widetilde{\Sigma}$ so that it contains the fan obtained by $\phi_{\operatorname{trop}}^{-1}$ applied to the fan of $X_{E}\times \mathbb{P}^n$. This means $\phi$ is now a regular map $X_{\widetilde{\Sigma}}\xrightarrow{\phi} X_{E}\times \mathbb{P}^n$. We now claim to have the following diagram where the vertical arrows are all birational morphisms
\begin{center}
    \begin{tikzcd}
    \widetilde{\operatorname{diag}(W_L)} \ar[r, hook] \ar[d] & \widetilde{W_L\times W_L} \ar[r, hook] \ar[d] & X_{\widetilde{\Sigma}} \ar[d,"\phi"] \\
    W_L\times \{\mathbf{1}\} \ar[r, hook] & \mathbb{P}(S_L)|_{W_L}  \ar[r, hook]& X_{E}\times \mathbb{P}^n
    \end{tikzcd}
\end{center}
The two things to check are that $\widetilde{\operatorname{diag}(W_L)}$ and $\widetilde{W_L\times W_L}$ map birationally onto $W_L\times \{\mathbf{1}\}$ and $\mathbb{P}(S_L)|_{W_L}$ respectively. This is possible because it suffices to check that this is true when restricted to the open torus $\phi|_{T\times T}$ as 
$\widetilde{\operatorname{diag}(W_L)}$ and $\widetilde{W_L\times W_L}$ are irreducible. To see that $(\widetilde{W_L\times W_L})\cap (T\times T)$ maps into $\mathbb{P}(S_L)|_{W_L}$, we need our convention that the fiber of $\mathbb{P}(S_L)\to X_{E}$ over $t\in T$ is $t^{-1}\mathbb{P}(L)\subset \mathbb{P}^n$. 

To proceed, we need to know that the pullback of the Chow class $[W_L\times \{\mathbf{1}\}]$ agrees with the Chow class of the proper transform, or equivalently that
\begin{align}
    [\widetilde{\operatorname{diag}(W_L)} ] &= \phi^{*}[W_L\times \{\mathbf{1}\}].\label{eq:closurepullback}
\end{align}
To prove \eqref{eq:closurepullback}, one first notes that the wonderful compactification $W_L$ intersects the torus orbits of the permutohedral toric variety $X_{E}$ properly \cite[Theorem 6.3]{H18}. This implies $W_L\times \{\bf 1\}$ intersects the torus orbits of $X_{E}\times \mathbb{P}^n$ properly. Finally, applying the dimension count in \Cref{lem:intersectproperly} below yields \eqref{eq:closurepullback}. Alternatively, it also is possible to deduce \eqref{eq:closurepullback} from \Cref{lem:tropicalfacts}. 

Applying \eqref{eq:closurepullback} to the problem at hand, we obtain
\begin{align*}
 \int_{\widetilde{W_L\times W_L}}[\widetilde{\operatorname{diag}(W_L)} ]^2 &=\int_{\widetilde{W_L\times W_L}}\phi^{*}[W_L\times \{\mathbf{1}\}]^2 =\\
 \int_{\mathbb{P}(S_L)|_{W_L}}\phi_{*}\phi^{*}([W_L\times \{\mathbf{1}\}]^2) &= \int_{\mathbb{P}(S_L)|_{W_L}}[W_L\times \{\mathbf{1}\}]^2.
\end{align*}
At this point, one can use the formula for the class of $[W_L\times \{\mathbf{1}\}]$ as the projectivization of a subbundle \cite[Proposition 9.13]{EH16} and finish by a computation. 

Instead of doing the computation, we will present a geometric proof, connecting the the self intersection with the log-tangent sheaf and finally reducing to a logarithmic version of the Poincar\'e-Hopf theorem. To make this connection, we will need to show that
\begin{align}
\label{eq:normalRau}
    N_{(W_L\times \{\mathbf{1}\})/(\mathbb{P}(S_L)|_{W_L})}=T_{W_L}(-\log D_{W_L}).
\end{align}
To compute the normal bundle of $W_L\times \{\mathbf{1}\}$ in $\mathbb{P}(S_L)|_{W_L}$, we will express $W_L\times \{\mathbf{1}\}$ as the zero locus of a section of a vector bundle on $\mathbb{P}(S_L)|_{W_L}$. The locus $W_L\times \{\mathbf{1}\}\subset \mathbb{P}(S_L)|_{W_L}$ can be described as the locus in $\mathbb{P}(S_L)|_{W_L}$, where the universal line is parallel to $\mathbf{1}$. This is equivalently the zero locus of the map
\begin{align*}
      \mathcal{O}_{\mathbb{P}(S_L)|_{W_L}}(-1)\to \pi^{*}S_L|_{W_L}/(\mathcal{O}_{\mathbb{P}(S_L)|_{W_L}}\cdot \mathbf{1}).
\end{align*}
The target $\pi^{*}S_L|_{W_L}/(\mathcal{O}_{\mathbb{P}(S_L)|_{W_L}}\cdot \mathbf{1})$ is given taking the quotient of the inclusion of the constant section $\mathcal{O}|_{W_L}\cdot \mathbf{1}=\mathcal{O}|_{W_L}\cdot (1,\ldots,1)$ in $S_L|_{W_L}\subset \mathcal{O}_{W_L}^{n+1}$, and pulling back by the projection $\pi\colon  \mathbb{P}(S_L)|_{W_L}\to W_L$. We have already taken the quotient $S_L|_{W_L}/(\mathcal{O}_{W_L}\cdot \mathbf{1})$ in \Cref{thm:CSMGeom} and identified it as $T_{W_L}(-\log D_{W_L})$. 

Thus, $W_L\times \{\mathbf{1}\}\subset \mathbb{P}(S_L)|_{W_L}$ is the zero locus of the map
\begin{align*}
    \mathcal{O}_{\mathbb{P}(S_L)|_{W_L}}(-1)\to \pi^{*}T_{W_L}(-\log D_{W_L}),
\end{align*}
or equivalently the zero locus of a section of $\pi^{*}T_{W_L}(-\log D_{W_L})\otimes \mathcal{O}_{\mathbb{P}(S_L)|_{W_L}}(1)$.

The restriction of a vector bundle to the zero locus of a section vanishing in proper codimension is the normal bundle of that section \cite[Proposition-Definition 6.15]{EH16}. Thus, the restriction of the vector bundle $\pi^{*}T_{W_L}(-\log D_{W_L})\otimes \mathcal{O}_{\mathbb{P}(S_L)|_{W_L}}(1)$ to $W_L\times \{\mathbf{1}\}$ is the normal bundle $ N_{(W_L\times \{\mathbf{1}\})/(\mathbb{P}(S_L)|_{W_L})}$. To perform the restriction, $\mathcal{O}_{\mathbb{P}(S_L)|_{W_L}}(1)$ restricts to the trivial bundle as the universal line is constant along $W_L\times \{\mathbf{1}\}$ and $\pi^{*}T_{W_L}(-\log D_{W_L})$ restricts to $T_{W_L}(-\log D_{W_L})$ as $W_L\times \{\mathbf{1}\}$ maps isomorphically to $W_L$ under $\pi$. Therefore, $\pi^{*}T_{W_L}(-\log D_{W_L})\otimes \mathcal{O}_{\mathbb{P}(S_L)|_{W_L}}(1)$ restricted to $W_L\times \{\mathbf{1}\}$ is $T_{W_L}(-\log D_{W_L})$, concluding our proof of \eqref{eq:normalRau}. 

Finally, 
\begin{align*}
    \int_{\mathbb{P}(S_L)|_{W_L}}[W_L\times \{\mathbf{1}\}]^2 = c_{\operatorname{top}}(N_{(W_L\times \{\mathbf{1}\})/\mathbb{P}(S_L)})
\end{align*}
by \cite{LMS75}, and by \eqref{eq:normalRau}, 
\begin{align*}
    c_{\operatorname{top}}(N_{(W_L\times \{\mathbf{1}\})/\mathbb{P}(S_L)})=c_{\operatorname{top}}(T_{W_L}(-\log D_{W_L})).
\end{align*}
The top Chern class $c_{\operatorname{top}}(T_{W_L}(-\log D_{W_L}))$ is $c_{r-1}(\mathcal{S_L}|_{W_L})$ by \Cref{thm:CSMGeom} and $\int_{X_E} c_{r-1}(\mathcal{S_L}|_{W_L}) = \int_{X_E} c_{r-1}(\cS_L)c_{|E|-r}(\cQ_M)$ is equal to $(-1)^{r-1}\beta(M)$ by \Cref{thm:betainvar}. 
\end{proof}

We chose to conclude $c_{\operatorname{top}}(T_{W_L}(-\log D_{W_L}))=(-1)^{r-1}\beta(M)$ using the framework given in this paper to be self-contained. However, there is a more classical approach to get the same result given in \Cref{rem:betainvarByLog}, which uses a logarithmic version of the Poincar\'e Hopf theorem to relate the Chern class to the topological Euler characteristic of the hyperplane arrangement complement $W_L\backslash D_{W_L}$. 

\medskip
\Cref{lem:intersectproperly} below was used in the proof of \Cref{thm:poincarehopf} in the representable case as a substitute for \Cref{lem:tropicalfacts}, giving a geometric proof independent of tropical methods. 

\begin{lem}
\label{lem:intersectproperly}
Let $Y\subset T$ be an irreducible subvariety of a torus. Let $X_{\Sigma}$ be a smooth toric variety compactifying $T$ and $\overline{Y}$ be the closure of $Y$ in $X_{\Sigma}$. Suppose $\overline{Y}$ intersects each torus orbit in $X_{\Sigma}$ properly. Then, the following statement holds:

Let $\widetilde{\Sigma}$ be a unimodular fan refining $\Sigma$, and $\pi\colon  \widetilde{X}\to X$ be the corresponding map of toric varieties. Then, $\pi^{-1}(\overline{Y})$ is equal to the closure $\overline{\pi^{-1}(Y)}$ in $X_{\widetilde{\Sigma}}$. In particular, $\pi^{*}[\overline{Y}]=[\overline{\pi^{-1}(Y)}]$ in $A^{\bullet}(\pi^{-1}(\overline{Y}))=A^{\bullet}(\overline{\pi^{-1}(Y)})$, which implies equality in $A^{\bullet}(X_{\widetilde{\Sigma}})$. 
\end{lem}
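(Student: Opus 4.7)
The plan is to prove the set-theoretic equality $\pi^{-1}(\overline{Y}) = \overline{\pi^{-1}(Y)}$ first, from which the Chow-theoretic statement will follow. Write $d = \dim Y$ and $\widetilde{Y} := \overline{\pi^{-1}(Y)}$. One inclusion $\widetilde{Y} \subseteq \pi^{-1}(\overline{Y})$ is immediate, since $\pi^{-1}(\overline{Y})$ is closed and contains $\pi^{-1}(Y) = Y$; the reverse inclusion is the real content.

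First I would establish a uniform dimension bound on intersections with torus orbits. For each cone $\widetilde{\tau} \in \widetilde{\Sigma}$, let $\sigma \in \Sigma$ be the unique cone whose relative interior contains that of $\widetilde{\tau}$; such a $\sigma$ exists because $\widetilde{\Sigma}$ refines $\Sigma$, and satisfies $\widetilde{\tau} \subseteq \sigma$ with $\dim \widetilde{\tau} \leq \dim \sigma$. The toric morphism $\pi$ then restricts to a surjective homomorphism of algebraic tori $\pi|_{O_{\widetilde{\tau}}}\colon O_{\widetilde{\tau}} \to O_{\sigma}$ whose fibers are tori of dimension $\dim \sigma - \dim \widetilde{\tau}$. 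Combining this with the hypothesis $\dim(\overline{Y} \cap O_{\sigma}) = d - \dim \sigma$ gives
\[
\dim\bigl(\pi^{-1}(\overline{Y}) \cap O_{\widetilde{\tau}}\bigr) \;=\; (d - \dim\sigma) + (\dim\sigma - \dim\widetilde{\tau}) \;=\; d - \dim \widetilde{\tau}
\]
uniformly over all cones $\widetilde{\tau} \in \widetilde{\Sigma}$.

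The crux is to upgrade this dimension formula to the fibrewise set-theoretic equality $\pi^{-1}(\overline{Y}) \cap O_{\widetilde{\tau}} = \widetilde{Y} \cap O_{\widetilde{\tau}}$ for every $\widetilde{\tau}$. The forward inclusion is automatic, since $\widetilde{Y} \subseteq \pi^{-1}(\overline{Y})$. For the reverse I would invoke Tevelev's theorem on tropical compactifications (see \cite[Section~6.4]{MS15}), whose consequence for us is that the proper-intersection hypothesis renders the multiplication map $T \times \overline{Y} \to X_{\Sigma}$ faithfully flat, a property preserved under the base change along $\pi$. This flatness forces $\widetilde{Y} \cap O_{\widetilde{\tau}}$ to coincide with the full torus-bundle preimage of $\overline{Y} \cap O_{\sigma}$ under $\pi|_{O_{\widetilde{\tau}}}$, which is exactly $\pi^{-1}(\overline{Y}) \cap O_{\widetilde{\tau}}$. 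Taking the union over all $\widetilde{\tau}$ yields the desired $\pi^{-1}(\overline{Y}) = \widetilde{Y}$. This is where I expect the main obstacle: the numerical dimension count alone does not rule out additional irreducible components of $\pi^{-1}(\overline{Y})$ of the correct dimension but supported in boundary orbits and not in $\widetilde{Y}$, so genuinely new input beyond codimension counting is needed, which Tevelev's flatness supplies.

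For the Chow-theoretic conclusion, since both $X_{\Sigma}$ and $X_{\widetilde{\Sigma}}$ are smooth and $\overline{Y}$ has codimension $c := n - d$, the established equality $\pi^{-1}(\overline{Y}) = \widetilde{Y}$ together with the dimension formula shows that $\pi^{-1}(\overline{Y})$ has pure codimension $c$, so the refined pullback $\pi^{*}[\overline{Y}]$ equals $m \cdot [\widetilde{Y}]$ for some positive integer multiplicity $m$. Applying $\pi_{*}$ and using that $\pi|_{\widetilde{Y}}\colon \widetilde{Y} \to \overline{Y}$ is proper and birational gives $\pi_{*}[\widetilde{Y}] = [\overline{Y}]$, while the projection formula combined with $\pi_{*}\mathbf{1} = \mathbf{1}$ yields $\pi_{*}\pi^{*}[\overline{Y}] = [\overline{Y}]$. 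Together these force $m = 1$, completing the proof that $\pi^{*}[\overline{Y}] = [\widetilde{Y}]$ in $A^{\bullet}(X_{\widetilde{\Sigma}})$.
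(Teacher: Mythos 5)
Your dimension count on orbits is correct and matches the paper's, and your closing Chow-theoretic argument (writing $\pi^{*}[\overline{Y}]=m[\overline{\pi^{-1}(Y)}]$ and forcing $m=1$ via $\pi_{*}\pi^{*}=\mathrm{id}$ and birationality of $\pi|_{\overline{\pi^{-1}(Y)}}$) is fine --- if anything more explicit than the paper's. The genuine gap is in the step you yourself flag as the crux. You assert that the proper-intersection hypothesis makes the multiplication map $T\times\overline{Y}\to X_{\Sigma}$ faithfully flat and cite Tevelev. That implication is not something you can quote: in Tevelev's framework (and in \cite[\S 6.4]{MS15}) faithful flatness of the multiplication map is the \emph{definition} of a tropical compactification, not a consequence of orbit intersections having the expected dimension; equidimensionality of fibers does not imply flatness without Cohen--Macaulayness of $\overline{Y}$, which is not assumed. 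Moreover, here $X_{\Sigma}$ compactifies all of $T$, so $\overline{Y}$ misses many orbits and the map is not even surjective. Finally, even granting flatness, "preserved under base change" gives you a flat map out of $(T\times\overline{Y})\times_{X_{\Sigma}}X_{\widetilde{\Sigma}}$, and identifying the relevant piece of that fiber product with $T\times\overline{\pi^{-1}(Y)}$ is essentially equivalent to the equality you are trying to prove, so the argument is circular at the decisive moment.

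The missing ingredient is much more elementary, and it is what the paper uses: a \emph{lower} bound on the dimension of every irreducible component of $\pi^{-1}(\overline{Y})$. Writing $\pi^{-1}(\overline{Y})$ as the intersection of the graph of $\pi$ with $X_{\widetilde{\Sigma}}\times\overline{Y}$ inside the smooth variety $X_{\widetilde{\Sigma}}\times X_{\Sigma}$, Krull's principal ideal theorem (\cite[Theorem 0.2]{EH16}) gives $\dim Z\geq\dim Y$ for every component $Z$. Your own computation shows the boundary part $\pi^{-1}(\overline{Y})\setminus\pi^{-1}(Y)=\bigcup_{\widetilde{\tau}\neq 0}\bigl(\pi^{-1}(\overline{Y})\cap O_{\widetilde{\tau}}\bigr)$ has dimension at most $\dim Y-1$ (the paper gets the same bound by bounding fiber dimensions of $\pi$ over $O_{\sigma}$ by $\dim\sigma-1$). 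Hence no component can be contained in the boundary, so every component meets the dense irreducible set $\pi^{-1}(Y)=Y$ and therefore equals $\overline{\pi^{-1}(Y)}$. Note also that your stated worry about "components of the correct dimension supported in boundary orbits" is already excluded by your dimension count; the components that the count alone cannot exclude are ones of dimension $<\dim Y$, and it is precisely those that the Krull lower bound kills.
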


\begin{proof}
We clearly have $\pi^{-1}(\overline{Y})\supset \overline{\pi^{-1}(Y)}$. To show the reverse inclusion, we first show $\dim(\pi^{-1}(\overline{Y}\backslash Y))<\dim(Y)$. To do this, we will show for all positive-dimensional cones $\sigma$ in $\Sigma$ and the corresponding torus orbit $O_\sigma$, we must have $\dim(\pi^{-1}(O_\sigma\cap \overline{Y}))<\dim(Y)$. 

By the assumption that $\overline{Y}$ intersects each torus orbit of $X_{\Sigma}$ properly, $\dim(O_\sigma\cap \overline{Y})\leq \dim(Y)-\dim(\sigma)$, where either equality holds or the intersection $O_\sigma\cap \overline{Y}$ is empty, in which case the dimension is understood to be $-\infty$. By \cite[Proposition 2.14]{HLY02}, the fibers over $O_\sigma$ under $\pi\colon  \widetilde{X}\to X$ have dimension at most $\dim(\sigma)-1$. Therefore, 
\begin{align*}
    \dim(\pi^{-1}(O_\sigma\cap \overline{Y}))\leq \dim(O_\sigma\cap \overline{Y})+\dim(\sigma)-1 < \dim(Y).
\end{align*}

To finish, we first note that every irreducible component of $\pi^{-1}(\overline{Y})$ has dimension at least $\dim(Y)$ \cite[Theorem 0.2]{EH16}, as $\pi^{-1}(\overline{Y})$ can be expressed as the intersection between the graph of $\pi$ and $X_{\widetilde{\Sigma}}\times \overline{Y}$ inside the smooth variety $X_{\widetilde{\Sigma}}\times X_{\Sigma}$. Next, $\pi^{-1}(\overline{Y})=\pi^{-1}(Y)\sqcup \bigcup_{\sigma}\pi^{-1}(O_\sigma\cap \overline{Y})$, where the union is over all positive-dimensional cones $\sigma$ in $\Sigma$. Since we have just shown that $\pi^{-1}(O_\sigma\cap \overline{Y})<\dim(Y)$, $\pi^{-1}(\overline{Y})$ must be irreducible. Since $\pi^{-1}(\overline{Y})$ is an irreducible variety containing $\overline{\pi^{-1}(Y)}$ and their dimensions agree, $\pi^{-1}(\overline{Y})=\overline{\pi^{-1}(Y)}$. 

To deduce $\pi^{*}[\overline{Y}]=[\overline{\pi^{-1}(Y)}]$, we note that $\pi^{*}[\overline{Y}]$ is a well-defined class in $A^{\bullet}(\pi^{-1}(\overline{Y}))=A^{\bullet}(\overline{\pi^{-1}(Y)})$ by construction of the cap product using \cite[Definition 8.1.2]{Ful98}, so it must be the fundamental class $[\overline{\pi^{-1}(Y)}]$. 
\end{proof}

\section{Global Chern roots}
In this section we show that one can decompose tautological $K$-classes of matroids as sums of classes of line bundles.
The construction of these decompositions are analogous to considering successive quotients in filtrations of tautological bundles of Grassmannians, and likewise are not canonical.
Moreover, they are not directly applicable for proving positivity statements because the line bundle summands are generally not nef.  However, they relate the tautological $K$-classes of matroids to certain constructions in previous works \cite{FR13,huh2014rota,EurThesis}.  Also, they are useful in computer computations, for instance in Macaulay2 \cite{M2}, which has been helpful for the development of this paper.

\medskip
Let $M$ be a matroid of rank $r$ on $E$.
Fix a sequence $\MM = (M_0, \ldots, M_{n+1})$ consisting of matroids $M_i$ of rank $i$ on $E$ such that $M_r = M$ and $B_\sigma(M_i) \subset B_\sigma(M_{i+1})$ for all permutations $\sigma\in \mathfrak S_E$ and $i = 0, \ldots, n$.
Such a sequence $\MM$ is known as a ``full flag matroid that lifts $M$'' \cite[Ch.~1]{BGW03}.  
One such $\MM$ is the ``full Higgs lift'' of $M$ which is obtained by defining
\[
\text{the set of bases of } M_i = \left\{S \in  \textstyle\binom{E}{i} \ \middle| \ \text{$S$ contains or is contained in a basis of $M$}\right\}
\]
for all $0 \leq i \leq  n+1$.  For each $0 \leq i \leq n$, we express the differences $[\cS_{M_{i+1}}] - [\cS_{M_i}]$ and $[\cQ_{M_{i}}] -[\cQ_{M_{i+1}}]$ as $K$-classes of line bundles as follows.
As denoted in \S\ref{subsec:Tdiv}, 
let $\mathcal O(D_P)$ be the $T$-equivariant line bundle of $X_E$ corresponding to a lattice polytope $P\subset \RR^E$ whose normal fan coarsens $\widetilde\Sigma_E$.
For a matroid $N$ with ground set $E$, we then have by the discussion in \S\ref{subsec:Tdiv} that
\[
[\mathcal O(D_{-P(N)})]_\sigma = \prod_{i \in B_{\sigma}(N)} T_i  \quad\text{and}\quad [\mathcal O(D_{P(N^\perp)})]_\sigma = \prod_{i \not\in B_{\sigma}(N)} T_i^{-1}.
\]
Thus, since $B_\sigma(M_i) \subset B_\sigma(M_{i+1})$ for all $0\leq i\leq n$ and permutations $\sigma$, we have that
\[
\begin{split}
[\cS_{M_{i+1}}] - [\cS_{M_i}] & = [\mathcal O(D_{-P(M_{i+1})})^\vee \otimes \mathcal O(D_{-P(M_i)})] \qquad\text{and}\\
 [\cQ_{M_{i}}] -[\cQ_{M_{i+1}}] & = [\mathcal O(D_{P({M_i}^\perp)}) \otimes \mathcal O(D_{P({M_{i+1}}^\perp)})^\vee].
\end{split}
\]
In particular, since $M_0 = U_{0,E}$ and $M_{n+1} = U_{n+1, E}$ so that $[\cS_{M_0}] = [\cQ_{M_{n+1}}] = 0$, we have that
\begin{equation}\label{eq:ChernRoots}
\begin{split}
&[\cS_M] = \sum_{i=0}^{r-1} [\mathcal O(D_{-P(M_{i+1})})^\vee \otimes \mathcal O(D_{-P(M_{i})})] \qquad\text{and}\\
&[\cQ_M] = \sum_{j=r}^{|E|-1} [\mathcal O(D_{P({{M_{j}}^\perp})}) \otimes \mathcal O(D_{P({{M_{j+1}}^\perp})})^\vee] \qquad\text{as elements in $K_T^0(X_E)$.}
\end{split}
\end{equation}

One might hope that this decomposition implied positivity properties of $[\mathcal{S}_M^{\vee}]$ and $[\mathcal{Q}_M]$. However, for example for $[\mathcal{S}_M^{\vee}]$, the line bundles $\mathcal O(D_{-P(M_{i+1})}) \otimes \mathcal O(D_{-P(M_{i})})^\vee$ is nef if and only if $P(M_i)$ is a weak Minkowski summand of $P(M_{i+1})$ --- see \cite[\S2.2 \& \S2.4]{ACEP20} for a proof.  This however seldom occurs:  When a matroid $M$ is connected after removing its loops and coloops, the only weak Minkowski summand of $P(M)$ is itself \cite{Ngu78}.
Hence, although the bundles $\cS_L^\vee$ and $\cQ_L$ are globally generated and hence nef if $L$ is a realization of $M$, it is unclear how to establish from the Chern roots listed here that the positivity property of $\cS_L^\vee$ and $\cQ_L$ as nef vector bundles persist for arbitrary (not necessarily realizable) matroids.

\begin{rem}
\label{rem:modular}
Let $z_S \in A^1(X_E)$ denote the divisor class of the torus-invariant divisor $Z_S\subset X_E$ corresponding to a nonempty proper subset $S$ of $E$, and denote $z_E = -\alpha \in A^1(X_E)$.  Combining \Cref{rem:alphabeta} with a well-known description for the Chow ring of a smooth complete toric variety (see for instance \cite[Ch.~5]{Ful93}), one has that the Chow ring of the permutohedral variety has a presentation
\[
A^\bullet(X_E) =\frac{\ZZ[z_S \mid \emptyset \subsetneq S \subseteq E]}{\langle z_Sz_{S'} \mid S\not\subseteq S' \text{ and } S\not\supseteq S'\rangle + \langle \sum_{i\in S \subseteq E} z_S \mid i \in E\rangle}.
\]
Note that in this presentation, one has $\sum_{\emptyset\subsetneq S\subseteq E} z_S = \beta$ because it follows from the end of \Cref{rem:alphabeta} that $\alpha+ \beta = \sum_{\emptyset\subsetneq S \subsetneq E} z_S$.
For a matroid $N$ of rank $r$ on $E$, the translate $P' = -P(N) + r\be_0$ of its base polytope lies in the lattice $\mathbf 1^\perp$.  The support function $h_{P(N)}(x) = \max_{\bm \in P(M)}\langle \bm, x \rangle$ of the base polytope satisfies $h_{P(N)}(\be_S) = \rk_M(S)$ for any subset $S\subseteq E$, and hence the support function $h_{P'}$ of the translate $P'$ satisfies $h_{P'}(\be_S) = \rk_N(S) - r$ if $0\in S$ and $h_{P'}(\be_S) = \rk_N(S)$ otherwise.
Thus, by the discussion in \S\ref{subsec:Tdiv} and the fact that $\alpha = \sum_{0\in S\subsetneq E} z_S$ (\Cref{rem:alphabeta}), one has
\[
\sum_{\emptyset\subsetneq S \subseteq E} \operatorname{rk}_N(S) z_S  =  [D_{-P(N)}]  = [D_{P(N^\perp)}] \quad\text{as elements in }A^1(X_E),
\]
where the last equality follows from the fact that $P(N^\perp)$ and $-P(N)$ are translates $P(N^\perp) = -P(N) + \mathbf 1$ of each other.
In particular, one can restate the decomposition of $[\cS_M]$ and $[\cQ_M]$ into sums of line bundles in \Cref{eq:ChernRoots} by stating that a possible collection of Chern roots for $[\cS_M]$ and $[\cQ_M]$ is given by
\[
\begin{split}
\text{Chern roots of } [\cS_M] &= \Big\{ \sum_{\emptyset\subsetneq S\subseteq E} \big(-\operatorname{rk}_{M_{i+1}}(S) + \operatorname{rk}_{M_{i}}(S) \big)z_S \Big\}_{i = 0, \ldots, r-1} \quad\text{and}\\
\text{Chern roots of } [\cQ_M] &= \Big\{ \sum_{\emptyset\subsetneq S\subseteq E} \big(-\operatorname{rk}_{M_{i+1}}(S) + \operatorname{rk}_{M_{i}}(S) \big)z_S \Big\}_{i = r, \ldots, n}.
\end{split}
\]
The ``modular filter'' of two consecutive matroids $M_i$ and $M_{i+1}$ in the sequence $\MM$ is defined as the collection $\mathscr F_i = \{S\subseteq E \mid \operatorname{rk}_{M_{i+1}}(S) - \operatorname{rk}_{M_i}(S) = 1\}$.  Writing $\alpha_{\mathscr F_i} = \sum_{\substack{S\in \mathscr F_i\\ \emptyset\subsetneq S \subsetneq E}} z_S$, we have that the elements $\alpha - \alpha_{\mathscr F_i}$ for various $i$ give a collection of Chern roots for $[\cS_M]$ and $[\cQ_M]$.  These elements $\alpha - \alpha_{\mathscr F_i}$ appeared previously in \cite[Remark 31]{huh2014rota} and \cite{FR13}, and the interpretation of them as Chern roots of a $K$-class first appeared in \cite[Remark 7.2.6]{EurThesis}.  The elements $\alpha - \alpha_{\mathscr F_i}$ when $\mathscr F_i$ are principal filters were studied in \cite{BES} to give a generalization of a volume formula for generalized polyhedra \cite[Corollary 9.4]{P09} and a simplified proof for the portion of the Hodge theory of matroids in \cite{AdiHuhKatz} relevant to log-concavity.
\end{rem}

\bibliography{BEST_TautClMat_arXiv}
\bibliographystyle{amsalpha}

\end{document}